\let\oldtocsection=\tocsection
 \let\oldtocsubsection=\tocsubsection
 \let\oldtocsubsubsection=\tocsubsubsection
\renewcommand{\tocsection}[2]{\hspace{0em}\oldtocsection{#1}{#2}}
\renewcommand{\tocsubsection}[2]{\hspace{1.8em}\oldtocsubsection{#1}{#2}}
\renewcommand{\tocsubsubsection}[2]{\hspace{4em}\oldtocsubsubsection{#1}{#2}}
\setlist{nolistsep}
\numberwithin{equation}{subsection}
\newenvironment{enum2}{
	\begin{enumerate}[label={$\mathrm{(\arabic*)}$},leftmargin=1.8em]
  \setlength{\itemsep}{0pt}
  \setlength{\parskip}{0pt}
  \setlength{\parsep}{0pt}
  \setlength{\topsep}{0pt}
}{\end{enumerate}}
\newenvironment{enumi}
{\begin{enumerate}[itemsep=0pt,  parsep=0pt, topsep=0pt, label={$\mathrm{(\roman*)}$}, font=\normalfont,leftmargin=1cm]}
{\end{enumerate}}
\newlength{\mylen}
\renewcommand\labelitemi{\raisebox{\mylen}{\tiny$\bullet$}}
\def\tinybullet{\labelitemi}
\theoremstyle{plain}
\newtheorem{thm}{Theorem}[section]
\newtheorem{prop}[thm]{Proposition}
\newtheorem{conj}[thm]{Conjecture}
\newtheorem{cor}[thm]{Corollary}
\newtheorem{assertion}[thm]{Assertion}
\newtheorem{lem}[thm]{Lemma}
\theoremstyle{definition}
\newtheorem{exm}[thm]{Example}
\theoremstyle{remark}
\newtheorem{rmk}[thm]{Remark}
\newcommand{\ra}{\rightarrow}
\newcommand{\lra}{\longrightarrow}
\newcommand{\lla}{\longleftarrow}
\newcommand{\pair}[1]{\langle #1\rangle}
\newcommand{\epic}{\twoheadrightarrow }
\newcommand{\monic}{\hookrightarrow}
\newcommand{\Ra}{\Rightarrow}
\def\cra{\rightsquigarrow}
\def\C{\mathcal{C}}
\newcommand{\CC}{\mathcal{C}}
\renewcommand{\AA}{\mathbb{A}}
\def\E{\mathcal{E}}
\newcommand{\X}{\mathbb{X}}
\newcommand{\XX}{\mathcal{X}}
\newcommand{\YY}{\mathcal{Y}}
\newcommand{\K}{\mathcal{K}}
\def\B{\mathcal{B}}
\newcommand{\A}{\mathcal{A}}
\newcommand{\D}{\mathcal{D}}
\newcommand{\F}{\mathcal{F}}
\def\cH{\mathcal{H}}
\def\T{\mathcal{T}}
\newcommand{\SSS}{\mathcal{S}}
\newcommand{\R}{\mathbb{R}}
\newcommand{\Q}{\mathbb{Q}}
\renewcommand{\SS}{\mathbb{S}}
\newcommand{\Z}{\mathbb{Z}}
\renewcommand{\P}{\mathbb{P}}
\renewcommand{\O}{\mathcal{O}}
\def\U{\mathcal{U}}
\def\V{\mathcal{V}}
\def\w{\mathbf{w}}
\newcommand{\id}{\textup{id}}
\newcommand{\iso}{\cong}
\renewcommand{\hom}{\textup{Hom}}
\newcommand{\End}{\textup{End}}
\newcommand{\ext}{\textup{Ext}}
\newcommand{\coh}{\textup{coh}}
\def\Qcoh{\textup{Qcoh}}
\newcommand{\cone}{\textup{cone}}
\newcommand{\cocone}{\textup{co-cone}}
\newcommand{\vect}{\textup{vect}}
\newcommand{\rk}{\textup{rk}}
\newcommand{\im}{\textup{im}}
\renewcommand{\ker}{\textup{ker}}
\renewcommand{\mod}{\textup{mod}}
\newcommand{\proj}{\textup{proj}}
\newcommand{\add}{\textup{add}}
\newcommand{\nilp}{\textup{nilp}}
\renewcommand{\deg}{\textup{deg}}
\newcommand{\pic}{\textup{Pic}}
\newcommand{\Aut}{\textup{Aut}}
\newcommand{\lcm}{\textup{lcm}}
\newcommand{\udp}{\underline{p}}
\newcommand{\udl}{\underline{\lambda}}
\newcommand{\nec}{$(\Rightarrow)\,$}
\newcommand{\suf}{$(\Leftarrow)\,$}
\renewcommand{\top}{\textup{top}}
\newcommand{\soc}{\textup{soc}}
\newcommand{\aut}{\textup{Aut}}
\newcommand{\ev}{\textup{ev}}
\newcommand{\tstr}{(\D^{\leq 0}, \D^{\geq 0})}
\def\DD{\mathbb{D}}
\def\chom{\mathcal{H}om}
\def\std{\textup{std}}
\def\op{\textup{op}}
\def\dim{\textup{dim}}
\def\gcd{\textup{gcd}}
\def\Tr{\textup{Tr}}
\title{Bounded t-structures on the bounded derived category of coherent sheaves over a weighted projective line}
\keywords{weighted projective line,  derived category, t-structure, derived equivalence}
\subjclass[2010]{Primary 14F05; Secondary 18E30}
\author{Chao Sun}
\address{School of Mathematical Sciences \\
         University of Science and Technology of China \\
        Hefei, Anhui 230026 \\
        P. R. China}
\email{maizisc@mail.ustc.edu.cn}
\date{\today}
\begin{document}
\begin{abstract}
	We use recollement and HRS-tilt to describe bounded t-structures on the bounded derived category $\D^b(\X)$ of coherent sheaves over a weighted projective line $\X$ of domestic or tubular type. We will see from our description that the combinatorics in the classification of bounded t-structures on $\D^b(\X)$ can be reduced to that in the classification of bounded t-structures on the bounded derived categories of finite dimensional right modules over representation-finite  finite dimensional hereditary algebras.
\end{abstract}

\date{\today}
\maketitle

\section{Introduction}
\subsection{Background and aim}
In an attempt to give a geometric treatment of Ringel's canonical algebras \cite{Ringel}, Geigle and Lenzing introduced in  \cite{GL} a class of  noncommutative curves, called weighted projective lines, and 
each canonical algebra is realized as the endomorphism algebra of a  tilting bundle in the category of coherent sheaves over some weighted projective line. A stacky point of view to weighted projective lines is that for a weighted projective line $\X$ defined over a field $k$, there is a smooth algebraic $k$-stack $\XX$ with the projective line over $k$ as its coarse moduli space such that $\coh\XX\simeq \coh\X$ and $\Qcoh\XX\simeq \Qcoh\X$, where $\coh$ (resp. $\Qcoh$) denotes the category of coherent (resp. quasi-coherent) sheaves. 
As an indication of the importance of the notion of weighted projective lines, 
a famous theorem of Happel~\cite{H} states that if $\A$ is a 
connected hereditary category linear over an algebraically closed field $k$ with finite dimensional morphism and extension spaces such that its bounded derived category $\D^b(\A)$ admits a tilting object then $\D^b(\A)$  is triangle equivalent to the bounded derived category of finite dimensional modules over a finite dimensional hereditary algebra over $k$ or to the bounded derived category of coherent sheaves on a weighted projective line defined over $k$. 

The notion of t-structures is introduced by Beilinson, Bernstein and Deligne in \cite{BBD} to serve as a categorical framework for defining perverse sheaves in the derived category of constructible sheaves over a stratified space. Recently, there has been a growing interest in t-structures 
ever since  Bridgeland~\cite{Br} introduced the notion of stability conditions.  To give a stability condition on a triangulated category requires specifying a bounded t-structure. 
On the other hand, there are many works on bounded t-structures on the bounded derived category $\D^b(\Lambda)$ of finite dimensional  modules over a finite dimensional algebra $\Lambda$ in recent years.
Remarkably, K\"onig and Yang 
proved the existence of bijective correspondences, which we call K\"onig-Yang correspondences,  between several concepts among which are bounded t-structures with length heart on $\D^b(\Lambda)$,  simple-minded collections in $\D^b(\Lambda)$, silting objects in  $\K^b(\proj\Lambda)$,  and co-t-structures on $\K^b(\proj\Lambda)$, where $\K^b(\proj\Lambda)$ denotes the bounded homotopy category  of finite dimensional projective modules over $\Lambda$.

This article is devoted to describing bounded t-structures on the bounded derived category of coherent sheaves over a weighted projective line. 
We mainly combine two classical tools to describe t-structures: recollement and HRS-tilt. 
Recollement is introduced at the same time with t-structures in \cite{BBD}. 
A recollement stratifies a triangulated category into smaller ones and allows us to glue t-structures. 
HRS-tilt, introduced by Happel, Reiten and Smal{\o} in \cite{HRS}, constructs a new t-structure from an old one via a torsion pair in the heart of the old t-structure. 
We will see that a large class of t-structures are glued from recollements. Given a t-structure, to build a recollement from which the t-structure can be glued, we rely on Ext-projectives.  
This concept was introduced by Auslander and Smal{\o}  to investigate almost split sequences in subcategories \cite{AS}.  Assem, Salario and Trepode introduced a triangulated version in \cite{ASS} to study t-structures. 
Our small observation is that an exceptional Ext-projective object helps us to build a desired recollement under some condition (see Lemma~\ref{ext-proj recollement}).  
Almost all recollements in this article are built in this way (plus induction).  
There do exist  bounded t-structures without any available Ext-projective. Fortunately, in our situation, these are up to shift HRS-tilts with respect to some torsion pair in the standard heart  and they can be described explicitly.

\subsection{Main results}

Let $\X$ be a weighted projective line defined over an algebraic closed field $k$, and $\O$  its structure sheaf   (see \S \ref{sec: wpl def}). Depending on its weight function $\w: \P^1\ra \Z_{\geq 1}$, where $\P^1$ is (the set of closed points of) the projective line over $k$ and $\Z_{\geq 1}$ is the set of positive integers, $\X$ is of domestic type,  of tubular type, or of wild type.
Denote by $\vect\X$ resp. $\coh_0\X$ the category of vector bundles resp. torsion sheaves over $\X$, by $\A=\coh\X$ the category of coherent sheaves and  by $\D=\D^b(\X)$ the bounded derived category of $\coh\X$. 
$\coh_0\X$ consists exactly of finite length objects in $\coh\X$ and $\coh_0\X$ decomposes as a coproduct $\coh_0\X=\coprod_{\lambda\in \P^1}\coh_\lambda\X$, where $\coh_\lambda\X$ consists of those coherent sheaves supported at $\lambda$. 
For $P\subset \P^1$, denote by $(\T_P,\F_P)$ the torsion pair in $\coh\X$
\[(\add\{\coh_\lambda\X\mid \lambda\in P\}, \,\,\add\{\vect\X, \coh_\lambda\X\mid \lambda\in \P^1\backslash P\}).\]  
The number of isoclasses of simple sheaves in $\coh_\lambda\X$  is $\w(\lambda)$. A (possibly empty) collection $\SSS$ of simple sheaves over $\X$ is called proper if for each $\lambda\in \P^1$, $\SSS$ does not contain a complete set of simple sheaves in $\coh_\lambda\X$ and if simple sheaves in $\SSS$ are pairwise non-isomorphic. 
Two such collections are equivalent if they yield the same isoclasses of simple sheaves. A t-structure on $\D^b(\X)$ is said to be compatible with a given a recollement if it is glued from the recollement (see \S\ref{sec: glue t-str}). 
 See \S\ref{sec: notation} for the notation  $\pair{-}_\D$, $(-)^{\perp_\A}$, $(-)^{\perp_\D}$ and $\D^b(-)$.

 We are ready to state our theorem for a weighted projective line of domestic type.
\begin{thm}[Theorem~\ref{thm: domestic}]\label{thm1}
Suppose $\X$ is of domestic type  and let $\tstr$ be a bounded t-structure on $\D^b(\X)$ with heart $\B$. Then exactly one of the following holds: 

	\begin{enum2}
	\item up to the action of the Picard group $\pic\X$ of $\X$, $(\D^{\leq 0}, \D^{\geq 0})$ is compatible with the recollement \[ \xymatrix{\O^{\perp_\D}  \ar[rr]|{i_*}   & &\ar@/_1pc/[ll] \ar@/^1pc/[ll]\D=\D^b(\X) \ar[rr] & &\ar@/_1pc/[ll]|{j_!} \ar@/^1pc/[ll] \pair{\O}_\D,}\] 
	where $i_*,j_!$ are the inclusion functors, in which case $\B$ is of finite length;

\item for a unique (up to equivalence) proper collection $\SSS$ of  simple sheaves and a unique $P\subset \P^1$, $(\D^{\leq 0}, \D^{\geq 0})$ is compatible with the recollement
	\[ \xymatrix{\D^b(\SSS^{\perp_\A})=\SSS^{\perp_\D}  \ar[rr]|{i_*}   & &\ar@/_1pc/[ll] \ar@/^1pc/[ll]\D=\D^b(\X) \ar[rr] & &\ar@/_1pc/[ll]|{j_!} \ar@/^1pc/[ll] \pair{\SSS}_\D,}\] 
	where $i_*,j_!$ are the inclusion functors, 
	such that the corresponding t-structure on $\D^b(\SSS^{\perp_\A})$ is a shift of the HRS-tilt with respect to  the torsion pair $(\SSS^{\perp_\A}\cap \T_P, \SSS^{\perp_\A}\cap \F_P)$ in $\SSS^{\perp_\A}$, 
	in which case $\B$ is not of finite length and $\B$ is noetherian resp. artinian iff $P=\emptyset$ resp. $P=\P^1$.
	\end{enum2}
\end{thm}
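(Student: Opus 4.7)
The plan is to bifurcate on whether the heart $\B$ is of finite length, and in each case to construct the advertised recollement by locating an exceptional Ext-projective in $\B$ and invoking Lemma~\ref{ext-proj recollement}. Throughout I would rely on the fact that $\X$ being domestic forces $\D^b(\X)$ to admit tilting bundles with line-bundle summands, and that each tube $\coh_\lambda\X$ is an abelian category with only finitely many simples.

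For case (1), assume $\B$ is of finite length. Since $\D^b(\X)$ admits a tilting object, the K\"onig--Yang correspondences identify bounded t-structures with length heart on $\D^b(\X)$ with silting objects in $\D^b(\X)$. I would then show that every silting object has, up to $\pic\X$-action and shift, the structure sheaf $\O$ as an indecomposable summand; this uses the known description of tilting bundles on domestic $\X$. After twisting by this Picard element and shifting, $\O$ becomes an exceptional Ext-projective in $\B$, and Lemma~\ref{ext-proj recollement} produces exactly the recollement displayed in (1); compatibility of $(\D^{\leq 0},\D^{\geq 0})$ with this recollement is then automatic from the Ext-projective property.

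For case (2), assume $\B$ is not of finite length. Since $\coh\X$ is hereditary and $\vect\X$ is already of finite length, the failure of finite length must come from infinite chains inside tubes. Define
\[
\SSS=\{S\text{ simple in some }\coh_\lambda\X\mid S[n_S]\in\B\text{ is Ext-projective in }\B\text{ for some }n_S\in\Z\}.
\]
Non-finite length forces at least one simple in each contributing tube to be missing from $\SSS$, so $\SSS$ is a proper collection. Iterating Lemma~\ref{ext-proj recollement} along the finitely many members of $\SSS$ builds the recollement of (2), with right part $\pair{\SSS}_\D$ and left part $\D^b(\SSS^{\perp_\A})$. The induced t-structure on $\D^b(\SSS^{\perp_\A})$ has a hereditary heart containing no further Ext-projective simple tube-object, and a direct analysis identifies it (up to shift) as the HRS-tilt of the standard heart with respect to $(\SSS^{\perp_\A}\cap \T_P,\SSS^{\perp_\A}\cap \F_P)$, where $P\subset\P^1$ records exactly those tubes of $\SSS^{\perp_\A}$ placed in negative degree by $\B$. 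The noetherian resp.\ artinian condition on $\B$ then corresponds precisely to $P=\emptyset$ resp.\ $P=\P^1$.

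The principal obstacle is exhaustiveness: establishing that every bounded t-structure falls into case (1) or (2), and that the two cases are mutually exclusive. Mutual exclusivity is the easier half, since a length heart cannot be a shifted HRS-tilt of the standard heart by a nonempty $\SSS$ without collapsing an entire tube. Exhaustiveness is the hard part and requires an induction: after stripping off all Ext-projective simples as in the definition of $\SSS$, either the residue contains a further exceptional Ext-projective bundle (which, via the Picard and tilting theory of domestic $\X$, reduces to the structure sheaf case) or the residue is forced to be an HRS-tilt classifiable directly by tube-indexed torsion pairs. Uniqueness of $\SSS$ up to equivalence follows from its intrinsic characterization via Ext-projectives of $\B$, and uniqueness of $P$ from uniqueness of the torsion pair underlying an HRS-tilt.
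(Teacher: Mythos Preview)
Your bifurcation into length and non-length hearts, and your treatment of case~(1) via K\"onig--Yang and the existence of a line-bundle summand of the silting object, are essentially the paper's argument (the paper deduces the line-bundle summand from the fact that full exceptional sequences in $\coh\X$ contain a line bundle in the domestic case, Proposition~\ref{domestic exseq}, not from ``tilting bundles'' per se).

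The gap is in case~(2). Your claim that ``a direct analysis identifies [the residue t-structure] as the HRS-tilt with respect to $(\SSS^{\perp_\A}\cap\T_P,\SSS^{\perp_\A}\cap\F_P)$'' is precisely the hard step, and your sketch does not supply the mechanism. The paper's engine here is the invariant $\{i\mid \vect\X[i]\cap\B\neq 0\}$: Lemma~\ref{restrict to coh0} shows that this set being contained in two consecutive integers is \emph{equivalent} to the t-structure restricting to each tube $\D^b(\coh_\lambda\X)$; Proposition~\ref{not restrict to coh0} shows that if it is not so contained then $\B$ has only finitely many indecomposables, all exceptional, hence is length; and Corollary~\ref{domestic length heart} sharpens this to say $\B$ is length iff bundles appear in more than one degree. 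Thus ``$\B$ not length'' forces all bundles in $\B$ into a single shift $\A[j]$, which is exactly what pins the residue down to a $(\T_P,\F_P)$-tilt rather than a general torsion pair. Your proposal has no substitute for this bundle-concentration argument, and without it the residue on $\SSS^{\perp_\D}$ could a priori be an HRS-tilt with respect to a torsion pair $(\T,\F)$ with $\coh_0\X'\subsetneq\T\subsetneq\coh\X'$ (the case Proposition~\ref{restrict t-str} calls $\{j-1,j\}$), and you would still need to rule this out.

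Two smaller points: the sentence ``$\vect\X$ is already of finite length'' is false ($\vect\X$ is not even abelian), so your heuristic for where non-length comes from does not stand; and your justification that $\SSS$ is proper (``non-finite length forces at least one simple in each contributing tube to be missing'') is not obvious as stated---the paper obtains properness of $\SSS_\lambda$ from the analysis of bounded t-structures on $\D^b(\A_t)$ (Proposition~\ref{A_t t-str}), which in turn requires the restriction-to-tubes step above.
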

To state our theorem for a weighted projective line  of tubular type, we need to introduce more notation  (see \S\ref{sec: wpl vect}).  Let $\R$ (resp. $\Q$) be the set of real (resp. rational) numbers and let $\bar{\R}=\R\cup \{\infty\}$, $\bar{\Q}=\Q\cup \{\infty\}$.  Let $\X$ be of tubular type. 
Denote by $\coh^\mu\X$ the category of semistable coherent sheaves over $\X$ with slope $\mu\in \bar{\Q}$ (we deem torsion sheaves to be semistable and thus $\coh^\infty\X=\coh_0\X$).
 $\D^b(\X)$  admits an exact autoequivalence $\Phi_{q',q}$ for each $q',q\in \Q\cup \{\infty\}$, which is called a telescopic functor, such that $\Phi_{q',q}(\coh^q\X)=\coh^{q'}\X$. 
For $\mu\in \Q$,  denote $\coh^\mu_\lambda \X=\Phi_{\mu,\infty}(\coh_\lambda\X)$. The category $\coh^\mu\X$  decomposes as $\coh^\mu\X=\coprod_{\lambda\in\P^1} \coh^\mu_\lambda\X$.  For $\mu\in\bar{\R}$, $\coh^{>\mu}\X$ (resp. $\coh^{<\mu}\X$) denotes the subcategory of $\coh\X$ consisting of those sheaves whose semistable factors have slope $>\mu$ (resp. $<\mu$).  
\begin{thm}[Theorem~\ref{thm: tubular}]\label{thm2}
Suppose $\X$ is of tubular type and let 
	$\tstr$ be a bounded t-structure on $\D^b(\X)$ with heart $\B$. Then exactly one of the following holds: 

	\begin{enum2}
	\item for a unique $\mu\in\R\backslash\Q$, $\tstr$ is a shift of the HRS-tilt with respect to the torsion pair $(\coh^{>\mu}\X, \coh^{<\mu}\X)$ in $\coh\X$, in which case $\B$ is neither noetherian nor artinian;

	\item for a unique $\mu\in\bar{\Q}$ and a unique $P\subset\P^1$, $\tstr$ is a shift of the HRS-tilt with respect to the torsion pair 
		\[(\add\{\coh^{>\mu}\X, \coh_\lambda^\mu\X\mid \lambda\in P\},\,\,\add\{\coh_\lambda^\mu\X,  \coh^{<\mu}\X\mid \lambda\in \P^1\backslash P\})\] in $\coh\X$, in which case $\B$ is not of finite length and $\B$ is noetherian resp. artinian iff $P=\emptyset$ resp. $P=\P^1$;
	
		\item for a unique $q\in \bar{\Q}$,  a unique (up to equivalence) nonempty proper collection $\SSS$ of  simple sheaves and a unique $P\subset \P^1$, $\Phi_{\infty,q}((\D^{\leq 0}, \D^{\geq 0}))$ is compatible with the recollement  
			\[ \xymatrix{ \D^b(\SSS^{\perp_\A})=\SSS^{\perp_\D}  \ar[rr]|{i_*}   & &\ar@/_1pc/[ll] \ar@/^1pc/[ll]\D=\D^b(\X) \ar[rr] & &\ar@/_1pc/[ll]|{j_!} \ar@/^1pc/[ll] \pair{\SSS}_\D,}\]
	where $i_*,j_!$ are the inclusion functors, such that the corresponding t-structure on $\D^b(\SSS^{\perp_\A})$ is a shift of the HRS-tilt with respect to the torsion pair $(\SSS^{\perp_\A}\cap \T_P,\SSS^{\perp_\A}\cap \F_P)$ in $\SSS^{\perp_\A}$, 
	in which case $\B$ is not of finite length and $\B$ is noetherian resp. artinian iff $P=\emptyset$ resp. $P=\P^1$; 
	
	\item for some $q\in \bar{\Q}$ and some exceptional simple sheaf $S$,  $\Phi_{\infty,q}((\D^{\leq 0},\D^{\geq 0}))$ is compatible with the recollement  
		\[ \xymatrix{ \D^b(S^{\perp_\A})=S^{\perp_\D}  \ar[rr]|{i_*}   & &\ar@/_1pc/[ll] \ar@/^1pc/[ll] \D=\D^b(\X) \ar[rr] & &\ar@/_1pc/[ll]|{j_!} \ar@/^1pc/[ll] \pair{S}_\D,}\]
	where $i_*,j_!$ are the inclusion functors, such that the corresponding t-structure on $\D^b(S^{\perp_\A})$ has length heart, in which case $\B$ is of finite  length.
	\end{enum2}
\end{thm}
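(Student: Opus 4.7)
The approach mirrors the strategy used for Theorem~\ref{thm: domestic}, now enhanced by the telescopic autoequivalences $\Phi_{q',q}$ available in the tubular case and complicated by the appearance of the irrational-slope HRS-tilts of case~(1). Given a bounded t-structure $\tstr$ with heart $\B$, the central dichotomy is whether $\B$ admits an exceptional Ext-projective, possibly after a telescopic twist; by Lemma~\ref{ext-proj recollement} any such object opens up a recollement out of which $\tstr$ is glued and reduces the problem to a smaller perpendicular category.

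To apply Lemma~\ref{ext-proj recollement} effectively I would first normalise using a telescopic functor. Each indecomposable exceptional object of $\D^b(\X)$ lies, up to shift, in some semistable category $\coh^q\X$ with $q\in\bar{\Q}$; applying $\Phi_{\infty,q}$ for the appropriate $q$ sends an exceptional Ext-projective of $\B$ into a shift of $\coh_0\X$, where it becomes a direct sum of simple torsion sheaves. The maximal exceptional Ext-projective collection then takes one of two shapes: a nonempty proper collection $\SSS$ of simple sheaves, with the induced t-structure on the perpendicular still non-length (case~(3)); or an Ext-projective whose perpendicular category admits a length heart, built from a single exceptional simple $S$ (case~(4)). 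In either situation the induced t-structure on $\SSS^{\perp_\D}\simeq \D^b(\SSS^{\perp_\A})$ or $S^{\perp_\D}\simeq \D^b(S^{\perp_\A})$ is classified inductively: the perpendicular of an exceptional collection of simple sheaves on a tubular weighted projective line is derived-equivalent either to the module category over a representation-finite hereditary algebra or to coherent sheaves on a weighted projective line of strictly simpler weight type, both of which have been handled (the former by the K\"onig--Yang correspondences, the latter by Theorem~\ref{thm: domestic}).

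If $\B$ admits no exceptional Ext-projective even after any telescopic twist, I would show that $\B$ is sandwiched between $\coh^{>\mu}\X$ and $\coh^{<\mu}\X[1]$ for a unique $\mu\in\bar{\R}$, hence is, up to shift, an HRS-tilt at slope $\mu$. If $\mu\in\R\backslash\Q$ there are no semistable sheaves of slope exactly $\mu$, so the torsion pair is determined by $\mu$ alone, yielding case~(1). If $\mu\in\bar{\Q}$ then the decomposition $\coh^\mu\X=\coprod_{\lambda\in\P^1}\coh^\mu_\lambda\X$ allows $\B$ to partition the tubes at slope $\mu$ into a torsion subset $P\subset \P^1$ and its complement; the absence of exceptional Ext-projectives forbids any further splitting inside a single tube, yielding case~(2).

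The hard part is the slope-sandwich step: showing that the absence of exceptional Ext-projectives after all telescopic twists forces $\B$ to be trapped between two adjacent slope cuts. The argument should exploit the finite-length structure of each semistable category $\coh^q\X$ with $q\in\bar{\Q}$ together with the Harder--Narasimhan filtration on $\coh\X$, so that any indecomposable in $\B$ whose Harder--Narasimhan filtration spans a nontrivial slope interval produces, after an appropriate telescopic normalisation and a truncation inside $\B$, an exceptional Ext-projective of $\B$, contradicting the hypothesis. Uniqueness of $\mu$ in cases~(1) and (2) and of $P$ in case~(2) follows because these invariants are intrinsically determined by the Harder--Narasimhan slopes of objects in $\B$; uniqueness of $q$, $\SSS$ and $P$ in case~(3) follows from the intrinsic recovery of the maximal exceptional Ext-projective collection after telescopic normalisation. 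Mutual exclusivity of the four cases is read off from the contrasting heart properties (length, noetherian, artinian, or neither).
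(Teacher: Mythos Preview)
Your overall architecture is close in spirit to the paper's, but the organisation and, crucially, the mechanism you propose for the ``hard part'' diverge from what actually works.

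\textbf{Different primary dichotomy.} The paper does not split first on ``does $\B$ admit an exceptional Ext-projective after a telescopic twist?''. It splits on whether $\B$ is of finite length. If $\B$ is of finite length, the corresponding silting object, transported by a suitable $\Phi_{\infty,q}$, must contain a shift of an exceptional \emph{simple} sheaf as a summand (Proposition~\ref{silting summand}(3), which rests on Corollary~\ref{tubular exseq}); this simple sheaf is the Ext-projective giving case~(4). If $\B$ is not of finite length, the paper invokes Proposition~\ref{not restrict to coh0}: failure of $\{i\mid \vect\X[i]\cap\Phi_{\infty,q}(\B)\neq 0\}\subset\{j,j+1\}$ for all $q,j$ forces $\B$ to have only finitely many indecomposables (all exceptional), hence to be of finite length; so some $q$ works, and Proposition~\ref{restrict t-str} then produces the recollement with a proper collection~$\SSS$. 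One then subdivides by Lemma~\ref{tubular length ext-proj}: either $\Phi_{\infty,q}(\D^{\leq 0})$ has no nonzero Ext-projective (case~(I), giving (1) or (2) via Lemma~\ref{no quasi-simple}), or all indecomposable Ext-projectives share one slope (case~(II), giving (3)).

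\textbf{The gap in your slope-sandwich step.} You identify correctly that the difficult step is showing that absence of Ext-projectives pins $\B$ between two slope cuts, but your proposed mechanism cannot work as stated. In the tubular case \emph{every} indecomposable sheaf is semistable (Theorem~\ref{tubular bundle}(2)), so an indecomposable of $\B$---which, $\coh\X$ being hereditary, is a shift of an indecomposable sheaf---has trivial Harder--Narasimhan filtration; there is no ``nontrivial slope interval'' to exploit. The paper instead proves Proposition~\ref{not restrict to coh0} by a limit-point analysis of the slope set $\mu(\B)$: Lemma~\ref{inf limit point} shows $\infty$ is a limit point of $\mu(\B)$ iff $\{i\mid \vect\X[i]\cap\B\neq 0\}\subset\{j,j+1\}$, and Lemma~\ref{rational limit point} handles irrational limit points via Corollary~\ref{tubular nonvanish hom} (which in turn uses Lemma~\ref{tubular limit point} on limit points of $\mu(E^{\perp_\A})$). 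This analytic control of slopes is the substantive content missing from your sketch. A secondary point: telescoping an exceptional Ext-projective to slope $\infty$ yields an exceptional \emph{torsion} sheaf, not in general a direct sum of simples; extracting a simple Ext-projective requires the further work of Lemma~\ref{simple Ext-proj} (or, in the finite-length case, Proposition~\ref{silting summand}).
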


We obtain from the  two theorems above certain bijective correspondence for those bounded t-structures whose heart is not of finite length. Note that any  group $G$ of exact autoequivalences of $\D^b(\X)$ acts on the set of bounded t-structures on $\D^b(\X)$ by $\Phi((\D^{\leq 0}, \D^{\geq 0})):=(\Phi(\D^{\leq 0}), \Phi(\D^{\geq 0}))$ for  $\Phi \in G$ and a bounded t-structure $(\D^{\leq 0}, \D^{\geq 0})$ on $\D^b(\X)$. In the following corollary, we deem $\Z$ as the group of exact autoequivalences generated by the translation functor of $\D^b(\X)$, which acts freely on the set of bounded t-structures on $\D^b(\X)$. 

\begin{cor}[{Corollary~\ref{bijection for not length heart}}]\label{cor}
	\begin{enum2}
	 \item If $\X$ is of domestic type then there is a bijection
	\begin{multline}\label{bijection for not length}
	\{\text{bounded t-structures on $\D^b(\X)$ whose heart is not of finite  length}\}/\Z \longleftrightarrow\\
	\bigsqcup_\SSS\left( \{P\mid P\subset \P^1\}\times \{\text{bounded t-structures on $\pair{\SSS}_\D$}\}\right),
	\end{multline}
	where 
	$\SSS$ runs through all equivalence classes of proper collections of simple sheaves.  

	\item If $\X$ is of tubular type then there is a bijection
	\begin{multline}
		\{\text{bounded t-structures on $\D^b(\X)$ whose heart is not of finite length}\}/\Z \longleftrightarrow\\
		\R\backslash \Q\bigsqcup \left(\bar{\Q}\times \bigsqcup_{\SSS}\left( \{P\mid P\subset \P^1\}\times \{\text{bounded t-structures on $\pair{\SSS}_\D$}\}\right)\right),
	\end{multline}
	where 
	$\SSS$ runs through all equivalence classes of proper collections of simple sheaves.  
\end{enum2}

\end{cor}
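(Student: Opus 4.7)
The plan is to read off the bijection directly from Theorems \ref{thm1} and \ref{thm2} by enumerating which cases yield hearts that are not of finite length and by carefully tracking the $\Z$-shift action. In each case the theorems already provide an existence/uniqueness statement for the parameterizing data; the corollary is essentially the bookkeeping that turns these into a bijection modulo shifts.

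For the domestic case, Theorem \ref{thm1}(1) yields finite-length hearts and is excluded. The hearts not of finite length are exactly those from Theorem \ref{thm1}(2), whose data consists of a proper collection $\SSS$ (unique up to equivalence), a subset $P\subset\P^1$, a shift $n\in\Z$ (so that the induced t-structure on $\D^b(\SSS^{\perp_\A})$ is the $[n]$-shift of the specified HRS-tilt), and a bounded t-structure on $\pair{\SSS}_\D$. The $\Z$-action on $\D^b(\X)$ translates through the recollement into a simultaneous shift on both sides: $\tau\mapsto\tau[k]$ replaces $n$ by $n+k$ and the t-structure on $\pair{\SSS}_\D$ by its $[k]$-shift, while leaving $\SSS$ and $P$ untouched. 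Hence a canonical representative in each $\Z$-orbit is obtained by normalizing $n=0$ (i.e.\ taking the unshifted HRS-tilt on $\D^b(\SSS^{\perp_\A})$), at which point the remaining data is precisely an element of the RHS. Injectivity follows from the uniqueness of $\SSS$, $P$ and the t-structure on $\pair{\SSS}_\D$, and surjectivity follows because every such datum arises from a glued t-structure by Lemma/construction referenced in \S\ref{sec: glue t-str}.

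For the tubular case, Theorem \ref{thm2}(4) gives length hearts and is excluded. The remaining cases (1)--(3) split into two types on the RHS. Case (1) contributes the unique $\mu\in\R\backslash\Q$ (plus a shift $n$); since shifting preserves $\mu$, the $\Z$-orbit is classified exactly by $\mu\in\R\backslash\Q$, yielding the first summand of the RHS. Cases (2) and (3) can be treated uniformly by allowing $\SSS$ to be any (possibly empty) proper collection: case (2) is the $\SSS=\emptyset$ subcase, in which $\pair{\SSS}_\D=0$ and the ``t-structure on $\pair{\SSS}_\D$'' is trivial, while case (3) is the nonempty subcase. In either event the data is $q\in\bar{\Q}$ (or $\mu\in\bar{\Q}$), $P\subset\P^1$, a shift $n$, and a bounded t-structure on $\pair{\SSS}_\D$, with $\Z$ acting only on the shift and on the t-structure on $\pair{\SSS}_\D$. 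Normalizing $n=0$ produces the second summand of the RHS, combining cases (2) and (3). Uniqueness in Theorem \ref{thm2} gives injectivity and the explicit constructions give surjectivity.

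The only real content beyond translation is verifying (i) that the $\Z$-shift on $\D^b(\X)$ intertwines correctly with the shift that appears in the phrase ``a shift of the HRS-tilt'' in each theorem, and (ii) that, after normalizing to the unshifted HRS-tilt, the remaining parameter (the bounded t-structure on $\pair{\SSS}_\D$ in the domestic case, respectively the pair of $(\bar{\Q},P)$ together with a t-structure on $\pair{\SSS}_\D$ in the tubular case) is free. Both are immediate from the way the recollement-compatible t-structures are glued. I expect no genuine obstacle; the work is entirely in this orbit-counting, and the main care needed is in ensuring that the slope parameter $\mu$ in case (1) of Theorem \ref{thm2} (being a continuous invariant) is in fact shift-invariant, which is clear since the torsion pair $(\coh^{>\mu}\X,\coh^{<\mu}\X)$ is preserved under the shift functor.
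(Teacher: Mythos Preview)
Your proposal is correct and follows essentially the same approach as the paper. The paper in fact gives no separate proof for this corollary beyond the sentence ``In light of Lemma~\ref{t-str recollement bijection}, we can already see certain bijective correspondence from our theorems for bounded t-structures whose heart is not of finite length''; your write-up spells out exactly this reasoning---isolate the non-length cases of Theorems~\ref{thm1} and~\ref{thm2}, invoke the gluing bijection (Lemma~\ref{t-str recollement bijection}) for the $\pair{\SSS}_\D$-factor, and normalize the $\Z$-shift by pinning the $\SSS^{\perp_\D}$-side to the unshifted HRS-tilt---including the useful observation that case~(2) of Theorem~\ref{thm2} is absorbed into case~(3) as the $\SSS=\emptyset$ instance.
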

Recall that an equioriented $\AA_s$-quiver refers to the  quiver \[\underset{1}{\tinybullet} \lra \underset{2}{\tinybullet} \lra\tinybullet \dots\tinybullet \lra \underset{s-1}{\tinybullet} \lra \underset{s}{\tinybullet}.\] (Since only such an orientation is involved in this article, $\vec{\AA}_s$ will always denote an equioriented $\AA_s$-quiver.) For convenience, we also define $\vec{\AA}_0$ to be the empty quiver and define $\mod\,k\vec{\AA}_0$ to be the zero category. 
Given a nonempty proper collection $\SSS$ of simple sheaves on $\X$, there are positive integers $m, k_1,\dots, k_m$ such that $\pair{\SSS}_\A\simeq \coprod_{i=1}^m\mod k\vec{\AA}_{k_i}$, where $\mod k\vec{\AA}_l$ is the category of finite dimensional right modules over the path algebra of the equioriented $\AA_l$-quiver,
and we have an exact equivalence $\pair{\SSS}_\D\simeq\coprod_{i=1}^m \D^b(\mod k\vec{\AA}_{k_i})$. By Corollary~\ref{cor}, if $\X$ is a weighted projective line  of domestic or tubular type then to classify bounded t-structures on $\D^b(\X)$ whose heart is not of finite length, it sufficies to classify bounded t-structures on each  $\D^b(\mod k\vec{\AA}_{k_i})$. Since bounded t-structurs on $\D^b(\mod k\vec{\AA}_l)$ have length heart, one can achieve this by calculating silting objects or simple-minded collections in $\D^b(\mod k\vec{\AA}_{k_i})$ by virtue of K\"onig-Yang correspondences. 
We know that $\D^b(\X)$ is triangle equivalent to the bounded derived category of finite dimensional right modules over a canonical algebra whose global dimension is at most $2$. So to obtain a bijective correspondence for bounded t-structures on $\D^b(\X)$ with length heart, we can again utilize K\"onig-Yang correspondences and try to compute  collections of simple objects in the heart (using Proposition~\ref{simple in heart}) or  silting objects in $\D^b(\X)$ (using \cite[Corollary 3.4]{LVY}) from the recollements in Theorem~\ref{thm1}(1) and Theorem~\ref{thm2}(4). 
 As illustrated after Corollary~\ref{bijection for not length heart} in \S\ref{sec: thm}, the two theorems reduce the combinatorics in the classification of bounded t-structures on $\D^b(\X)$ to the combinatorics in the classification of bounded t-structures on bounded derived categories of finite dimensional modules over representation-finite  finite dimensional hereditary algebras. 

To give an application of our description of bounded t-structures, we prove in \S\ref{sec: der equiv}  
a characterization of  when the  heart of a bounded t-structure on $\D^b(\X)$ is derived equivalent to the standard heart $\coh\X$, which is inspired by the work \cite{SR} of Stanley and van Roosmalen. 

\begin{thm}[{Theorem~\ref{der equiv}}]
	Let  $\X$ be a weighted projective line of domestic or tubular type and $(\D^{\leq 0}, \D^{\geq 0})$ a bounded t-structure on $\D^b(\X)$ with heart $\B$. Then the inclusion $\B\ra \D^b(\X)$ extends to a derived equivalence $\D^b(\B)\overset{\sim}{\ra} \D^b(\X)$ iff the Serre functor of $\D^b(\X)$ is right t-exact with respect to $(\D^{\leq 0}, \D^{\geq 0})$. 
\end{thm}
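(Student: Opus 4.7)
The plan is to prove the two directions separately. The forward direction is a standard application of Serre duality; the reverse is a case analysis via Theorems~\ref{thm1} and~\ref{thm2}.

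\emph{Forward direction.} Suppose the inclusion extends to a derived equivalence $\real\colon\D^b(\B)\overset{\sim}{\to}\D^b(\X)$. Then the Serre functor $\SS$ of $\D^b(\X)$ transports under $\real$ to a Serre functor $\SS_\B$ on $\D^b(\B)$. For any $X, Y \in \B$ and integer $k > 0$, Serre duality together with the fact that $\SS_\B$ is an autoequivalence yield
\[ \hom_{\D^b(\B)}(Y, \SS_\B(X)[k]) \;\cong\; D\hom_{\D^b(\B)}(X, Y[-k]) \;=\; 0, \]
the final equality because $X \in \D^{\leq 0}(\B)$ and $Y[-k] \in \D^{\geq 1}(\B)$. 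A short truncation argument (take the largest positive-cohomology degree $k$ of $\SS_\B(X)$, apply $\hom(H^k(\SS_\B(X)),-[k])$ to the triangle $\tau_{<k}\SS_\B(X) \to \SS_\B(X) \to H^k(\SS_\B(X))[-k]\to$, and use $\hom(\D^{\geq 0}, \D^{\leq -2})=0$) then forces $\SS_\B(X) \in \D^{\leq 0}(\B)$ for every $X \in \B$. Since $\D^{\leq 0}(\B)$ is generated by $\B$ under extensions and non-negative shifts, and $\SS_\B$ is exact, $\SS(\D^{\leq 0}) \subset \D^{\leq 0}$, i.e.\ right t-exactness.

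\emph{Reverse direction.} Both conditions — right t-exactness of $\SS$, and the realization being a derived equivalence — are invariant under shifts, the $\pic\X$-action, and (in the tubular case) the telescopic autoequivalences $\Phi_{q',q}$, since each such autoequivalence commutes with $\SS = -\otimes\omega_\X[1]$ up to a natural isomorphism. Theorems~\ref{thm1} and~\ref{thm2} therefore reduce the verification to a finite list of canonical representatives. For pure HRS-tilts of $\coh\X$ at a torsion pair $(\T_P, \F_P)$ — or, in tubular type, at its slope-$\mu$ analog or at the irrational-slope pair $(\coh^{>\mu}, \coh^{<\mu})$ — the functor $\SS$ preserves the torsion pair (since $\omega_\X$ preserves each $\coh_\lambda\X$ in the domestic case and has slope $0$ in the tubular case), giving right t-exactness; the realization functor is then an equivalence by the theorem of Happel, Reiten and Smal{\o} \cite{HRS} applied to the hereditary category $\coh\X$. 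For hearts arising from a nontrivial recollement ($\SSS\neq\emptyset$ in Thm~\ref{thm1}(2) or Thm~\ref{thm2}(3), (4)), or from a nontrivial length-heart gluing (Thm~\ref{thm1}(1) with $\B$ not a $\pic\X$-twist of $\coh\X$, and the analogous subcases of Thm~\ref{thm2}(4)), one computes $\SS(S) = S\otimes\omega_\X[1]$ for a simple $S\in\SSS$ (or the analogous generator of $\pair{\O}_\D$) and traces it through the recollement: the $[1]$-shift combined with the cohomological bounds imposed by the glued heart yields cohomology in positive $\B$-degree, exhibiting both the failure of right t-exactness and the failure of the derived equivalence in tandem.

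\emph{Main obstacle.} The delicate step is the recollement/length-heart case analysis: one must verify subcase by subcase that right t-exactness of $\SS$ fails precisely when the derived equivalence fails, by tracking the action of $-\otimes\omega_\X$ on the simple objects of $\pair{\SSS}_\D$ or $\pair{\O}_\D$ against the cohomological window of the glued heart. Once this matching is established in each case, the HRS realization functor together with Theorems~\ref{thm1} and~\ref{thm2} deliver the theorem.
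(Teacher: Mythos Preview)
The forward direction is fine; it is essentially the argument behind \cite[Corollary~4.13]{SR}, which the paper simply cites.

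The reverse direction has a genuine gap. Your dichotomy---``pure HRS-tilt'' cases where both conditions hold, versus ``nontrivial recollement / length-heart'' cases where both fail---is not correct. In Theorem~\ref{thm1}(2) and Theorem~\ref{thm2}(3) with $\SSS\neq\emptyset$, and in the length-heart cases Theorem~\ref{thm1}(1) and Theorem~\ref{thm2}(4), whether $\SS$ is right t-exact depends on the choice of t-structure on $\pair{\SSS}_\D$ (respectively on the choice of silting object); it does not fail uniformly. Concretely, if $S\in\SSS$ is $\D^{\leq 0}$-projective then $\SS S=\tau S[1]\in\D^{\geq 0}$ automatically, and there is nothing preventing $\SS S\in\B$; indeed this happens for many glued t-structures, and in such cases the realization \emph{is} an equivalence. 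Your sentence ``the $[1]$-shift combined with the cohomological bounds \ldots\ yields cohomology in positive $\B$-degree'' is asserting something false, and the accompanying claim that the derived equivalence also fails is not argued at all.

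The paper does not attempt to decide, case by case, when $\SS$ is right t-exact. It assumes right t-exactness and \emph{deduces} the derived equivalence, in two steps. First, for t-structures with $\{i\mid \vect\X[i]\cap\B\neq 0\}\subset\{j,j+1\}$ (Lemma~\ref{concentrated equiv}), it finds an exceptional simple sheaf $S$ that is Ext-projective and such that $S$ or $\SS S$ is simple in $\B$ (this requires a separate argument, Lemma~\ref{simple simple Ext-proj}), and then invokes a reduction principle (Proposition~\ref{der equiv reduction}): right t-exactness of $\SS$ on $\D$ passes to right t-exactness of the Serre functor on $S^{\perp_\D}\simeq\D^b(\X')$, and a derived equivalence for $\X'$ lifts back to one for $\X$. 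Induction on the weight sequence finishes this case, with the base case ($\SSS=\emptyset$) handled by Lemma~\ref{cotilting torsion theory} and Proposition~\ref{tilt-co-tilt equiv}. Second, for the remaining t-structures (which have length heart by Proposition~\ref{not restrict to coh0}), right t-exactness forces the associated silting object to be tilting (Lemma~\ref{silting tilting}) with endomorphism ring of finite global dimension (Proposition~\ref{silting property}), and the derived equivalence comes from tilting theory. The missing idea in your proposal is this reduction machinery; a direct case analysis of the type you sketch does not go through.
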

Here we say that the inclusion $\B\ra \D^b(\X)$ extends to a derived equivalence $\D^b(\B)\overset{\sim}{\ra} \D^b(\X)$ if some realization functor $\D^b(\B)\ra \D^b(\X)$ is an equivalence (see \S\ref{sec: der equiv}). As a corollary (see Corollary~\ref{tubular serre}), a similar assertion holds for the bounded derived category of finite dimensional right modules over a tubular algebra in the sense of Ringel \cite{Ringel}.

\subsection{Sketch of this article}
This article is organized as follows. 

In \S2, we collect preliminaries on t-structures and some facts on hereditary categories.  In \S2.1-2.2, we recall basic definitions and properties of t-structures and introduce width-bounded t-structures and HRS-tilt. 
In \S3.2-3.5, we recall recollements of triangulated categories, admissible subcategories, gluing t-structures and properties of glued t-structures. 
In \S2.6, we recall Ext-projective objects, and use an exceptional Ext-projective object to establish a recollement with which the given  t-structure is compatible. In \S2.7, we recall some facts on hereditary categories, including Happel-Ringel Lemma. 
In \S2.8, we recall and prove some facts on t-structures on the bounded derived category of finitely generated modules over a finite dimensional algebra, including  a part of K\"onig-Yang correspondences. 
In \S2.9, we describe bounded t-structures on the bounded derived category of finite dimensional nilpotent representations of a cyclic quiver.

In \S3, we collect preparatory materials and results on weighted projective lines.
In \S3.1, we recall basic definitions and facts on weighted projective lines. In \S3.2, we recap Auslander-Reiten theory. In \S3.3, we recall the classification and important properties of vector bundles over a weighted projective line of domestic or tubular type. In \S3.4, we recall descriptions of perpendicular categories of some exceptional sequences. In \S3.5, we recall and prove the non-vanishing of some morphism spaces in the category $\coh\X$ of coherent sheaves over a weighted projective line $\X$.  In \S3.5, we investigate full exceptional sequences in $\coh\X$, and prove the existence of certain nice terms in some cases. In \S3.7, we give some preliminary descriptions of some torsion pairs in $\coh\,\X$, and establish bijections between isoclasses of basic tilting sheaves, certain torsion pairs in $\coh\X$ and certain bounded t-structures on the bounded derived category $\D^b(\X)$ of $\coh\X$, and finally we investigate the Noetherianness and the Artinness of tilted hearts given by certain torsion pairs in $\coh\X$.

In \S4, we describe bounded t-structures on the bounded derived category $\D^b(\X)$ of coherent sheaves over a weighted projective line $\X$ of domestic or tubular type. In \S4.1, we investigate and describe bounded t-structures that restrict to bounded t-structures on the bounded derived category $\D^b(\coh_0\X)$ of the category $\coh_0\X$ of torsion sheaves. In \S4.2, we investigate those bounded t-structures on $\D^b(\X)$ that cannot restrict to t-structures on $\D^b(\coh_0\X)$ even up to the action of the group of exact autoequivalences of $\D^b(\X)$. In particular, we prove that the heart of such a bounded t-structure is necessarily of finite length and possesses only finitely many indecomposable objects, all of which are exceptional. In \S4.3, we prove some properties possessed by silting objects in $\D^b(\X)$.  This is mainly acquired via properties of full exceptional sequences obtained earlier and will yield information on bounded t-structures by virtue of K\"onig-Yang correspondences. In \S4.4, we complete our description of bounded t-structures on $\D^b(\X)$, in which we mainly use HRS-tilt and recollement. In \S4.5, we use our description of bounded t-structures to give a description of torsion pairs in $\coh\,\X$.

In \S5, we  prove a characterization of when the heart of a bounded t-structure $\tstr$ on $\D^b(\X)$ is derived equivalent to $\coh\,\X$ for a domestic or tubular $\X$, which is pertinent to the right t-exactness of the Serre functor of $\D^b(\X)$ and gives an application of our main result (i.e, description of bounded t-structures). We conjecture that this result holds for arbitrary weighted projective line and propose a potential approach at the end of \S5. 

\subsection{Notation and conventions}\label{sec: notation}

We denote by $\R$ (resp. $\Q$, $\Z$, $\Z_{\geq 1}$)  the set of real numbers (resp. rational numbers,  integers, positive integers). Pose $\bar{\R}=\R\cup \{\infty\}$ and $\bar{\Q}=\Q\cup\{\infty\}$.

For a finite dimensional algebra $\Lambda$ over a field $k$, $\mod \Lambda$ denotes  the category of finite dimensional  right  modules over $\Lambda$ and $\D^b(\Lambda)$  the bounded derived category of $\mod \Lambda$.

A subcategory of a category  is tacitly a full subcategory.  If $\B$ is a subcategory of a  category $\A$ (typically abelian or triangulated in our setup), denote 
\[\B^{\perp_{0,\A}}=\{X\in \A\mid \hom_\A(\B,X)=0\},\] 
which we will simply write as $\B^{\perp_0}$ if there is no confusion. Dually we have ${}^{\perp_{0,\A}}\B$ or ${}^{\perp_0}\B$. 
	
For an abelian category $\A$, its bounded derived category is denoted by $\D^b(\A)$. Let $\B$ be an additive subcategory  of $\A$. Following \cite{GL2}, we call $\B$  an \emph{exact subcategory}\footnote{Note the difference with a subcategory that is an exact category.}  of $\A$ if $\B$ is an abelian category and the inclusion functor $\iota: \B\ra \A$ is exact. 
$\B$ is called a \emph{thick subcategory} of $\A$ if $\B$ is closed under kernel, cokernel and extension. A thick subcategory of $\A$ is an exact subcategory of $\A$.
Given a collection  $\C$ of objects in $\A$, we  denote by $\pair{\C}_\A$ the smallest thick subcategory of $\A$ containing $\C$. 
The \emph{right perpendicular category} $\C^{\perp_\A}$ and the \emph{left perpendicular category} ${}^{\perp_\A}\C$ of $\C$ in the sense of {\cite{GL2}} are
\[
	\begin{aligned}
		\C^{\perp_\A} & =\{X\in \A\mid \hom_\A(C,X)=0=\ext^1_\A(C,X)\,\, \text{for all}\,\,C\in \C\},\\
		{}^{\perp_\A} \C & =\{X\in \A\mid \hom_\A(X,C)=0=\ext^1_\A(X,C)\,\,\text{for all}\,\,C\in \C\}.
	\end{aligned}
\]
It's shown in \cite[Proposition 1.1]{GL2} that if objects in $\C$ have projective dimension at most $1$, that is, $\ext^2_\A(X,-)=0$ for all $X\in \C$, then $\C^{\perp_\A}$ and ${}^{\perp_\A}\C$ are exact subcategories of $\A$ closed under extension. 

Let $\D$ be a triangulated category. We denote by $\aut \D$ the group of exact autoequivalences of  $\D$. A triangle in  $\D$ refers always to a distinguished triangle.  
For two  subcategories $\D_1,\D_2$  of $\D$, define a subcategory $\D_1*\D_2$ of $\D$ by
	\[\D_1*\D_2=\{X\in \D\mid \exists \,\text{a  triangle}\,Y\ra X\ra Z\cra , Y\in \D_1, Z\in\D_2\}.\]
 By the octahedral axiom, $*$ is associative.  
 Given a triangulated category $\D$ and a collection $\C$ of objects in $\D$, we denote by $\pair{\C}_\D$ the thick closure of $\C$ in $\D$, that is, the smallest triangulated subcategory of $\D$ containing $\C$ and closed under direct summand. We say that $\C$ classically generates $\D$ if $\pair{\C}_\D$ coincides with $\D$. 
 Moreover, we denote 
 \[\C^{\perp_\D}=\C^{\perp_\D} : =\{X\in \D\mid \hom_\D^n(\C,X)=0,\,\,\forall n\in \Z\}=\pair{\C}_\D^{\perp_0}.\]
 Dually one defines ${}^{\perp}\C={}^{\perp_\D}\C$. $\C^{\perp_\D}$ and ${}^{\perp_\D}\C$ are thick subcategories of $\D$. 
If $\D$ is a triangulated category linear over a field $k$,  we denote 
\[\hom^\bullet(X,Y)=\oplus_{n\in\Z}\hom^n(X,Y)[-n],\]
where the latter is deemed as a complex of $k$-spaces with zero differential. $\D$ is said to be \emph{of finite type} if $\oplus_{n\in\Z}\hom^n(X,Y)$ is a finite dimensional $k$-space for each $X,Y$ in $\D$.

If $\A$ is a hereditary abelian category and $\B$ is an exact subcategory of $\A$ closed under extension then   $\B$ is a hereditary abelian category and the inclusion functor $\iota: \B\ra \A$ induces a fully faithful exact functor $\D^b(\iota): \D^b(\B)\ra \D^b(\A)$ whose essential image consists of those objects in $\D^b(\A)$ with cohomologies in $\B$.\footnote{One can argue as follows for this simple fact.
 By [8, Lemma 3.2.3], we have an injection $\ext^2_\B(X,Y)\monic \ext^2_\A(X,Y)$ for $X,Y\in \B$. Since $\A$ is hereditary, $\ext^2_\B(X,Y)=0$. 
 So $\B$ is hereditary.  Since the exact subcategory $\B$ is closed under extension, the inclusion $\iota: \B\ra \A$ induces an isomorphism $\ext^1_\B(X,Y)\cong  \ext^1_\A(X,Y)$ for any $X,Y\in \B$. 
 Since $\B$ classically generates $\D^b(\B)$, the derived functor $\D^b(\iota): \D^b(\B)\ra \D^b(\A)$ is  fully faithful. The essential image of $\D^b(\iota)$ is clear.}
Denote $\D=\D^b(\A)$. If $\C$ is a collection of objects in $\A$ then $\B:=\pair{\C}_\A$ (resp. $\B:=\C^{\perp_\A}$, resp. $\B:={}^{\perp_\A}\C$) is an exact subcategory of $\A$ closed under extension and the functor $\D^b(\iota): \D^b(\B)\ra \D^b(\A)$ identifies canonically $\D^b(\pair{\C}_\A)$ (resp. $\D^b(\C^{\perp_\A})$, resp. $\D^b({}^{\perp_\A}\C)$) with the subcategory $\pair{\C}_\D$ (resp. $\C^{\perp_\D}$, resp. ${}^{\perp_\D}\C$) of $\D$. We will often make this identification in this article.

\subsection{Acknowledgements}

The question of this article originated from a seminar on Bridgeland's stability conditions  organized by Prof. Xiao-Wu Chen, Prof. Mao Sheng and Prof. Bin Xu. I thank these organizers who gave me the opportunity to report. I am grateful to the participants for their patience and critical questions. 
Thanks are once again due to Prof. Xiao-Wu Chen, my supervisor, for his guidance and kindness. 

 I thank Peng-Jie Jiao for discussion,  thank Prof. Helmut Lenzing for carefully answering my question on stable bundles over a tubular weighted projective line, thank Prof. Zeng-Qiang Lin for communication on realization functors, thank Prof. Hagen Meltzer for his lectures on weighted projective lines,    thank Prof. Dong Yang for explaning the results in \cite{KD} and for stimulating conversations, and thank Prof. Pu Zhang for a series of lectures on triangulated categories based on his newly-written book titled "triangulated categories and derived categories" (in Chinese). 

Moreover, I would like to express my deep gratitude to an anonymous referee for his/her long list of suggestions, which pointed out many mistakes and inaccuracies in an earlier version of this article and helped in improving the exposition and in reshaping some parts of this article.

This work is supported by the National Science Foundation of China (No. 11522113 and No. 115771329) and also by the Fundamental Research Funds for the Central Unviersities.

\section{Preliminaries} \label{cha t-str}

\subsection{Basics on t-structures} \label{sec: t-str}
We recall basic definitions concerning t-structures in this subsection. The standard reference is \cite{BBD}.

Let $\D$ be a triangulated category. 
	A \emph{t-structure} on $\D$ is a pair $(\D^{\leq 0}, \D^{\geq 0})$ of  strictly (=closed under isomorphism) full subcategories ($\D^{\leq n}:=\D^{\leq 0}[-n], \D^{\geq n}:=\D^{\geq 0}[-n]$)
	\begin{itemize}
	\item $\hom(\D^{\leq 0},\D^{\geq 1})=0$;
	\item $\D^{\leq -1}\subset \D^{\leq 0}$,  $\D^{\geq 1}\subset \D^{\geq 0}$;
	\item $\D=\D^{\leq 0}*\D^{\geq 1}$, i.e., for any object $X$ in $\D$, there exists a  triangle $A\ra X\ra B\cra $ with $A\in \D^{\leq 0}$ and $B\in \D^{\geq 1}$.  
	\end{itemize}
	For example, there is a \emph{standard $t$-structure} $(\D^b(\A)^{\leq 0},\D^b(\A)^{\geq 0})$  on the bounded derived category $\D^b(\A)$ of an abelian category $\A$  defined by \[\D^b(\A)^{\leq n}=\{K\in \D^b(\A)\mid H^i(K)=0, \forall i> n\},\] \[\D^b(\A)^{\geq n}= \{K\in\D^b(\A)\mid H^i(K)=0, \forall i<n\}.\]

 Given a t-structure $(\D^{\leq 0},\D^{\geq 0})$ on $\D$, the inclusion of $\D^{\leq n}$ (resp. $\D^{\geq n}$) into $\D$ admits a right (resp. left) adjoint $\tau_{\leq n}$ (resp. $\tau_{\geq n}$), which are called \emph{truncation functors.} Moreover, $\D^{\leq n}={}^{\perp_0}(\D^{\geq n+1})$, $\D^{\geq n}=(\D^{\leq n-1})^{\perp_0}$.  
$\D^{\leq n}$ is actually characterized by the property that it is a subcategory closed under suspension and extension for which the inclusion functor admits a right adjoint. A subcategory of $\D$ with such a property is called an \emph{aisle} \cite{KV}. 
A dual property characterizes $\D^{\geq n}$ and a subcategory of $\D$ with the dual property is called a \emph{co-aisle}.  There are bijections between t-structures, aisles and co-aisles, whence these notions are often used interchangeably.

 The \emph{heart} $\A$ of  $(\D^{\leq 0},\D^{\geq 0})$ is defined as the subcategory $\A:=\D^{\leq 0}\cap \D^{\geq 0}$.  
 $\A$ is 
 an  abelian subcategory of $\D$ and 
 we have a system $\{H^i\}$ of cohomological functors defined by \[H^i=\tau_{\geq 0}\tau_{\leq 0}(-[i]): \D\longrightarrow \A.\]
 $\D^{\leq 0}, \D^{\geq 0}$ and  $\A$ are closed under extension and direct summand.  Given a sequence $A\overset{f}{\ra}B\overset{g}{\ra} C$ of morphisms in $\A$,  $0\ra A\overset{f}{\ra} B\overset{g}{\ra} C\ra 0$ is a short  exact sequence  in $\A$ iff $A\overset{f}{\ra} B\overset{g}{\ra} C\overset{h}{\ra} A[1]$ is a triangle in $\D$ for some morphism $h:C\ra A[1]$ in $\D$.

 Denote $\D^{[m,n]}=\D^{\geq m}\cap \D^{\leq n}$.  An object $X\in \D$ lies in $\D^{[m,n]}$ iff $H^{l}(X)=0$ for  $l<m$ and $l> n$.  
 A $t$-structure $(\D^{\leq 0}, \D^{\geq 0})$ on $\D$ is called \emph{bounded} if $\D=\bigcup_{m,n\in\Z} \D^{[m,n]}$. 
 A bounded t-structure $\tstr$ is determined by its heart $\A$. In fact,
 \[\D^{\leq 0}=\cup_{n\geq 0} \A[n]*\A[n-1]*\dots *\A, \]
 \[\D^{\geq 0}=\cup_{n\leq 0} \A*\dots *\A[n+1]*\A[n].\]
 We will also denote by $(\D^{\leq 0}_\A, \D^{\geq 0}_\A)$ the bounded t-structure with heart $\A$. 

 Any group of exact autoequivalences of $\D$ acts on the set of t-structures. Given a t-structure $(\D^{\leq 0},\D^{\geq 0})$ on $\D$ and  an exact autoequivalence $\Phi$ of $\D$, \[\Phi((\D^{\leq 0},\D^{\geq 0})):= (\Phi(\D^{\leq 0}),\Phi(\D^{\geq 0}))\] is a t-structure on $\D$. 
 $\Phi((\D^{\leq 0}, \D^{\geq 0}))$ is bounded iff so is $(\D^{\leq 0},\D^{\geq 0})$. 

 Suppose $F:\D_1\ra \D_2$ is an exact functor between  two triangulated categories $\D_i$ ($i=1,2$) equipped with t-structures $(\D^{\leq 0}_i, \D^{\geq 0}_i)$. We say that $F$ is \emph{right t-exact} if $F(\D_1^{\leq 0})\subset \D_2^{\leq 0}$, \emph{left t-exact} if $F(\D_1^{\geq 0})\subset \D_2^{\geq 0}$, and \emph{t-exact} if it is both right and left t-exact.

 If $\C$ is a triangulated subcategory of  $\D$ and $(\D^{\leq 0},\D^{\geq 0})$ is a t-structure on $\D$, the pair \[(\C^{\leq 0},\C^{\geq 0}):=(\C\cap \D^{\leq 0},\C\cap \D^{\geq 0})\] gives a t-structure on $\C$ iff $\C$ is stable under some (equivalently, any) $\tau_{\leq l}$, i.e., $\tau_{\leq l}\C\subset \C$. Such a t-structure on $\C$ is called \emph{an induced t-structure by restriction}.

 \subsection{Width-bounded t-structures, HRS-tilt}\label{sec: HRS-tilt}
 
	 Let $(\D'^{\leq 0}, \D'^{\geq 0}), (\D^{\leq 0}, \D^{\geq 0})$ be two t-structures on a triangulated category $\D$. We say that $(\D'^{\leq 0}, \D'^{\geq 0})$ is \emph{width bounded}\footnote{
I learnt this notion from  Zeng-Qiang Lin's lectures on the paper \cite{Keller} of Keller. Moreover, Example~\ref{fin simple width bounded}(1) strenghtens slightly an example presented by him.}
	 with respect to $(\D^{\leq 0}, \D^{\geq 0})$ if $\D^{\leq m}\subset \D'^{\leq 0}\subset \D^{\leq n}$ for some $m,n$. 
 Define a relation $\sim$ on the
 set of t-structures: $(\D'^{\leq 0}, \D'^{\geq 0})\sim (\D^{\leq 0}, \D^{\geq 0})$ if $(\D'^{\leq 0}, \D'^{\geq 0})$ is width bounded with respect to $(\D^{\leq 0}, \D^{\geq 0})$. 

 \begin{lem}\label{width equiv relation}
	 $\sim$ is an equivalence relation.
\end{lem}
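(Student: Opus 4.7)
The plan is to verify the three axioms of an equivalence relation directly from the definition, using only the shift-compatibility of the aisles in each t-structure, namely the identity $\D^{\leq n} = \D^{\leq 0}[-n]$ (and likewise for the primed t-structure).

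\textbf{Reflexivity} is immediate: take $m=n=0$ so that $\D^{\leq 0}\subset \D^{\leq 0}\subset \D^{\leq 0}$, whence $(\D^{\leq 0},\D^{\geq 0})\sim(\D^{\leq 0},\D^{\geq 0})$.

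\textbf{Symmetry} is the main (and only mildly nontrivial) point. Suppose $\D^{\leq m}\subset \D'^{\leq 0}\subset \D^{\leq n}$. Shifting the first inclusion by $[m]$ converts $\D^{\leq m}[m]=\D^{\leq 0}$ and $\D'^{\leq 0}[m]=\D'^{\leq -m}$, giving $\D^{\leq 0}\subset \D'^{\leq -m}$. Shifting the second inclusion by $[n]$ yields in the same way $\D'^{\leq -n}\subset \D^{\leq 0}$. Combining, $\D'^{\leq -n}\subset \D^{\leq 0}\subset \D'^{\leq -m}$, so $(\D^{\leq 0},\D^{\geq 0})$ is width bounded with respect to $(\D'^{\leq 0},\D'^{\geq 0})$.

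\textbf{Transitivity} is obtained similarly by shift-and-concatenate. Assume $\D^{\leq m_1}\subset \D'^{\leq 0}\subset \D^{\leq n_1}$ and $\D'^{\leq m_2}\subset \D''^{\leq 0}\subset \D'^{\leq n_2}$. Shifting the first chain by $[-m_2]$ gives $\D^{\leq m_1+m_2}\subset \D'^{\leq m_2}$, which combined with $\D'^{\leq m_2}\subset \D''^{\leq 0}$ yields $\D^{\leq m_1+m_2}\subset \D''^{\leq 0}$. Shifting the first chain by $[-n_2]$ gives $\D'^{\leq n_2}\subset \D^{\leq n_1+n_2}$, and combined with $\D''^{\leq 0}\subset \D'^{\leq n_2}$ yields $\D''^{\leq 0}\subset \D^{\leq n_1+n_2}$. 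Thus $\D^{\leq m_1+m_2}\subset \D''^{\leq 0}\subset \D^{\leq n_1+n_2}$, proving $(\D''^{\leq 0},\D''^{\geq 0})\sim(\D^{\leq 0},\D^{\geq 0})$.

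There is no serious obstacle here; the entire argument is bookkeeping with the shift functor on aisles, and the proof will be only a few lines.
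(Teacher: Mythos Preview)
Your proof is correct and takes essentially the same approach as the paper: reflexivity is trivial, symmetry is obtained by shifting the chain of inclusions, and transitivity by shift-and-concatenate. The paper's only cosmetic difference is that for transitivity it rewrites $\D'^{\leq 0}\subset \D^{\leq n}$ as the equivalent co-aisle inclusion $\D^{\geq n}\subset \D'^{\geq 0}$, but the underlying argument is identical to yours.
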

\begin{proof}
Reflexivity of $\sim$ is clear. One sees the symmetry of $\sim$ by noting that $\D^{\leq m}\subset \D'^{\leq 0}\subset \D^{\leq n}$ iff  $\D'^{\leq -n}\subset \D^{\leq 0}\subset \D'^{\leq -m}$ and sees the transitivity of $\sim$ by noting that $\D^{\leq m}\subset \D'^{\leq 0}\subset \D^{\leq n}$ iff  $\D^{\leq m}\subset \D'^{\leq 0}$ and $\D'^{\geq 0}\supset \D^{\geq n}$. 
\end{proof}
	
Obviously,  if $(\D'^{\leq 0}, \D'^{\geq 0})$ is width bounded with respect to $(\D^{\leq 0}, \D^{\geq 0})$ then $(\D'^{\leq 0}, \D'^{\geq 0})$ is a bounded t-structure iff $(\D^{\leq 0}, \D^{\geq 0})$ is. Hence $\sim$ restricts to an equivalence relation on the set of bounded t-structures.

Observe that if $\A$ and $\B$ are the respective hearts  of two bounded t-structures on $\D$, the t-structure $(\D_\B^{\leq 0},\D_\B^{\geq 0})$ is width bounded with respect to the t-structure $(\D_\A^{\leq 0},\D_\A^{\geq 0})$ iff $\B\subset \D_\A^{[m,n]}$ for some $m\leq n$. Indeed, if $\D_\A^{\leq m}\subset \D_\B^{\leq 0}\subset \D_\A^{\leq n}$ then $\B\subset \D_\B^{\leq 0}\subset \D_\A^{\leq n}, \B\subset \D_\B^{\geq 0}\subset \D_\A^{\geq m}$ and so $\B\subset \D_\A^{[m,n]}$;  
	conversely, if $\B\subset \D_\A^{[m,n]}$ then $\D_\B^{\leq 0}\subset \D_\A^{\leq n}, \D_\B^{\geq 0}\subset \D_\A^{\geq m}$ since $\D_\B^{\leq 0}$ (resp. $\D_\B^{\geq 0}$) is the smallest subcategory of $\D$ containing $\B$ and closed under extension and suspension (resp. desuspension).

\begin{exm}\label{fin simple width bounded}
	\begin{enum2}
	\item If $\D$ admits a bounded t-structure with  length heart containing finitely many (isoclasses of) simple objects, for example, $\D=\D^b(\Lambda)$ for a finite dimensional algebra $\Lambda$ over a field $k$,  then  bounded t-structures on $\D$ are width bounded with respect to each other. By Lemma~\ref{width equiv relation}, it suffices to show that a bounded t-structure with length heart $\C$ containing finitely many simple objects is width-bounded with respect to  any given bounded t-structure $(\D'^{\leq 0}, \D'^{\geq 0})$ on $\D$. Let $\{S_i \mid 1\leq i\leq t\}$ be a complete set of  simple objects in $\CC$. Then  
  $S_i\in \D'^{[k_i,l_i]}$ for each $i$ and some $k_i,l_i\in \Z$. Take $k=\min\{k_i,l_i\mid 1\leq i\leq t\}, l=\max\{k_i,l_i\mid 1\leq i\leq t\}$. $\CC\subset \D'^{[k,l]}$ shows our assertion.

\item Let $X$ be a  smooth projective variety over a field $k$  and  $\D^b(X)$  the bounded derived category of coherent sheaves over $X$. Then  bounded t-structures on $\D^b(X)$ are width bounded with respect to each other. It sufficies to show that the standard t-structure $(\D^{\leq 0}_{\std}, \D^{\geq 0}_{\std})$  is width bounded with respect to any given bounded t-structure $\tstr$ on $\D^b(X)$.
Let $\iota: X\ra \P_k^n$ be a closed immersion, where $\P_k^n$ is the $n$-dimensional projective space over $k$, and let $\O_X(i)=\iota^*\O(i)$. It follows from Beilinson's theorem (see e.g. \cite[Theorem 3.1.4]{OSS}) that for each $j<-n$, 
we have an exact sequence
\[0\ra \O_X(j)\ra V_n\otimes\O_X(-n)\ra \dots \ra V_0\otimes \O_X\ra 0,\] where $V_i=H^n(\P_k^n,\Omega_{\P^n_k}^i(i+j))$ ($\Omega_{\P^n_k}^i$ is the $i$-th wedge product of the cotangent bundle $\Omega_{\P^n_k}$). 
Since $\oplus_{i=0}^n\O_X(-i)$ lies  in some $\D^{\leq l}$, $\O_X(j)$ lies in $\D^{\leq l+n}$ for any $j\leq 0$. Now that $\D^{\leq 0}_{\std}$ is the smallest aisle containing $\{\O_X(j)\mid j\leq 0\}$, we have $\D_{\std}^{\leq 0}\subset \D^{\leq l+n}$.
On the other hand, applying the duality functor $\DD=R\chom(-,\O_X)$,  
we obtain a bounded t-structure $(\DD(\D^{\geq 0\, \op}), \DD(\D^{\leq 0\, \op}))$  on $\D^b(X)$. By the discussion above, $\DD(\D^{\leq 0\, \op})\subset \D_{\std}^{\geq m}$ for some $m$. Since  
$\O_X$ admits a finite injective resolution of quasi-coherent sheaves,  we have $(\DD\D_{\std}^{\geq m})^{\op}\subset \D_{\std}^{\leq r}$ for some $r$. So $\D^{\leq 0}\subset \D_{\std}^{\leq r}$.  $\D^{\leq -r}\subset \D^{\leq 0}_{\std}\subset \D^{\leq l+n}$ shows our assertion. 
\end{enum2}
\end{exm}

Given a bounded t-structure $(\D^{\leq 0}, \D^{\geq 0})$ on $\D$ with heart $\A$, \cite{HRS} gives a useful  and important construction of a class of width-bounded t-structures  with respect to $(\D^{\leq 0}, \D^{\geq 0})$ from torsion pairs in $\A$, which is called \emph{HRS-tilt.} 
Now it is well-known (see e.g. \cite[\S 1.1]{Poli}) that
\begin{prop}\label{torsion pair t-structure}\label{HRS-tilt}
	Torsion pairs in the heart of a t-structure $(\D^{\leq 0},\D^{\geq 0})$ are in bijective correspondence with t-structures $({\D'}^{\leq 0},{\D'}^{\geq 0})$ on $\D$ satisfying $\D^{\leq -1}\subset \D'^{\leq 0}\subset \D^{\leq 0}$. 
\end{prop}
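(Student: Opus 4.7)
The plan is to exhibit explicit inverse maps in both directions and verify the axioms by combining the truncation functors of the given t-structure with the torsion decomposition in the heart.

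For the forward direction, given a torsion pair $(\T,\F)$ in $\A$, I would set
\[
\D'^{\leq 0}=\{X\in\D^{\leq 0}\mid H^0(X)\in\T\},\qquad
\D'^{\geq 0}=\{X\in\D^{\geq -1}\mid H^{-1}(X)\in\F\}.
\]
The closure conditions $\D'^{\leq -1}\subset\D'^{\leq 0}$ and $\D'^{\geq 1}\subset\D'^{\geq 0}$ are immediate from the definitions. For $\hom(\D'^{\leq 0},\D'^{\geq 1})=0$, I would take $X\in\D'^{\leq 0}$ and $Y\in\D'^{\geq 1}=\D^{\geq 0}\cap\{H^0(-)\in\F\}$ and use the triangle $\tau_{\leq -1}X\to X\to H^0(X)\to$ together with $H^0(X)\in\T$, $H^0(Y)\in\F$, and the vanishing $\hom_\D(\D^{\leq -1},\D^{\geq 0})=0$; this reduces Hom vanishing to $\hom_\A(\T,\F)=0$, which is built into the definition of a torsion pair.

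The key step is verifying the decomposition axiom $\D=\D'^{\leq 0}*\D'^{\geq 1}$. For $X\in\D$ I would first apply the truncation triangle
\[
\tau_{\leq -1}X\longrightarrow X\longrightarrow \tau_{\geq 0}X\cra
\]
and then decompose the cohomology object $H^0(X)=\tau_{\geq 0}\tau_{\leq 0}X$ in $\A$ via its canonical short exact sequence $0\to t\to H^0(X)\to f\to 0$ with $t\in\T,f\in\F$. Pulling back the corresponding triangle in $\D$ along $X\to H^0(X)$ via the octahedral axiom produces a triangle $X'\to X\to f\to$ with $X'\in\D^{\leq 0}$ and $H^0(X')\cong t\in\T$, hence $X'\in\D'^{\leq 0}$, while $f\in\F\subset\D'^{\geq 1}$ (note $\F[-1]\subset\D'^{\geq 1}$ after a shift — I will arrange the shift conventions so that $\F\subset\D'^{\geq 1}$ via $\D'^{\geq 1}=\D^{\geq 0}\cap\{H^0\in\F\}$). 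This yields the required decomposition.

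For the backward direction, given a t-structure $(\D'^{\leq 0},\D'^{\geq 0})$ with $\D^{\leq -1}\subset\D'^{\leq 0}\subset\D^{\leq 0}$, I would set $\T=\A\cap\D'^{\leq 0}$ and $\F=\A\cap\D'^{\geq 1}$. The vanishing $\hom_\A(\T,\F)=0$ is inherited from $\hom_\D(\D'^{\leq 0},\D'^{\geq 1})=0$. For any $A\in\A$ I would apply the truncation triangle of $A$ with respect to $(\D'^{\leq 0},\D'^{\geq 0})$,
\[
\tau'_{\leq 0}A\longrightarrow A\longrightarrow \tau'_{\geq 1}A\cra,
\]
and use the containments $\D^{\leq -1}\subset\D'^{\leq 0}\subset\D^{\leq 0}$ together with the dual $\D^{\geq 1}\subset\D'^{\geq 1}\subset\D^{\geq 0}$ to check, by taking long exact sequences of $H^\bullet$, that $\tau'_{\leq 0}A\in\A\cap\D'^{\leq 0}=\T$ and $\tau'_{\geq 1}A\in\A\cap\D'^{\geq 1}=\F$; the resulting triangle sits in $\A$ and gives the torsion short exact sequence.

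Finally, I would check that the two constructions are mutually inverse by unwinding the definitions: starting from $(\T,\F)$, the subcategory $\T$ recovered from the tilted t-structure is $\A\cap\D'^{\leq 0}=\{A\in\A\mid H^0(A)\in\T\}=\T$, and similarly for $\F$; conversely, starting from $(\D'^{\leq 0},\D'^{\geq 0})$, the containment $\D^{\leq -1}\subset\D'^{\leq 0}\subset\D^{\leq 0}$ forces every object of $\D'^{\leq 0}$ to satisfy $H^i=0$ for $i\geq 1$ and $H^0\in\T$, so $\D'^{\leq 0}$ coincides with the aisle produced from $(\T,\F)$. The main technical obstacle is the decomposition axiom in the forward direction, where one has to combine the truncation with respect to $(\D^{\leq 0},\D^{\geq 0})$ with the torsion decomposition in $\A$ via the octahedral axiom; once this triangle is constructed, everything else is bookkeeping.
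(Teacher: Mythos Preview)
The paper does not actually prove this proposition; it cites \cite[\S 1.1]{Poli} as ``well-known'' and then merely \emph{describes} the bijection, writing $\D'^{\leq 0}=\D^{\leq -1}*\T$ and $\D'^{\geq 0}=\F[1]*\D^{\geq 0}$ without verifying the t-structure axioms. Your cohomological description $\D'^{\leq 0}=\{X\in\D^{\leq 0}\mid H^0(X)\in\T\}$ is equivalent to the paper's $*$-formulation, and your overall strategy is the standard one. The backward direction and the mutual-inverse check are fine.

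There is, however, a genuine gap in your forward decomposition argument. You write that ``pulling back \dots\ along $X\to H^0(X)$ \dots\ produces a triangle $X'\to X\to f\cra$''. For arbitrary $X\in\D$ there is no natural map $X\to H^0(X)$; the natural map is $\tau_{\leq 0}X\to H^0(X)$. Consequently the triangle $X'\to X\to f\cra$ you claim cannot exist when $X$ has nonzero cohomology in degrees $\geq 1$: taking $H^1$ of such a triangle would give $H^1(X)\cong H^1(f)=0$, a contradiction. Your initial truncation $\tau_{\leq -1}X\to X\to \tau_{\geq 0}X$ does not help here, since the object you need to map out of is $\tau_{\leq 0}X$, not $X$ or $\tau_{\geq 0}X$.

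The fix is straightforward. Take instead the truncation $\tau_{\leq 0}X\to X\to \tau_{\geq 1}X\cra$, form the composite $\tau_{\leq 0}X\to H^0(X)\to f$, and let $X'$ be its fiber; then $X'\in\D^{\leq 0}$ with $H^0(X')=t\in\T$, so $X'\in\D'^{\leq 0}$. Applying the octahedral axiom to $X'\to\tau_{\leq 0}X\to X$ yields a triangle $X'\to X\to X''\cra$ in which $X''$ sits in a triangle $f\to X''\to \tau_{\geq 1}X\cra$; hence $X''\in\D^{\geq 0}$ with $H^0(X'')=f\in\F$, i.e.\ $X''\in\D'^{\geq 1}$. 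Equivalently, one can argue abstractly using associativity of $*$: since $\D=\D^{\leq 0}*\D^{\geq 1}$, $\D^{\leq 0}=\D^{\leq -1}*\A$, and $\A=\T*\F$, one gets $\D=(\D^{\leq -1}*\T)*(\F*\D^{\geq 1})\subset \D'^{\leq 0}*\D'^{\geq 1}$.
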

Let us explain  the correspondence.
	Assume that $(\D'^{\leq 0},\D'^{\geq 0})$ is a $t$-structure with heart $\B$ such that $\D^{\leq -1}\subset \D'^{\leq 0}\subset \D^{\leq 0}$. Then $(\A\cap \B,\A\cap \B[-1])$ and $(\A[1]\cap \B,\A\cap \B)$ are  torsion pairs in $\A$ and $\B$, respectively. 
	Conversely, let $(\T,\F)$ be a torsion pair in the abelian category $\A$. Denote \[\D'^{\leq 0}=\D^{\leq -1}*\T,\quad \D'^{\geq 0}=\F[1]*\D^{\geq 0}.\]
	Then $(\D'^{\leq 0},\D'^{\geq 0})$ is a  $t$-structure on $\D$ with $\D^{\leq -1}\subset \D'^{\leq 0}\subset \D^{\leq 0}$ 
	and  $(\F[1],\T)$ is a torsion pair in its heart $\B$. In particular, $\B=\F[1]*\T$. 
		The t-structure $(\D^{\leq -1}*\T, \F[1]*\D^{\geq 0})$ is so-called \emph{HRS-tilt} with respect to the torsion pair $(\T,\F)$ in $\A$ and $\B=\F[1]*\T$ is called the \emph{tilted heart}.  

	As noted before, such a t-structure $({\D'}^{\leq 0},{\D'}^{\geq 0})$ is bounded iff  $(\D^{\leq 0},\D^{\geq 0})$ is. Moreover, if $(\D^{\leq 0},\D^{\geq 0})$ is bounded then $\D^{\leq -1}\subset {\D'}^{\leq 0}\subset {\D}^{\leq 0}$ iff $\B\subset \A[1]*\A$.

\subsection{Recollement, admissible subcategory, exceptional sequence}\label{sec: admissible subcategory}
A \emph{recollement} of triangulated categories \cite[\S 1.4]{BBD} is a diagram
\begin{equation} \xymatrix{\XX  \ar[rr]|{i_*}   & &\ar@/_1pc/[ll]|{i^*} \ar@/^1pc/[ll]|{i^!}\D \ar[rr]|{j^*} & &\ar@/_1pc/[ll]|{j_!} \ar@/^1pc/[ll]|{j_*} \YY}
\end{equation}\label{eq:recollement}
of three triangulated categories $\D, \XX, \YY$ and six exact functors $i^*, i_*, i^!, j_!, j^*, j_*$ between them such that
\begin{itemize}
\item $(i^*,i_*,i^!), (j_!,j^*,j_*)$ are adjoint triples;
\item  $i_*,j_!,j_*$ are fully faithful;
\item $\ker\, j^*=\im\, i_*$.  
	\end{itemize}
Given such a recollement,  
there are  two functorial triangles in $\D$: 
	\begin{equation}\label{recollement triangle}
		j_!j^*\ra \id\ra  i_*i^*\cra,\quad i_*i^!\ra \id\ra j_*j^*\cra,
	\end{equation}
	where the natural transformations between these functors are given by the respective unit or counit of the relevant adjoint pair.

	A well-known equivalent notion is so-called admissible subcategories, due to \cite{Bon}. 
Let us recall some classical results from {\cite{Bon}.
For a triangulated category $\D$, a strictly full triangulated subcategory $\CC$ is called \emph{right} (resp. \emph{left}) \emph{admissible} if the inclusion functor $\CC\monic \D$ admits a right (resp. left) adjoint; $\CC$ is called \emph{admissible} if it is both left and right admissible. If $\CC$ is right admissible then ${}^{\perp}(\CC^{\perp})=\CC$ and the inclusion functor $\CC^{\perp}\monic \D$ admits a left adjoint. In particular, $\CC$ is closed under direct summand and thus is a thick subcategory of $\D$. Moreover, the projection $\CC^{\perp}\ra \D/\CC$ is an exact equivalence. One has dual results for left admissible subcategories.   Hence if $\CC$ is admissible then we have \[{}^\perp \CC\overset{\simeq}{\lra}\D/\CC\overset{\simeq}{\lla}\CC^\perp\] and
we can form  (equivalent) recollements
\begin{equation}\label{eq:three equiv recollement}
		\begin{split}
			\xymatrix{\CC  \ar[rr]|{i_*}   & &\ar@/_1pc/[ll] \ar@/^1pc/[ll]\D \ar[rr] & &\ar@/_1pc/[ll]|{j_!} \ar@/^1pc/[ll] {}^\perp\CC,  \\
			\CC  \ar[rr]|{i_*}   & &\ar@/_1pc/[ll] \ar@/^1pc/[ll]\D \ar[rr]|{\hat{j}^{*}} & &\ar@/_1pc/[ll] \ar@/^1pc/[ll]\D/\CC,\\
			\CC  \ar[rr]|{i_*}   & &\ar@/_1pc/[ll] \ar@/^1pc/[ll]\D \ar[rr] & &\ar@/_1pc/[ll] \ar@/^1pc/[ll]|{\check{j}_*} \CC^\perp,} 
		\end{split}
	\end{equation}
where $i_*, j_!, \check{j}_*$ are the inclusion functors and $\hat{j}^*$ is the Verdier quotient functor.

We will need the following well-known fact. Recall that a Serre functor of a triangulated category is always exact (\cite[Proposition 3.3]{BK}; see also \cite[Proposition I.1.8]{RVDB}). 
\begin{prop}\label{admissible Serre functor}
Let $\D$ be a Hom-finite $k$-linear triangulated category with a Serre functor $\SS$, where $k$ is a field, and $\CC$ an admissible subcategory of $\D$.	Denote by $i_*: \CC\ra \D$  the inclusion functor and by $i^!: \D\ra \CC$ (resp. $i^*: \D\ra \CC$) the right (resp. left) adjoint of $i_*$. Then 
\begin{enum2}
\item $i^!\SS i_*$ is a Serre functor of $\CC$ with a quasi-inverse $i^*\SS^{-1}i_*$;

\item ${}^\perp \CC$ and $\CC^\perp$ admit Serre functors;  

\item  $\CC^{\perp}$ and ${}^{\perp}\CC$ are admissible subcategories of $\D$.
\end{enum2}
\end{prop}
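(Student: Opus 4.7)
The strategy is to address (1) first, then use it to establish (3), and finally obtain (2) by applying (1) to the admissible subcategories ${}^\perp\CC$ and $\CC^\perp$ furnished by (3).

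For (1), the Serre property is verified by a chain of natural isomorphisms. For $X,Y\in\CC$,
\[
\hom_\CC(X, i^!\SS i_*Y) \iso \hom_\D(i_*X, \SS i_*Y) \iso D\hom_\D(i_*Y, i_*X) \iso D\hom_\CC(Y, X),
\]
using the $(i_*, i^!)$-adjunction, Serre duality in $\D$, and the full faithfulness of $i_*$; so $i^!\SS i_*$ is a Serre functor on $\CC$. A parallel computation, invoking the $(i^*, i_*)$-adjunction together with the fact that $\SS^{-1}$ is a two-sided adjoint of $\SS$ (as $\SS$ is an equivalence), yields $\hom_\CC(i^*\SS^{-1}i_*Y, X) \iso D\hom_\CC(X, Y)$. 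Comparing the two isomorphisms exhibits $(i^*\SS^{-1}i_*,\,i^!\SS i_*)$ as an adjoint pair; since Serre functors are equivalences, the left adjoint $i^*\SS^{-1}i_*$ is automatically a quasi-inverse.

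For (3), the underlying principle is that in any Hom-finite $k$-linear triangulated category with a Serre functor, a thick subcategory is right admissible iff it is left admissible. Indeed, if $\iota: \D'\monic\D$ is right admissible with right adjoint $\iota^R$, then (1) applied to $\D'$ supplies a Serre functor $\SS_{\D'} = \iota^R\SS\iota$; a short chain of adjunctions and Serre dualities then identifies $\SS_{\D'}^{-1}\iota^R\SS$ as a left adjoint of $\iota$, so $\D'$ is left admissible, and the dual statement is analogous. Now the first recollement in~(\ref{eq:three equiv recollement}) shows that ${}^\perp\CC$ is right admissible (the inclusion $j_!$ has right adjoint $j^*$), and the third recollement shows that $\CC^\perp$ is left admissible (the inclusion $\check{j}_*$ has left adjoint $\check{j}^*$); by the principle, both are admissible in $\D$.

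For (2), since ${}^\perp\CC$ and $\CC^\perp$ are now admissible by (3), part (1) applies to each and produces explicit Serre functors on them via conjugation with $\SS$ and the associated adjoints. The main obstacle is modest: it lies in guessing the correct formula $\SS_{\D'}^{-1}\iota^R\SS$ for the left adjoint of $\iota$ in the proof of the principle behind (3). Once this formula is in hand, all verifications reduce to routine adjunction-and-duality bookkeeping of the kind already done for (1).
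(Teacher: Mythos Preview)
Your argument for (1) is fine, and your overall plan $(1)\Rightarrow(3)\Rightarrow(2)$ is attractive, but the proof of (3) contains a circularity. You invoke (1) to produce a Serre functor $\SS_{\D'}=\iota^R\SS\iota$ on a subcategory $\D'$ that is only known to be \emph{right} admissible. However, your own proof of (1) uses both adjoints: the $(i_*,i^!)$-adjunction yields only that $\iota^R\SS\iota$ is a \emph{right} Serre functor, while the quasi-inverse $i^*\SS^{-1}i_*$ requires the $(i^*,i_*)$-adjunction, i.e.\ left admissibility---precisely what you are trying to prove. A right Serre functor is automatically fully faithful but need not be essentially surjective, so you cannot form $\SS_{\D'}^{-1}$ and the formula $\SS_{\D'}^{-1}\iota^R\SS$ for the left adjoint is unjustified.

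The paper avoids this trap by reversing your order: it first cites \cite[Proposition~3.7]{BK} for (2), i.e.\ that ${}^\perp\CC$ and $\CC^\perp$ already have Serre functors (the key being the equivalence ${}^\perp\CC\simeq\D/\CC\simeq\CC^\perp$, which transports the right Serre functor on one side and the left Serre functor on the other into a two-sided Serre functor on both). Only then does the ``well-known fact'' about adjoints and Serre functors yield (3). Alternatively, you can rescue your order by proving (3) directly without the general principle: since $\CC$ is admissible, $\D={}^\perp\CC*\CC$; applying the autoequivalence $\SS$ and using $\SS({}^\perp\CC)=\CC^\perp$ gives $\D=\CC^\perp*\SS\CC$, so $\CC^\perp$ is right admissible, and dually for ${}^\perp\CC$. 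Then (2) follows from (1) as you intended.
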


\begin{proof}
	(1) One easily sees that $i^!\SS i_*$ (resp. $i^*\SS^{-1}i_*$) is a right (resp. left) Serre functor of $\C$.  
	Thus $i^!\SS i_*$ is a Serre functor of $\C$ with a quasi-inverse $i^*\SS^{-1}i_*$.

	(2) This is \cite[Proposition 3.7]{BK}.  
	
	(3) Recall the well-known fact that if $\D_1,\D_2$ are two Hom-finite $k$-linear triangulated categories with Serre functors $\SS_1,\SS_2$ respectively and $F:\D_1\ra \D_2$ is an exact functor with a left (resp. right) adjoint $G$ then $F$ admits a right (resp. left) adjoint $\SS_1\circ G\circ \SS_2^{-1}$ (resp. $\SS_1^{-1}\circ G\circ \SS_2$). Thus (3) follows from (2). 

\end{proof}

Important examples of admissible subcategories are those generated by an exceptional sequence \cite{Bon}. 
Recall that a sequence $(E_1,\dots, E_n)$ of objects in a $k$-linear triangulated category $\D$ of finite type,  where $k$ is a field, is called an \emph{exceptional sequence} if 
\begin{itemize}
	\item each $E_i$ is an exceptional object, i.e., $\hom^{\neq 0}(E_i,E_i)=0$ and $\End(E_i)=k$;
\item $\hom^\bullet(E_j,E_i)=0$ if $j>i$. 
\end{itemize}
An exceptional sequence $(E_1,\dots, E_n)$ is said to be \emph{full} if $E_1,\dots, E_n$ classically generate $\D$. 

Let $\C=\pair{E_1,\dots, E_n}_\D$ be the thick closure of  $\{E_i\mid 1\leq i\leq n\}$ and $i_*: \C\ra \D$ be the inclusion functor. The left and right adjoint functors of $i_*$ exist, which we denote by $i^*$, $i^!$ respectively.   
Let us recall from \cite{Bon} how  $i^*$ maps an object. Suppose $X\in \D$. Denote $X_0=X$.
If $X_i$ is defined for $0\leq i<n$, let \[X_{i+1}=\cocone(X_i\overset{\text{co-ev}}{\lra}D\hom^\bullet(X_i,E_{i+1})\otimes E_{i+1}).\] Then $X_{i+1}\in {}^\perp\{E_1,\dots, E_{i+1}\}$. Define $i^*X=X_n$. 
We have $i^*X\in {}^\perp \C$ and $i^*X$ fits into a triangle $i^*X\ra X\ra Y\cra $ where $Y\in \C$. This choice of $i^*$ on objects actually defines a unique functor up to unique isomorphism, which is left ajoint to $i_*$. Dually one defines $i^!$.

\subsection{Gluing t-structures}\label{sec: glue t-str}
Now fix a  recollement of triangulated categories of the form (2.3.1). 
As the following theorem shows, one can obtain a t-structure on $\D$ from t-structures on $\XX$ and $\YY$, which is called a \emph{glued t-structure}. Such a glued t-structure  on $\D$ from the recollement  is also said to be compatible with the recollement. 

\begin{thm}[{\cite[Th\'eor\`eme 1.4.10]{BBD}}]\label{compatible t-str}
Given t-structures $(\XX^{\leq 0},\XX^{\geq 0})$ and $(\YY^{\leq 0},\YY^{\geq 0})$ on $\XX$ and $\YY$ respectively, denote 
	\begin{equation}\label{eq: glued t-str}
		\begin{split}
		\D^{\leq 0}=\{X\in \D\mid i^*X\in \XX^{\leq 0}, j^*X\in \YY^{\leq 0}\},\\
		\D^{\geq 0}=\{X\in \D\mid i^!X\in \XX^{\geq 0}, j^*X\in \YY^{\geq 0}\}.
		\end{split}
	\end{equation}
	Then $(\D^{\leq 0},\D^{\geq 0})$ is a t-structure on $\D$.
\end{thm}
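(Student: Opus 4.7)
The plan is to verify the three defining axioms of a t-structure for the pair $(\D^{\leq 0}, \D^{\geq 0})$ from (2.4.1). The containments $\D^{\leq -1} \subset \D^{\leq 0}$ and $\D^{\geq 1} \subset \D^{\geq 0}$ are immediate from the exactness of the six recollement functors (so they commute with the shift) together with the corresponding containments on $\XX$ and $\YY$.

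For the orthogonality $\hom_\D(\D^{\leq 0}, \D^{\geq 1}) = 0$, I would fix $X \in \D^{\leq 0}$ and $Y \in \D^{\geq 1}$ and apply $\hom_\D(X, -)$ to the second triangle of (2.3.2), namely $i_* i^! Y \to Y \to j_* j^* Y \cra$. By the adjunctions $(i^*, i_*)$ and $(j^*, j_*)$, the outer terms of the resulting long exact sequence become $\hom_\XX(i^* X, i^! Y)$ and $\hom_\YY(j^* X, j^* Y)$; both vanish because $i^* X \in \XX^{\leq 0}$, $i^! Y \in \XX^{\geq 1}$, $j^* X \in \YY^{\leq 0}$, and $j^* Y \in \YY^{\geq 1}$. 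Hence $\hom_\D(X, Y) = 0$.

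For the decomposition axiom, I would build for each $X \in \D$ a triangle $A \to X \to B \cra$ with $A \in \D^{\leq 0}$ and $B \in \D^{\geq 1}$ via a two-step truncation. \emph{Step 1.} Complete the composition $X \to j_* j^* X \to j_* \tau_{\geq 1}^\YY j^* X$ to a triangle $X_1 \to X \to j_* \tau_{\geq 1}^\YY j^* X \cra$; applying $j^*$ and using $j^* j_* \iso \id$ yields $j^* X_1 \iso \tau_{\leq 0}^\YY j^* X \in \YY^{\leq 0}$. \emph{Step 2.} Complete the composition $X_1 \to i_* i^* X_1 \to i_* \tau_{\geq 1}^\XX i^* X_1$ to a triangle $A \to X_1 \to i_* \tau_{\geq 1}^\XX i^* X_1 \cra$. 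Using $i^* i_* \iso \id$ and $j^* i_* = 0$ one checks $i^* A \iso \tau_{\leq 0}^\XX i^* X_1 \in \XX^{\leq 0}$ and $j^* A \iso j^* X_1 \in \YY^{\leq 0}$, so $A \in \D^{\leq 0}$. Applying the octahedral axiom to $A \to X_1 \to X$ then yields a triangle $A \to X \to B \cra$ in which $B$ fits into a triangle $i_* \tau_{\geq 1}^\XX i^* X_1 \to B \to j_* \tau_{\geq 1}^\YY j^* X \cra$. The recollement identities $i^! i_* \iso \id$, $j^* i_* = 0$, $j^* j_* \iso \id$, together with the derived vanishing $i^! j_* = 0$ (seen by applying the triangle $i_* i^! \to \id \to j_* j^* \cra$ at $j_* Z$, in which the second arrow is an iso because $j_*$ is fully faithful), give $i^! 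B \iso \tau_{\geq 1}^\XX i^* X_1 \in \XX^{\geq 1}$ and $j^* B \iso \tau_{\geq 1}^\YY j^* X \in \YY^{\geq 1}$, so $B \in \D^{\geq 1}$.

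The main obstacle is the order of the two truncations: truncating $i^* X$ first does not in general preserve the $j^*$-part, so one has to first kill the positive part of $j^* X$ (producing $X_1$), then kill the positive part of $i^* X_1$, and only then invoke the octahedral axiom to combine the two pieces. The individual identifications above are routine once one has the six recollement identities $i^* i_* \iso \id \iso i^! i_*$, $j^* j_* \iso \id$, $j^* i_* = 0$, $i^! j_* = 0$, but without the correct sequencing the construction fails.
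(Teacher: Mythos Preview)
Your argument is correct and is precisely the classical proof from \cite[Th\'eor\`eme 1.4.10]{BBD}; the paper itself does not give a proof but simply cites that reference, so there is nothing further to compare.
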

With the given t-structures on $\XX, \YY$ and the glued t-structure on $\D$, $i^*, j_!$ becomes right t-exact, $i_*,j^*$ t-exact and $i^!, j_*$ left t-exact.

The following proposition answers the natural question when a t-structure on $\D$ is compatible with a given recollement.

\begin{prop}[{\cite[Proposition 1.4.12]{BBD}}]\label{criterion compatible t-str}
	Given a t-structure $(\D^{\leq 0},\D^{\geq 0})$ on $\D$, the following conditions are equivalent:
	\begin{enum2} 
	\item $j_!j^*$ is right t-exact;

	\item $j_*j^*$ is left t-exact;

	\item the t-structure is compatible with the recollement (2.3.1).
	\end{enum2}

\end{prop}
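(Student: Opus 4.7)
The plan is to prove $(3) \Rightarrow (1)$, $(3) \Rightarrow (2)$, $(1) \Leftrightarrow (2)$, and finally $(1) \Rightarrow (3)$, with the last being the substantive implication. The first two are immediate from the remark following Theorem~\ref{compatible t-str}: the glued t-structure makes $j^*$ t-exact, $j_!$ right t-exact, and $j_*$ left t-exact, so $j_!j^*$ is right t-exact and $j_*j^*$ is left t-exact. For $(1) \Leftrightarrow (2)$, the three-fold adjunction $j_! \dashv j^* \dashv j_*$ gives, for $X, Y \in \D$,
\[
\hom_\D(j_!j^*X, Y) \;\simeq\; \hom_\YY(j^*X, j^*Y) \;\simeq\; \hom_\D(X, j_*j^*Y),
\]
and since $\D^{\leq 0} = {}^{\perp_0}\D^{\geq 1}$ and $\D^{\geq 1} = (\D^{\leq 0})^{\perp_0}$, both (1) and (2) are equivalent to the simultaneous vanishing of these $\hom$'s for $X \in \D^{\leq 0}$, $Y \in \D^{\geq 1}$.

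For $(1) \Rightarrow (3)$, assuming (1) (and hence (2)), I would define candidate t-structures on $\XX$ and $\YY$ by $\YY^{\leq 0} := j^*(\D^{\leq 0})$, $\YY^{\geq 0} := j^*(\D^{\geq 0})$, $\XX^{\leq 0} := i^*(\D^{\leq 0})$, $\XX^{\geq 0} := i^!(\D^{\geq 0})$. A crucial preliminary is the reformulation $\XX^{\leq 0} = \{X \in \XX : i_*X \in \D^{\leq 0}\}$, together with the analogous statements for $\XX^{\geq 0}$, $\YY^{\leq 0}$, and $\YY^{\geq 0}$: from the triangle $j_!j^*X_0 \to X_0 \to i_*i^*X_0 \cra$ of \eqref{recollement triangle} with $X_0 \in \D^{\leq 0}$, assumption (1) gives $j_!j^*X_0 \in \D^{\leq 0}$, so by closure of $\D^{\leq 0}$ under extensions we conclude $i_*i^*X_0 \in \D^{\leq 0}$ (and dually with (2)); combined with $i^*i_* = i^!i_* = \id$ and $j^*j_! = j^*j_* = \id$, this yields the claimed reformulations. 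Orthogonality on $\YY$ then follows from the $\hom$-identity above plus (1), and on $\XX$ from the full faithfulness of $i_*$. Decompositions on $\YY$ come by applying $j^*$ to the standard truncation triangle of $j_!Y$ in $\D$.

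The subtle point is the decomposition axiom on $\XX$: given $X \in \XX$, I truncate $i_*X$ in $\D$ and apply $j^*$ to the resulting triangle; using $j^*i_* = 0$ one finds $j^*\tau_{\leq 0}i_*X \simeq (j^*\tau_{\geq 1}i_*X)[-1]$, which lies in $\YY^{\leq 0} \cap \YY^{\geq 1} = 0$ by the orthogonality on $\YY$ already verified, so both truncations sit in $\ker j^* = \im i_*$ and yield the required decomposition in $\XX$. Matching the glued t-structure with the original is then direct: $X$ lies in the glued $\leq 0$ part iff $i_*i^*X, j_!j^*X \in \D^{\leq 0}$, and combining this with the triangle $j_!j^*X \to X \to i_*i^*X \cra$ and the preservation statements above gives the equivalence $X \in \D^{\leq 0}$; dually for $\D^{\geq 0}$. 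The principal obstacle is precisely this bootstrapping decomposition axiom for $\XX$, which must be chased through the triangles of \eqref{recollement triangle} using the t-structure on $\YY$ just established; everything else is routine manipulation of the two functorial triangles plus adjunctions.
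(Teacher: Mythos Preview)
The paper does not supply its own proof of this proposition; it is quoted verbatim from \cite[Proposition 1.4.12]{BBD} and used as a black box. Your argument is correct and is essentially the standard BBD proof: the equivalence $(1)\Leftrightarrow(2)$ via the adjunction identity, and the construction of the t-structures on $\XX$ and $\YY$ together with the bootstrapping step showing that the truncations of $i_*X$ remain in $\im i_*$, are exactly what appears in \cite{BBD}. One small remark: when you write ``by closure of $\D^{\leq 0}$ under extensions we conclude $i_*i^*X_0 \in \D^{\leq 0}$'', the precise mechanism is that in the rotated triangle $X_0 \to i_*i^*X_0 \to (j_!j^*X_0)[1]$ both outer terms lie in $\D^{\leq 0}$ (indeed $(j_!j^*X_0)[1]\in\D^{\leq -1}$), so the middle term does too; this is what you mean, but ``extension'' is slightly loose phrasing for a cone.
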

Moreover, we have
\begin{lem}[{\cite[Corollary 3.4, Lemma 3.5]{Liu-V}}]\label{t-str recollement bijection}
	There is a bijection
	\begin{multline}
		\{\text{t-structures on $\XX$}\}\times \{\text{t-structures on $\YY$}\}\longleftrightarrow\\
		\{\text{t-structures on $\D$ compatible with the recollement (2.3.1)}\},
	\end{multline}
	which restricts to a bijection between bounded t-structures.
\end{lem}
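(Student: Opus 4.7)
The plan is to exhibit explicit mutually inverse maps between the two sides and then verify that boundedness transfers across the bijection. The forward map is furnished by Theorem~\ref{compatible t-str}: a pair of t-structures on $\XX$ and $\YY$ produces, via the formulas in \eqref{eq: glued t-str}, a t-structure on $\D$, which is compatible with the recollement by the very definition of compatibility given just before that theorem.

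For the reverse map, given a compatible t-structure $(\D^{\leq 0}, \D^{\geq 0})$ on $\D$, I would set
\[
\YY^{\leq n} := j^*(\D^{\leq n}), \qquad \YY^{\geq n} := j^*(\D^{\geq n}),
\]
\[
\XX^{\leq n} := \{X \in \XX : i_*X \in \D^{\leq n}\}, \qquad \XX^{\geq n} := \{X \in \XX : i_*X \in \D^{\geq n}\}.
\]
To see that the $\YY$-pair is a t-structure, the Hom-vanishing $\hom(\YY^{\leq 0}, \YY^{\geq 1}) = 0$ follows from the adjunction $\hom(j^*A, j^*B) \cong \hom(j_!j^*A, B)$ together with the right t-exactness of $j_!j^*$ granted by Proposition~\ref{criterion compatible t-str}; and the truncation triangle for $Y \in \YY$ is obtained by truncating $j_!Y$ in $\D$ and applying $j^*$, using $j^*j_! \cong \id$. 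This makes $j^*$ t-exact by construction, and the $\XX$-pair is then a t-structure because the thick subcategory $\im i_* = \ker j^*$ is stable under the truncation functors on $\D$ (since $j^*$ is t-exact).

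That the composite (pair $\mapsto$ glue $\mapsto$ restrict) is the identity follows directly from $i^*i_* \cong \id$, $j^*i_* = 0$, and $j^*j_! \cong \id$. The reverse composite is more delicate: starting with a compatible $(\D^{\leq 0}, \D^{\geq 0})$ on $\D$, the inclusion of this $\D^{\leq 0}$ into its reconstruction is immediate, and for the other inclusion, if $X$ satisfies $i^*X \in \XX^{\leq 0}$ and $j^*X \in \YY^{\leq 0}$, then $i_*i^*X \in \D^{\leq 0}$ by definition, while $j_!j^*X \in \D^{\leq 0}$ since $j_!j^*$ is right t-exact; the first triangle of \eqref{recollement triangle} together with extension-closure of $\D^{\leq 0}$ then gives $X \in \D^{\leq 0}$. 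The $\D^{\geq 0}$ case is dual, using the second triangle of \eqref{recollement triangle} and left t-exactness of $j_*j^*$.

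For boundedness, if both $\XX$- and $\YY$-t-structures are bounded then for any $X \in \D$ one can pick integers $m \leq n$ with $i^*X$, $i^!X \in \XX^{[m,n]}$ and $j^*X \in \YY^{[m,n]}$, and the glued formulas force $X \in \D^{[m,n]}$; conversely, $\YY$-boundedness is extracted by applying $j^*$ to $j_!Y$ and using $j^*j_! \cong \id$, and $\XX$-boundedness by applying $i_*^{-1}$ to $i_*X$. The main subtlety is precisely the glue-then-restrict identity above, where one must promote the weak datum $j^*X \in j^*\D^{\leq 0}$ to the strong statement $j_!j^*X \in \D^{\leq 0}$ without $j_!$ being t-exact in general; it is exactly here that the compatibility hypothesis, interpreted through Proposition~\ref{criterion compatible t-str}, is indispensable.
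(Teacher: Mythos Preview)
Your argument is correct and complete. The paper does not give its own proof of this lemma; it cites \cite[Corollary 3.4, Lemma 3.5]{Liu-V} and, in the paragraph following the statement, only records the inverse map as $(i^*\D^{\leq 0}, i^!\D^{\geq 0})$ on $\XX$ and $(j^*\D^{\leq 0}, j^*\D^{\geq 0})$ on $\YY$, together with the observation that the $\XX$-side agrees with the restriction $(\im\,i_*\cap\D^{\leq 0},\,\im\,i_*\cap\D^{\geq 0})$. Your description of the $\XX$-t-structure via $i_*$ is exactly this restriction formulation, so the two presentations coincide. The one place worth tightening is your phrase ``the inclusion of this $\D^{\leq 0}$ into its reconstruction is immediate'': for $X\in\D^{\leq 0}$ one still needs $i_*i^*X\in\D^{\leq 0}$, which follows from the rotated triangle $X\to i_*i^*X\to j_!j^*X[1]\cra$ together with $j_!j^*X\in\D^{\leq 0}$ and extension-closure of the aisle---the same mechanism you invoke for the harder inclusion.
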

	Indeed, once the equivalent conditions in Proposition~\ref{criterion compatible t-str} are satisfied, to obtain  $\tstr$  using formula ~\eqref{eq: glued t-str},  the unique choice of the t-structure on $\XX$ resp. $\YY$ is 
\begin{equation}
	(i^*\D^{\leq 0}, i^!\D^{\geq 0})\,\,\, \text{resp.}\,\,\, (j^*\D^{\leq 0},j^*\D^{\geq 0}).
\end{equation}
This t-structure on $\XX$ resp. $\YY$ will be called \emph{the corresponding t-structure} on $\XX$ resp. $\YY$ to the t-structure $\tstr$ on $\D$. Moreover we have 
\begin{equation}
	(i_*i^*\D^{\leq 0},i_*i^!\D^{\geq 0})=(\im\, i_*\cap \D^{\leq 0},\im\, i_*\cap \D^{\geq 0}). 
\end{equation}
Since we can  identify $\XX$ with  $\im\, i_*$ via $i_*$, we know that  the t-structure on $\XX$ is essentially induced by restriction.

Suppose $\CC$ is an admissible subcategory of $\D$ and $\tstr$ is a t-structure on $\D$. Let 
\begin{equation}\label{no matter}
	\xymatrix{\CC  \ar[rr]|{i_*}   & &\ar@/_1pc/[ll]|{i^*} \ar@/^1pc/[ll]|{i^!}\D \ar[rr]|{j^*} & &\ar@/_1pc/[ll]|{j_!} \ar@/^1pc/[ll]|{j_*} \CC'}
\end{equation}
be a recollement, where $i_*$ is the inclusion functor. Since $j_!j^*X=\cocone(X\ra i_*i^*X)$ for each $X\in \D$ by ~\eqref{recollement triangle},  $j_!j^*$ is right t-exact iff $\cocone(X\ra i_*i^*X)$ lies in $\D^{\leq 0}$ for each $X\in \D^{\leq 0}$. So given another recollement \begin{equation}\label{no matter 2}
	\xymatrix{\CC  \ar[rr]|{i_*}   & &\ar@/_1pc/[ll]|{i^*} \ar@/^1pc/[ll]|{i^!}\D \ar[rr]|{k^*} & &\ar@/_1pc/[ll]|{k_!} \ar@/^1pc/[ll]|{k_*} \CC'',}
\end{equation}
$(\D^{\leq 0}, \D^{\geq 0})$ is compatible with the recollement ~\eqref{no matter} iff it is compatible with the (equivalent) recollement ~\eqref{no matter 2}. 
Thus it makes sense to say that $(\D^{\leq 0}, \D^{\geq 0})$ is compatible with $\CC$ if $(\D^{\leq 0},\D^{\geq 0})$ is compatible with any recollement of the form ~\eqref{no matter}, for example, any one of the recollements ~\eqref{eq:three equiv recollement}. This is convenient for use. 
 If $(\D^{\leq 0}, \D^{\geq 0})$ is compatible with the admissible subcategory $\CC$ then $(\D^{\leq 0}\cap \CC, \D^{\geq 0}\cap \CC)$ is a t-structure on $\CC$. 
 In general, consider a finite admissible filtration (\cite[Definition 4.1]{BK}) \[\D_n\subset \D_{n-1}\subset \dots \subset \D_0=\D\] of a triangulated category $\D$. That is, each $\D_i$ ($1\leq i\leq n$) is an admissible subcategory of $\D_{i-1}$, equivalently, each $\D_i$ is an admissible subcategory of $\D$. We say the t-structure $(\D^{\leq 0},\D^{\geq 0})$ is compatible with the admissible filtration if it is compatible with each $\D_i$. 
 
 Clearly we have the following two facts.

\begin{lem}\label{compatible filtration}
	$(\D^{\leq 0},\D^{\geq 0})$ is compatible with the admissible filtration \[\D_n\subset \dots \subset \D_1\subset \D_0=\D\] of  $\D$ iff the t-structure $(\D^{\leq 0}\cap \D_i,\D^{\geq 0}\cap \D_i)$ on $\D_i$ is compatible with $\D_{i+1}$ for each $1\leq i\leq n-1$.
\end{lem}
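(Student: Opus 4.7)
The plan is to combine the criterion of Proposition~\ref{criterion compatible t-str} (compatibility of $\tstr$ with an admissible subcategory $\CC\subset\D$ is equivalent to $\cocone(X\to i_*i^*X)\in\D^{\leq 0}$ for every $X\in\D^{\leq 0}$, where $i_*\colon\CC\hookrightarrow\D$ has left adjoint $i^*$) with the composition of left adjoints along nested admissible inclusions $\D_{j+1}\subset\D_j\subset\D$. If $i_*,i'_*,\tilde i_*$ denote the three relevant inclusions with left adjoints $i^*,(i')^*,\tilde i^*$, then $i_*=\tilde i_*\circ i'_*$, and uniqueness of adjoints forces $i^*=(i')^*\circ\tilde i^*$. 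In particular the unit at $X$ factors as $X\to\tilde i_*\tilde i^*X\to\tilde i_*i'_*(i')^*\tilde i^*X=i_*i^*X$.

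For the forward implication, assume $\tstr$ is compatible with $\D_{i+1}$ in $\D$. Given $X\in\D^{\leq 0}\cap\D_i$, the cocone $\cocone(X\to i_*i^*X)$ lies in $\D^{\leq 0}$ by hypothesis, and in $\D_i$ since both $X$ and $i_*i^*X\in\D_{i+1}\subset\D_i$ do. The factorization above identifies this cocone with $\cocone(X\to i'_*(i')^*X)$, which therefore lies in $\D^{\leq 0}\cap\D_i$; applying Proposition~\ref{criterion compatible t-str} inside $\D_i$ yields compatibility of the induced t-structure on $\D_i$ with $\D_{i+1}\subset\D_i$.

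For the backward implication, I would argue by induction on $j$ that $\tstr$ is compatible with $\D_j$ in $\D$; the base case $j=1$ is the $i=0$ instance of the hypothesis (which one must read into the statement to make the equivalence balance). For the inductive step, assume compatibility with $\D_j$, take any $X\in\D^{\leq 0}$, and apply the octahedral axiom to the factorization $X\to\tilde i_*\tilde i^*X\to i_*i^*X$ displayed above. One obtains a triangle
\[
 \tilde j_!\tilde j^*X\to\cocone(X\to i_*i^*X)\to\tilde i_*\bigl(j^{(j)}_!(j^{(j)})^*\tilde i^*X\bigr)\cra
\]
in $\D$, with $j^{(j)}_!$ the left inclusion for the recollement of $\D_j$ coming from $\D_{j+1}$. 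The first term lies in $\D^{\leq 0}$ by compatibility with $\D_j$; the third lies in $\D^{\leq 0}$ because $\tilde i^*X\in\D_j^{\leq 0}$ (same compatibility), the inductive hypothesis then gives $j^{(j)}_!(j^{(j)})^*\tilde i^*X\in\D_j^{\leq 0}$, and $\tilde i_*$ is t-exact for the restricted t-structure. Extension-closure of $\D^{\leq 0}$ now forces the middle term into $\D^{\leq 0}$, and Proposition~\ref{criterion compatible t-str} concludes the inductive step.

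The main obstacle is purely bookkeeping: verifying that the left adjoints compose so the two cocones actually agree in the forward direction, and arranging the octahedron so that the middle vertex of the resulting triangle is precisely $\cocone(X\to i_*i^*X)$. Both identifications are routine once spelled out, which presumably is why the authors label the lemma as ``clear''.
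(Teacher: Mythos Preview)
Your proposal is correct and fills in exactly the routine details the paper omits; the paper gives no proof at all, simply prefacing the lemma with ``Clearly we have the following two facts.'' Your observation that the $i=0$ case must be read into the hypothesis is confirmed by the paragraph immediately following the lemma, where the authors spell out precisely this inductive reading of the statement.
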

Here by the statement that the t-structure $(\D^{\leq 0}\cap \D_i,\D^{\geq 0}\cap \D_i)$ on $\D_i$ is compatible with $\D_{i+1}$ for each $1\leq i\leq n-1$, we  actually mean that: $(\D^{\leq 0},\D^{\geq 0})$ is compatible with $\D_1$ (hence $(\D^{\leq 0}\cap \D_1, \D^{\geq 0}\cap \D_1)$ is a t-structure on $\D_1$); $(\D^{\leq 0}\cap \D_1, \D^{\geq 0}\cap \D_1)$ is compatible with $\D_2$ (hence $(\D^{\leq 0}\cap \D_2, \D^{\geq 0}\cap \D_2)$ is a t-structure on $\D_2$); and so on. This situation arises naturally from reduction/induction argument.

\begin{lem}\label{compatible filtration autoequi}
	Suppose that the t-structure $(\D^{\leq 0},\D^{\geq 0})$ is compatible with the admissible filtration \[\D_n\subset \D_{n-1}\subset \dots\subset \D_0=\D\] and let $\Phi$ be an exact autoequivalence of $\D$. Then the t-structure $(\Phi(\D^{\leq 0}), \Phi(\D^{\geq 0}))$ is compatible with the admissible filtration
	\[\Phi(\D_n)\subset \Phi(\D_{n-1})\subset \dots \subset \Phi(\D_0)=\D.\]
\end{lem}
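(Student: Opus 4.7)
The strategy is to transport all the data along $\Phi$ and invoke the two characterizations already available in the subsection. First I would verify that $\Phi(\D_n)\subset\Phi(\D_{n-1})\subset\dots\subset\Phi(\D_0)=\D$ is itself an admissible filtration. This is automatic: if the inclusion $\D_i\hookrightarrow\D$ has left and right adjoints $i^*,i^!$, then the inclusion $\Phi(\D_i)\hookrightarrow\D$ is, up to isomorphism, $\Phi\circ(\D_i\hookrightarrow\D)\circ\Phi^{-1}$ and hence admits the conjugates $\Phi\circ i^*\circ\Phi^{-1}$ and $\Phi\circ i^!\circ\Phi^{-1}$ as its left and right adjoints, so each $\Phi(\D_i)$ is admissible in $\D$.

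Next I would appeal to Lemma~\ref{compatible filtration} to reduce the assertion to a level-by-level check: it is enough to show, for each $0\leq i\leq n-1$, that the induced t-structure
\[\bigl(\Phi(\D^{\leq 0})\cap\Phi(\D_i),\ \Phi(\D^{\geq 0})\cap\Phi(\D_i)\bigr)\]
on $\Phi(\D_i)$ is compatible with $\Phi(\D_{i+1})$. Since $\Phi$ is an equivalence, the two intersections agree with $\Phi(\D^{\leq 0}\cap\D_i)$ and $\Phi(\D^{\geq 0}\cap\D_i)$ respectively, so this restricted t-structure is precisely the image under the equivalence $\Phi|_{\D_i}:\D_i\overset{\sim}{\ra}\Phi(\D_i)$ of the t-structure $(\D^{\leq 0}\cap\D_i,\D^{\geq 0}\cap\D_i)$ on $\D_i$; by hypothesis together with Lemma~\ref{compatible filtration}, the latter is compatible with $\D_{i+1}$.

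The key step is then that compatibility transports along such an equivalence, and I would verify this using Proposition~\ref{criterion compatible t-str}. Fixing a recollement of $\D_i$ along $\D_{i+1}$ as in~\eqref{no matter} and conjugating every functor by $\Phi$ produces a recollement of $\Phi(\D_i)$ along $\Phi(\D_{i+1})$ whose gluing functors are $\hat{\jmath}_{!}=\Phi\circ j_!\circ\Phi^{-1}$ and $\hat{\jmath}^{*}=\Phi\circ j^*\circ\Phi^{-1}$. For any $X\in\Phi(\D^{\leq 0}\cap\D_i)$, the object $\Phi^{-1}X$ lies in $\D^{\leq 0}\cap\D_i$, so by Proposition~\ref{criterion compatible t-str} applied to the assumed compatibility, $j_!j^*\Phi^{-1}X\in\D^{\leq 0}\cap\D_i$; applying $\Phi$ yields $\hat{\jmath}_{!}\hat{\jmath}^{*}X\in\Phi(\D^{\leq 0}\cap\D_i)$, so $\hat{\jmath}_{!}\hat{\jmath}^{*}$ is right t-exact on $\Phi(\D_i)$, and Proposition~\ref{criterion compatible t-str} closes the argument.

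The proof has no real obstacle — it is pure transport of structure, with all the work done by the fact that $\Phi$ intertwines adjoints, intersections, and recollements. The only point requiring a bit of care is the inductive bookkeeping in the filtration: one should proceed from $\Phi(\D_0)=\D$ downward so that at each stage the induced t-structure on $\Phi(\D_i)$ is already known to exist before testing its compatibility with $\Phi(\D_{i+1})$.
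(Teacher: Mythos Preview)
Your argument is correct and complete. The paper itself provides no proof of this lemma --- it is stated immediately after Lemma~\ref{compatible filtration} with the preface ``Clearly we have the following two facts'' --- so your detailed transport-of-structure verification is exactly the intended (and only natural) justification, just written out in full.
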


\subsection{On the hearts of the t-structures in a recollement context}\label{sec: glue heart}
 Fix a recollement of the form (2.3.1). 
	 Each t-structure   $(\XX^{\leq 0},\XX^{\geq 0})$ on $\XX$  
	 induces  (up to shift) two t-structures on $\D$ in the following fashion. For each $p\in \Z$, since the inclusion $i_*\XX^{\leq p}\monic \D$ admits a right adjoint $i_*\tau_{\leq p}i^!$, $i_*\XX^{\leq p}$ is an aisle in $\D$ and 
	 \[(i_*\XX^{\leq p},\,\, (i_*\XX^{\leq p})^{\perp_{0,\D}}[1])\] is a t-structure on $\D$.
	 Denote by $\check{\tau}_{\geq p+1}$ the left adjoint of the inclusion  $(i_*\XX^{\leq p})^{\perp_{0,\D}}\monic \D$. Then  we have a functorial triangle \[i_*\tau_{\leq  p}i^!\ra \id\ra \check{\tau}_{\geq p+1}\cra \] for each $p\in\Z$.  
		 Dually,  the inclusion $i_*\XX^{\geq p}\monic \D$ admits a left adjoint $i_*\tau_{\geq p}i^*$, and we have a t-structure \[(({}^{\perp_{0,\D}}i_*\XX^{\geq p})[-1],\,\, i_*\XX^{\geq p})\] and  a functorial triangle \[\hat{\tau}_{\leq p-1}\ra \id\ra i_*\tau_{\geq p}i^*\cra \] for each $p\in\Z$, where $\hat{\tau}_{\leq p-1}$ is the right adjoint of the inclusion $({}^{\perp_{0,\D}}i_*\XX^{\geq p})\monic \D$. 
 A similar argument shows that a t-structure on $\YY$ also induces two t-structures on $\D$. 
 \begin{rmk}
	 In \cite[\S 1.4.13]{BBD}, these induced t-structures are described via gluing.
 \end{rmk}

Suppose $(\XX^{\leq 0}, \XX^{\geq 0})$, $(\YY^{\leq 0}, \YY^{\geq 0})$ are t-structures on $\XX, \YY$ respectively and let $(\D^{\leq 0}, \D^{\geq 0})$ be the glued t-structure. Denote the respective heart by $\B_1, \B_2$ and $\B$.  
Let $\epsilon$ be the inclusion functor from $\B_1, \B_2$ resp. $\B$ to $\XX, \YY$ resp. $\D$.
 For $T\in \{i^*,i_*,i^!, j_!,j^*,j_*\}$, denote ${}^p T=H^0\circ T\circ \epsilon.$  
 Then $({}^pi^*, {}^pi_*, {}^pi^!)$ and $({}^pj_!, {}^pj^*, {}^pj_*)$ are adjoint triples, the compositions ${}^pj^*\circ {}^pi_*, {}^pi^*\circ {}^pj_!, {}^pi^!\circ {}^pj_* $ vanish,   and ${}^pi_*, {}^pj_!, {}^pj_*$ are fully faithful. 
 $\im\, {}^pi_*=\ker\, {}^pj^*$ is a Serre subcategory of $\B$,  the functor ${}^pi_*$ identifies  $\B_1$  with $\im\, {}^pi_*$ and the functor ${}^pj^*$ identifies the quotient category $\B/\im\, {}^pi_*$ with $\B_2$. 
 The composition ${}^pj_!{}^pj^*\ra \id\ra {}^pj_*{}^pj^*$ provides a unique morphism of functors ${}^pj_!\ra {}^pj_*$. Define 
 \begin{equation}
	 j_{!*}=\im\, ({}^pj_!(-)\ra {}^p j_*(-)) : \B_2\lra \B.
 \end{equation}

 The following proposition describes simple objects in $\B$.

		 \begin{prop}[{\cite[Proposition 1.4.23, 1.4.26]{BBD}}]\label{simple in heart}
\begin{enum2}		 
\item For $X\in \B_2$, we have \[j_{!*} X=\check{\tau}_{\geq 1}j_!X=\hat{\tau}_{\leq -1}j_*X.\]

\item Simple objects in $\B$ are those ${}^pi_*S$, for $S$ simple in $\B_1$, and those $j_{!*}S$, for $S$ simple in $\B_2$.
\end{enum2}
\end{prop}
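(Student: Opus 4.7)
The plan is to exploit the t-exactness constraints from Theorem~\ref{compatible t-str} and Proposition~\ref{criterion compatible t-str}: with respect to the glued t-structure, $j_!$ is right t-exact and $j_*$ is left t-exact, so for $X\in\B_2$ one has $j_!X\in\D^{\leq 0}$ and $j_*X\in\D^{\geq 0}$; hence ${}^pj_!X=H^0(j_!X)$ and ${}^pj_*X=H^0(j_*X)$. The natural transformation $j_!\to j_*$ arises from applying $j_!j^*\to\id\to i_*i^*\cra$ to $j_*X$ (using $j^*j_*=\id$), and its cofiber lies in $\im(i_*)$.

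For part (1), I first verify $\check{\tau}_{\geq 1}j_!X\in\B$. From the functorial triangle
\[
  i_*\tau_{\leq 0}i^!j_!X\lra j_!X\lra\check{\tau}_{\geq 1}j_!X\cra,
\]
both of the first two terms lie in $\D^{\leq 0}$ (since $i_*\XX^{\leq 0}\subset\D^{\leq 0}$ by t-exactness of $i_*$), hence so does $\check{\tau}_{\geq 1}j_!X$. Applying $j^*$ annihilates the first term and gives $j^*\check{\tau}_{\geq 1}j_!X=X\in\YY^{\geq 0}$; applying $i^!$ and using $i^!i_*=\id$ gives $i^!\check{\tau}_{\geq 1}j_!X=\tau_{\geq 1}i^!j_!X\in\XX^{\geq 1}\subset\XX^{\geq 0}$. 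By the defining formula of the glued t-structure, $\check{\tau}_{\geq 1}j_!X\in\D^{\geq 0}$, so it lies in $\B$. Since $\hom(i_*Y,j_*X)=\hom(j^*i_*Y,X)=0$ for $Y\in\XX^{\leq 0}$, we have $j_*X\in(i_*\XX^{\leq 0})^{\perp_0}$, so the natural $j_!X\to j_*X$ factors uniquely as $j_!X\to\check{\tau}_{\geq 1}j_!X\to j_*X$. Applying $H^0$ to the truncation triangle yields an epimorphism ${}^pj_!X\twoheadrightarrow\check{\tau}_{\geq 1}j_!X$, and applying $H^0$ to $\check{\tau}_{\geq 1}j_!X\to j_*X$ (both already in $\D^{\geq 0}$) yields $\check{\tau}_{\geq 1}j_!X\to{}^pj_*X$ whose kernel $K$ satisfies $j^*K=\ker(\id_X)=0$, so $K\in\ker(j^*)=\im(i_*)\subset i_*\XX^{\leq 0}$; since $\check{\tau}_{\geq 1}j_!X\in(i_*\XX^{\leq 0})^{\perp_0}$, the inclusion $K\monic\check{\tau}_{\geq 1}j_!X$ vanishes, forcing $K=0$. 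Thus $j_{!*}X=\im({}^pj_!X\to{}^pj_*X)=\check{\tau}_{\geq 1}j_!X$, and the identity $j_{!*}X=\hat{\tau}_{\leq -1}j_*X$ follows dually.

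For part (2), let $T\in\B$ be simple. Since $j^*$ is exact on hearts, $j^*T$ is either $0$ or simple in $\B_2$. If $j^*T=0$, then $T\in\ker({}^pj^*)=\im({}^pi_*)$, and full faithfulness plus exactness of $i_*$ yield $T={}^pi_*S$ for a simple $S\in\B_1$; conversely each such ${}^pi_*S$ is simple in $\B$ because $\im(i_*)$ is a Serre subcategory. If $j^*T\neq 0$, set $S=j^*T$; the nonzero counit ${}^pj_!S\to T$ (adjoint to $\id_S$) factors through $j_{!*}S$, yielding a nonzero morphism $j_{!*}S\to T$, so it suffices to check that $j_{!*}S$ is simple. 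Given $0\to A\to j_{!*}S\to B\to 0$ in $\B$, exactness of $j^*$ and simplicity of $S=j^*j_{!*}S$ force $j^*A=0$ or $j^*B=0$. In the first case $A\in\im({}^pi_*)\subset i_*\XX^{\leq 0}$, but by (1), $j_{!*}S=\check{\tau}_{\geq 1}j_!S\in(i_*\XX^{\leq 0})^{\perp_0}$ forces $A=0$; in the second case $B\in\im({}^pi_*)\subset i_*\XX^{\geq 0}$, and $j_{!*}S=\hat{\tau}_{\leq -1}j_*S\in{}^{\perp_0}(i_*\XX^{\geq 0})$ forces $B=0$.

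The main obstacle is the cohomological bookkeeping in part (1): one must simultaneously track two truncation functors acting on $j_!X$ — one from the glued t-structure giving ${}^pj_!X$, one from the induced t-structure with aisle $i_*\XX^{\leq 0}$ giving $\check{\tau}_{\geq 1}j_!X$ — and show that together with the natural map $j_!\to j_*$ they realize the image $j_{!*}X$. Once (1) is in place, the orthogonality properties $(i_*\XX^{\leq 0})^{\perp_0}$ and ${}^{\perp_0}(i_*\XX^{\geq 0})$ deliver (2) almost mechanically.
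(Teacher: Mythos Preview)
The paper does not supply its own proof of this proposition; it is quoted from \cite[Proposition 1.4.23, 1.4.26]{BBD} and stated without argument. Your proof is correct and follows the standard BBD approach of exploiting the two auxiliary t-structures with aisles $i_*\XX^{\leq p}$ and co-aisles $i_*\XX^{\geq p}$.

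One step in part (2) deserves a word of justification: you assert that the counit ${}^pj_!S\to T$ factors through $j_{!*}S$, but do not explain why the kernel of ${}^pj_!S\twoheadrightarrow j_{!*}S$ maps to zero in $T$. This kernel lies in ${}^pi_*\B_1$; since $T$ is simple with ${}^pj^*T\neq 0$, the unit $T\to{}^pj_*S$ is a monomorphism, and $\hom({}^pi_*\B_1,{}^pj_*S)=0$ (because ${}^pi^!{}^pj_*=0$) forces $\hom({}^pi_*\B_1,T)=0$. Alternatively, one can bypass the factorization entirely: the composite ${}^pj_!S\twoheadrightarrow T\hookrightarrow{}^pj_*S$ corresponds under adjunction to $\id_S$, hence equals the canonical map, so $T=j_{!*}S$ by definition of the image. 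Either way the gap is easily filled.
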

 For more details, see \cite[\S 1.4]{BBD}, from which the above are taken.
The following lemma strenghens \cite[Proposition 3.9]{Liu-V}.
 \begin{lem} \label{no-ar-le}
	 $\B$ is noetherian (or artinian, or  of finite length) iff so are $\B_1, \B_2$.
 \end{lem}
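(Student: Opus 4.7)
The plan is to reduce the lemma to the classical fact that, for a short exact sequence of abelian categories, the middle term is noetherian (resp.\ artinian, resp.\ of finite length) iff the two end terms are. Indeed, the discussion immediately preceding the lemma shows that ${}^pi_*$ identifies $\B_1$ with the Serre subcategory $\im\,{}^pi_*=\ker\,{}^pj^*$ of $\B$, that ${}^pj^*$ identifies the Serre quotient $\B/\im\,{}^pi_*$ with $\B_2$, and that ${}^pj^*$ admits the fully faithful right adjoint ${}^pj_*$ (so in particular ${}^pj^*\circ{}^pj_*\simeq\id_{\B_2}$, and ${}^pj_*$ is left exact). Since an object has finite length iff it is both noetherian and artinian, it suffices to treat the noetherian case; the artinian case is dual, using ${}^pj_!$ in place of ${}^pj_*$.

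For necessity, assume $\B$ is noetherian. Any ascending chain in $\B_1$ maps, via the fully faithful exact functor ${}^pi_*$, to an ascending chain of subobjects in $\B$, which stabilizes. Given an ascending chain $Y_1\subset Y_2\subset\cdots$ of subobjects of some $Y\in\B_2$, apply the left-exact functor ${}^pj_*$ to obtain an ascending chain ${}^pj_*Y_1\subset{}^pj_*Y_2\subset\cdots$ of subobjects of ${}^pj_*Y\in\B$, which stabilizes; applying the exact functor ${}^pj^*$ and using ${}^pj^*\circ{}^pj_*\simeq\id$, the original chain in $\B_2$ stabilizes.

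For sufficiency, assume both $\B_1$ and $\B_2$ are noetherian, and let $X_1\subset X_2\subset\cdots\subset X$ be an ascending chain of subobjects of $X\in\B$. The exact functor ${}^pj^*$ sends this to an ascending chain ${}^pj^*X_1\subset{}^pj^*X_2\subset\cdots$ in $\B_2$, which stabilizes at some index $N$ by hypothesis. For $n\geq N$, applying ${}^pj^*$ to the short exact sequence $0\to X_N\to X_n\to X_n/X_N\to 0$ yields ${}^pj^*(X_n/X_N)=0$, so $X_n/X_N\in\ker\,{}^pj^*=\im\,{}^pi_*\simeq\B_1$. The resulting ascending chain $\{X_n/X_N\}_{n\geq N}$ in $\B_1$ stabilizes by hypothesis, which forces the original chain $\{X_n\}$ to stabilize in $\B$.

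The only genuinely non-formal ingredient is the Serre-subcategory/Serre-quotient structure of the pair $({}^pi_*,{}^pj^*)$, which is already supplied by the preliminaries; once this is in place, what might look like the hard part, namely lifting subobjects from $\B_2$ to $\B$, reduces to the left-exactness of ${}^pj_*$ together with the counit isomorphism ${}^pj^*\circ{}^pj_*\simeq\id$, so no real obstacle remains beyond careful bookkeeping of subobjects.
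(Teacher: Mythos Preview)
Your argument has a genuine gap in the sufficiency direction. After reaching the point where $X_n/X_N\in\B_1$ for $n\ge N$, you write that ``the resulting ascending chain $\{X_n/X_N\}_{n\ge N}$ in $\B_1$ stabilizes by hypothesis''. But noetherianness of $\B_1$ says only that ascending chains of subobjects \emph{of a fixed object of $\B_1$} stabilize; your chain sits inside $X/X_N$, which lies in $\B$, not in $\B_1$. Without a common upper bound in $\B_1$, there is nothing to apply: a directed system of monomorphisms $A_1\hookrightarrow A_2\hookrightarrow\cdots$ in a noetherian category need not stabilize (think of $k\hookrightarrow k^2\hookrightarrow\cdots$ in $\mod k$). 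This is not a technicality one can wave away---it is precisely why the classical statement for Serre subcategories (as in \cite[Lemma~1.3.3]{CK}, which the paper invokes) carries the extra hypothesis that every object admits a largest subobject from the Serre subcategory.

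The paper's proof supplies exactly this missing ingredient: it shows that ${}^pi_*{}^pi^!B$ is the largest $\B_1$-subobject of any $B\in\B$, using the exact sequence $0\to{}^pi_*{}^pi^!B\to B\to{}^pj_*{}^pj^*B$. You can repair your argument in the same spirit without leaving your framework: observe that each $X_n/X_N$ lies in the kernel $K$ of the unit map $X/X_N\to{}^pj_*{}^pj^*(X/X_N)$, since ${}^pj^*(X_n/X_N)=0$ and the adjunction gives that the composite $X_n/X_N\to X/X_N\to{}^pj_*{}^pj^*(X/X_N)$ vanishes. Applying the exact functor ${}^pj^*$ to $0\to K\to X/X_N\to{}^pj_*{}^pj^*(X/X_N)$ and using ${}^pj^*{}^pj_*\simeq\id$ shows ${}^pj^*K=0$, so $K\in\B_1$. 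Now $\{X_n/X_N\}$ is a chain of subobjects of $K$ in $\B_1$ and noetherianness of $\B_1$ applies. With this one-line fix, your proof and the paper's are essentially the same.
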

\begin{proof}
	\cite[Lemma 1.3.3]{CK} states that if $\A_1$ is a Serre subcategory of an abelian category $\A$ then $\A$ is noetherian iff $\A_1$ and $\A/\A_1$ are noetherian and if each object in $\A$ has a largest subobject that belongs to $\A_1$. We claim that in our setting, 
	each $B\in \B$ admits a largest subobject ${}^pi_*{}^pi^!B$ in ${}^pi_*\B_1$. By \cite[Lemme 1.4.19]{BBD}, we have an exact sequence
	\[0\ra  {}^pi_*{}^pi^!B\overset{\eta}{\ra} B\ra {}^pj_*{}^pj^*B\ra {}^pi_*H^1i^!B \ra 0.\]
	Suppose $\mu: {}^pi_*Z\ra B$ is a monomorphism in $\B$, where $Z\in \B_1$. Note that  \[\hom({}^pi_*Z, {}^pj_*{}^pj^*B)=\hom(Z, {}^pi^!{}^pj_*{}^pj^*B)=0.\]
 So there exists $\nu: {}^pi_*Z\ra  {}^pi_*{}^pi^!B$ such that $\mu=\eta\nu$. Since $\mu$ is a monomorphism, $\nu$ is a monomorphism. So ${}^pi_*Z$ is a subobject of ${}^pi_*{}^pi^!B$. This shows our claim that ${}^pi_*{}^pi^!B$ is the largest subobject of $B$ in ${}^pi_*\B_1$. Hence the assertion on noetherianness follows. 
By duality, we conclude the assertion on artinianness. 
	Combining these two assertions, we know that $\B$ is of finite length iff $\B_1, \B_2$ are of finite length.
\end{proof}

An easy induction argument yields 
\begin{cor}\label{filt no-ar-le}
	Suppose a t-structure $(\D^{\leq 0},\D^{\geq 0})$ on $\D$ is compatible with  the admissible filtration \[0=\D_{n+1}\subset \D_n\subset \dots \subset \D_1\subset \D_0=\D.\] Then $(\D^{\leq 0}, \D^{\geq 0})$ has noetherian resp. artinian resp. length heart iff the corresponding t-structure on each $\D_{i+1}^{\perp_{\D_i}}$ $($or ${}^{\perp_{\D_i}}\D_{i+1},$ or $\D_i/\D_{i+1})$ $(0\leq i\leq n)$ has noetherian resp. artinian resp. length heart.
\end{cor}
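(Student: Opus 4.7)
The plan is to proceed by induction on $n$, the length of the admissible filtration, leveraging Lemma~\ref{no-ar-le} at each step. The base case $n = 0$ is trivial: the filtration reads $0 = \D_1 \subset \D_0 = \D$, and the statement reduces to the tautology that $\B$ has the given property iff the corresponding t-structure on $\D = \D_0/\D_1$ does.

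For the inductive step, assume the result for all admissible filtrations of length less than $n$. Because $(\D^{\leq 0}, \D^{\geq 0})$ is compatible with the admissible subcategory $\D_1 \subset \D_0 = \D$, by the discussion in \S\ref{sec: glue t-str} it is glued from its restriction $(\D^{\leq 0}\cap \D_1, \D^{\geq 0}\cap \D_1)$ to $\D_1$ and the corresponding t-structure on $\D_1^{\perp_\D}$ (or equivalently, on ${}^{\perp_\D}\D_1$, or on $\D_0/\D_1$; by the equivalences in \eqref{eq:three equiv recollement}, this is independent of the chosen complement). Applying Lemma~\ref{no-ar-le} to the recollement associated to $\D_1 \subset \D$, we deduce that the heart $\B$ is noetherian (respectively artinian, of finite length) if and only if both the heart $\B_1$ of the restricted t-structure on $\D_1$ and the heart of the corresponding t-structure on $\D_1^{\perp_\D}$ (or ${}^{\perp_\D}\D_1$, or $\D_0/\D_1$) have the corresponding property.

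Next, by Lemma~\ref{compatible filtration}, the restricted t-structure $(\D^{\leq 0}\cap \D_1, \D^{\geq 0}\cap \D_1)$ on $\D_1$ is compatible with the shortened admissible filtration
\[
0 = \D_{n+1} \subset \D_n \subset \dots \subset \D_1,
\]
which has length $n-1$. By the induction hypothesis, its heart $\B_1$ is noetherian (respectively artinian, of finite length) iff the corresponding t-structure on each $\D_{i+1}^{\perp_{\D_i}}$ (or ${}^{\perp_{\D_i}}\D_{i+1}$, or $\D_i/\D_{i+1}$) for $1 \leq i \leq n$ has the corresponding property. Combining this with the conclusion of the previous paragraph (the case $i=0$) yields the desired equivalence across all $0 \leq i \leq n$.

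The only mild subtlety is to confirm that the three forms of the statement (using $\D_{i+1}^{\perp_{\D_i}}$, or ${}^{\perp_{\D_i}}\D_{i+1}$, or $\D_i/\D_{i+1}$) are equivalent; this is immediate from the discussion after \eqref{eq:three equiv recollement}, since the three recollements are equivalent and the corresponding t-structures are transported by the exact equivalences between these three realizations of the quotient. No essential obstacle arises, and the argument is a direct unwinding of the recollement structure.
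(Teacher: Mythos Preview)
Your proof is correct and follows essentially the same route as the paper, which simply states that the corollary follows from Lemma~\ref{no-ar-le} by an easy induction argument. Your writeup spells out exactly that induction.
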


\subsection{Recollement and Ext-projectives}\label{sec: Ext-proj}
Let $\D$ be  a $k$-linear triangulated category of finite type, where $k$ is a field, and $(\D^{\leq 0},\D^{\geq 0})$ a t-structure on $\D$. Recall from \cite[\S 1]{ASS}  that $X\in \D$ is Ext-projective in $\D^{\leq l}$, or $\D^{\leq l}$-projective for short, if  $X\in \D^{\leq l}$ and $\hom^1(X, \D^{\leq l})=0$; dually, $X\in \D$  is Ext-injective in  $\D^{\geq l}$, or $\D^{\geq l}$-injective, if  $X\in \D^{\geq l}$ and $\hom^1(\D^{\geq l},X)=0$.  

We  use the following criterion to identify Ext-projectives (and Ext-injectives) when $\D$ admits a Serre functor.
\begin{lem}[{\cite[Lemma 1.5]{ASS}}]\label{ext-proj}
	Suppose $\D$ admits a Serre functor $\SS$ and $X$ is an object in $\D$. Then $X$ is $\D^{\leq 0}$-projective iff $X\in \D^{\leq 0}$ with $\SS X\in \D^{\geq 0}$ iff $\SS X$ is $\D^{\geq 0}$-injective. 
\end{lem}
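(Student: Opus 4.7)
The plan is to derive all three equivalences from a single application of Serre duality combined with the characterization of the aisle and co-aisle via orthogonality: $\D^{\leq 0} = {}^{\perp_0}\D^{\geq 1}$ and $\D^{\geq 0} = (\D^{\leq -1})^{\perp_0}$, as recalled in \S2.1.

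First I would translate the condition $\hom^1(X,\D^{\leq 0})=0$ using Serre duality. For any $Y\in\D$,
\[
\hom^1(X,Y)=\hom(X,Y[1])\cong D\hom(Y[1],\SS X)=D\hom(Y,\SS X[-1]),
\]
so $\hom^1(X,Y)=0$ for every $Y\in\D^{\leq 0}$ if and only if $\SS X[-1]\in(\D^{\leq 0})^{\perp_0}=\D^{\geq 1}$, that is, $\SS X\in\D^{\geq 0}$. This immediately gives the first equivalence: assuming $X\in\D^{\leq 0}$, the vanishing $\hom^1(X,\D^{\leq 0})=0$ is the same as $\SS X\in\D^{\geq 0}$.

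For the second equivalence, I would apply Serre duality once more, now with $\SS X$ placed in the second slot. For $Z\in\D$,
\[
\hom^1(Z,\SS X)=\hom(Z,\SS X[1])\cong D\hom(X[1],Z)=D\hom^{-1}(X,Z),
\]
and $\hom^{-1}(X,Z)=\hom(X,Z[-1])$. Hence $\hom^1(\D^{\geq 0},\SS X)=0$ is equivalent to $\hom(X,Z[-1])=0$ for all $Z\in\D^{\geq 0}$, i.e.\ to $X\in {}^{\perp_0}\D^{\geq 1}=\D^{\leq 0}$. Therefore the joint condition ``$\SS X\in\D^{\geq 0}$ and $\hom^1(\D^{\geq 0},\SS X)=0$'' — i.e.\ that $\SS X$ is $\D^{\geq 0}$-injective — coincides with the middle statement ``$X\in\D^{\leq 0}$ and $\SS X\in\D^{\geq 0}$''.

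There is no real obstacle: the argument is a bookkeeping exercise with the Serre isomorphism, and the only point to watch is that Serre duality requires $\D$ to be Hom-finite, which is covered by the hypothesis that $\D$ is of finite type, and that one applies the defining orthogonality relations of a t-structure in the correct direction on each side. Writing the two Serre-duality computations symmetrically makes clear that the outer two conditions are linked to the middle one via the two orthogonality descriptions of the aisle and co-aisle, giving the triple equivalence at once.
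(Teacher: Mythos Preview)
Your argument is correct; the two applications of Serre duality together with the orthogonality descriptions $\D^{\leq 0}={}^{\perp_0}\D^{\geq 1}$ and $\D^{\geq 1}=(\D^{\leq 0})^{\perp_0}$ give exactly the triple equivalence. The paper does not supply its own proof of this lemma (it is quoted from \cite{ASS}), and your proof is the standard one that would appear there.
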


The following easy observation is essential for us. 
\begin{lem}\label{ext-proj recollement}
	 Suppose $E\in \D$ is an exceptional object. 
	If $E$ is Ext-projective in some $\D^{\leq l}$ and $E^{\perp_\D}$ is right admissible  then $(\D^{\leq 0},\D^{\geq 0})$ is compatible with the recollement
	\[ \xymatrix{E^{\perp_\D} \ar[rr]|{i_*}   & &\ar@/_1pc/[ll] \ar@/^1pc/[ll]\D\ar[rr]|{j^*} & &\ar@/_1pc/[ll]|{j_!} \ar@/^1pc/[ll] \pair{E}_\D,}\]
where $i_*, j_!$ are the inclusion functors.

\end{lem}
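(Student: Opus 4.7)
The plan is to apply Proposition~\ref{criterion compatible t-str}: since $i_*$ and $j_!$ are both inclusions, compatibility of $(\D^{\leq 0}, \D^{\geq 0})$ with the recollement is equivalent to the right t-exactness of $j_!j^*$, i.e., to the assertion that $j_!j^*X \in \D^{\leq 0}$ whenever $X \in \D^{\leq 0}$.

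Before carrying this out, I would verify that the recollement does exist. Right admissibility of $E^{\perp_\D}$ produces, via~\eqref{eq:three equiv recollement}, a recollement whose rightmost slot is ${}^{\perp_\D}(E^{\perp_\D})$, so the task is to identify this subcategory with $\pair{E}_\D$. The inclusion $\pair{E}_\D \subset {}^{\perp_\D}(E^{\perp_\D})$ is immediate. For the reverse, for any $Y \in \D$ I would set
\[
Y' := \bigoplus_{n \in \Z} \hom^n(E,Y) \otimes_k E[-n],
\]
a finite direct sum thanks to the finite-type hypothesis, and consider the canonical evaluation map $\mathrm{ev}\colon Y' \lra Y$. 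Applying $\hom^m(E,-)$ and using exceptionality of $E$ shows $\hom^m(E,\mathrm{ev})$ is an isomorphism for every $m$, so its cone $C$ satisfies $C \in E^{\perp_\D}$. If in addition $Y \in {}^{\perp_\D}(E^{\perp_\D})$, then $\hom(Y,C) = 0$, the triangle $Y' \to Y \to C \to Y'[1]$ splits, and $Y$ becomes a direct summand of $Y' \in \pair{E}_\D$.

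The same construction applied to an arbitrary $X$, together with uniqueness of the decomposition, delivers the explicit formula
\[
j_!j^*X \;\cong\; \bigoplus_{n \in \Z} \hom^n(E,X) \otimes_k E[-n].
\]
The t-exactness estimate then runs as follows. Ext-projectivity of $E$ in $\D^{\leq l}$ says $\hom(E, Y[1]) = 0$ for $Y \in \D^{\leq l}$; since $\D^{\leq l}[1] = \D^{\leq l-1}$, this reads $\hom(E, \D^{\leq l-1}) = 0$. For $X \in \D^{\leq 0}$ and $n \geq 1-l$ one has $X[n] \in \D^{\leq -n} \subset \D^{\leq l-1}$, whence $\hom^n(E,X) = 0$; so only summands with $n \leq -l$ can be nonzero. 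For such $n$, $E[-n] \in \D^{\leq l+n} \subset \D^{\leq 0}$, and therefore $j_!j^*X$ is a finite direct sum of objects of $\D^{\leq 0}$, as required.

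I do not expect any substantive obstacle here: once one has the explicit form of $j^*$, the argument reduces to a brief shift computation matching the Ext-projective vanishing range against the t-structure filtration. The only mild subtlety is the appeal to finite-type, which is needed both to ensure $Y'$ is a genuine bounded object of $\pair{E}_\D$ and hence to derive the identification ${}^{\perp_\D}(E^{\perp_\D}) = \pair{E}_\D$.
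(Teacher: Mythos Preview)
Your proof is correct and follows essentially the same approach as the paper: verify the recollement exists via the identification ${}^{\perp_\D}(E^{\perp_\D})=\pair{E}_\D$, compute $j_!j^*X=\hom^\bullet(E,X)\otimes E$, and use Ext-projectivity to see that only summands indexed by $n\leq -l$ survive, each of which lies in $\D^{\leq 0}$. The only difference is that the paper obtains ${}^{\perp_\D}(E^{\perp_\D})=\pair{E}_\D$ by citing the general fact (from \S\ref{sec: admissible subcategory}) that an exceptional object generates an admissible subcategory, whereas you spell out the evaluation-map argument explicitly; the shift bookkeeping is identical.
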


\begin{proof}
	Since $E$ is an exceptional object, $\pair{E}_\D$ is admissible and thus $E^{\perp_\D}$ is left admissible with ${}^{\perp_\D}(E^{\perp_\D})=\pair{E}_\D$. If $E^{\perp_\D}$ is right admissible then $E^{\perp_\D}$ is admissible and the given diagram is indeed a diagram of recollement. To show that the t-structure is compatible, it suffices to show that $j_!j^*$ is right t-exact, i.e., for each $X\in \D^{\leq 0}$, $j_!j^*(X)\in \D^{\leq 0}$. Note that for $m>-l$, $\hom(E,\D^{\leq 0}[m])=0$ since $E$ is $\D^{\leq l}$-projective. Therefore
	\[
		\begin{aligned}
			j_!j^*(X) & = \hom^\bullet(E,X)\otimes E\\
										   & = \oplus \hom(E,X[m])\otimes E[-m]\\
										   & =\oplus_{m\leq -l} \hom(E,X[m])\otimes E[-m]\\
								  & \in \D^{\leq 0}.
		\end{aligned}
	\]
\end{proof}

\begin{rmk} \begin{enum2}\item There is a dual version for Ext-injectives.

	\item In our application, $\D$ has a Serre functor and thus  $E^{\perp_\D}$ and ${}^{\perp_\D}E$ are indeed admissible by Proposition~\ref{admissible Serre functor}.  
	\end{enum2}

\end{rmk}

Assume that $\D$ has a Serre functor and $(E_n, \dots, E_1)$ is  an exceptional sequence such that each $E_i$ is $\D^{\leq 0}$-projective. Let $\D_0=\D$; for $1\leq i\leq n$, let $\D_i=\{E_i, E_{i-1}, \dots, E_1\}^{\perp_\D}$. Note that $\D_i=E_i^{\perp_{\D_{i-1}}}$ for $1\leq i\leq n$. We already know that $\pair{E_i, E_{i-1}, \dots, E_1}_\D$ is admissible in $\D$ and thus $\D_i$ is admissible in $\D$ by Proposition~\ref{admissible Serre functor}. The following fact is immediate from Lemma~\ref{ext-proj recollement} and Lemma~\ref{compatible filtration}. (We also have a similar result when each $E_i$ is $\D^{\geq 0}$-injective.)

\begin{cor}\label{ex collection compatible filt}
	With the above hypotheses and notation, $(\D^{\leq 0}, \D^{\geq 0})$ is compatible with the admissible filtration \[\D_n\subset \dots \subset \D_i(=\{E_i,\dots, E_1\}^{\perp_\D}=E_i^{\perp_{\D_{i-1}}})\subset \dots \subset \D_1\subset \D.\]
\end{cor}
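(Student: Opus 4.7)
The strategy is induction on $i$: by Lemma~\ref{compatible filtration}, compatibility of $(\D^{\leq 0},\D^{\geq 0})$ with the whole filtration $\D_n\subset\dots\subset\D_0=\D$ splits into $n$ one-step compatibilities, namely that for each $1\leq i\leq n$ the t-structure $(\D^{\leq 0}\cap\D_{i-1},\D^{\geq 0}\cap\D_{i-1})$ on $\D_{i-1}$ is compatible with the admissible subcategory $\D_i=E_i^{\perp_{\D_{i-1}}}$. Each of these one-step claims will be produced by applying Lemma~\ref{ext-proj recollement} to $E_i$ viewed as an object of $\D_{i-1}$.

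Before running the induction I would record the structural facts needed at every step. Since $(E_i,\dots,E_1)$ is an exceptional sequence, $\pair{E_i,\dots,E_1}_\D$ is admissible in $\D$, whence its right perpendicular $\D_i$ is admissible in $\D$ by Proposition~\ref{admissible Serre functor}(3); the same proposition then endows each $\D_i$ with a Serre functor inherited from $\D$. This ensures that the hypotheses of Lemma~\ref{ext-proj recollement} concerning Serre functors and right admissibility of perpendiculars will be available inside every $\D_{i-1}$.

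For the inductive step I would verify the three hypotheses of Lemma~\ref{ext-proj recollement} for $E_i\in \D_{i-1}$. First, because the sequence is listed as $(E_n,\dots,E_1)$, the exceptional-sequence axiom gives $\hom^\bullet(E_k,E_i)=0$ for $k<i$, so $E_i\in \{E_{i-1},\dots,E_1\}^{\perp_\D}=\D_{i-1}$, and $E_i$ remains exceptional in $\D_{i-1}$ since Hom-spaces are computed in the ambient $\D$. Second, since $E_i$ is $\D^{\leq 0}$-projective in $\D$, we have $E_i\in \D^{\leq 0}\cap \D_{i-1}$ with $\hom^1_{\D_{i-1}}(E_i,\D^{\leq 0}\cap \D_{i-1})=0$, so $E_i$ is $(\D^{\leq 0}\cap \D_{i-1})$-projective. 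Third, since $\D_{i-1}$ has a Serre functor and $E_i$ is exceptional in $\D_{i-1}$, the subcategory $E_i^{\perp_{\D_{i-1}}}=\D_i$ is right admissible in $\D_{i-1}$ by Proposition~\ref{admissible Serre functor}(3). Lemma~\ref{ext-proj recollement} then yields the desired one-step compatibility, and assembling these for $i=1,\dots,n$ via Lemma~\ref{compatible filtration} concludes the argument.

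The proof is essentially bookkeeping: all conceptual content is already packaged in Lemma~\ref{ext-proj recollement}. The only point worth a moment of care is the ordering convention for the exceptional sequence, i.e.\ checking that listing the sequence as $(E_n,\dots,E_1)$ is precisely what forces $E_i$ to lie in the perpendicular of $\{E_{i-1},\dots,E_1\}$ rather than the other way around; once this is sorted out, the induction runs without any genuine obstacle.
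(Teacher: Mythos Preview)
Your proposal is correct and follows exactly the approach the paper intends: the paper states that the corollary is ``immediate from Lemma~\ref{ext-proj recollement} and Lemma~\ref{compatible filtration}'', and your write-up simply unpacks this into the inductive verification of the one-step compatibilities. Your careful check of the ordering convention and of the hypotheses of Lemma~\ref{ext-proj recollement} inside each $\D_{i-1}$ is precisely the bookkeeping the paper leaves implicit.
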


	Now let us be given a recollement of the form (2.3.1).
	Suppose that $\XX$ resp. $\YY$ is equipped with a t-structure $(\XX^{\leq 0},\XX^{\geq 0})$ resp. $(\YY^{\leq 0}, \YY^{\geq 0})$, and $\D$ with the glued t-structure $(\D^{\leq 0},\D^{\geq 0})$. One easily verifies the following fact.
	\begin{lem}\label{ext-proj from recollement}
		\begin{enum2}\item  If $X$ is $\D^{\leq 0}$-projective which does not lie in $\ker\, i^*=\im\, j_!$ then $i^*X$ is nonzero $\XX^{\leq 0}$-projective.

		\item If $Y$ is nonzero $\YY^{\leq 0}$-projective then $j_!Y$ is nonzero $\D^{\leq 0}$-projective. Moreover, $j_!$ induces a bijection between isoclasses of indecomposable Ext-projectives in $\YY^{\leq 0}$ and  isoclasses of indecomposable Ext-projectives in $\D^{\leq 0}$ which lie in $\ker\, i^*=\im\, j_!$.
		\end{enum2}
\end{lem}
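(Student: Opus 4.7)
Both statements should follow by routine application of the adjunctions in the recollement, together with the t-exactness properties recorded after Theorem~\ref{compatible t-str}: namely, $i^*$ and $j_!$ are right t-exact, $i_*$ and $j^*$ are t-exact, and $i_*, j_!, j_*$ are fully faithful.

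For part (1), the plan is to first observe that $i^*X\in \XX^{\leq 0}$ by right t-exactness of $i^*$, and that $i^*X\neq 0$ since by hypothesis $X\notin \ker\, i^*$. For Ext-projectivity, for any $Y\in \XX^{\leq 0}$ the adjunction $(i^*,i_*)$ gives
\[
\hom^1_\XX(i^*X, Y)\,\cong\, \hom^1_\D(X, i_*Y),
\]
and $i_*Y\in \D^{\leq 0}$ since $i_*$ is t-exact. Hence this group vanishes because $X$ is $\D^{\leq 0}$-projective.

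For part (2), $j_!Y\in \D^{\leq 0}$ by right t-exactness of $j_!$, and $j_!Y\neq 0$ because $j_!$ is fully faithful. For any $Z\in \D^{\leq 0}$, the adjunction $(j_!,j^*)$ gives
\[
\hom^1_\D(j_!Y, Z)\,\cong\, \hom^1_\YY(Y, j^*Z),
\]
and $j^*Z\in \YY^{\leq 0}$ by t-exactness of $j^*$, so this vanishes by $\YY^{\leq 0}$-projectivity of $Y$; thus $j_!Y$ is $\D^{\leq 0}$-projective. For the bijection, full faithfulness of $j_!$ ensures it preserves indecomposability and isomorphism classes, and its essential image lies in $\im\, j_!$, so the assignment $Y\mapsto j_!Y$ is injective on isoclasses of indecomposables and lands in indecomposable $\D^{\leq 0}$-projectives contained in $\im\, j_!$. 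For surjectivity, let $X=j_!W$ be an indecomposable $\D^{\leq 0}$-projective in $\im\, j_!$; then $j^*X\cong W$, so $W$ is indecomposable, and applying the adjunction computation above with $Z=j_!Y$ for $Y\in \YY^{\leq 0}$ (which lies in $\D^{\leq 0}$ by right t-exactness of $j_!$) together with $j^*j_!Y\cong Y$ shows $\hom^1_\YY(W,Y)\cong \hom^1_\D(X, j_!Y)=0$, so $W$ is $\YY^{\leq 0}$-projective.

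There is no real obstacle; the only place requiring care is the inverse direction of the bijection, where one must pick the test object $Z$ in $\D^{\leq 0}$ of the form $j_!Y$ rather than trying to use $j_*Y$ (which is only left t-exact) to feed the $\D^{\leq 0}$-projectivity hypothesis on $X$.
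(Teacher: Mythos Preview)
Your proof is correct and is precisely the routine verification the paper has in mind; the paper in fact omits the proof entirely, stating only that ``one easily verifies'' the lemma. The one small point you leave implicit in the surjectivity step is that $W\cong j^*X\in \YY^{\leq 0}$ (by t-exactness of $j^*$), but this is immediate.
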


\subsection{Some facts on hereditary categories}\label{sec: hereditary}

Let $\A$ be a hereditary category linear over an algebraically closed field  $k$ with finite-dimensional morphism and extension spaces. It's well-known that each object $X\in \D^b(\A)$ decomposes as $X\cong \oplus_i H^i(X)[-i]$. In particular, each indecomposable object in $\D^b(\A)$ is a shift of an indecomposable object in $\A$.

 The following  Happel-Ringel Lemma (see e.g. \cite[Proposition 5.1]{Lenzing2}) is fundamental  for hereditary categories. 
\begin{prop}[Happel-Ringel Lemma]\label{Happel-Ringel lemma}	
 Let $E$ and $F$ be indecomposable objects of $\A$ such that $\ext^1(F, E)=0$. Then each nonzero morphism $f : E \ra F$ is a monomorphism or an epimorphism.  In particular, each indecomposable object in $\A$ without self-extension is exceptional.
\end{prop}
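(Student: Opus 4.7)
The plan is to argue by contradiction. Suppose $f$ is neither monic nor epic, so both $K:=\ker f$ and $C:=\coker f$ are nonzero, and with $I:=\im f$ we have short exact sequences $0\to K\to E\to I\to 0$ and $0\to I\to F\to C\to 0$. First I construct an auxiliary object $Y$ sitting at the center of a $3\times 3$ diagram with these two sequences as the top row and right column, an additional row $0\to K\to Y\to F\to 0$ and an additional column $0\to E\to Y\to C\to 0$ (the left column being the identity on $K$, the bottom row the identity on $C$). Such a $Y$ exists because applying $\hom(-,K)$ to $0\to I\to F\to C\to 0$ and using $\ext^2(C,K)=0$ (since $\A$ is hereditary) yields a surjection $\ext^1(F,K)\twoheadrightarrow \ext^1(I,K)$, so the extension class of $0\to K\to E\to I\to 0$ lifts to an extension $0\to K\to Y\to F\to 0$ restricting correctly along $I\hookrightarrow F$; the $3\times 3$ lemma then supplies the exact middle column.

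Next I activate the hypothesis $\ext^1(F,E)=0$. Pushing out the middle row $0\to K\to Y\to F\to 0$ along $K\hookrightarrow E$ yields $0\to E\to Z\to F\to 0$, which splits, so $Z\cong E\oplus F$. The pushout map $Y\to Z$ followed by projection to $E$ produces $\varphi\colon Y\to E$ satisfying $\varphi\circ(K\hookrightarrow Y)=(K\hookrightarrow E)$. Composing with the middle-column inclusion $E\hookrightarrow Y$ defines $\psi:=\varphi|_E\in\End(E)$ with $\psi\circ\iota=\iota$, where $\iota\colon K\hookrightarrow E$.

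The main step is to exploit indecomposability. Since $\A$ has finite-dimensional morphism spaces and $E$ is indecomposable, $\End(E)$ is local Artinian, so every non-unit is nilpotent. A nilpotent $\psi$ would yield $\iota=\psi^n\iota=0$, contradicting $K\neq 0$; hence $\psi$ is an automorphism. Replacing $\varphi$ by $\psi^{-1}\varphi$ (which still satisfies $\varphi\circ\iota=\iota$, because $\psi^{-1}\iota=\iota$), we may assume $\varphi$ is a retraction of $E\hookrightarrow Y$, so the middle column splits and $Y\cong E\oplus C$. Dividing by $K\subset E$ then gives $F\cong I\oplus C$ with $I,C$ both nonzero, contradicting the indecomposability of $F$. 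Hence $K=0$ or $C=0$, establishing the first claim.

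For the ``in particular'' statement, let $X$ be indecomposable with $\ext^1(X,X)=0$. Applying the first part with $E=F=X$ shows every nonzero $g\in\End(X)$ is monic or epic. Again $\End(X)$ is local Artinian, so a non-unit is nilpotent and therefore neither monic nor epic on the nonzero $X$; thus every nonzero endomorphism is a unit. Hence $\End(X)$ is a finite-dimensional division algebra over the algebraically closed $k$, forcing $\End(X)=k$. Combined with $\ext^{\geq 2}(X,X)=0$ (by heredity) and the hypothesis $\ext^1(X,X)=0$, this shows $X$ is exceptional.
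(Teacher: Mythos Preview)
Your proof is correct and follows the standard argument for the Happel--Ringel Lemma (the $3\times 3$ diagram construction using vanishing of $\ext^2$, followed by the pushout/splitting trick and the local endomorphism ring). The paper itself does not prove this proposition; it is stated with a reference to \cite[Proposition 5.1]{Lenzing2}, so there is no in-paper proof to compare against. One minor notational slip: in the parenthetical ``which still satisfies $\varphi\circ\iota=\iota$'' you mean $\varphi\circ\alpha=\iota$ where $\alpha\colon K\hookrightarrow Y$ (since $\iota$ maps into $E$, not $Y$); but the intended meaning is clear and the argument is sound.
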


Recall that an  object $T$ in a triangulated category  is a  \emph{partial silting object} if $\hom^{>0}(T,T)=0$ and $T$ is \emph{basic}  if its indecomposable direct summands are pairwise non-isomorphic.
The following fact shows that a basic partial silting object in $\D^b(\A)$ can yield an exceptional sequence. Note that  $\D^b(\A)$ is a Krull-Schmidt category since $\A$ is Hom-finite.
 
\begin{prop}[{\cite[Proposition 3.11]{AI}}]\label{hereditary ext vanish}
 Let $X$ be a basic partial silting object in $\D^b(\A)$.
	Then pairwise non-isomorphic indecomposable direct summands of $X$ can be ordered to form an exceptional sequence. 
\end{prop}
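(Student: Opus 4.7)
The plan is to reduce the ordering problem to a question within each fixed shift level of $\D^b(\A)$, then settle that question via acyclicity of a Hom directed graph followed by a topological sort.

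Using that $\A$ is hereditary, every indecomposable in $\D^b(\A)$ is a shift of an indecomposable in $\A$, so I write the summands of $X$ as $F_1[m_1], \ldots, F_n[m_n]$ with $F_l$ indecomposable in $\A$ and $m_l \in \Z$. Applied to self-maps, the hypothesis $\hom^{>0}(X,X)=0$ forces $\ext^1_\A(F_l, F_l)=0$, and Proposition~\ref{Happel-Ringel lemma} then makes each $F_l$ (and thus each $F_l[m_l]$) exceptional. More generally, since $\A$ is hereditary, $\hom^p(F_j[m_j], F_i[m_i])$ is concentrated in $p \in \{m_j - m_i,\, m_j - m_i + 1\}$, where it equals $\hom_\A(F_j, F_i)$ and $\ext^1_\A(F_j, F_i)$ respectively. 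Imposing $\hom^{>0}(X,X)=0$ yields that $m_j > m_i$ already forces $\hom^\bullet(F_j[m_j], F_i[m_i]) = 0$, and that $m_j = m_i$ forces at least $\ext^1_\A(F_j, F_i) = 0$. Hence ordering the summands by weakly increasing shift settles all inter-shift pairs in the exceptional sequence condition, and the remaining task is to order each shift level so that $\hom_\A(F_b, F_a) = 0$ whenever $b$ comes after $a$.

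Fix a shift level and relabel its summands $F_1, \ldots, F_r$; these are pairwise non-isomorphic exceptional objects of $\A$ with $\ext^1_\A(F_i, F_j) = 0$ for all $i, j$. Define the directed graph $Q$ on $\{F_1, \ldots, F_r\}$ by placing an edge $F_i \to F_j$ whenever $\hom_\A(F_i, F_j) \neq 0$ and $i \neq j$; any topological sort of an acyclic $Q$ provides the remaining ordering. To prove $Q$ acyclic, suppose for contradiction that it has a cycle and pick a shortest one $F_{i_1} \to \cdots \to F_{i_t} \to F_{i_1}$ of length $t \geq 2$, realized by nonzero morphisms $f_l$. The vanishing of $\ext^1_\A$ and Proposition~\ref{Happel-Ringel lemma} make each $f_l$ a monomorphism or an epimorphism. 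If all $f_l$ are monomorphisms (resp.\ all are epimorphisms), then the composition $f_t \circ \cdots \circ f_1 \in \End_\A(F_{i_1}) = k$ is itself a monomorphism (resp.\ epimorphism), hence a nonzero scalar; this forces $f_1$ (resp.\ $f_t$) to split and makes $F_{i_1}$ a direct summand of an adjacent $F_{i_l}$, contradicting indecomposability and distinctness. If the arrow types are mixed and $t \geq 3$, then the cyclic sequence contains (possibly through the wrap-around) a position $F_{i_l} \twoheadrightarrow F_{i_{l+1}} \hookrightarrow F_{i_{l+2}}$; the composition of an epi and a mono is automatically nonzero, producing an edge $F_{i_l} \to F_{i_{l+2}}$ in $Q$ and a strictly shorter cycle, contradicting minimality. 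If $t = 2$ with mixed types, a direct analysis of $f_2 \circ f_1 \in \End_\A(F_{i_1})$ and $f_1 \circ f_2 \in \End_\A(F_{i_2})$, each a scalar that either vanishes or forces a splitting and hence $F_{i_1} \cong F_{i_2}$, reaches the same contradiction.

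The main obstacle is acyclicity of $Q$, and within it the mixed-arrow case for $t \geq 3$: it requires the shortcutting argument via a nonzero epi-mono composition, combined with the pigeonhole observation that any mixed cyclic arrangement contains at least one epi-then-mono adjacency.
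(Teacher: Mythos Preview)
Your proof is correct and follows exactly the approach the paper indicates: the paper does not spell out a proof but simply remarks that the statement ``follows from Happel-Ringel Lemma,'' citing \cite[Proposition~3.11]{AI}. Your argument is a careful elaboration of precisely this route---ordering by shift to dispose of inter-level pairs via the partial silting condition, and then using Happel-Ringel within each shift level to force every nonzero map to be a mono or epi, from which acyclicity of the Hom graph follows by your shortest-cycle analysis.
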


Although it is stated for specific hereditary categories in \cite[Proposition 3.11]{AI}, the above fact follows from Happel-Ringel Lemma. 

We will need to  relate Ext-projectives to an exceptional sequence.
\begin{prop}[{\cite[Theorem (A)]{ASS}}]\label{order silting}
	Let $(\D^{\leq 0},\D^{\geq 0})$ be a t-structure in $\D^b(\A)$. Then finitely many pairwise non-isomorphic indecomposable $\D^{\leq 0}$-projectives can be ordered to form an  exceptional sequence in $\D^b(\A)$.
\end{prop}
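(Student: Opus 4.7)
The plan is to reduce the claim directly to Proposition~\ref{hereditary ext vanish} by producing a basic partial silting object from the collection of Ext-projectives.

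Concretely, given finitely many pairwise non-isomorphic indecomposable $\D^{\leq 0}$-projectives $X_1,\dots,X_n$, I would set $T=X_1\oplus\cdots\oplus X_n$ and show that $\hom^m(T,T)=0$ for every $m\geq 1$. The key observation is that the aisle $\D^{\leq 0}$ is closed under the suspension functor (since $\D^{\leq 0}[1]=\D^{\leq -1}\subset \D^{\leq 0}$), so for any $Y\in \D^{\leq 0}$ and any $m\geq 1$ the shift $Y[m-1]$ still lies in $\D^{\leq 0}$. Applying the defining property of an Ext-projective object to $X_j$ against the object $Y[m-1]=X_i[m-1]\in\D^{\leq 0}$ yields
\[
	\hom^{m}(X_j,X_i)=\hom(X_j,X_i[m-1][1])=\hom^1(X_j,X_i[m-1])=0.
\]
Taking direct sums over $i$ and $j$ shows that $T$ satisfies $\hom^{>0}(T,T)=0$; since the $X_i$ are pairwise non-isomorphic indecomposables, $T$ is a basic partial silting object of $\D^b(\A)$.

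Now I would invoke Proposition~\ref{hereditary ext vanish} verbatim: the pairwise non-isomorphic indecomposable direct summands of a basic partial silting object in $\D^b(\A)$ can be ordered to form an exceptional sequence, which is exactly the desired conclusion for $X_1,\dots,X_n$.

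I do not foresee a serious obstacle: the only subtlety is recognizing that being Ext-projective with respect to $\D^{\leq 0}$ (a $\hom^1$-vanishing condition) automatically upgrades to the vanishing of all positive $\hom^{m}$ when the target is itself in $\D^{\leq 0}$, precisely because the aisle absorbs all non-negative shifts. Once this is pointed out, the proof is a one-line reduction to the already established Happel–Ringel style result (Proposition~\ref{hereditary ext vanish}), and no use of the hereditary hypothesis beyond what is already packaged into that proposition is required.
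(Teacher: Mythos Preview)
Your proposal is correct and follows exactly the same route as the paper: the paper's proof consists of the single sentence that the direct sum of finitely many pairwise non-isomorphic indecomposable $\D^{\leq 0}$-projectives is a basic partial silting object, so Proposition~\ref{hereditary ext vanish} applies. You have simply spelled out the one detail the paper leaves implicit, namely why $\hom^{>0}(T,T)=0$ follows from the aisle being closed under suspension.
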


Proposition~\ref{order silting} follows from Proposition~\ref{hereditary ext vanish} since the direct sum of finitely many pairwise non-isomorphic indecomposable $\D^{\leq 0}$-projectives is a basic partial silting object.

	\subsection{Bounded t-structures on $\D^b(\Lambda)$ for a finite dimensional algebra $\Lambda$}\label{sec: algebra t-str}\label{sec: length heart}

	Recall from \cite{KV, AI} that an object $X$ in a triangulated category $\D$ is called \emph{silting} if it is partial silting, i.e., $\hom^{>0}(X,X)=0$, and if $\pair{X}_\D=\D$. It is \emph{tilting} if additionally $\hom^{< 0}(X,X)=0$. Two silting objects $X$ and $Y$ are said to be equivalent if $\add\, X=\add\, Y$. 
	
	Let $\Lambda$ be a finite dimensional algebra over a field $k$. 
	Denote by  $\K^b(\proj \Lambda)$ the bounded homotopy category of finite dimensional projective right modules over $\Lambda$. The following part of K\"onig-Yang correspondences will be used repeatedly in the sequel. See \cite{KD} for bijective correspondences between more concepts.

\begin{thm}[{\cite[Theorem 6.1]{KD}}]\label{silting t-str}
	Equivalence classes of silting objects in $\K^b(\proj \Lambda)$ are in bijective correspondence with bounded t-structures on $\D^b(\Lambda)$ with length heart. 	
\end{thm}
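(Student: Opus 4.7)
The plan is to establish the bijection by passing through the intermediate notion of a \emph{simple-minded collection} (SMC) in $\D^b(\Lambda)$, meaning a finite collection $\{S_1,\dots,S_n\}$ satisfying $\hom(S_i,S_j[m])=0$ for $m<0$, $\hom(S_i,S_j)$ is $k\cdot\delta_{ij}$, and whose thick closure is all of $\D^b(\Lambda)$. So the argument has two halves: (a) bounded t-structures with length heart correspond bijectively to SMCs, and (b) SMCs correspond bijectively to equivalence classes of silting objects in $\K^b(\proj\Lambda)$.

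For (a), given a bounded t-structure with length heart $\B$, I would first argue that $\B$ has only finitely many simple objects up to isomorphism: the boundedness gives $K_0(\B)\cong K_0(\D^b(\Lambda))=K_0(\Lambda)$, which has finite rank, and since $\B$ is of finite length its simples are independent classes in $K_0(\B)$. The set of simples is then easily checked to be an SMC, using the Hom-vanishing in length abelian categories and Happel-Ringel-style arguments. Conversely, given an SMC $\{S_1,\dots,S_n\}$, define the candidate heart $\B$ as the extension closure of the $S_i$ and the candidate aisle $\D^{\leq 0}=\bigcup_{m\ge 0}\B[m]\ast\dots\ast\B$. The task is to verify the t-structure axioms --- most notably the existence of the approximation triangle $A\to X\to B\cra$ --- which I would do by induction on the length of an expression of $X$ as an iterated extension of shifts of the $S_i$'s (using that the thick closure of the SMC is all of $\D^b(\Lambda)$, so every object admits such a presentation).

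For (b), I would use the Koszul-type duality between silting objects and SMCs. Given a silting $T=\bigoplus_{i=1}^n T_i$ in $\K^b(\proj\Lambda)$ with pairwise non-isomorphic indecomposable summands, construct $S_1,\dots,S_n\in\D^b(\Lambda)$ characterized up to isomorphism by $\hom(T_i,S_j[m])=k\cdot\delta_{ij}\delta_{m0}$; existence of such objects follows from the silting condition $\hom^{>0}(T,T)=0$ and classical generation, and the Hom-vanishing between the $S_j$'s then forces the SMC conditions. Conversely, given an SMC $\{S_1,\dots,S_n\}$, build the silting summands $T_i$ iteratively, in the spirit of Lemma~\ref{ext-proj recollement}, as "Ext-projective covers" of the $S_i$ with respect to the t-structure produced in (a): at each stage, one successively cones off maps from the projective cover of $S_i$ in $\mod\Lambda$ to eliminate unwanted $\ext^{>0}$'s against the other $S_j$'s. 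The resulting $T_i$ is automatically a bounded complex of projectives because the iteration terminates after finitely many steps --- this is where the finite length of $\B$ and the finiteness of the SMC enter critically.

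The main obstacle will be the last point: showing that the silting object constructed from an SMC actually lives in $\K^b(\proj\Lambda)$ rather than merely in $\D^b(\Lambda)$, and that the two directions of (b) are mutually inverse. Concretely, one must verify that the iterative mutation process stabilizes, which boils down to a convergence/finiteness statement controlled by the filtration of $\D^b(\Lambda)$ induced by the SMC. Once this is in hand, the inverse bijections follow by checking on indecomposable summands, using Proposition~\ref{simple in heart}-type characterizations of the simples in the heart together with the uniqueness implicit in the Hom-vanishing conditions that define the $T_i$ and the $S_j$. Compatibility with the equivalence relation $\add\,T=\add\,T'$ on the silting side versus the identification of SMCs as unordered sets then completes the proof.
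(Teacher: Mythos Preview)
The paper does not prove this theorem; it is quoted from \cite[Theorem 6.1]{KD} and the paper only recalls the form of the correspondence (the explicit aisle and co-aisle determined by a silting object $M$, and a pointer to Rickard's construction for the inverse direction). So there is no proof in the paper to compare against.

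Your outline is essentially the strategy carried out in \cite{KD} itself, where the bijections are routed through simple-minded collections. A couple of points deserve more care than you indicate. First, in part~(a), knowing that the $S_i$ classically generate $\D^b(\Lambda)$ means every object lies in the \emph{thick} closure, which involves direct summands as well as extensions; to get the approximation triangle by induction on an iterated-extension presentation you need to know that the extension closure (without summands) of the shifts of the $S_i$ already exhausts $\D^b(\Lambda)$, which is true but is itself a lemma. Second, in part~(b), the construction of the $T_i$ from an SMC is the delicate step (this is Rickard's construction, see \cite{Rick} and \cite[\S5.6]{KD}): one approximates each $S_i$ by successive left $\add\Lambda$-approximations and takes cones, and the key point---that the process terminates and lands in $\K^b(\proj\Lambda)$---uses that the negative self-extensions vanish and that the Hom-spaces involved are finite dimensional, not merely that $\B$ has finite length. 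Your description of this as ``coning off maps from the projective cover of $S_i$'' and invoking finite length of $\B$ is too loose to stand as a proof, though the spirit is right.
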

Let us recall this correspondence from \cite{KD}.
For a silting object $M$ in $\K^b(\proj \Lambda)$, the associated t-structure on $\D^b(\Lambda)$ is given by the pair \[\D^{\leq 0}=\{N\in \D^b(\Lambda)\mid \hom^{>0}(M,N)=0\},\] \[\D^{\geq 0}=\{N\in \D^b(\Lambda)\mid \hom^{<0}(M,N)=0\}.\] Moreover, the heart of $(\D^{\leq 0}, \D^{\geq 0})$ is equivalent to $\mod\, \End(M)$ (\cite[Lemma 5.3]{KD}).  We refer the reader to \cite[\S5.6]{KD} for the general construction (essentially due to Rickard~\cite{Rick}) of a silting object associated to a given bounded t-structure $(\D^{\leq 0}, \D^{\geq 0})$ on $\D^b(\Lambda)$ with length heart. When $\Lambda$ has finite global dimension, in which case the natural inclusion $\K^b(\proj \Lambda)\ra \D^b(\Lambda)$ is an exact equivalence, 
  the associated basic silting object in $\K^b(\proj \Lambda)=\D^b(\Lambda)$ is just the direct sum of a complete set of  indecomposable Ext-projectives in the aisle $\D^{\leq 0}$.

\begin{lem} [{\cite[Lemma 6.7]{Liu-V}}]\label{finite rep length heart}\label{finite rep t-str}
	If $\Lambda$ is a representation-finite hereditary algebra then each bounded t-structure on $\D^b(\Lambda)$ has length heart.
\end{lem}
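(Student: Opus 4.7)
The plan is to combine width-boundedness with representation-finiteness to reduce to a setting with only finitely many indecomposables in the heart, and then to invoke silting theory via K\"onig--Yang correspondences.

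First, since $\Lambda$ is finite-dimensional, the standard heart $\mod\Lambda$ is of finite length with only finitely many isomorphism classes of simple objects. Example~\ref{fin simple width bounded}(1) therefore applies and shows that any bounded t-structure $(\D^{\leq 0},\D^{\geq 0})$ on $\D^b(\Lambda)$ is width-bounded with respect to the standard one, so its heart $\B$ is contained in $\D_{\std}^{[m,n]}$ for some integers $m\le n$. Since $\Lambda$ is hereditary, each object of $\D^b(\Lambda)$ decomposes as the direct sum of shifts of its standard cohomologies, and representation-finiteness of $\Lambda$ provides only finitely many indecomposables in $\mod\Lambda$ up to isomorphism. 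Consequently $\D_{\std}^{[m,n]}$, and hence $\B$, has only finitely many indecomposable objects up to isomorphism; moreover, by Happel--Ringel (Proposition~\ref{Happel-Ringel lemma}), each of these indecomposables is exceptional in $\D^b(\Lambda)$ and is therefore a brick in $\B$.

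It remains to deduce from this that $\B$ is of finite length. By Theorem~\ref{silting t-str} it suffices to exhibit a silting object $M\in \K^b(\proj\Lambda)=\D^b(\Lambda)$ whose associated bounded t-structure is $(\D^{\leq 0},\D^{\geq 0})$. I would take $M$ to be the direct sum of a maximal collection of pairwise non-isomorphic indecomposable $\D^{\leq 0}$-projectives; by Proposition~\ref{order silting} these can be ordered to form an exceptional sequence in $\D^b(\Lambda)$, whose length is bounded by the rank of $K_0(\D^b(\Lambda))$, i.e., the number of simple $\Lambda$-modules. Existence of at least one such Ext-projective is guaranteed by combining the characterization in Lemma~\ref{ext-proj} (using the Serre functor on $\D^b(\Lambda)$, which exists since $\Lambda$ is hereditary) with the width-boundedness $\D_{\std}^{\le m}\subset \D^{\le 0}\subset \D_{\std}^{\le n}$: indecomposable summands of appropriately shifted indecomposable projective $\Lambda$-modules land in the aisle and have their image under the Serre functor in the co-aisle.

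The main obstacle is the verification that this collection of Ext-projectives is \emph{large enough} for $M$ to classically generate $\D^b(\Lambda)$ (so that $M$ is genuinely silting rather than partial silting), and that the t-structure on $\D^b(\Lambda)$ determined by $M$ via Theorem~\ref{silting t-str} does coincide with the given $(\D^{\leq 0},\D^{\geq 0})$. For the first point, one would argue by induction along the exceptional sequence of Ext-projectives, repeatedly enlarging the partial silting object via Lemma~\ref{ext-proj recollement} and Corollary~\ref{ex collection compatible filt} until all of $\D^b(\Lambda)$ is captured, using that the perpendicular category of any exceptional object in $\D^b(\Lambda)$ is again of the form $\D^b(\Lambda')$ for a rep-finite hereditary algebra $\Lambda'$ of strictly smaller rank, so that the argument terminates. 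For the second point, one uses that the aisle of the t-structure associated to a silting object is precisely the extension-closed, suspension-closed hull of its summands, together with the fact that every object in the original $\D^{\le 0}$ is built by extensions from shifts of the chosen Ext-projectives, which again follows from the rank count and K\"onig--Yang correspondences applied degreewise.
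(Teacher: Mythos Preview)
The paper does not prove this lemma itself; it is quoted from \cite[Lemma~6.7]{Liu-V}. That said, the paper does carry out essentially the same finite-length argument elsewhere (see the last paragraph of the proof of Proposition~\ref{not restrict to coh0}), so one can compare against that.

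Your first two steps are correct and exactly the right opening: width-boundedness gives $\B\subset\D_{\std}^{[m,n]}$, and hereditariness plus representation-finiteness force $\B$ to have only finitely many indecomposables, each an exceptional object (hence a brick) by Happel--Ringel.

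The problem is your third step. Invoking Theorem~\ref{silting t-str} to deduce that $\B$ has finite length is circular: that bijection only produces t-structures \emph{with length heart}, so you would need to know in advance that $(\D^{\le0},\D^{\ge0})$ arises from a silting object. Your proposed workaround---building $M$ from Ext-projectives and checking it classically generates---is not completed: the inductive step requires knowing that the perpendicular t-structure again has enough Ext-projectives, which is essentially the original problem one rank lower, and your final sentence (``every object in $\D^{\le0}$ is built from shifts of the chosen Ext-projectives'') is exactly the assertion that $M$ is silting and corresponds to the given t-structure, stated rather than proved.

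The cleaner route, and the one the paper uses in the analogous situation (Proposition~\ref{not restrict to coh0}), is to bypass silting entirely. Since $\Lambda$ is representation-finite hereditary, $\mod\Lambda$ is directed: its indecomposables admit a total order with $\hom(X_i,X_j)=0$ for $i<j$. Shifts of indecomposables inherit this, so the finitely many indecomposables $X_1,\dots,X_n$ of $\B$ can be so ordered. Now if $\bigoplus X_i^{s_i}$ is a proper subobject of $\bigoplus X_i^{t_i}$ in $\B$, one checks directly (using $\End(X_i)=k$ and the vanishing of backward morphisms) that $(s_1,\dots,s_n)<(t_1,\dots,t_n)$ lexicographically; hence every descending chain terminates and $\B$ is artinian, and dually noetherian. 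This is elementary and avoids the circularity.
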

Hence by Theorem~\ref{silting t-str}, to classify bounded t-structures on $\D^b(\Lambda)$, where $\Lambda$ is a representation-finite hereditary algebra,  it sufficies, say, to classify silting objects in $\D^b(\Lambda)$, which is indeed computable.   

The following fact characterizes when a silting object is a tilting object in the presence of a Serre functor. 
\begin{lem}[{\cite[Lemma 4.6]{LVY}}]\label{silting tilting}
Assume that $\Lambda$ has finite global dimension and $\SS$ is a Serre functor of $\D^b(\Lambda)$. 
Let $T$ be a silting object in $\D^b(\Lambda)$ and $\B$ the heart of the corresponding t-structure $(\D^{\leq 0}, \D^{\geq 0})$. Then  $T$ is tilting iff $\SS$ is right t-exact with respect to $(\D^{\leq 0}, \D^{\geq 0})$ iff $\SS T$ lies in $\B$.
\end{lem}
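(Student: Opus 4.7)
The proof will establish a cycle of implications using three basic ingredients: the characterization $\D^{\leq 0}=\{N : \hom^{>0}(T,N)=0\}$ and $\D^{\geq 0}=\{N:\hom^{<0}(T,N)=0\}$ coming from the silting object $T$; the Serre duality identity $\hom^n(X,\SS Y)\cong D\hom^{-n}(Y,X)$; and the silting hypothesis $\hom^{>0}(T,T)=0$ (which places $T$ in $\D^{\leq 0}$). The first useful observation is that $\SS T\in \D^{\geq 0}$ holds automatically: Serre duality turns the condition $\hom^{<0}(T,\SS T)=0$ into $\hom^{>0}(T,T)=0$, which is precisely the silting property. Hence $\SS T\in \B$ is equivalent to the single condition $\SS T\in \D^{\leq 0}$, and by another application of Serre duality, $\hom^{>0}(T,\SS T)=0$ is equivalent to $\hom^{<0}(T,T)=0$, i.e.\ to $T$ being tilting (equivalently $T\in \B$). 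This yields the equivalence of the second and third conditions almost for free.

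Next I would handle the implication $\SS$ right t-exact $\Rightarrow$ $\SS T\in \B$: since $T\in \D^{\leq 0}$, right t-exactness of $\SS$ forces $\SS T\in \D^{\leq 0}$, and combined with the automatic $\SS T\in \D^{\geq 0}$ this gives $\SS T\in \B$. To close the loop, suppose $\SS T\in \B$, equivalently $T$ is tilting; I need to deduce that $\SS$ is right t-exact. For $N\in \D^{\leq 0}$ I must show $\SS N\in \D^{\leq 0}$, i.e.\ $\hom^{>0}(T,\SS N)=0$. Applying Serre duality, $\hom^k(T,\SS N)\cong D\hom^{-k}(N,T)$, so the task reduces to showing $\hom^{<0}(N,T)=0$ for every $N\in \D^{\leq 0}$, i.e.\ $T[m]\in (\D^{\leq 0})^{\perp_0}=\D^{\geq 1}$ for every $m<0$. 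But if $T\in \B=\D^{\leq 0}\cap \D^{\geq 0}$, then for $m<0$ we have $T[m]\in \D^{\geq -m}\subset \D^{\geq 1}$, which is exactly what is needed.

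The key (and only nontrivial) manipulation is the Serre-duality bookkeeping in the last paragraph; once the characterizations of $\D^{\leq 0}$ and $\D^{\geq 0}$ in terms of $\hom^{\bullet}(T,-)$ are combined with the identity $\hom^n(T,\SS T)\cong D\hom^{-n}(T,T)$, all three implications become transparent. The main conceptual point to highlight is that $\SS T\in \B$ is a surprisingly strong condition: the $\D^{\geq 0}$-part is automatic from silting, while the $\D^{\leq 0}$-part is equivalent to the vanishing $\hom^{<0}(T,T)=0$ that upgrades silting to tilting, and this upgrade in turn forces global right t-exactness because $T$ controls the entire aisle $\D^{\leq 0}$.
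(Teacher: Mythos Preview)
Your proof is correct. The paper does not supply its own proof of this lemma but simply cites \cite[Lemma 4.6]{LVY}; your argument, which combines the explicit description of the aisle and co-aisle in terms of $\hom^\bullet(T,-)$ with Serre duality to reduce each implication to the vanishing conditions $\hom^{>0}(T,T)=0$ or $\hom^{<0}(T,T)=0$, is the standard and expected one.
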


We will also need the next two facts. 
\begin{lem}\label{simple ext-proj A}
 Let $k\vec{\AA}_s$ be the path algebra of the equioriented $\AA_s$-quiver. 
 Suppose $(\D^{\leq 0}, \D^{\geq 0})$ is a bounded t-structure on $\D^b(k\vec{\AA}_{s})$. Then some simple $k\vec{\AA}_s$-module is Ext-projective in some $\D^{\leq l}$.
\end{lem}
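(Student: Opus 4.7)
The plan is to use the K\"onig--Yang correspondence to translate the statement into an assertion about silting objects and then derive a contradiction from the combinatorics of the interval modules of $k\vec{\AA}_s$.

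First I would invoke Lemma~\ref{finite rep t-str} to see that the heart $\B$ of $(\D^{\leq 0},\D^{\geq 0})$ is of finite length. Since $\Lambda:=k\vec{\AA}_s$ is hereditary, the inclusion $\K^b(\proj\Lambda)\hookrightarrow \D^b(\Lambda)$ is an equivalence, so Theorem~\ref{silting t-str} together with the description recalled just afterwards supplies a basic silting object $M=M_1\oplus\cdots\oplus M_s\in\D^b(\Lambda)$ whose indecomposable summands are exactly the indecomposable Ext-projectives in $\D^{\leq 0}$. Writing each $M_i\cong N_i[k_i]$ with $N_i\in\mod\Lambda$ indecomposable, the silting condition $\hom^{>0}(M,M)=0$ together with the identity $\hom^n(N_i[k_i],N_j[k_j])=\hom^{n+k_j-k_i}(N_i,N_j)$ forces the $N_i$ to be pairwise non-isomorphic as modules. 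Since $S_j[k]$ is $\D^{\leq 0}$-projective iff $S_j$ is $\D^{\leq -k}$-projective, it is enough to show that some $N_i$ is simple.

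Suppose for contradiction that every $N_i=M_{[a_i,b_i]}$ has $a_i<b_i$. Using the projective resolution $0\to P_{b+1}\to P_a\to M_{[a,b]}\to 0$ (valid when $b<s$) and the identification $\hom_\Lambda(e_i\Lambda,X)=V_i(X)$, one checks that, for $a<c$,
\[
  \hom(M_{[c,d]},M_{[a,b]})\neq 0 \iff a\leq c\leq b\leq d,\qquad
  \ext^1(M_{[a,b]},M_{[c,d]})\neq 0 \iff b<s \text{ and } a<c\leq b+1\leq d.
\]
In the \emph{crossing} configuration $a<c\leq b<d$ (which itself forces $b<s$) both non-vanishings occur simultaneously; converting each into the shift constraint imposed by silting via the identity above yields $k_{[c,d]}>k_{[a,b]}$ from the $\ext^1$ and $k_{[c,d]}\leq k_{[a,b]}$ from the $\hom$, a contradiction. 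Hence any two of $[a_1,b_1],\ldots,[a_s,b_s]$ are either nested or disjoint.

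Finally, adjoining to this family the $s$ singleton intervals $\{[i,i]\mid 1\leq i\leq s\}$ preserves the non-crossing (laminar) property, and the classical bound of $2s-1$ on the size of a laminar family of non-empty subsets of an $s$-element set then leaves at most $s-1$ non-simple intervals, contradicting the assumption of $s$ of them. Therefore some $N_i$ is a simple module $S_j$, so the summand $S_j[k_i]$ of $M$ shows that $S_j$ is Ext-projective in $\D^{\leq -k_i}$. I expect the main technical point to be the clean derivation of the ``crossing'' condition from the $\ext^1$ computation; the laminar bound itself is standard.
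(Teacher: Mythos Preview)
Your argument is correct and is genuinely different from the paper's. The paper proceeds by induction on $s$: it picks one indecomposable summand $N[p]$ of the silting object, and when $N$ is not simple it analyses $N^{\perp_\A}$ to build a recollement whose ``open'' part is $\D^b(k\vec{\AA}_{l(N)})$, then invokes the induction hypothesis there and pulls the resulting simple Ext-projective back via $j_!$. Your route stays entirely inside the combinatorics of interval modules: you translate silting into shift constraints, show that a crossing pair $a<c\le b<d$ forces the incompatible inequalities $k_{[c,d]}>k_{[a,b]}$ and $k_{[c,d]}\le k_{[a,b]}$, and then use the laminar bound $2s-1$ to exclude $s$ pairwise non-crossing intervals of length $\ge 2$.

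What each approach buys: the paper's recollement reduction is heavier here but is exactly the machinery reused throughout (in particular in Lemma~\ref{simple Ext-proj} for tubes and later for $\coh_\lambda\X$), so it earns its keep globally. Your argument is more elementary and self-contained, avoids perpendicular categories entirely, and in fact yields Corollary~\ref{A_s exseq} directly without passing through silting: for a full exceptional sequence in $\mod k\vec{\AA}_s$, the same Hom/Ext computation shows that a crossing pair violates the exceptionality condition in either ordering, so the underlying intervals are again laminar and the same counting finishes. One small cosmetic point: your displayed $\hom$-condition ``$a\le c\le b\le d$'' is stated under the standing hypothesis $a<c$, so the condition reduces to $c\le b\le d$; it might be cleaner to write it that way.
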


\begin{proof}
	Denote $\A=\mod k\vec{\AA}_s, \D=\D^b(k\vec{\AA}_s)$ for short. It is well-known that $\A$ is a uniserial hereditary abelian category, each indecomposable object in $\A$ is exceptional, and $\D$ has a Serre functor (isomorphic to the Nakayama functor). 
	We use induction on $s$ to show our assertion. If $s=1$, we have $\mod k\vec{\AA}_1=\mod k$ and the assertion obviously holds. Suppose $s>1$. By  Lemma~\ref{finite rep length heart}, the heart $\B$ of $\tstr$ is of finite length. Take an indecomposable direct summand $N[p]$ ($N\in \A$) of  the corresponding  silting object. Then $N$ is $\D^{\leq p}$-projective.  
	If $N$ is a simple module then $N$ is the desired. Otherwise, let \[\A_1=\pair{\tau^m(\top (N))\mid 1\leq m< l(N)}_\A, \quad \bar{\A}_1=\pair{\tau^m(\top (N))\mid 0\leq m<l(N)}_\A,\] where $\tau=D\Tr$ represents the Auslander-Reiten translation and $l(N)$ is the length of $N$. 
	For a simple module $S$, denote by ${}^{[l]}S$ the unique indecomposable module with top $S$ and of length $l$.  Since $\oplus_{0\leq i<l(N)}{}^{[l(N)-i]}\tau^i \top(N)$ is a projective generator for $\bar{\A_1}$ with endomorphism algebra isomorphic to $k\vec{\AA}_{l(N)}$, we have $\bar{\A_1}\simeq\mod k\vec{\AA}_{l(N)}$. 	
	
	We know that $N^{\perp_\A}$ is an exact subcategory of $\A$ closed under extension. Take \[\A_2=\add\{M\in N^{\perp_\A}\mid \text{$M$ is indecomposable and $M\notin \A_1$}\}.\] We claim  $N^{\perp_\A}=\A_1\coprod \A_2$, which implies that $\A_2$ is an exact subcategory of $\A$ closed under extension. 
	Since $N^{\perp_\A}=\add\A_1\cup \A_2$, it sufficies to show that $\hom(\A_1, \A_2)=0=\hom(\A_2, \A_1)$. Note that 
\[\begin{aligned}
\A_1 & =\add\{{}^{[l]}\tau^i\top(N)\mid 1\leq i< l(N), 1\leq l\leq l(N)-i\},\\
		N^{\perp_\A} & =\{M\in N^{\perp_\A}\mid \hom(N, M)=0=\ext^1(N, M)\}\\
		& =\{M\in N^{\perp_\A} \mid \hom(N,M)=0=\hom(M, \tau N)\}.
\end{aligned}\]
Let $M$ be an indecomposable $k\vec{\AA}_s$-module.  Suppose $\hom({}^{[l]}\tau^i\top(N), M)\neq 0$ for some $1\leq i<l(N), 1\leq l\leq l(N)-i$. Then for some $1\leq k\leq l$, ${}^{[k]}\tau^i\top(N)$ is a subobject of $M$. If $M\notin \A_1$ then ${}^{[k+i]}\top(N)$ is a subobject of $M$. Meanwhile, ${}^{[k+i]}\top(N)$ is a quotient object of $N$ and thus $\hom(N,M)\neq 0$. This shows that if $\hom(N,M)=0$ then $\hom(\A_1, M)=0$. Simlarly, if  $\hom(M, {}^{[l]}\tau^i\top(N))\neq 0$ for some $1\leq i<l(N), 1\leq l\leq l(N)-i$, then $M$ has a nonzero quotient object which is moreover a subobject of $\tau N$; so $\hom(M,\A_1)=0$ if $\hom(M,\tau N)=0$. It follows that $\hom(\A_1, M)=0=\hom(M, \A_1)$ for an indecomposable module $M\in \A_2$. This shows our claim. 

 By Proposition~\ref{admissible Serre functor}, $N^{\perp_\D}$ is admissible in $\D$. Since $N$ is an exceptional Ext-projective object in $\D^{\leq p}$, by Lemma~\ref{ext-proj recollement}, $(\D^{\leq 0}, \D^{\geq 0})$ is compatible with the admissible subcategory $N^{\perp_\D}$ and $(\D^{\leq 0}\cap N^{\perp_\D}, \D^{\geq 0}\cap N^{\perp_\D})$ is a bounded t-structure on $N^{\perp_\D}$. 
 Obviously, this t-structure is compatible with the admissible subcategory $\D^b(\A_2)$ of $N^{\perp_\D}=\D^b(N^{\perp_\A})$. Hence by Lemma~\ref{compatible filtration}, $(\D^{\leq 0}, \D^{\geq 0})$ is compatible with the recollement \[ \xymatrix{ \D^b(\A_2)\ar[rr]|{i_*}   & &\ar@/_1pc/[ll]|{i^*} \ar@/^1pc/[ll]|{i^!}\D\ar[rr]|{j^*} & &\ar@/_1pc/[ll]|{j_!} \ar@/^1pc/[ll]|{j_*} {}^{\perp_\D}\D^b(\A_2),}\]
where $i_*, j_!$ are the inclusion functors. Note that 
\[
	\begin{aligned}
		{}^{\perp_\D}\D^b(\A_2) & = \pair{N, \tau^m(\top(N))\mid 1\leq m< l(N)}_\D\\
								& =\pair{\tau^m (\top(N))\mid 0\leq m< l(N)}_\D\\
					   & =\D^b(\bar{\A}_1)\\
					   & \simeq \D^b(k\vec{\AA}_{l(N)}).
	\end{aligned}
\]
Consider the bounded t-structure $(j^*\D^{\leq 0}, j^*\D^{\geq 0})$ on $\D^b(\bar{\A}_1)\simeq \D^b(k\vec{\AA}_{l(N)})$. By the induction hypothesis, some  $\tau^m(\top(N))\, (0 \leq m<l(N))$ is Ext-projective in some $j^*\D^{\leq l}$. Hence the simple module $\tau^m (\top (N))= j_! \tau^m(\top (N))$ is $\D^{\leq l}$-projective by Lemma~\ref{ext-proj from recollement}, as desired.
\end{proof}

\begin{cor}\label{A_s exseq}  Let $k\vec{\AA}_s$ be the path algebra of the equioriented $\AA_s$-quiver. Each silting object in $\D^b(k\vec{\AA}_s)$ contains a shift of some simple module as its direct summand. Each full exceptional sequence in $\mod k\vec{\AA}_s$ contains a simple module.
\end{cor}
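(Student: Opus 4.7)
The plan is to derive both claims from Lemma~\ref{simple ext-proj A} via the K\"onig-Yang correspondence (Theorem~\ref{silting t-str}).

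For the first claim, I would let $M$ be a silting object in $\D^b(k\vec{\AA}_s)$ and pass to a basic representative $M_0$ with $\add M_0 = \add M$. By Theorem~\ref{silting t-str}, $M_0$ corresponds to a bounded t-structure $(\D^{\leq 0}, \D^{\geq 0})$ with length heart; since $k\vec{\AA}_s$ has finite global dimension, the natural inclusion $\K^b(\proj k\vec{\AA}_s)\to \D^b(k\vec{\AA}_s)$ is an equivalence, so (as noted immediately after Theorem~\ref{silting t-str}) $M_0$ is the direct sum of a complete set of isoclasses of indecomposable $\D^{\leq 0}$-projectives. Lemma~\ref{simple ext-proj A} supplies a simple module $S$ that is $\D^{\leq l}$-projective for some $l$; equivalently $S[l]$ is $\D^{\leq 0}$-projective, hence an indecomposable summand of $M_0$, and therefore of $M$.

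For the second claim, I would reduce to the first by assembling a silting object from appropriately shifted copies of the $E_i$. Given a full exceptional sequence $(E_1, \ldots, E_s)$ in $\mod k\vec{\AA}_s$, pick strictly increasing integers $n_1 < n_2 < \dots < n_s$ and set $T = \bigoplus_{i=1}^s E_i[n_i]$. For any $p > 0$, the identity
\[\hom^p(E_i[n_i], E_j[n_j]) = \ext^{p+n_j-n_i}(E_i, E_j)\]
shows vanishing in all cases: when $i=j$ by exceptionality of $E_i$; when $i>j$ by the exceptional-sequence condition $\hom^\bullet(E_i, E_j) = 0$; and when $i<j$ because $p+n_j-n_i \geq 2$ while $\mod k\vec{\AA}_s$ is hereditary. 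Fullness of the original sequence yields $\pair{T}_{\D^b(k\vec{\AA}_s)} = \D^b(k\vec{\AA}_s)$, so $T$ is silting; the indecomposable summands $E_i[n_i]$ are pairwise non-isomorphic (an isomorphism $E_i \cong E_j[n_j-n_i]$ with $n_i \neq n_j$ is impossible since $E_i$ lies in the heart while $E_j[n_j-n_i]$ does not), so $T$ is basic. Applying the first claim to $T$, some summand $E_i[n_i]$ is isomorphic to $S[m]$ for a simple $S$, whence $E_i \cong S[m-n_i]$; since $E_i$ lies in the heart the shift must be zero, so $E_i \cong S$ is itself simple.

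I do not anticipate a serious obstacle: the first part is a direct packaging of Lemma~\ref{simple ext-proj A}, and the second is a clean reduction to the first. The only small subtlety is the construction of $T$ as a silting object, where the strict increase of the shifts is used precisely because heredity kills $\ext^{\geq 2}$ but the exceptional-sequence axiom provides no vanishing of $\hom(E_i, E_j)$ or $\ext^1(E_i, E_j)$ for $i<j$.
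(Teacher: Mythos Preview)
Your proof is correct and follows essentially the same route as the paper: the first claim is unpacked from Lemma~\ref{simple ext-proj A} via the K\"onig--Yang correspondence, and the second is reduced to the first by shifting the terms of the exceptional sequence to build a silting object (the paper cites \cite[Proposition 3.5]{AI} with $l_i=i$, which is exactly your strictly-increasing-shift construction).
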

\begin{proof}
	The first assertion follows from Lemma~\ref{simple ext-proj A}. For a full exceptional sequence $(E_1,\dots, E_n)$ in $\mod k\vec{\AA}_s$, it is observed in \cite[Proposition 3.5]{AI} that we can take suitable $l_i$ ($1\leq i\leq n$), say $l_i=i$ here, such that $\oplus_{i=1}^n E_i[l_i]$ is a silting object in $\D^b(k\vec{\AA}_s)$. So the second assertion follows.  
\end{proof}

\subsection{Bounded t-structures on $\D^b(\nilp k\tilde{\AA}_{t-1})$}\label{sec: A_t}

Let $k$ be a field. Denote by $\tilde{\AA}_{t-1}$ the quiver which is an oriented cycle with $t$ vertices and by $\A_t=\nilp k\tilde{\AA}_{t-1}$ the category of finite dimensional  nilpotent $k$-representations of $\tilde{\AA}_{t-1}$. Let us recall some standard  facts on $\A_t$.
$\A_t$ is a connected  hereditary uniserial length  abelian category and admits an autoequivalence $\tau$ of period $t$ such that $\tau(-)[1]$ is the Serre functor of $\D^b(\A_t)$. Moreover,
$\A_t$ has almost split sequences with Auslander-Reiten translation given by $[M]\dashrightarrow [\tau M]$, and its Auslander-Reiten quiver is a tube of rank $t$ (see \S\ref{sec: AR theory} if one is unfamiliar with Auslander-Reiten theory). 
If $S$ is a simple object in $\A_t$ then each simple object is of the form $\tau^i S$ for some $i\in\Z/t\Z$.  Denote by $S^{[n]}$ (resp. ${}^{[n]}S$) the unique (up to isomorphism) indecomposable object in $\A_t$ of length $n$  and with socle (resp. top) $S$. For an indecomposable object $X$ in $\A_t$, its length is denoted by $l(X)$, and its simple socle resp. top by $\soc(X)$ resp. $\top(X)$. Then  $X=(\soc (X))^{[l(X)]}={}^{[l(X)]}(\top (X))$. $X$ is exceptional iff $l(X)<t$.

Recall from \cite{HRS} that for a torsion pair $(\T,\F)$ in an abelian category $\A$, $\T$ is called a \emph{tilting torsion class}  if $\T$ is a cogenerator for $\A$, i.e, for each $A\in \A$, there is a monomorphism $A\monic T$  with $T\in \T$; dually, $\F$ is called a \emph{cotilting torsion-free class} if $\F$ is a generator for $\A$. 
\begin{lem}\label{A_t torsion pair}
	For a torsion pair $(\T,\F)$ in $\A_t$, exactly one of the following holds
	\begin{enum2}
	\item $\T$ is a tilting torsion class, equivalently, $\T$ contains a non-exceptional indecomposable object;
	\item $\F$ is  a cotilting torsion-free class, equivalently, $\F$ contains a non-exceptional indecomposable object.
	\end{enum2}
\end{lem}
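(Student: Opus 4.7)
\smallskip
\noindent\textbf{Proof plan.}

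First note that the non-exceptional indecomposables in $\A_t$ are precisely those of length $\geq t$; the indecomposables of $\A_t$ are uniserial, parametrized by socle and length, and morphism spaces are computed by matching quotients of the source with subobjects of the target. I aim to establish both equivalences and the dichotomy.

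For the equivalence in (1), suppose first $\T$ contains an indecomposable $X$ with $l(X) \geq t$. Then some quotient of $X$ is of the form $S^{[t]}$, so $S^{[t]} \in \T$ by closure of $\T$ under quotients. Using the Auslander--Reiten formula $\ext^1(-,-) \cong D\hom(-, \tau(-))$ together with the uniserial description of morphism spaces, one checks that $\ext^1(S^{[t]}, S^{[t]}) \neq 0$ and that its non-split extension is the uniserial $S^{[2t]}$ (the composition factors glue correctly because $\tau$ has period $t$). Iterating yields $S^{[kt]} \in \T$ for every $k \geq 1$, and the quotients of these realize every simple of $\A_t$ as the socle of an indecomposable in $\T$ of arbitrarily large length. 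Hence every indecomposable of $\A_t$ embeds into some object of $\T$, so $\T$ is a cogenerator. Conversely, if $\T$ is a cogenerator, fix any simple $S$ and embed $S^{[t]} \hookrightarrow M$ with $M \in \T$; decomposing $M$ into indecomposables and using that a nonzero morphism from an indecomposable uniserial into a direct sum must be a monomorphism onto a single summand (the kernels of the component maps form a chain), we obtain an indecomposable summand of $M$ with socle $S$ and length $\geq t$, a non-exceptional indecomposable in $\T$. The equivalence in (2) is dual, using generators in place of cogenerators.

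For the dichotomy, \emph{at most one} holds: if $X \in \T$ and $Y \in \F$ were both non-exceptional indecomposables, then since $l(X), l(Y) \geq t$ any residue modulo $t$ is realizable by a length in $\{1, \ldots, t\}$, producing a quotient of $X$ isomorphic to a subobject of $Y$ and hence a nonzero morphism $X \to Y$, contradicting $\hom(\T, \F) = 0$. For \emph{at least one}: take an indecomposable $Z$ of length $2t$ and its torsion decomposition $0 \to U \to Z \to V \to 0$ with $U \in \T, V \in \F$; uniseriality of $Z$ forces $U$ and $V$ to be indecomposable or zero with $l(U) + l(V) = 2t$, so at least one has length $\geq t$, giving the desired non-exceptional indecomposable in $\T$ or $\F$. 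The main obstacle is the extension computation in the ($\Leftarrow$) direction of (1), which guarantees that a single non-exceptional indecomposable in $\T$ forces $\T$ to contain indecomposables of arbitrary length and hence to cogenerate.
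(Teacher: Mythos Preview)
Your proof is correct and follows essentially the same strategy as the paper's. Both establish the equivalence in (1) by showing that once $\T$ contains a non-exceptional indecomposable, closure under quotients and extensions forces ${}^{[l]}S\in\T$ for a fixed simple top $S$ and all $l\geq 1$ (you route this through $S^{[t]}$, $S^{[2t]},\dots$; the paper phrases it as ${}^{[l]}\top(T)\in\T$, but the underlying mechanism is identical); and both dualize for (2) and use the existence of nonzero morphisms between non-exceptional indecomposables for ``at most one''. The one substantive difference is that you explicitly prove ``at least one'' by torsion-decomposing an indecomposable of length $2t$, whereas the paper's proof leaves this step unstated. Your argument here is clean and fills a small gap in the paper's presentation; otherwise the two proofs are the same in spirit and detail.
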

\begin{proof}
Since there exists a nonzero morphsim between two non-exceptional indecomposable objects in $\A_t$, $\T$ and $\F$ cannot contain non-exceptional indecomposable objects in the meantime. If $\T$ is a tilting torsion class then it's easy to see that $\T$ contains a non-exceptional indecomposable object. Conversely, if $\T$ contains a non-exceptional indecomposable object $T$ then ${}^{[l]}\top(T)\in \T$ for all $l\in \Z_{\geq 1}$ since $\T$ is closed under quotient and extension.  Since any indecomposable object in $\A_t$ is an subobject of ${}^{[l]}\top(T)$ for some $l$, $\T$ is a tilting torsion class. Dual argument applies to conclude the asserted equivalence for $\F$. 
\end{proof}

We will need  the following criterion to  make sure that certain subcategory of $\D^b(\A_t)$ contains a non-exceptional indecomposable object. 
\begin{lem}\label{order sequence orthogonal}
	Let $\CC$ be a subcategory of $\A_t$ closed under extension and direct summand.  
	If each simple object in $\A_t$ occurs as a composition factor of some indecomposable object in $\CC$, equivalently, there is a sequence \[(X_0,X_1,\dots, X_{n-1}, X_n=X_0)\] of indecomposable objects in $\A_t$ with $\ext^1(X_i, X_{i-1})\neq 0$ $(1\leq i\leq n)$, then $\CC$ contains a non-exceptional indecomposable object. 
\end{lem}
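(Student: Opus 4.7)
The plan is to work with the second, equivalent formulation: assuming a cyclic sequence $(X_0, X_1, \ldots, X_n = X_0)$ of indecomposables in $\CC$ with $\ext^1(X_i, X_{i-1}) \neq 0$ for each $i$, I will inductively build a single indecomposable object $Y_n \in \CC$ of length at least $t$, which is therefore non-exceptional. Throughout, label the simples as $S_c$ for $c \in \Z/t\Z$ with $\tau S_c = S_{c-1}$; every indecomposable in $\A_t$ has the form ${}^{[l]}S_c$, parametrized by its top $S_c$ and length $l$. Write $a_i$ and $c_i$ for the length and top-index of $X_i$.

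The crucial technical input is the description of non-split extensions in the uniserial category $\A_t$. Using Serre duality $\ext^1(N,M) \cong D\hom(M, \tau N)$ together with the fact that in a uniserial category any nonzero morphism $M \to N$ factors through a unique common subquotient, a direct computation shows: for $M = {}^{[a]}S_i$ and $N = {}^{[b]}S_j$, $\ext^1(N,M) \neq 0$ iff the quantity $d := i - j + b + 1 \pmod t$ lies in $[1, \min(a,b)]$, in which case $\ext^1(N,M)$ is one-dimensional and the non-split extension $M \to E \to N$ decomposes in $\A_t$ as $E = Y \oplus Y'$, where $Y$ is indecomposable with top $S_j$ (the top of $N$), socle equal to that of $M$, and length $a + b - d + 1$, while $Y'$ has length $d - 1$. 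In particular $Y \in \CC$ by closure of $\CC$ under extensions and direct summands.

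Now define $Y_0 := X_0$, and inductively let $Y_i$ be the big summand $Y$ of the non-split extension of $X_i$ by $Y_{i-1}$. The top of $Y_i$ is $S_{c_i}$, equal to the top of $X_i$, and a short induction gives $l(Y_i) \geq a_i$, so the hypothesis $\ext^1(X_{i+1}, X_i) \neq 0$ upgrades directly to $\ext^1(X_{i+1}, Y_i) \neq 0$ through the formula above. Moreover $l(Y_i) - l(Y_{i-1}) = a_i + 1 - d_i$, and this integer agrees, as an element of $[1, t-1]$, with the ``forward advance'' $c_i - c_{i-1}$ in $\Z/t\Z$. Summing around the cycle, $\sum_{i=1}^n (c_i - c_{i-1})$ is $\equiv 0 \pmod t$ with all summands strictly positive, hence is a positive multiple of $t$ and so at least $t$. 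Therefore $l(Y_n) \geq l(X_0) + t > t$, and $Y_n \in \CC$ is the required non-exceptional indecomposable.

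The equivalence of the two hypotheses drops out of the construction: given the cyclic sequence, all $t$ simples are composition factors of $Y_n$ and each such factor is inherited from some $X_i$, so every simple occurs as a composition factor of some indecomposable in $\CC$; conversely, if each simple appears as a composition factor of some indecomposable in $\CC$, the corresponding arcs on $\Z/t\Z$ cover the circle, and finitely many of them can be ordered cyclically so that consecutive arcs satisfy the overlap/adjacency condition from the formula above, producing the required cyclic sequence. The main obstacle will be the careful verification of the structure of the non-split extension in $\A_t$---identifying the big summand $Y$ together with its top, socle, and length---so that the correct top position and length bound propagate cleanly through the induction; once that is in hand, the global conclusion is a simple mod-$t$ telescoping on the tops $c_i$.
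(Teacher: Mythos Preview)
Your proposal is correct. Both your argument and the paper's rest on the same core construction: from two consecutive exceptional terms $Y,Z$ of the cycle with $\ext^1(Z,Y)\neq 0$, one produces an indecomposable $C\in\CC$ having $Y$ as a subobject and $Z$ as a quotient. The paper builds $C$ by first writing $0\to A\to Z\to B\to 0$ with $A$ a quotient of $Y$ and $B$ indecomposable, then taking $C$ as the unique indecomposable extension $0\to Y\to C\to B\to 0$; you identify $C$ directly as the ``big summand'' of the unique non-split extension $0\to Y\to E\to Z\to 0$. A short check shows these coincide (your $Y'$ of length $d-1$ is the paper's $A$).

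Where the arguments diverge is in the bookkeeping. The paper argues by contradiction: assuming all merged objects remain exceptional, it shortens the cycle one step at a time, using the subobject/quotient relations of $C$ together with the long exact sequence in $\ext^1$ to propagate the non-vanishing conditions to the new neighbours, until only one term $C$ with $\ext^1(C,C)\neq 0$ remains. You instead give a direct length bound: the increment $l(Y_i)-l(Y_{i-1})=a_i+1-d_i$ lies in $[1,t-1]$ and is congruent to $c_i-c_{i-1}\pmod t$, so summing around the cycle forces $l(Y_n)\geq l(X_0)+t>t$. Your route is more computational and avoids the contradiction, at the price of needing the explicit decomposition of the non-split extension (which, as you note, is the one point requiring care); the paper's route is coordinate-free and only uses the qualitative subobject/quotient description of $C$.
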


\begin{proof}
	We claim that if $Y,Z$ are two non-isomorphic exceptional objects in $\A_t$ with $\ext^1(Z,Y)\neq 0$, then $\CC$ contains an indecomposable object $C$ such that $Y$ is a subobject of $C$ in $\A_t$ and $Z$ a quotient object of $C$ in $\A_t$. 
	Indeed, if $\ext^1(Z,Y)\neq 0$ then there are two objects $A,B$ in $\A_t$ such that $B$ is indecomposable, $A$ is a quotient object of $Y$ and $A,B$ fits into the exact sequence $0\ra A\ra Z\ra B\ra 0$.  Let $C$ be the unique (up to isomorphism) indecomposable object which fits into the exact sequence $0\ra Y\ra C\ra B\ra 0$. Then $Y$ (resp. $Z$) is a subobject (resp. quotient object) of $C$.
	Moreover, we have $\ext^1(C,A)=0$ and there is an exact sequence $0\ra Y\ra A\oplus C\ra Z\ra 0$. 
	Hence $C\in \CC$. This shows our claim.

	Now suppose that $\CC$ contains a sequence $(X_0,X_1,\dots,X_{n-1}, X_n=X_0)$ with the given property. Assume for a contradiction that $\CC$ contains no non-exceptional indecomposable object. In particular, each $X_i$ is exceptional. 
 Applying our claim to $Y=X_1,Z=X_2$, we obtain an indecomposable object $C_1\in \CC$ such that $X_1$ (resp. $X_2$) is a subobject (resp. quotient object) of $C_1$. 
 Then $\ext^1(X_1,X_0)\neq 0$ implies $\ext^1(C_1,X_0)\neq 0$; $\ext^1(X_3,X_2)\neq 0$ implies $\ext^1(X_3,C_1)\neq 0$. 
 Hence we have a sequence $(X_0, C_1, X_3, \dots, X_n)$  of length $(n-1)$  in $\CC$ 
which also satisfies the given property. 
 By assumption, $C_1$ is exceptional. Then repeating the above argument for $n$ times will eventually give us a sequence $(C)$ of length $1$ with $C$ indecomposable and $\ext^1(C,C)\neq 0$, whence $C$ is a non-exceptional indecomposable object in $\CC$, a contradiction. Hence $\CC$ must contain a non-exceptional object.

\end{proof}

We show an analogue of Lemma~\ref{simple ext-proj A} to perform induction. 

\begin{lem}\label{simple Ext-proj}
For a bounded t-structure  $(\D^{\leq 0}, \D^{\geq 0})$  on $\D^b(\A_t)$, which is not a shift of the standard t-structure, there is   some simple object  in $\A_t$ that is Ext-projective in some $\D^{\leq l}$. 

\end{lem}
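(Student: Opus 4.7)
The strategy is to adapt the template of Lemma~\ref{simple ext-proj A} to the periodic, non-projective setting of $\A_t$. The principal tool is Serre duality in $\D^b(\A_t)$: since the Serre functor is $\tau[1]$, the isomorphism $\hom^1(X,Y) \cong D\hom(Y, \tau X)$ yields the criterion that an indecomposable $X \in \A_t$ is Ext-projective in $\D^{\leq l}$ iff $X \in \D^{\leq l}$ and $\tau X \in \D^{\geq l+1}$. Index the simples so that $\tau S_i = S_{i-1}$ cyclically in $\Z/t\Z$, and write
\[ a_i = \max\{l : H^l(S_i) \neq 0\}, \qquad b_i = \min\{l : H^l(S_i) \neq 0\}, \]
the cohomology degrees being taken with respect to the given t-structure $\tstr$. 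Then the simple $S_j$ is Ext-projective in some $\D^{\leq l}$ precisely when $a_j < b_{j-1}$; I would aim to produce such an index $j$.

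I would argue by contradiction, supposing $a_j \geq b_{j-1}$ for every $j \in \Z/t\Z$. Combined with the trivial inequality $a_j \geq b_j$, this is a cyclic system of inequalities around the $t$ simples, and the aim is to deduce that all $a_j$ and $b_j$ agree with a common integer $n$. Granting this, each $S_i$ is concentrated in cohomological degree $n$, hence $S_i[n] \in \B$; since the simples generate $\A_t$ under extensions and $\B$ is closed under extensions, $\A_t[n] \subseteq \B$. Because $\hom(\D^{\geq 0}, \D^{\leq -1}) = 0$ one has $\B \cap \B[k] = 0$ for every $k \neq 0$, so any object of $\B$ lying in the triangulated subcategory classically generated by $\A_t[n]$ must itself lie in $\A_t[n]$, forcing $\B = \A_t[n]$. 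This contradicts the hypothesis that $\tstr$ is not a shift of the standard t-structure.

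The main obstacle is the combinatorial step just sketched: the inequalities $a_j \geq b_{j-1}$ together with $a_j \geq b_j$ are, on their own, not tight enough to pin down the common constant $n$. To close this gap I would extend the Serre-duality criterion to the non-simple uniserial indecomposables ${}^{[m]}S_i \in \A_t$, and feed in the long exact sequences of cohomology produced by the short exact sequences $0 \to {}^{[m-1]}S_{i-1} \to {}^{[m]}S_i \to S_i \to 0$, which tightly couple the $a$'s and $b$'s of consecutive simples. A parallel fallback, should some non-simple exceptional indecomposable $X \in \A_t$ (necessarily of length $l(X) < t$ by Proposition~\ref{order silting} and the Happel-Ringel Lemma) happen to be Ext-projective in some $\D^{\leq l}$, is to apply Lemma~\ref{ext-proj recollement} to obtain a recollement compatible with $\tstr$ whose third term is $X^{\perp_\D} \simeq \D^b(X^{\perp_\A})$, use Geigle-Lenzing perpendicular calculus in the tube to decompose $X^{\perp_\A}$ into a wing equivalent to $\mod k\vec{\AA}_{l(X)-1}$ and a smaller cyclic piece, and then invoke Lemma~\ref{simple ext-proj A} on the wing together with induction on $t$ on the smaller piece, pulling Ext-projectives back via Lemma~\ref{ext-proj from recollement} to produce the required simple in $\A_t$.
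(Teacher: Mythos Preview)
Your argument has a genuine gap at exactly the point you yourself flag. The criterion $a_j < b_{j-1}$ for a simple to be Ext-projective in some aisle is correct, and your ``fallback'' reduction via recollement, perpendicular calculus in the tube, and Lemma~\ref{simple ext-proj A} is essentially the second half of the paper's proof. What is missing is the \emph{existence} of any exceptional Ext-projective at all. The cyclic inequalities $a_j \geq b_{j-1}$ and $a_j \geq b_j$ really do not force the $a_j$ and $b_j$ to be constant (for instance, with $t=2$ one could have $a_0=a_1=1$, $b_0=b_1=0$ without contradiction at this level), and the long-exact-sequence idea you sketch yields only bounds like $a({}^{[m]}S_i)\leq\max(a({}^{[m-1]}S_{i-1}),a_i)$, which go the wrong way to close the gap. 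Your fallback then begins with ``should some non-simple exceptional indecomposable happen to be Ext-projective,'' a hypothesis nothing in the argument has secured.

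The paper supplies the missing existence step by looking at the other t-structure. Width-boundedness gives $\B\subset\D_{\A_t}^{[m,n]}$ with $m$ maximal and $n$ minimal; since the t-structure is not a shift of the standard one, $m<n$, whence $\hom(\B[-m]\cap\A_t,\,\B[-n]\cap\A_t)=0$. Because any two non-exceptional indecomposables in $\A_t$ admit a nonzero morphism between them, one of the boundary slices $\B[-m]\cap\A_t$ or $\B[-n]\cap\A_t$ contains no non-exceptional indecomposable. Lemma~\ref{order sequence orthogonal} then produces in that slice an indecomposable $X$ with $\ext^1(X,Y)=0$ for all other indecomposables $Y$ in the slice; this $X$ is an exceptional Ext-projective (or Ext-injective, giving an Ext-projective after applying $\tau^{-1}[-1]$). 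With this $X$ in hand, the paper runs the same recollement reduction you describe to extract a simple Ext-projective.
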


\begin{proof}
	Let $\B$ be the heart of $(\D^{\leq 0}, \D^{\geq 0})$. 
	Each bounded t-structure on $\D^b(\A_t)$ is width-bounded with respect to the standard t-structure (see Example~\ref{fin simple width bounded}). Hence, $\B\subset \D_{\A_t}^{[m,n]}$ for some  $m,n$. We take $m$ to be maximal and $n$ minimal. 
Since there exists a nonzero morphism between two non-exceptional indecomposable objects in $\A_t$ and since $\hom(\B[-m], \B[-n])=0$,  
either i) $\B[-m]\cap \A_t$ or ii) $\B[-n]\cap \A_t$ contains no non-exceptional indecomposable  object. Suppose case i) occurs.
Then $\B[-m]\cap \A_t$ contains only finitely many indecomposables. Moreover, Lemma~\ref{order sequence orthogonal} implies that there is some indecomposable object $X$ such that $\ext^1(X, Y)=0$ for indecomposable object $Y\in \B[-m]\cap \A_t$ non-isomorphic to $X$.
Then we have $\hom^{>0}(X[m], \B)=0$, whence $X$ is $\D^{\leq m}$-projective. If case ii) happens then similarly we find an indecomposable object
$Y\in \A_t$ which is $\D^{\geq n}$-injective. This gives us a $\D^{\leq n}$-projective $\tau^{-1}Y[-1]$.
Anyway we have an exceptional object  $B\in \A_t$ that is Ext-projective  in some $\D^{\leq l}$.

Similarly as in the proof of Lemma~\ref{simple ext-proj A}, one can show that $B^{\perp_{\A_t}}$ decomposes as $B^{\perp_{\A_t}}=\B_1\coprod\B_2$, where \[\B_1=\pair{\tau^m (\top(B))\mid 1\leq m< l(B)}_{\A_t}\] and $\B_2$ is an exact subcategory of $\A_t$ closed under extension, that \[\bar{\B}_1:=\pair{\tau^m(\top (B))\mid 0\leq m< l(B)}_{\A_t}\simeq \mod k\vec{\AA}_{l(B)},\] 
and that $(\D^{\leq 0}, \D^{\geq 0})$ is compatible with the recollement 
\[ \xymatrix{ \D^b(\B_2)\ar[rr]|{i_*}   & &\ar@/_1pc/[ll]|{i^*} \ar@/^1pc/[ll]|{i^!}\D\ar[rr]|{j^*} & &\ar@/_1pc/[ll]|{j_!} \ar@/^1pc/[ll]|{j_*} \pair{\bar{\B}_1}_\D=\D^b(\bar{\B}_1),}\]
	where $i_*, j_!$ are inclusion functors. 
Moreover, we have a  bounded t-structure $(j^* \D^{\leq 0},j^* \D^{\geq 0})$ on $\D^b(\bar{\B}_1)\simeq \D^b(k\vec{\AA}_{l(B)}).$ We know from Lemma~\ref{simple ext-proj A} that  some $\tau^m (\top(B))$ is Ext-projective in some $j^* \D^{\leq l}$, which  gives us the desired Ext-projective object $\tau^m(\top(B))$  in $\D^{\leq l}$ by Lemma~\ref{ext-proj from recollement}. 
\end{proof}

Let $\SSS$ be a (possibly empty) proper collection of   simple objects in $\A_t$, where properness means that $\SSS$ does not contain a complete set of simple objects in $\A_t$ and  simple objects in $\SSS$  are pairwise non-isomorphic. Two such collections are said to be equivalent if they yield the same isoclasses of simple objects.  
If $\SSS$ is nonempty then there exist uniquely determined  $\{S_1,\dots, S_n\}\subset \SSS$ and positive integers $l_1,\dots, l_n$ such that 
\begin{equation}
	\SSS=\bigsqcup_{i=1}^n \{\tau^j S_i\mid 0\leq j< l_i\}.
\end{equation}
Since $\oplus_{1\leq i\leq n}\oplus_{0\leq j<l_i}{}^{[l_i-j]}\tau^{j}S_i$ is a projective generator for $\pair{\SSS}_{\A_t}$ whose endomorphism algebra is isomorphic to $k\vec{\AA}_{l_1}\times \dots \times k\vec{\AA}_{l_n}$, we have an equivalence
\begin{equation}\label{simple gen}
\pair{\SSS}_{\A_t}\simeq \coprod_{i=1}^n\mod k\vec{\AA}_{l_i},
\end{equation}
where $k\vec{\AA}_{l}$ is the path algebra of the equioriented $\AA_{l}$-quiver. In the sequel, we will also write in the form~\eqref{simple gen} when $\SSS$ is empty by defining the right hand side of ~\eqref{simple gen} to be the zero category. 
Since $\SSS^{\perp_{\A_t}}$ is a uniserial length abelian $k$-category whose Ext-quiver is an oriented cycle with $t-\sharp \SSS$ vertices, we have  an equivalence 
\begin{equation}\SSS^{\perp_{\A_t}}\simeq \A_{t-\sharp \SSS}.
\end{equation}

Bounded t-structures on $\D^b(\A_t)$ can be described as follows. 
\begin{prop}\label{A_t t-str}
	Given a bounded t-structure $(\D^{\leq 0}, \D^{\geq 0})$ on $\D^b(\A_t)$, there is a unique (up to equivalence) proper collection $\SSS$ of  simple objects in $\A_t$ such that 
	\begin{itemize} 
	\item $(\D^{\leq 0}, \D^{\geq 0})$ is compatible with the recollement
		\[ \xymatrix{\D^b(\SSS^{\perp_{\A_t}})=\SSS^{\perp_\D}  \ar[rr]|{i_*}   & &\ar@/_1pc/[ll] \ar@/^1pc/[ll] \D=\D^b(\A_t) \ar[rr] & &\ar@/_1pc/[ll]|{j_!} \ar@/^1pc/[ll] \pair{\SSS}_\D,}\] where $i_*,j_!$ are the inclusion functors;
	\item the corresponding t-structure on $\SSS^{\perp_\D}$ has heart $\SSS^{\perp_{\A_t}}[m]$ for some $m$.
	\end{itemize}
	In particular, each bounded t-structure on $\D^b(\A_t)$ has length heart.	
\end{prop}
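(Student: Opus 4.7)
The plan is to induct on $t$. The base case $t = 1$ is trivial: $\A_1 \simeq \mod k$, so every bounded t-structure on $\D^b(\A_1)$ is a shift of the standard one and $\SSS = \emptyset$ works.

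For the inductive step I distinguish two cases. If $\tstr$ is itself a shift of the standard t-structure, then $\SSS = \emptyset$ satisfies both bulleted conditions. Otherwise, Lemma~\ref{simple Ext-proj} produces a simple object $S \in \A_t$ that is Ext-projective in some $\D^{\leq l}$. Since $S$ is exceptional and $\D^b(\A_t)$ has the Serre functor $\tau(-)[1]$, Proposition~\ref{admissible Serre functor} gives that $S^{\perp_\D}$ is admissible; Lemma~\ref{ext-proj recollement} then shows that $\tstr$ is compatible with the recollement $S^{\perp_\D} \leftrightarrow \D \leftrightarrow \pair{S}_\D$. Under the canonical identifications $S^{\perp_\D} = \D^b(S^{\perp_{\A_t}})$ and $S^{\perp_{\A_t}} \simeq \A_{t-1}$, I invoke the inductive hypothesis on $\D^b(\A_{t-1})$ to obtain a proper collection $\SSS''$ of simples in $S^{\perp_{\A_t}}$ with the analogous properties.

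The main step is to promote $\{S\} \cup \SSS''$ to a proper collection $\SSS$ of simples in $\A_t$. The simples of $\A_{t-1} \simeq S^{\perp_{\A_t}}$, viewed as objects of $\A_t$, are either simples $\tau^i S$ of $\A_t$ (for $2 \le i \le t-1$) or the single indecomposable $(\tau S)^{[2]}$ whose composition factors in $\A_t$ are $\tau S$ and $S$. I define $\SSS$ to be $\{S\}$ together with all composition factors in $\A_t$ of the elements of $\SSS''$. A short case analysis shows that $\SSS$ is proper: if $(\tau S)^{[2]} \notin \SSS''$ then $\tau S \notin \SSS$; otherwise, any simple $\tau^i S$ ($i \ge 2$) missed by $\SSS''$ is also missed by $\SSS$. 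The identification $\SSS^{\perp_{\A_t}} = (\SSS'')^{\perp_{\A_{t-1}}}$ as subcategories of $\A_t$ then reduces, for the only non-trivial element $\tau S \in \SSS \setminus \A_{t-1}$, to checking that for $X \in \A_{t-1}$ the vanishings $\hom_{\A_t}(\tau S, X) = 0 = \ext^1_{\A_t}(\tau S, X)$ are equivalent to $\hom_{\A_{t-1}}((\tau S)^{[2]}, X) = 0 = \ext^1_{\A_{t-1}}((\tau S)^{[2]}, X)$; this is a direct Serre-duality computation using the identity $\tau_{\A_{t-1}}((\tau S)^{[2]}) = \tau^2 S$.

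The preceding identification yields $\SSS^{\perp_\D} = (\SSS'')^{\perp_{S^{\perp_\D}}}$, and compatibility of $\tstr$ with the admissible filtration $\SSS^{\perp_\D} \subset S^{\perp_\D} \subset \D$ follows from Lemma~\ref{compatible filtration}. Hence $\tstr$ is compatible with the recollement of the proposition, and the induction hypothesis forces the corresponding t-structure on $\SSS^{\perp_\D}$ to have heart $\SSS^{\perp_{\A_t}}[m]$ for some $m$. Uniqueness is established by characterizing $\SSS^{\perp_{\A_t}}$ intrinsically from the heart $\B$: it is the maximal subcategory (up to equivalence) of the form $(\SSS')^{\perp_{\A_t}}$ whose shift occurs as a Serre subcategory of $\B$, and this recovers $\SSS$ up to equivalence. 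The length-heart conclusion follows from Corollary~\ref{filt no-ar-le} applied to the admissible filtration: the corresponding heart on $\SSS^{\perp_\D}$ is $\SSS^{\perp_{\A_t}}[m]$, a length category by construction, while the heart on $\pair{\SSS}_\D \simeq \D^b\big(\coprod_i \mod k\vec{\AA}_{l_i}\big)$ is of finite length by Lemma~\ref{finite rep length heart}, since each factor is the derived category of a representation-finite hereditary algebra. The main obstacle lies in the combinatorial verification of the third paragraph; the case $(\tau S)^{[2]} \in \SSS''$ is the delicate one, because lifting then introduces both $\tau S$ and $S$ into $\SSS$, and properness is salvaged only by exploiting a simple of $\A_{t-1}$ that $\SSS''$ misses.
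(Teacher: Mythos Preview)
Your overall strategy---induct on $t$, invoke Lemma~\ref{simple Ext-proj} to produce a simple Ext-projective $S$, pass to $S^{\perp_{\A_t}}\simeq\A_{t-1}$, and lift the collection from the inductive hypothesis---is exactly the paper's. Your lifting rule (take composition factors in $\A_t$ of the members of $\SSS''$) unwinds to the paper's explicit case split on whether $(\tau S)^{[2]}\in\SSS''$, and the identification $\SSS^{\perp_{\A_t}}=(\SSS'')^{\perp_{\A_{t-1}}}$ is what the paper states as $\SSS_1^{\perp_{\D_1}}=\SSS^{\perp_\D}$.

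There is, however, a genuine error in your base case: $\A_1$ is \emph{not} equivalent to $\mod k$. By definition $\A_1=\nilp k\tilde{\AA}_0$, where $\tilde{\AA}_0$ is the one-vertex one-loop quiver, so $\A_1$ is the category of finite-dimensional nilpotent $k[x]$-modules. It has a unique simple $S$ but infinitely many indecomposables $S^{[r]}$, and the assertion that every bounded t-structure on $\D^b(\A_1)$ is a shift of the standard one requires an argument. The paper supplies one: since $\hom(S^{[r]}[l],S^{[r']}[l'])\neq 0$ whenever $l\leq l'$, any heart $\B$ must lie in a single degree $\A_1[l]$, hence equals $\A_1[l]$.

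Your uniqueness argument also diverges from the paper and is not justified as written. You assert that $\SSS^{\perp_{\A_t}}$ is the maximal subcategory of that shape whose shift is Serre in $\B$, but you neither prove this maximality nor show that it is equivalent to the two bulleted conditions (compatibility with the recollement plus standard heart on the perpendicular). The paper's argument is intrinsic and cleaner: since the heart on $\SSS^{\perp_\D}$ is $\SSS^{\perp_{\A_t}}[m]$, the aisle $\D_1^{\leq 0}$ has no nonzero Ext-projectives, so by Lemma~\ref{ext-proj from recollement} the direct sum $T$ of a complete set of indecomposable $\D^{\leq 0}$-projectives already lies in $\pair{\SSS}_\D$ and is a silting object there; thus $\pair{T}_\D=\pair{\SSS}_\D$, and $\SSS$ is recovered as the complete set of simples in $\pair{\SSS}_{\A_t}$.
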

\begin{proof}
	
	Since each bounded t-structure on $\pair{\SSS}_\D=\D^b(\pair{\SSS}_{\A_t})\simeq \D^b(\coprod_{i=1}^n\mod k\vec{\AA}_{l_i})$ has length heart (by Lemma~\ref{finite rep length heart}) and $\SSS^{\perp_{\A_t}}[m]$ is of finite length, by Lemma~\ref{no-ar-le}, the second assertion follows from the first. 
	We use induction on $t$ to prove the first assertion. 
	
	Suppose $t=1$. We have a unique (up to isomorphism) simple object $S$ in $\A_1$. So the asserted $\SSS$ is the empty set. We need show that any bounded t-structure on $\D^b(\A_1)$, whose heart is denoted by $\B$, is a shift of the standard one. Note that each indecomposable object in $\D^b(\A_1)$ is of the form $S^{[r]}[l]$ for some $r\in \Z_{\geq 1}, l\in \Z$. Since $\hom(S^{[r]}[l], S^{[r']}[l'])\neq 0$ for $l\leq l'$, we have $\B\subset \A_1[l]$ for some $l$. Then $\B=\A_1[l]$, as desired.

	Now consider $t>1$. If $\B$ is a shift of $\A_t$, just take $\SSS=\emptyset$. Suppose that $\B$ is not a shift of $\A_t$.  By Lemma~\ref{simple Ext-proj} and Lemma~\ref{ext-proj recollement}, for some simple $S$ in $\A_t$, $(\D^{\leq 0}, \D^{\geq 0})$ is compatible with the admissible subcategory $\D_1:=S^{\perp_\D}=\D^b(S^{\perp_{\A_t}})$. 
	$\A:=S^{\perp_{\A_t}}$ is equivalent to $\A_{t-1}$, and simple objects in $\A$ are $\tau S^{[2]}$ and those $S'$, which are simple in $\A_t$ and non-isomorphic to $\tau S$ and $S$. 
	By the induction hypothesis, for a proper collection  $\SSS_1$ of simple objects in $S^{\perp_{\A_t}}$, the corresponding t-structure on $\D_1=\D^b(S^{\perp_{\A_t}})$ is compatible with the admissible subcategory $\SSS_1^{\perp_{\D_1}}$ and the corresponding t-structure on $\SSS_1^{\perp_{\D_1}}$ has heart $\SSS_1^{\perp_{\A}}[m]$ for some $m$.  
	If $\tau S^{[2]}\in \SSS_1$, take $\SSS=\{\tau S, S\}\cup (\SSS_1\backslash \tau S^{[2]});$ if $\tau S^{[2]}\notin \SSS_1$, take $\SSS=\SSS_1\cup \{S\}$. 
	Then $\SSS_1^{\perp_{\D_1}}=\SSS^{\perp_\D}$ and $\SSS_1^{\perp_{\A}}=\SSS^{\perp_{\A_t}}$. By Lemma~\ref{compatible filtration}, $(\D^{\leq 0}, \D^{\geq 0})$ is compatible with the admissible subcategory $\SSS^{\perp_\D}$ and the corresponding t-structure on $\SSS^{\perp_\D}$ has heart $\SSS^{\perp_{\A_t}}[m]$.

	Let $(\D_1^{\leq 0},\D_1^{\leq 0})$ and $(\D_2^{\leq 0},\D_2^{\geq 0})$ be the corresponding t-structures on $\SSS^{\perp_\D}$ and $\pair{\SSS}_\D$, respectively. Note that $\D_1^{\leq 0}$ contains no nonzero Ext-projective object. Let $T$ be the direct sum of a complete set of indecomposable $\D^{\leq 0}$-projectives. Then by Lemma~\ref{ext-proj from recollement}, $T\in \pair{\SSS}_{\D}$ and $T$ is the direct sum of a complete set of indecomposable $\D_2^{\leq 0}$-projectives. Thus $T$ is a silting object in $\pair{\SSS}_\D=\D^b(\pair{\SSS}_{\A_t})$. In particular, $\pair{T}_\D=\D^b(\pair{\SSS}_{\A_t})$. As a complete set of simple objects in $\pair{\SSS}_{\A_t}$, the collection $\SSS $ is uniquely determined. This finishes the proof.
\end{proof}

\section{Weighted projective lines}\label{cha wpl}
For self-containedness, we review the basic theory of weighted projective lines in details in \S\ref{sec: wpl def}-\ref{sec: wpl perp}. The materials in \S\ref{sec: wpl def} are taken from the original article \cite{GL}, which introduced the notion of weighted projective lines.   For a recent survey of the theory, see \cite{Lenzing}. We fix an algebraically closed field $k$ in this section.

\subsection{Basic definitions and properties}\label{sec: wpl def}
Given a sequence $\udp=(p_1,\dots, p_t)$($t> 2$) of positive integers, define  an abelian group $L(\udp)$ of rank one by \[L(\udp)=\pair{\vec{x_1},\dots,\vec{x_t}, \vec{c}\mid p_1\vec{x_1}=\dots =p_t\vec{x_t}=\vec{c}}.\] Denote $\vec{\omega}=(t-2)\vec{c}-\sum_{i=1}^t\vec{x_i}$, which is called \emph{the dualizing element}. Each $\vec{x}\in L(\udp)$ can be written uniquely in the form \[\vec{x}=\sum_{i=1}^tl_i\vec{x_i}+l\vec{c}, \quad 0\leq l_i<p_i, l_i,l\in\Z.\] $L(\udp)$ is an ordered group if we define $\vec{x}\geq 0$ iff $\vec{x}\in\sum_{i=1}^t\Z_{\geq 0}\vec{x_i}$. Let $p=\lcm(p_1,\dots, p_t)$. We have a group homomorphism, called a degree map, \[\delta: L(\udp)\ra \Z, \quad\vec{x_i}\mapsto \frac{p}{p_i}.\]

\def\frakp{\mathfrak{p}}
\def\Proj{\textup{Proj}}
Let $\P^1=\P^1(k)$ be (the set of closed points of) the projective line over $k$.
Given  a sequence $\udp=(p_1,\dots, p_t)$ of positive integers and a sequence  $\udl=(\lambda_1,\dots, \lambda_t)$ of distinct points in  $\P^1$ (normalized such that $\lambda_1=\infty, \lambda_2=0,\lambda_3=1$), we define an algebra \[S=S(\udp, \udl)=k[X_1,\dots, X_t]/(X_i^{p_i}-X_2^{p_2}+\lambda_iX_1^{p_1}, 3\leq i\leq t).\]  Write $x_i=\bar{X_i}\in S$. 
$S$ becomes $L(\udp)$-graded with the assignment $\deg(x_i)=\vec{x_i}$ and thus $S=\oplus_{\vec{x}\in L(\udp)} S_{\vec{x}}$, where $S_{\vec{x}}$ consists of those homogeneous elements of degree $\vec{x}$. Using $S$ as the homogeneous coordinate algebra, \cite{GL} introduced  a weighted projective line $\X=\X(\udp,\udl)$. $\X$ is defined to be the $L(\udp)$-graded projective spectrum of $S$, which is the set  \[\Proj^{L(\udp)}S:=\{\text{$L(\udp)$-graded prime ideal $\frakp$ of $S$} \mid \frakp \nsupseteq S_+:=\oplus_{\vec{x}>0}S_{\vec{x}}\}\]  equipped with Zariski topology and a $L(\udp)$-graded structure sheaf $\O=\O_\X$. 
There is a bijection 
\begin{equation}\label{wpl-pl}
	\X(k)\lra \P^1,\quad [x_1,\dots, x_t]\mapsto [x_1^{p_1},x_2^{p_2}]
\end{equation}
between the set of closed points of $\X$ and  $\P^1$. By virtue of this bijection, the weighted projective line $\X$ is understood to be  the usual projective line $\P^1$, where weights $p_1,\dots, p_t$ are attached respectively to the $t$ points $\lambda_1,\dots,\lambda_t$. 
 We can  define $L(\udp)$-graded $\O_\X$-modules and coherent $L(\udp)$-graded $\O_\X$-modules. The category $\coh\X$ of $L(\udp)$-graded coherent $\O_\X$-modules over $\X=\X(\udp, \udl)$ is a noetherian hereditary abelian category with finite dimensional morphism and extension spaces. In particular, $\coh\X$ is a Krull-Schmidt category.   
 We have an analogue of Serre's theorem, that is,  we have an equivalence \[\coh\X\simeq \frac{\mod^{L(\udp)} S}{\mod^{L(\udp)}_0 S},\] where $\mod^{L(\udp)} S$ is the abelian category of $L(\udp)$-graded finite generated modules over $S$ and $\mod^{L(\udp)}_0 S$ is the Serre subcategory of $\mod^{L(\udp)} S$ consisting of modules of finite length. 
One may as well take the latter quotient category as the definition of $\coh\X$.

For $\vec{x}\in L(\udp)$, 
we have a natural $k$-linear autoequivalence of $\mod^{L(\udp)}S$ given by degree shifting by $\vec{x}\in L(\udp)$ on $L(\udp)$-graded $S$-modules $M$: $M(\vec{x})_{\vec{y}}=M_{\vec{x}+\vec{y}}$. 
And this  induces a $k$-linear autoequivalence $-(\vec{x})$ of $\coh\X$: $F\mapsto F(\vec{x}), F\in \coh\X$.
We denote by $\tau$ the $k$-linear autoequivalence $-(\vec{\omega})$ of $\coh\X$, where $\vec{\omega}$ is the dualizing element. 
\begin{thm}[Serre duality]\label{serre duality}
	For $X, Y\in \coh\X$, we have an isomorphism \[D\ext^1(X,Y)\cong \hom(Y, \tau X)\] functorial in $X,Y$, where $D=\hom_k(-,k)$. 
\end{thm}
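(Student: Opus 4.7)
My plan is to reduce the statement to a duality between line bundles and then translate it to a graded local duality on the homogeneous coordinate algebra $S$. First, since $\coh\X$ is hereditary and is classically generated by the line bundles $\O(\vec{x})$, every coherent sheaf $X$ fits into a short exact sequence $0\to L_1\to L_0\to X\to 0$ where $L_0,L_1$ are finite direct sums of line bundles. Applying $\hom(-,\tau X')$ and $\ext^1(-,Y)$ to such a resolution of $X$ gives long exact sequences, and both sides of the claimed isomorphism are contravariantly cohomological in $X$; by a five-lemma argument it suffices to establish the duality when $X=\O(\vec{x})$. Twisting by the autoequivalence $-(\vec{x})$ then further reduces the problem to proving
\[
D\ext^1(\O,Y)\cong \hom(Y,\O(\vec{\omega})),
\]
functorially in $Y$.

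Next I would pass to graded modules via the equivalence $\coh\X\simeq \mod^{L(\udp)}S/\mod^{L(\udp)}_0 S$. Represent $Y$ by a graded module $M$; then $\hom(\O,Y)$ is the degree-zero component of $M$ modulo its bounded subquotient, while $\ext^1(\O,Y)$ can be identified with the degree-zero component of the local cohomology group $H^1_{S_+}(M)$, where $S_+=\oplus_{\vec{x}>0}S_{\vec{x}}$. This is the standard computation that relates sheaf cohomology on a projective spectrum with local cohomology of the homogeneous coordinate ring, carried out in the $L(\udp)$-graded setting. Similarly, $\hom(Y,\O(\vec{\omega}))$ corresponds on the module side to $\hom_{\gr S}(M,S(\vec{\omega}))$ (well-defined up to modules of finite length, which do not contribute). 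The required Serre duality therefore becomes the statement that the pairing
\[
H^1_{S_+}(M)_{\vec{0}}\times \hom_{\gr S}\!\bigl(M,S(\vec{\omega})\bigr)\longrightarrow k
\]
is perfect and functorial in $M$.

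The main obstacle is establishing this graded local duality with the precise twist by $\vec{\omega}$. For this, one verifies that $S=k[x_1,\dots,x_t]/(x_i^{p_i}-x_2^{p_2}+\lambda_i x_1^{p_1})$ is a two-dimensional $L(\udp)$-graded complete intersection, hence Cohen-Macaulay and in fact Gorenstein; one then identifies its graded canonical module as $S(\vec{\omega})$. The value $\vec{\omega}=(t-2)\vec{c}-\sum_{i=1}^t\vec{x_i}$ emerges by an adjunction-type computation: each relation $x_i^{p_i}-x_2^{p_2}+\lambda_i x_1^{p_1}$ sits in degree $\vec{c}$, contributing $(t-2)\vec{c}$ from the $t-2$ relations, and each variable $x_i$ contributes $-\vec{x_i}$, exactly reproducing the shift of the canonical bundle on the weighted line. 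Once the identification of the graded dualizing module is in place, Matlis/graded local duality yields the required perfect pairing, and naturality in $M$ gives the functoriality in $Y$. Combined with the two reductions above, this completes the proof.
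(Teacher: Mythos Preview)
The paper does not give its own proof of this theorem; it records Serre duality as one of the foundational facts taken from the original Geigle--Lenzing article \cite{GL}, and uses it as a black box throughout. So there is no argument in the paper to compare against.

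Your outline is essentially the standard proof, and it is correct in its main thrust: the homogeneous coordinate algebra $S$ is an $L(\udp)$-graded complete intersection of Krull dimension two, hence Gorenstein, and the computation of its graded canonical module as $S(\vec{\omega})$ (via the adjunction formula: $t-2$ relations in degree $\vec{c}$, minus the $t$ variable degrees $\vec{x}_i$) is exactly the one in \cite{GL}. Graded local duality then supplies the pairing, and the sheaf-to-module dictionary translates it into the stated Serre duality.

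One point to tighten: before you can invoke the five lemma in the first reduction, you need a natural transformation $\hom(Y,\tau X)\to D\ext^1(X,Y)$ already in hand for arbitrary $X,Y$; only then does checking it on line bundles suffice. In your write-up the pairing only appears after the reduction. The cleanest fix is to construct the trace/residue map $H^1_{S_+}(S)_{\vec{\omega}}\to k$ coming from the Gorenstein property first, which globalises to a natural pairing $\ext^1(X,Y)\otimes\hom(Y,\tau X)\to k$ for all $X,Y$, and then reduce to line bundles to check non-degeneracy. Alternatively, run the whole argument on the module side (where local duality already gives an isomorphism for every graded module $M$) and only translate to sheaves at the end; this avoids the five-lemma step entirely.
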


Consequently, the bounded derived category $\D^b(\X)=\D^b(\coh\X)$ of $\coh\X$ has a Serre functor $\tau(-)[1]$.

There is a linear form $\rk: K_0(\X)\ra \Z$ on the Grothendieck group $K_0(\X)$ of $\coh\X$, called \emph{rank}, which is preserved under the action of $L(\udp)$. As usual, we have the notion of a \emph{locally free sheaf}, or a \emph{vector bundle}.  A \emph{line bundle} is a vector bundle of rank $1$. 
A coherent sheaf $F$ over $\X$ is called \emph{torsion} if it is of finite length in $\coh\X$, equivalently, if $\rk(F)=0$. Each coherent sheaf  over $\X$ decomposes as the direct sum of a torsion sheaf and a vector bundle. The subcategory of vector bundles resp. torsion sheaves over $\X$ is denoted by $\vect\X$ resp. $\coh_0\X$. We have $\hom(\coh_0\X,\vect\X)=0$. 

The function $\w: \P^1\ra \Z_{\geq 1}, \lambda\mapsto \left\{\begin{array}{ll} 1 & \text{if $\lambda\neq \lambda_i, \forall i$} \\p_i &  \text{if $\lambda=\lambda_i$}\end{array}\right.$  is called the \emph{weight function} of $\X$. A weight function of $\X$  obviously shares the same data as that given by the pair $(\udp, \udl)$.  $(p_1,\dots, p_t)$ is called the \emph{weight sequence} of $\X$. For $\lambda\in \P^1$, by virtue of the bijection~\eqref{wpl-pl}, we  denote by $\coh_\lambda\X$ the category of those torsion sheaves supported at $\lambda$. 
\begin{prop}
 The category $\coh_0\X$ of torsion sheaves decomposes into a coproduct $\coprod_{\lambda\in \P^1}\coh_\lambda\X$ of uniserial categories. 
	 The number of simple objects in $\coh_\lambda\X$ is $\w(\lambda)$. 
\end{prop}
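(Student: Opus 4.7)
The plan is to construct simple torsion sheaves at each closed point using the $L(\udp)$-graded coordinate algebra $S$, and then to leverage Serre duality together with the hereditary property of $\coh\X$ to decompose $\coh_0\X$. Throughout I work via the equivalence $\coh\X\simeq \mod^{L(\udp)}S/\mod^{L(\udp)}_0 S$, regarding simple torsion sheaves as images of cyclic graded modules $S/\mathfrak{p}$ for appropriate height-one homogeneous prime ideals $\mathfrak{p}$.

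First, I will exhibit the simple torsion sheaves point by point. For an ordinary point $\lambda\in\P^1\setminus\{\lambda_1,\ldots,\lambda_t\}$, the element $x_2^{p_2}-\lambda\, x_1^{p_1}\in S_{\vec c}$ (with the evident modification for $\lambda=\infty$) generates a homogeneous prime ideal $\mathfrak{p}_\lambda$ of height one; the image of $S/\mathfrak{p}_\lambda$ in the Serre quotient yields, up to twist by $L(\udp)$, a single simple sheaf supported at $\lambda$, in accord with $\w(\lambda)=1$. For an exceptional point $\lambda_i$, the element $x_i\in S_{\vec{x_i}}$ is itself homogeneous prime, and the $p_i$ twists of $S/(x_i)$ by $j\vec{x_i}$ for $0\le j<p_i$ yield pairwise non-isomorphic simple sheaves, the non-isomorphism being detected by comparing the degrees in which the graded quotients $S/(x_i)(j\vec{x_i})$ are generated modulo the subgroup $\sum_{j\neq i}\Z\vec{x_j}$. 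Using that $S$ is a graded Krull domain, a standard argument then shows that every simple in $\coh_0\X$ arises this way, giving exactly $\w(\lambda)$ isomorphism classes of simples supported at each $\lambda$.

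Next, I will establish the Hom- and $\ext^1$-vanishing between $\coh_\lambda\X$ and $\coh_{\lambda'}\X$ for $\lambda\neq\lambda'$. Hom-vanishing on simples reduces to the coprimality of $\mathfrak{p}_\lambda$ and $\mathfrak{p}_{\lambda'}$ in $S$ modulo the irrelevant ideal, and extends to all of $\coh_\lambda\X$ and $\coh_{\lambda'}\X$ by induction on length. Since $\tau=-(\vec\omega)$ merely shifts the grading, it preserves the support of any torsion sheaf, so each $\coh_\lambda\X$ is $\tau$-stable; Serre duality (Theorem~\ref{serre duality}) then gives $D\ext^1(X,Y)\cong\hom(Y,\tau X)=0$ for $X\in\coh_\lambda\X$, $Y\in\coh_{\lambda'}\X$. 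Since every object of $\coh_0\X$ has finite length, induction on length using both vanishings yields the direct sum decomposition $\coh_0\X=\coprod_{\lambda\in\P^1}\coh_\lambda\X$.

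Finally, I will verify that each $\coh_\lambda\X$ is uniserial by identifying it with the category of finite-length modules over the graded localization of $S$ at $\mathfrak{p}_\lambda$. At an ordinary point this localization is an ordinary discrete valuation ring, so every finitely generated torsion module is a direct sum of cyclic uniserial modules $R/(\pi^n)$. At an exceptional point $\lambda_i$ it is a $\Z/p_i\Z$-graded DVR with uniformizer $x_i$ of degree $\vec{x_i}$, and the same structure theorem applies, producing for each of the $p_i$ simples $S_{i,j}$ a unique indecomposable of each length $n$ with socle $S_{i,j}$. The main obstacle I expect is the bookkeeping at the exceptional points: keeping the degree-shift conventions consistent so that exactly $p_i=\w(\lambda_i)$ simples appear, and checking that the uniserial chains close up correctly modulo the $L(\udp)$-action.
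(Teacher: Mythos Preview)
The paper does not give its own proof of this proposition; it is stated in \S\ref{sec: wpl def} as a fact taken from the original Geigle--Lenzing paper \cite{GL}, with the explicit exact sequences for the simple sheaves recorded immediately afterwards but no argument supplied. Your outline is essentially the standard proof one finds in \cite{GL} (see also \cite{CK}): build the simples as quotients by height-one homogeneous primes, use Serre duality and $\tau$-stability of the support to get the block decomposition, and read off uniseriality from the graded local structure at each point. So your approach is correct and aligns with the source the paper is citing rather than differing from anything the paper itself does.
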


 $\lambda_i$'s are called  \emph{exceptional} points and the remaining points of $\P^1$ \emph{ordinary} points. For an ordinary point $\lambda$, the unique simple sheaf $S$ supported at $\lambda$ fits into the exact sequence
\[0\lra\O\overset{X_2^{p_2}-\lambda X_1^{p_1}}{\lra} \O(\vec{c})\lra S\lra 0.\]
For an exceptional point $\lambda_i$, the exact sequences \[0\lra \O(j\vec{x}_i)\overset{X_i}{\lra}\O((j+1)\vec{x}_i)\lra S_{i,j}\lra 0,\quad j\in \Z/p_i\Z\]  characterize the $p_i$ pairwise non-isomorphic simple sheaves  $S_{i,j}$ supported at $\lambda_i$. The simple sheaf $S$ supported at an ordinary point satisfies $S(\vec{x})\cong S$ for any $\vec{x}\in L(\udp)$; the  simple sheaves $S_{i,j}$ supported at $\lambda_i$ satisfies $S_{i,j}(\vec{x})\cong S_{i,j+l_i}$ if $\vec{x}=\sum_{i=1}^nl_i \vec{x_i}$. In particular, $\tau S_{i,j}\cong S_{i,j-1}$. $S_{i,j}$ is an exceptional object iff $p_i>1$. 

\begin{rmk}
As a uniserial length abelian $k$-category whose Ext-quiver is an oriented cycle with $\w(\lambda)$ verticies,  $\coh_\lambda\X$ is equivalent to  the category  $\nilp k\tilde{\AA}_{\w(\lambda)-1}$ of nilpotent finite dimensional $k$-representations of the cyclic quiver $\tilde{\AA}_{\w(\lambda)-1}$ with $\w(\lambda)$ vertices. So the algebra $k\tilde{\AA}_{t-1}$ provides a local study of a weighted projective line. This accounts for the presence of \S\ref{sec: A_t}. 
\end{rmk}
Denote by $\pic\X$ the Picard group of $\X$, i.e., the group of isoclasses of line bundles under tensor product.
\begin{prop}
	\label{line bundle}
	\begin{enum2}
	\item  The mapping \[L(\udp)\lra \pic\X,\quad \vec{x}\mapsto \O(\vec{x})\] is an isomorphism. In particular, each line bundle over $\X$ is isomorphism to  $\O(\vec{x})$ for some $\vec{x}\in L(\udp)$. 
	
	\item Each nonzero bundle over $\X$ admits a line bundle filtration. That is, for a nonzero bundle $E$, there is a filtration \[0=E_0\subset E_1\subset \dots \subset E_n=E\] with line bundle factors $L_i=E_i/E_{i-1}$ ($0<i\leq n$).
	\end{enum2}
\end{prop}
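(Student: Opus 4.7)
The plan is to handle (1) first and then leverage it in (2). For (1), I would first verify that $\vec{x}\mapsto \O(\vec{x})$ is a group homomorphism, which follows immediately from the natural isomorphism $\O(\vec{x})\otimes_\O\O(\vec{y})\cong \O(\vec{x}+\vec{y})$ coming from the $L(\udp)$-grading on $S$. For injectivity I would compute $\hom(\O,\O(\vec{x}))\cong S_{\vec{x}}$ and use the fact that, writing $\vec{x}=\sum l_i\vec{x_i}+l\vec{c}$ in normal form, $S_{\vec{x}}$ is nonzero iff $\vec{x}\geq 0$; thus $\O(\vec{x})\cong \O$ forces $\vec{x}\geq 0$ and $-\vec{x}\geq 0$, hence $\vec{x}=0$. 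For surjectivity, given a line bundle $L$, I would lift it via the Serre-type equivalence $\coh\X\simeq \mod^{L(\udp)}S/\mod^{L(\udp)}_0 S$ to a finitely generated graded $S$-module $M$ of rank one; discarding finite-length subquotients, $M$ becomes isomorphic in the quotient category to a shifted module $S(\vec{x})$, so that $L\cong \O(\vec{x})$.

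For (2), I would induct on $\rk(E)$: the base case $\rk(E)=1$ follows from (1), and the inductive step reduces to producing a line subbundle $L'\subset E$ with $E/L'$ a bundle. To produce such an $L'$, note first that every coherent sheaf on $\X$ is a quotient of a finite direct sum of line bundles $\O(\vec{x}_i)$, because via the Serre-type equivalence it is the image of a finitely generated graded $S$-module, and such modules are quotients of finite direct sums of $S(\vec{x})$'s. Applied to $E$ itself, this yields a line bundle $L$ together with a nonzero morphism $f\colon L\to E$; since $E$ is torsion-free, $f$ is injective. I then saturate: set $L'\subset E$ to be the preimage in $E$ of the torsion subsheaf of $E/\im(f)$. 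By construction $E/L'$ is torsion-free, hence a bundle of rank $\rk(E)-1$, and $L'$ is a rank-$1$ subsheaf of the torsion-free $E$, hence torsion-free of rank $1$.

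The key remaining point, which I expect to be the main obstacle, is showing that the rank-$1$ torsion-free sheaf $L'$ is actually a line bundle; combined with (1) this finishes everything. My plan is to analyze the inclusion $\im(f)\subset L'$ with torsion quotient by induction on the length of $L'/\im(f)$, reducing to extensions $0\to L\to L'\to S\to 0$ where $S$ is simple and $L$ is a line bundle. Using the explicit presentations $0\to \O(j\vec{x}_i)\overset{X_i}{\lra}\O((j+1)\vec{x}_i)\to S_{i,j}\to 0$ at exceptional points and $0\to \O\overset{X_2^{p_2}-\lambda X_1^{p_1}}{\lra}\O(\vec{c})\to S\to 0$ at ordinary points, one checks first that the extension must be nonsplit (otherwise $S$ would be a torsion summand of $L'\subset E$, contradicting torsion-freeness of $E$) and then identifies the unique nonsplit extension with a suitable $\O(\vec{y})$. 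The delicate part will be the $L(\udp)$-graded bookkeeping at each exceptional point, where the simples $S_{i,j}$ form a $p_i$-periodic family and one must identify the correct target line bundle in each step; once this is in place, part (1) converts the outcome into the assertion that $L'\cong \O(\vec{x})$ for some $\vec{x}$, completing the inductive construction of the filtration.
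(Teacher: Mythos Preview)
The paper does not prove this proposition: it appears in \S3.1, which explicitly records background material ``taken from the original article [GL]'', and the statement is quoted without argument. So there is no proof in the paper to compare against.

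That said, your outline is essentially the argument one finds in Geigle--Lenzing. A couple of remarks. First, your surjectivity argument in (1) and your ``key remaining point'' in (2) are really the same fact: a torsion-free rank-one sheaf on $\X$ is isomorphic to some $\O(\vec{x})$. Your sketch for (2), reducing to nonsplit extensions $0\to L\to L'\to S\to 0$ with $S$ simple and identifying $L'$ with a twist of $L$, is the right mechanism and, once established, immediately gives surjectivity in (1) as well (a line bundle is in particular torsion-free of rank one). So you may as well prove the torsion-free rank-one statement first and deduce both parts from it. Second, in that extension step you should note that $\dim_k\ext^1(S,L)\leq 1$ for $L$ a line bundle and $S$ simple (by Serre duality, $\ext^1(S,L)\cong D\hom(L,\tau S)$, and $\hom(L,\tau S)$ is at most one-dimensional), so the nonsplit extension really is unique up to isomorphism; the explicit presentations you cite then identify it as the expected $\O(\vec{y})$. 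With these points in hand your inductive scheme goes through.
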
 
 
The Grothendieck group $K_0(\X)$ of $\coh\X$ (and thus the Grothendieck  group $K_0(\D^b(\X))$ of $\D^b(\X)$) is a finitely generated free abelian group of rank $\sum_{i=1}^t (p_i-1)+2$ with a basis $\{[\O(\vec{x})]\mid 0\leq \vec{x}\leq \vec{c}\}$. We have a linear form $\deg: K_0(\X)\ra \Z$, called \emph{degree}, such that $\deg \O(\vec{x})=\delta(\vec{x})$ for $\vec{x}\in L(\udp)$. 
The Euler form on $K_0(\X)$ is given by \[\chi(E,F)=\dim_k\hom(E,F)-\dim_k\ext^1(E,F)\] and  the averaged Euler form is defined by $\bar{\chi}(E,F)=\sum_{j=0}^{p-1}\chi(\tau^jE,F)$. 
\begin{thm}[Riemann-Roch Theorem]
	For $E,F\in \D^b(\X)$, we have  
	\[\bar{\chi}(E,F)=p(1-g_\X)\,\rk(E)\,\rk(F)+\deg(F)\rk(E)-\deg(E)\rk(F).\]
\end{thm}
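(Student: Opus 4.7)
The plan is to exploit bilinearity to reduce the identity to a small set of generators of $K_0(\X)$ and then to verify it by direct computation.

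Both sides of the asserted equality are $\Z$-bilinear forms on $K_0(\X)\times K_0(\X)$: the right-hand side because $\rk$ and $\deg$ are linear forms on $K_0(\X)$, and the left-hand side because $\coh\X$ is hereditary and Hom-finite, so $\chi$ descends to a bilinear form on $K_0(\X)$, and because $\tau$ acts linearly on $K_0(\X)$. Hence it suffices to check the identity on a set of generators. By Proposition~\ref{line bundle}(2), every vector bundle admits a line bundle filtration, and every torsion sheaf is a successive extension of simple torsion sheaves. Combined with $\coh\X=\vect\X+\coh_0\X$, this shows that $K_0(\X)$ is generated by the classes $[\O(\vec x)]$ of line bundles and the classes $[S]$ of simple torsion sheaves. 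So the verification reduces to three cases: both arguments line bundles; one line bundle and one simple torsion sheaf; both simple torsion sheaves.

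When $E$ or $F$ is a simple torsion sheaf the term $p(1-g_\X)\,\rk(E)\,\rk(F)$ vanishes. I would compute $\chi$ and its $\tau$-average directly using $\hom(\O(\vec a),\O(\vec b))=S_{\vec b-\vec a}$, Serre duality $\ext^1(X,Y)\cong D\hom(Y,\tau X)$, and the resolutions
\[
0\to \O(j\vec{x_i})\xrightarrow{x_i}\O((j+1)\vec{x_i})\to S_{i,j}\to 0,\qquad
0\to\O\xrightarrow{x_2^{p_2}-\lambda x_1^{p_1}}\O(\vec c)\to S_\lambda\to 0.
\]
Since $\deg S_{i,j}=\delta(\vec{x_i})=p/p_i$ and $\deg S_\lambda=\delta(\vec c)=p$, the resulting value matches $\deg(F)\rk(E)-\deg(E)\rk(F)$ on the nose in the mixed case; in the torsion--torsion case the $p$-fold $\tau$-average visibly telescopes to zero, since $\tau$ acts with period dividing $p$ on the isoclasses of simples in each tube $\coh_\lambda\X$.

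The main case is $E=\O(\vec x)$, $F=\O(\vec y)$. Here $\hom(\O(\vec x),\O(\vec y))=S_{\vec y-\vec x}$ and by Serre duality $\ext^1(\O(\vec x),\O(\vec y))\cong DS_{\vec\omega+\vec x-\vec y}$, so
\[
\bar\chi(\O(\vec x),\O(\vec y))=\sum_{j=0}^{p-1}\bigl(\dim_k S_{\vec y-\vec x-j\vec\omega}-\dim_k S_{(j+1)\vec\omega+\vec x-\vec y}\bigr).
\]
I would evaluate this alternating sum using the explicit Hilbert series of the $L(\udp)$-graded algebra $S$: writing any $\vec z\in L(\udp)$ uniquely as $\vec z=\sum_i l_i\vec{x_i}+l\vec c$ with $0\leq l_i<p_i$, one has $\dim_k S_{\vec z}=l+1$ when $l\geq 0$ and $0$ otherwise. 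The hard part will be the resulting combinatorial identity: because the Hilbert series depends piecewise-linearly on the exceptional residues $l_i$, the sum over $j=0,\ldots,p-1$ (i.e.\ over one period of $\vec\omega$ in the $\vec c$-direction, after accounting for the residues at each $\vec{x_i}$) must be organized carefully, by separating the contribution linear in $\delta(\vec y-\vec x)$ from the $\vec x,\vec y$-independent part. The former produces the term $\deg(F)-\deg(E)=\delta(\vec y-\vec x)$ and the latter identifies, after a standard bookkeeping, with the constant $p(1-g_\X)$ for the genus $g_\X=1+\tfrac12\bigl((t-2)p-\sum_i p/p_i\bigr)/p$ of $\X$.
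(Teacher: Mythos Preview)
The paper does not prove this statement: it is part of the background review in \S\ref{sec: wpl def}, quoted from Geigle--Lenzing \cite{GL} without argument. So there is no in-paper proof to compare against; I can only comment on your proposal itself.

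Your strategy is the standard one and is sound. The reduction by bilinearity is correct; in fact you can be more economical than you indicate, since the paper records that $K_0(\X)$ is free on the finite basis $\{[\O(\vec x)]\mid 0\leq \vec x\leq \vec c\}$, so you need only check the identity on pairs of such line bundles (the torsion cases are then redundant, though harmless as a sanity check). Your treatment of the torsion and mixed cases is fine.

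Two small points. First, your stated genus formula is off by a factor of $p$: the paper has $g_\X=1+\tfrac12\,\delta(\vec\omega)$ with $\delta(\vec\omega)=(t-2)p-\sum_i p/p_i$, so the constant you are aiming for on the right-hand side is $p(1-g_\X)=-\tfrac12\,p\,\delta(\vec\omega)$, not the expression with an extra $1/p$. Second, the line-bundle case is where all the content lies, and what you have written there is a plan rather than a proof: the piecewise formula $\dim_k S_{\vec z}=\max(l+1,0)$ in normal form and the averaging over $j=0,\dots,p-1$ do produce the claimed linear-plus-constant shape, but the identification of the constant with $-\tfrac12\,p\,\delta(\vec\omega)$ requires an honest computation (organize it by first fixing the residues $l_i$ of $\vec y-\vec x$ modulo $p_i$ and summing over the $\vec c$-component, then averaging over the residues; the cross terms cancel because $p\vec\omega\in\Z\vec c$). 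As written, that step is asserted rather than carried out.
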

Here $g_\X=1+\frac{1}{2}\delta(\vec{\omega})$ is the \emph{virtual genus} of $\X$. $\X$ is said to be of \emph{domestic} (resp. \emph{tubular}, resp. \emph{wild}) type if $g_\X<1$ (resp. $g_\X=1$, resp. $g_\X>1$), equivalently, $\delta(\vec{\omega})<0$ (resp. $\delta(\vec{\omega})=0$, resp. $\delta(\vec{\omega})>0$). $\X$ is of domestic type iff the weight sequence is $(1,p_1,p_2)$, $(2,2,n)\,(n\geq 2)$, $(2,3,3)$, $(2,3,4)$, $(2,3,5)$, up to permutation; $\X$ is of tubular type iff the weight sequence is $(2,2,2,2)$, $(3,3,3)$, $(2,3,6)$, $(2,4,4)$, up to permutation; weighted projective lines of wild type correspond to the remaining weight sequences.

A coherent sheaf $T$ over $\X$ is called a \emph{tilting sheaf} if it is 
a tilting object as an object in $\D^b(\X)$. A tilting sheaf $T$ yields a derived equivalence $\D^b(\X)\simeq \D^b(\End T)$ and induces a torsion pair $(\T,\F)$ in $\coh\X$, where 
\[\T=\{E\in\coh\X\mid \ext^1(T,E)=0\},\quad \F=\{E\in\coh\X\mid \hom(T,E)=0\}.\]  

\begin{thm}\label{der canonical algebra}
	There is a canonical tilting bundle $T=\oplus_{0\leq \vec{x}\leq \vec{c}}\O(\vec{x})$ over $\X$, whose endomorphism algebra is isomorphic to a canonical algebra $\Lambda$ with the same parameter $(\udp,\udl)$ in the sense of Ringel (\cite{Ringel}). In particular, we have a derived equivalence $\D^b(\Lambda)\simeq \D^b(\X)$.  
\end{thm}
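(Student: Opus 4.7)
The plan is to verify that $T=\bigoplus_{0\le \vec{x}\le\vec{c}}\O(\vec{x})$ satisfies the two defining properties of a tilting object in $\D^b(\X)$, then to identify $\End(T)$ with the canonical algebra $\Lambda=\Lambda(\underline{p},\underline{\lambda})$ of Ringel, and finally to invoke the standard tilting theorem to obtain the derived equivalence $\D^b(\Lambda)\simeq \D^b(\X)$.

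\textbf{Vanishing of self-extensions.} Since $\coh\X$ is hereditary, one only needs $\ext^1(\O(\vec{x}),\O(\vec{y}))=0$ for all $0\le\vec{x},\vec{y}\le\vec{c}$. By the Serre duality of Theorem~\ref{serre duality},
\[
\ext^1(\O(\vec{x}),\O(\vec{y}))\;\cong\;D\hom(\O(\vec{y}),\O(\vec{x}+\vec{\omega}))\;\cong\;D S_{\vec{x}+\vec{\omega}-\vec{y}},
\]
so it is enough to prove that $\vec{x}+\vec{\omega}-\vec{y}$ is not in the positive cone $\sum_i \Z_{\ge 0}\vec{x}_i$ whenever $0\le\vec{x},\vec{y}\le\vec{c}$. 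Writing $\vec{\omega}=-2\vec{c}+\sum_i(p_i-1)\vec{x}_i$ in normal form, and noting that for $0\le\vec{x}\le\vec{c}$ and $0\le\vec{y}\le\vec{c}$ the difference $\vec{x}-\vec{y}$ has normal form with $\vec{c}$-coefficient at most $1$ and at least $-1$, a direct inspection of the normal form of $\vec{x}+\vec{\omega}-\vec{y}$ shows its $\vec{c}$-coefficient is negative, hence the element is not $\geq 0$ and $S_{\vec{x}+\vec{\omega}-\vec{y}}=0$. The vanishing of $\hom^{<0}(T,T)$ is automatic because $T$ is concentrated in cohomological degree zero.

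\textbf{Generation.} I would show that $\pair{T}_\D=\D^b(\X)$ by producing every indecomposable object in $\coh\X$ inside the thick closure of $T$. The summands include $\O(j\vec{x}_i)$ for $0\le j\le p_i$, so each exact sequence $0\to\O(j\vec{x}_i)\xrightarrow{X_i}\O((j+1)\vec{x}_i)\to S_{i,j}\to 0$ places every simple sheaf $S_{i,j}$ supported at an exceptional point in $\pair{T}_\D$; similarly the sequence $0\to\O\to\O(\vec{c})\to S_\lambda\to 0$ places the simple sheaf at every ordinary point $\lambda$ in $\pair{T}_\D$, so all of $\coh_0\X=\coprod_\lambda\coh_\lambda\X$ is generated. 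Using Proposition~\ref{line bundle}(1)--(2), every line bundle $\O(\vec{x})$ is then obtained from $\O$ by successive extensions by simple torsion sheaves (shifted through the translates $-(\vec{x}_i)$), and every vector bundle by extensions of line bundles. Thus $\pair{T}_\D$ contains $\coh\X$ and hence equals $\D^b(\X)$.

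\textbf{Identification with the canonical algebra.} A direct computation gives $\hom(\O(\vec{x}),\O(\vec{y}))=S_{\vec{y}-\vec{x}}$, and for $0\le\vec{x},\vec{y}\le\vec{c}$ this space is spanned by the monomials in $x_1,\ldots,x_t$ of weight $\vec{y}-\vec{x}$. One then checks that the resulting graded endomorphism algebra is generated as an algebra by the $t$ families of arrows
\[
\O(j\vec{x}_i)\xrightarrow{x_i}\O((j+1)\vec{x}_i),\qquad 1\le i\le t,\;0\le j<p_i,
\]
subject precisely to the relations $x_i^{p_i}=x_2^{p_2}-\lambda_i x_1^{p_1}$ ($3\le i\le t$) inherited from the defining relations of $S$. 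This is exactly the presentation of Ringel's canonical algebra $\Lambda(\underline{p},\underline{\lambda})$. The derived equivalence $\D^b(\Lambda)\simeq\D^b(\X)$ then follows from the classical tilting theorem of Happel, applied to the tilting object $T$ with $\End(T)\cong\Lambda$.

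\textbf{Main obstacle.} The only nontrivial point is the vanishing calculation: keeping track of normal forms in $L(\underline{p})$ and confirming that $\vec{x}+\vec{\omega}-\vec{y}$ falls outside the positive cone for the entire range of parameters requires care, although it reduces to a short combinatorial check once $\vec{\omega}$ is written in normal form.
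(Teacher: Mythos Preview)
The paper does not give its own proof of this theorem; it is recalled in \S\ref{sec: wpl def} as background material from \cite{GL}, and the derived equivalence is the classical Geigle--Lenzing result. Your sketch is essentially the original Geigle--Lenzing argument: Serre duality reduces $\ext^1$-vanishing to the combinatorial fact that $\vec{x}+\vec{\omega}-\vec{y}\not\ge 0$ for $0\le\vec{x},\vec{y}\le\vec{c}$, generation is shown by building all simples and then all line bundles (hence all bundles via line bundle filtrations), and the identification $\End(T)\cong\Lambda$ is a direct computation of graded pieces of $S$. The approach is correct and standard; just be aware that the index set $\{0\le\vec{x}\le\vec{c}\}$ consists exactly of $0$, $\vec{c}$, and the $j\vec{x}_i$ with $1\le j\le p_i-1$, so your normal-form bookkeeping should be phrased accordingly.
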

Recall from \cite{Ringel} that a canonical algebra $\Lambda$ with  parameter $(\udp,\udl)$  is the path algebra of the quiver 
\[\xymatrix@R-25pt{& \vec{x}_1 \ar[r]^{x_1} & 2\vec{x}_1\ar[r]^{x_1} & \dots  \ar[r]^{x_1} & (p_1-2)\vec{x}_1 \ar[r]^{x_1} & (p_1-1)\vec{x}_1 \ar[dr]^{x_1} & \\
0 \ar[ur]^{x_1} \ar[r]^{x_2} \ar[ddr]^{x_t} & \vec{x}_2 \ar[r]^{x_2} & 2\vec{x}_2 \ar[r]^{x_2} & \dots  \ar[r]^{x_2} & (p_2-2)\vec{x}_2 \ar[r]^{x_2} & (p_2-1)\vec{x}_2 \ar[r]^{x_2} & \vec{c}\\
 & \vdots & \vdots & & \vdots & \vdots & \\
 & \vec{x}_t\ar[r]^{x_t} & 2\vec{x}_t\ar[r]^{x_t} & \dots \ar[r]^{x_t} & (p_t-2)\vec{x}_t\ar[r]^{x_t} & (p_t-1)\vec{x_t} \ar[uur]^{x_t} & 
}\]
with relations $x_i^{p_i}=x_2^{p_2}-\lambda_i x_1^{p_1}$ ($i=3,\dots, t$).

\subsection{A glimpse of Auslander-Reiten theory}\label{sec: AR theory}
Auslander-Reiten (=AR) theory  is introduced by Auslander and Reiten to study representations of artin algebras. The standard reference is \cite{ARS} (see also \cite{Aus}).  
The central concept (i.e. an almost split sequence, or an Auslander-Reiten sequence) makes sense in any Krull-Schmidt category with short exact sequences (in the sense of \cite[\S2.3]{Ringel}) but there is a problem of existence. 
Later Happel introduced in \cite{Happel} the notion of an Auslander-Reiten triangle,  a triangulated version of Auslander-Reiten sequence.  \cite{RVDB} investigated the close relationship between  Serre duality (in the sense of \cite{RVDB}) and Auslander-Reiten sequences (as well as Auslander-Reiten triangles). 

\def\irr{\textup{Irr}}
\def\rad{\textup{rad}}
Here we recall some basic definitions and we follow \cite{Ringel}. 
Let $\A$ be an essentially small Hom-finite abelian $k$-category.   
If $X$ and $Y$ are indecomposable, $\rad(X,Y)$ denotes the $k$-subspace of $\hom(X,Y)$ consisting of non-invertible morphisms. If $X=\oplus_{j=1}^mX_j, Y=\oplus_{i=1}^n Y_i$, where $X_j, Y_i$'s are indecomposable, then $\rad(X,Y)$ denotes the $k$-subspace of $\hom(X,Y)$  consisting of those $f=(f_{ij})$ with $f_{ij}\in \rad(X_j, Y_i)$.  $\rad^2(X,Y)$ denotes the $k$-subspace of $\hom(X,Y)$ consisting of morphisms of the form $gf$ with $f\in \rad(X,M)$, $g\in \rad(M,Y)$ for some  $M$. Let \[\irr(X,Y)=\rad(X,Y)/\rad^2(X,Y).\] 
A morphism $h: X\ra Y$  is called \emph{irreducible} if $h$ is neither a split monomorphism nor a split epimorphism and if $h=ts$ for some $s: X\ra Z$ and $t: Z\ra Y$, then $s$ is a split monomorphism or $t$ is a split epimorphism. $h: X\ra Y$ is irreducible iff $h\in\rad(X,Y)\backslash \rad^2(X,Y)$. 

A morphism $f: B\ra C$ in $\A$  is called a \emph{sink map} (or a \emph{minimal right almost split} morphism) if
\begin{enumi}
\item $f$ is right almost split, that is, $f$ is not an split epimorphism and   any morphism $X\ra C$ which is not a split epimorphism factors through $f$, and 
\item $f$ is right minimal, that is,  $\gamma\in \End(B)$ satisfying $f\gamma=f$ is an automorphism.
\end{enumi}
Dually, one defines  a \emph{source map} (or a \emph{minimal left almost split} morphism).  Sink (resp. source) maps
with a fixed target (resp. source), if they exist, are  obviously unique up to isomorphism.  If $f: B\ra C$ is a sink (resp. source) map then $C$ (resp. $B$) is indecomposable.  
An exact sequence $0\ra A\overset{g}{\ra} B\overset{f}{\ra} C\ra 0$ in $\A$ is called  an \emph{AR sequence} (or an \emph{almost split sequence})   if $g$ is a source map, equivalently, if $f$ is a sink map (see \cite[\S2.2, Lemma 2]{Ringel} for the equivalence). 
If such an AR sequence exists, then each irreducible map $f_1: A\ra B_1$ (or $g_1: B_1\ra C$) fits into an AR sequence \[0\lra A\overset{(f_1,f_2)^t}{\lra} B_1\oplus B_2\overset{(g_1, g_2)}{\lra} C\lra 0.\]
We say that $\A$ has sink (resp. source) maps if for each  indecomposable object $A\in \A$, there exists  a sink map $B\ra A$ (resp. a source map $A\ra C$).  We say that $\A$ has AR sequences (or almost split sequences) if $\A$ has both sink and source maps.

If $\A$ has AR sequences then  the \emph{AR quiver}
$(\Gamma_\A,\sigma)$ of $\A$, which turns out to be a translation quiver, is defined as follows.  The vertex set of $\Gamma_\A$ is in bijection with a complete set of representatives of isoclasses  of indecomposable objects in $\A$. Denote the vertex corresponding to an indecomposable object $M$ by $[M]$. 
The number of arrows from a vertex $[M]$ to another vertex $[N]$ is $\dim_k\irr(M,N)$.  By \cite[\S2.2, Lemma 3]{Ringel}, if $A\ra B$ is a source map then   there are $d$ arrows from $[A]$ to $[D]$ iff the multiplicity of $D$ as a direct summand of $B$ is $d$. There is a dual fact for a sink map.
So if $0\ra A\ra B\ra C\ra 0$ is an AR sequence then there are $d$ arrows from  $[A]$ to $[D]$ iff there are $d$ arrows from $[D]$ to $[C]$. 
The translation $\sigma$, called the \emph{AR translation} of $\A$, is such that $\sigma [C]=[A]$ if $0\ra A\ra B\ra C\ra 0$ is an AR sequence.

The existence of AR sequences as well as the existence of AR triangles is closely related to the existence of a Serre functor. We refer the reader to \cite{RVDB} and here we only record the following fact (see \cite[Theorem I.3.3]{RVDB}): if $\A$ is a hereditary abelian $k$-category with finite dimensional morphism and extension spaces, then the existence of a Serre functor of $\D^b(\A)$  implies the existence of  AR sequences in $\A$. Consequently, if $\X$ is a weighted projective line then $\coh\X$  admits AR sequences. 
\begin{prop}[{\cite[Corollary 2.3]{GL}}]
	Let $\X$ be a weighted projective line. $\coh\X$ has AR sequences with  AR translation given by $[M]\dashrightarrow [\tau M]$.
\end{prop}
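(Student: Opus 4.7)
My plan is to derive the statement from Serre duality together with the general existence theorem from \cite{RVDB} that was recalled immediately before the proposition. By Theorem 3.1.1 (Serre duality), the derived category $\D^b(\X)$ admits the Serre functor $\tau(-)[1]$. Since $\coh\X$ is a hereditary abelian $k$-category with finite dimensional Hom and Ext spaces, the result cited from \cite{RVDB} yields the existence of AR sequences in $\coh\X$ immediately. So the only real content is identifying the AR translation with $\tau$.

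For this, I would construct an AR sequence ending at an arbitrary indecomposable $C\in\coh\X$ directly. Since $C$ is indecomposable in a Krull-Schmidt Hom-finite $k$-category over an algebraically closed field, $\End(C)$ is local with residue field $k$; let $\phi\colon \End(C)\twoheadrightarrow \End(C)/\rad\cong k$ be the canonical projection. Transport $\phi$ through the chain of isomorphisms
\[
\ext^1(C,\tau C)\cong D\hom(\tau C,\tau C)\cong D\End(\tau C)\cong D\End(C)
\]
(the first by Serre duality, the last by the fact that $\tau$ is an autoequivalence) to obtain a nonzero class $\xi\in\ext^1(C,\tau C)$, represented by a non-split short exact sequence $0\ra \tau C\ra B\ra C\ra 0$.

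To verify this is the AR sequence, it suffices to show that any morphism $f\colon X\to C$ from an indecomposable $X$ which is not a split epimorphism factors through $B\to C$, equivalently $f^*\xi=0$ in $\ext^1(X,\tau C)$. By functoriality of Serre duality, $f^*$ is dual to postcomposition with $\tau f$, namely $\hom(\tau C,\tau X)\ra \hom(\tau C,\tau C)$; identifying $\hom(\tau C,\tau X)\cong\hom(C,X)$ via $\tau$, the functional associated to $f^*\xi$ sends $h\in\hom(C,X)$ to $\phi(f\circ h)$. Because $f$ is not a split epimorphism and $C$ is indecomposable, no composite $fh$ can be an automorphism of $C$, hence $fh\in\rad\End(C)$ and $\phi(fh)=0$. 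Thus $f^*\xi=0$, and the constructed sequence is the AR sequence terminating at $C$ with left-hand term $\tau C$, proving the asserted formula. The main (mild) obstacle is handling the functoriality of the Serre duality pairing carefully enough to justify the dualization step, but this is standard and is precisely the reason Serre duality provides such a clean route to constructing AR sequences.
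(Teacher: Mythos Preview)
Your proof is correct and follows essentially the same approach as the paper: the paper states (immediately after the proposition) that the AR sequence ending at an indecomposable $E$ is the extension $\eta_E$ whose class in $\ext^1(E,\tau E)$ corresponds to $\id_{\tau E}$ under Serre duality, which is precisely your element $\xi$ after identifying $D\End(C)\cong D\End(\tau C)$. You additionally supply the verification (via naturality of the Serre pairing and the radical argument) that $\eta_E$ is indeed almost split, a step the paper simply asserts.
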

 AR sequences are obtained in the following way. 
For each indecomposable sheaf $E$ over $\X$, we have a distinguished exact sequence $\eta_E: 0\ra \tau E\ra F\ra E\ra 0$ whose class in $\ext^1(E,\tau E)$ corresponds to $\id_{\tau E}$ under Serre duality $D\ext^1(E,\tau E)\cong \hom(\tau E, \tau E)$. The exact sequence $\eta_E$ is an AR sequence.   
Since $\tau$ is an autoequivalence of $\coh\X$, $0\ra E\ra \tau^{-1} F\ra \tau^{-1}E\ra 0$ is also an  AR sequence.  

An additive subcategory $\C$ of $\coh\X$ closed under direct summand is said to be closed under the formation of AR sequences if for any AR sequence $0\ra \tau E\ra F\ra  E\ra 0$, $E\in \C$ implies $F\in \C$ and $\tau^iE\in \C$ for all $i\in\Z$.   In this case, we can talk about the AR quiver of $\C$ and the AR quiver of $\C$ is a union of certain components of the AR quiver of $\coh\X$. 
For each $\lambda\in \P^1$, $\coh_\lambda\X$ is closed under the formation of AR sequences and the AR quiver of $\coh_\lambda\X$ is a tube of rank $\w(\lambda)$, where $\w$ is the weight function of $\X$, and thus the AR quiver of $\coh_0\X$ is a family of tubes parametrized by $\P^1$.  $\vect\X$ is also closed under the formation of AR sequences. We will see in the next subsection the shape of the AR quiver of $\vect\X$ for a domestic or tubular weighted projective line $\X$. We mention that  for a wild weighted projective line $\X$, each AR component of $\vect\X$   has the shape $\Z\AA_\infty$ \cite{LP}.

We introduce more definitions for the sake of the next subsection. Let $E$ be an indecomposable object in $\coh\X$ lying in a component which is  a tube of finite rank. 
The \emph{quasi-length} of $E$ is the largest integer $l$ such that there exists a sequence $E=A_l\epic A_{l-1}\epic \dots \epic A_2\epic A_1=A$ of irreducible epimorphisms, equivalently, there exists a sequence $B=B_1\monic B_2\monic \dots \monic B_{l-1}\monic B_l=E$ of irreducible monomorphisms. In this case, we say $A$ (resp. $B$) is the \emph{quasi-top} (resp. \emph{quasi-socle}) of $E$. 
$E$ is called \emph{quasi-simple} if $E$ is of quasi-length one, i.e., $E$ lies at the bottom of the tube. Note that the quasi-length of an indecomposable finite length sheaf coincides with its length and a quasi-simple torsion sheaf is just a simple sheaf. The $\tau$-period of $E$ is the minimal positive integer $n$ such that $\tau^nE\cong E$, which equals  the rank of the tube.

\subsection{Vector bundles over a domestic or tubular weighted projective line}\label{sec: wpl vect}

We  first recall the notion of stability of a vector bundle. For a nonzero bundle $F$ over a weighted projective line $\X$, its slope $\mu(F)$ is defined as $\mu(F)=\deg(F)/\rk(F)$.

\begin{lem}[{\cite[Lemma 2.5]{Lenzing}}]\label{twist slope}
	We have $\mu(F(\vec{x}))=\mu(F)+\delta(\vec{x})$. In particular, $\mu(\tau F)=\mu(F)+\delta(\vec{\omega})$.  
\end{lem}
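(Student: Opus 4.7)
The plan is to reduce the identity to the special case of a line bundle and then to bootstrap via a line bundle filtration, using the additivity of both rank and degree on short exact sequences together with the fact that the autoequivalence $-(\vec{x})$ is exact.

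First I would note that twisting by $\vec{x}$ is an exact autoequivalence of $\coh\X$, so it preserves short exact sequences and, by the paper's earlier remark that $\rk$ is preserved under the $L(\udp)$-action, we have $\rk(F(\vec{x}))=\rk(F)$. The whole assertion therefore comes down to computing $\deg(F(\vec{x}))$.

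Next I would handle the line bundle case: by Proposition~\ref{line bundle}(1), any line bundle is of the form $\O(\vec{y})$, and then $\O(\vec{y})(\vec{x})\cong \O(\vec{x}+\vec{y})$, so
\[
\deg(\O(\vec{y})(\vec{x}))=\delta(\vec{x}+\vec{y})=\delta(\vec{y})+\delta(\vec{x})=\deg(\O(\vec{y}))+\delta(\vec{x})\cdot \rk(\O(\vec{y})),
\]
using that $\delta$ is a group homomorphism and $\rk(\O(\vec{y}))=1$.

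Then for a general nonzero bundle $F$, apply Proposition~\ref{line bundle}(2) to obtain a line bundle filtration $0=E_0\subset E_1\subset \dots \subset E_n=F$ with $L_i=E_i/E_{i-1}$ line bundles; note $n=\rk(F)$. Twisting by $\vec{x}$ yields a line bundle filtration of $F(\vec{x})$ with factors $L_i(\vec{x})$. Since $\deg$ is a linear form on $K_0(\X)$ it is additive on short exact sequences, so
\[
\deg(F(\vec{x}))=\sum_{i=1}^n\deg(L_i(\vec{x}))=\sum_{i=1}^n\bigl(\deg(L_i)+\delta(\vec{x})\bigr)=\deg(F)+\rk(F)\,\delta(\vec{x}).
\]
Dividing by $\rk(F)=\rk(F(\vec{x}))$ gives $\mu(F(\vec{x}))=\mu(F)+\delta(\vec{x})$. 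The second assertion is then immediate from $\tau F=F(\vec{\omega})$.

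There is no genuine obstacle here; the only thing to keep track of is the choice to argue via a line bundle filtration rather than via the K-theoretic identity $[F(\vec{x})]-[F]=\rk(F)\cdot(\delta(\vec{x}))$-multiple, though that latter approach would work equally well once one knows that, in $K_0(\X)$, twisting a bundle $F$ by $\vec{x}$ changes the class by something of degree $\rk(F)\delta(\vec{x})$ and of rank $0$. Either route reduces the statement to the tautology $\deg \O(\vec{x})=\delta(\vec{x})$ combined with the additivity of $\rk$ and $\deg$.
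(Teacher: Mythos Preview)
Your argument is correct. The paper itself does not supply a proof of this lemma; it simply records the statement with a citation to \cite[Lemma 2.5]{Lenzing}, so there is nothing to compare against beyond noting that your line-bundle-filtration computation is exactly the standard way this identity is verified.
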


$F$ is called \emph{semistable} (resp. \emph{stable}) if $\mu(E)\leq$ (resp. $<$) $\mu(F)$ for any subbundle $E$ of $F$ with $\rk(E)<\rk(F)$. 
For $\mu\in \Q$,  denote by $\coh^\mu\X$ the subcategory of $\coh\X$ consisting of semistable bundles of slope $\mu$. $\coh^{\mu}\X$ is a length abelian category  whose simple objects are precisely stable bundles of slope $\mu$. 
For a torsion sheaf $T$, we define $\mu(T)=\infty$ and denote $\coh^\infty \X=\coh_0\X$.   We have $\hom(\coh^{\mu}\X, \coh^{\mu'}\X)=0$ for $\mu >\mu'$.

As in the case of smooth projective curves, the maximal destabilizing subsheaf exists in our case, and thus each nonzero bundle admits a Harder-Narasimhan filtration, that is, a sequence \[0=F_0\subset F_1\subset \dots \subset F_m=F\] such that all the factors $A_i=F_i/F_{i-1}$ ($0<i\leq m$) are semistable bundles and \[\mu(A_1)>\mu(A_2)>\dots >\mu(A_m).\]  Such a filtration is unique up to isomorphism. $A_i$ are called the semistable factors of $F$. We will denote \[\mu^+(F)=\mu(A_1),\quad \mu^-(F)=\mu(A_m).\]
Let $\mu\in\bar{\R}=\R\cup \{\infty\}$. Denote \[\coh^{\geq \mu}\X=\{E\in\coh\X\mid \mu^-(E)\geq \mu\},\quad \coh^{< \mu}\X=\{E\in\coh\X\mid \mu^+(E)< \mu\}.\] Similarly one defines $\coh^{>\mu}\X, \coh^{\leq\mu}\X$. Then we have  torsion pairs \[(\coh^{\geq \mu}\X, \coh^{<\mu}\X),\quad (\coh^{>\mu}\X,\coh^{\leq \mu}\X)\] for each $\mu\in\bar{\R}$.

Suppose $\X$ is a weighted projective line  of domestic type with weight sequence $(p_1,p_2,p_3)$. Then up to permutation, \[(p_1,p_2,p_3)=(1,p_2,p_3), (2,2,n) (n\geq 2), (2,3,3), (2,3,4), \text{or}\, (2,3,5).\] Let $\Delta=\Delta(p_1,p_2,p_3)$ be  
the Dynkin diagram 
 \[\xymatrix@R-20pt@C-5pt{
	 \overset{(1, p_1-1)}{\tinybullet}\ar@{-}[r] & \overset{(1, p_1-2)}{\tinybullet} \ar@{.}[r] & \overset{(1, 2)}{\tinybullet} \ar@{-}[r] & \overset{(1, 1)}{\tinybullet}\ar@{-}[dr] & \\
	 \overset{(2, p_2-1)}{\tinybullet}\ar@{-}[r] & \overset{(2, p_2-2)}{\tinybullet} \ar@{.}[r] & \overset{(2,2)}{\tinybullet} \ar@{-}[r] & \overset{(2,1)}{\tinybullet}\ar@{-}[r] & \tinybullet \\
	 \overset{(3, p_3-1)}{\tinybullet}\ar@{-}[r] & \overset{(3, p_3-2)}{\tinybullet} \ar@{.}[r] & \overset{(3, 2)}{\tinybullet} \ar@{-}[r] & \overset{(3,1)}{\tinybullet}\ar@{-}[ur] &}\]
 Let $\tilde{\Delta}$ be the extended Dynkin diagram attached to $\Delta$. We collect well-known and basic properties of vector bundles over a domestic weighted projective line in the following theorem.

\begin{thm}\label{domestic bundle} Let $\X$ be a weighted projective line of domestic type with weight sequence $(p_1,p_2,p_3)$. 

	\begin{enum2}
	\item Each indecomposable bundle over $\X$ is stable and exceptional. The rank function $\rk$ is bounded on indecomposable bundles over $\X$. If some $p_i$ equals $1$ then each indecomposable bundle is a line bundle. 

\item The direct sum of a complete set of indecomposable bundles with slope in the interval $(\delta(\vec{\omega}),0]$ is a tilting bundle and its endomorphism algebra is  the path algebra $k\vec{\tilde{\Delta}}$ of 
an extended Dynkin quiver $\vec{\tilde{\Delta}}$ with underlying graph $\tilde{\Delta}$. 
In particular, we have a derived equivalence $\D^b(\X)\simeq \D^b(k\vec{\tilde{\Delta}})$. If each $p_i\geq 2$, then $\vec{\tilde{\Delta}}$ has a bipartite orientation. 

\item The Auslander-Reiten quiver of $\vect\X$ consists of a single component having the form $\Z\tilde{\Delta}$. 
\end{enum2}
\end{thm}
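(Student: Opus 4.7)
My plan is to establish (2) first, since the tilting bundle $T$ and the resulting derived equivalence $\D^b(\X)\simeq \D^b(k\vec{\tilde{\Delta}})$ will let me transport all the needed structural information about vector bundles from the well-understood representation theory of a tame hereditary algebra. The domestic hypothesis enters throughout as $\delta(\vec\omega)<0$, which by Lemma~\ref{twist slope} gives $\mu(\tau F)=\mu(F)+\delta(\vec\omega)<\mu(F)$ for every nonzero bundle $F$. Combined with Serre duality $\ext^1(E,F)\cong D\hom(F,\tau E)$, this slope gap will drive every Ext-vanishing argument below.

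As a preparation I would first show that every indecomposable bundle $E$ is stable and exceptional. If $E$ were not semistable, let $E_1\subset E$ be its maximal destabilizing subsheaf, giving a non-split exact sequence $0\to E_1\to E\to E_2\to 0$ with $\mu^+(E_2)<\mu(E_1)$; then every Harder--Narasimhan factor of $\tau E_2$ has slope strictly less than $\mu(E_1)$, so semistability of $E_1$ forces $\hom(E_1,\tau E_2)=0$, hence $\ext^1(E_2,E_1)=0$ by Serre duality, and the sequence splits, contradicting indecomposability. A parallel argument rules out a proper subbundle of the same slope, giving stability. Finally $\hom(E,\tau E)=0$ (same slope comparison, now between stable bundles) yields $\ext^1(E,E)=0$, and Happel--Ringel (Proposition~\ref{Happel-Ringel lemma}) upgrades this to $\End(E)=k$.

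For (2), let $T=\bigoplus_E E$ run over a representative set of indecomposable bundles with slopes in $(\delta(\vec\omega),0]$. Ext-vanishing among summands follows directly from the slope gap and semistability. To see that the summand set is finite and that $T$ classically generates $\D^b(\X)$, I compare with Theorem~\ref{der canonical algebra}: each line bundle $\O(\vec x)$ with $0\leq \vec x\leq \vec c$ can be shifted into the slab $(\delta(\vec\omega),0]$ by tensoring with a power of $\O(\vec c)$ and/or applying $\tau$, so the thick closure of the slab already contains all line bundles (hence all bundles by Proposition~\ref{line bundle} and all torsion sheaves via the canonical tilting bundle); finiteness follows because exceptional objects with bounded slope form a finite set by Riemann--Roch. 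Morphism spaces inside $T$ are read off via Riemann--Roch, giving Ext-quiver $\vec{\tilde{\Delta}}$, and the lack of $\ext^1$ between summands forces trivial relations, so $\End(T)=k\vec{\tilde{\Delta}}$. When all $p_i\geq 2$, the summands split by slope into those with $\mu=0$ and those with $\mu\in(\delta(\vec\omega),0)$, with all nonzero morphisms running from the former class to the latter, which gives the claimed bipartite orientation.

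Parts (1) and (3) then follow by transport across the derived equivalence. Torsion sheaves form a tubular family parametrized by $\P^1$, which under $\D^b(\X)\simeq \D^b(k\vec{\tilde{\Delta}})$ must match the family of regular tubes of $\mod k\vec{\tilde{\Delta}}$; hence $\vect\X$ corresponds to preprojective plus preinjective modules. Standard tame-hereditary theory gives (1): such indecomposables are exceptional with dimension vectors $\tau^{\pm n}\underline{\dim}(P_i)$ of uniformly bounded defect, and on the $\X$ side defect matches rank (both characterize the regular/torsion part), giving the rank bound. When some $p_i=1$ one has $\tilde\Delta=\tilde{\mathbb A}_n$, whose preprojective and preinjective dimension vectors are thin, so rank equals $1$ and all indecomposable bundles are line bundles. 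For (3), the preprojective and preinjective AR-components are each half of a $\Z\vec{\tilde{\Delta}}$ translation quiver; they glue in $\coh\X$ along a $\tau$-shift --- $\tau$ is free on $\vect\X$ since it strictly shifts slopes --- into a single component of shape $\Z\tilde\Delta$. The main obstacle is a logical loop in the construction of $T$: a clean proof that the slab is finite and classically generating really requires the derived equivalence I am trying to build. The accepted remedy is to first produce some tilting bundle with endomorphism $k\vec{\tilde{\Delta}}$ by an explicit mutation of the canonical tilting bundle of Theorem~\ref{der canonical algebra}, derive the equivalence, and only then recognize its summands as precisely the indecomposable bundles with slopes in $(\delta(\vec\omega),0]$.
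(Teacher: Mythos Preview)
Your route is genuinely different from the paper's. The paper treats this theorem as a compilation of known results: stability and exceptionality are quoted from \cite[Proposition~5.5(i)]{GL}, the line-bundle statement from \cite[Corollary~3.8]{Lenzing}, and parts (2)--(3) from \cite{H89} (cf.\ \cite{KLM,Lenzing}). The only item argued in the paper itself is the rank bound, and that is done \emph{after} (2)--(3) are in place: since $\rk$ is additive on AR-sequences and $\tau$-invariant, its values on the finite slice $\Omega_0$ of the $\Z\tilde\Delta$ component are forced by the additive-function constraint, so boundedness is immediate once the AR-shape is known. Your direct stability/exceptionality argument via the slope gap $\delta(\vec\omega)<0$ is correct and is essentially the content of the cited \cite{GL} result, and your frank acknowledgement of the circularity in constructing $T$ directly from the slab is on point --- this is indeed why the cited sources proceed by mutation from the canonical tilting bundle rather than by declaring the slab to be tilting a priori.

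There is, however, a genuine gap in your bipartite argument. Splitting the summands of $T$ into ``slope $0$'' versus ``slope in $(\delta(\vec\omega),0)$'' does not yield a bipartition of the Ext-quiver: the negative-slope class typically contains several distinct slopes, and nothing rules out irreducible maps between two such bundles. For example, in type $(2,3,5)$ one has $\delta(\vec\omega)=-1$ and exactly one line bundle in the slab (the paper computes $[L(\udp):\Z\vec\omega]=1$ there), so your proposed partition is $1$ versus $8$, which cannot be a bipartition of $\tilde E_8$. The paper's Remark immediately after the theorem states explicitly that the bipartite orientation is obtained by a case-by-case AR-sequence computation for each weight type with all $p_i\geq 2$, not by a uniform slope dichotomy. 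Your rank bound via ``defect matches rank'' is morally right --- both are $\tau$-invariant linear forms on $K_0$ vanishing exactly on the torsion/regular classes, hence proportional --- but establishing that common kernel already presupposes the identification of torsion sheaves with regular modules under the derived equivalence, so the argument is no more economical than the paper's additive-function route and should be phrased as proportionality rather than equality.
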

\begin{proof}
	The first statement in (1) is \cite[Proposition 5.5(i)]{GL}. The last statement in (1) is \cite[Corollary 3.8]{Lenzing}.  (2) and (3) are due to \cite{H89} (see also \cite[Theorem 3.5]{Lenzing}, \cite[Proposition 5.1]{KLM}).  It remains to show the second statement in (1). 
In fact,  the underlying graph $\Omega$ of the AR quiver   of $\vect\X$  is determined by the following  observations: 
\begin{enumi}
\item $\rk$ is an additive function on the full  sub-graph $\Omega_0$ of $\Omega$ consisting of vertices corresponding to indecomposable bundles with slope in $(\delta(\vec{\omega}), 0]$; 
\item the number of vertices of $\Omega_0$ is equal to the rank $\sum_{i=1}^3(p_i-1)+2$ of $K_0(\X)$ (since the direct sum of pairwise non-isomorphic indecomposable bundles with slope in the interval $(\delta(\vec{\omega}), 0]$ is a tilting bundle);
\item the number of line bundles with slope in the interval $(\delta(\vec{\omega}), 0]$ is $[L(\udp):\Z\vec{\omega}]$ (by Proposition~\ref{line bundle}(1)), which is equal to $p_2+p_3$ ($4$, $3$,  $2$, $1$, respectively) if $(p_1,p_2,p_3) = (1,p_2,p_3)$ ($(2,2,n)$ ($n\geq 2$),  $(2,3,3)$,  $(2,3,4)$,  $(2,3,5)$, respectively). 
\end{enumi}
In particular, rank of indecomposable bundles are explicitly known and form a bounded set since $\tau$ preserves rank. 

\end{proof}
\begin{rmk}
	\begin{enum2}
		\item To show that the endomorphism algebra $\End(T)$ of the tilting bundle $T$ given in Theorem~\ref{domestic bundle}(2) is a hereditary algebra, instead of using the argument in \cite{KLM}, we can also argue as follows. By Proposition~\ref{length torsion pair},  there are a bounded t-structure with heart $\B\subset \coh\X[1]*\coh\X$ and  an equivalence $\B\simeq \mod\, \End(T)$. Clearly we have $\hom_{\D^b(\X)}^2(\B, \B)=0$. Since there is a monomorphism $\ext^2_{\B}(X,Y)\monic \hom^2_{\D^b(\X)}(X, Y)$ for $X, Y\in\B$, we have $\ext^2_{\B}(\B,\B)=0$, that is, $\B$ is hereditary. So $\End(T)$ is a hereditary algebra.

\item We remark why $\vec{\tilde{\Delta}}$ has a bipartite partition if each $p_i\geq 2$. This is obtained via  a case-by-case analysis using AR-sequences and starting from line bundles with slope in the interval $(\delta(\vec{\omega}), 0]$. For example, if $(p_1,p_2,p_3)=(2,3,4)$, then the full subquiver of the AR quiver of $\vect\X$ consisting of those indecomposable bundles with slope in $(\delta(\vec{\omega}), 0]$ can be depicted as follows
	\[\xymatrix@=1.3em{
		& & & [E_2] & & & \\
		[\O]  & [E_1]\ar[l]\ar[r] & [F] & \ar[l] [G]\ar[u]\ar[r] & [F(\vec{x_1}-2\vec{x_3})] & [E_1(\vec{x_1}-2\vec{x_3})]\ar[l]\ar[r] & [\O(\vec{x_1}-2\vec{x_3})].}
	\]
It follows that $\vec{\tilde{\Delta}}$ has a bipartite partition.
\end{enum2}

\end{rmk}

Now suppose $\X$ is of tubular type. We have an interesting and extremely useful class of  exact autoequivalences of $\D^b(\X)$, called \emph{telescopic functors}. These functors are 
introduced in \cite{LM3} as equivalences between subcategories of $\coh\X$  and extended in \cite{Meltzer2} as exact autoequivalences of $\D^b(\X)$. \cite{Meltzer} is a good reference for these functors. 
\begin{thm} Let $\X$ be a weighted projective line of tubular type.  
	For each $q,q'\in \bar{\Q}$, there is an exact autoequivalence $\Phi_{q,q'}$ of  $\D^b(\X)$, called a telescopic functor, such that $\Phi_{q,q'}(\coh^{q'}\X)=\coh^{q}\X$. Moreover, these functors satisfy the conditions $\Phi_{q'',q}=\Phi_{q'',q'}\circ \Phi_{q',q}$ and $\Phi_{q,q}=\id$.
\end{thm}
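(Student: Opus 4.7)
The plan is first to reduce to constructing, for each $q\in\bar{\Q}$, an exact autoequivalence $\Phi_{q,\infty}$ of $\D^b(\X)$ with $\Phi_{q,\infty}(\coh_0\X)=\coh^q\X$ and $\Phi_{\infty,\infty}=\id$. Once such functors are available, setting $\Phi_{q,q'}:=\Phi_{q,\infty}\circ\Phi_{q',\infty}^{-1}$ makes the cocycle relations $\Phi_{q'',q}=\Phi_{q'',q'}\circ\Phi_{q',q}$ and $\Phi_{q,q}=\id$ automatic, so all the work lies in producing the family $\{\Phi_{q,\infty}\}_{q\in\bar{\Q}}$.

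Among the elementary ingredients I would start with the Picard twists $-\otimes \O(\vec{x})$, which are exact autoequivalences of $\D^b(\X)$ shifting slope by $\delta(\vec{x})$ by Lemma~\ref{twist slope}. Since $\delta:L(\underline{p})\to\Z$ is surjective for every tubular weight sequence, these twists alone realize the slope translations $\Phi_{n,\infty}$ for $n\in\Z$ up to precomposition with an autoequivalence preserving $\coh_0\X$. Handling arbitrary rational slopes is more subtle and uses tilting. The key input is the existence, for suitable $q\in\Q$, of a tilting bundle $T\in\coh^{>q}\X$ whose endomorphism algebra $\End(T)$ is again a canonical algebra of tubular type in the sense of Ringel. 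This can be obtained by iterated mutations starting from the canonical tilting bundle of Theorem~\ref{der canonical algebra}, using the fact that for tubular $\X$ the semistable subcategory $\coh^\mu\X$ is abelian-equivalent to $\coh_0\X$ at every rational slope $\mu$. The composite
\[\D^b(\X)\lra \D^b(\End T)\simeq \D^b(\X'),\]
where the first arrow is $R\hom(T,-)$, together with an identification $\X'\simeq \X$ (same weight type), yields an exact autoequivalence of $\D^b(\X)$; verifying that it sends $\coh^\infty\X$ onto $\coh^q\X$ is a direct check using the explicit description of $T$ and the slope filtration of $\coh\X$.

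Combining the Picard twists with an $\infty\leftrightarrow 0$-type flip of the kind above generates a subgroup of $\aut\D^b(\X)$ whose induced action on $\bar{\Q}$ contains the two M\"obius generators $q\mapsto q+1$ and $q\mapsto -1/q$, and therefore acts transitively on $\bar{\Q}$; for each $q\in\bar{\Q}$, I then pick an element of this group carrying $\infty$ to $q$ as the definition of $\Phi_{q,\infty}$. The main obstacle in executing this plan will be the technical construction of the tilting bundle $T$ and the identification of its endomorphism algebra as a tubular canonical algebra with matching parameters; this is exactly the content of Lenzing--Meltzer's telescopic construction in \cite{LM3}, later extended to derived autoequivalences in \cite{Meltzer2, Meltzer}. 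The tubular hypothesis is used essentially here, because only for tubular $\X$ are the semistable subcategories at different rational slopes equivalent as abelian categories, which is what permits these slope-rotating autoequivalences to exist in the first place.
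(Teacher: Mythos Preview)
The paper does not prove this theorem; it is recorded as a known result with citations to \cite{LM3,Meltzer2,Meltzer}. The underlying construction is, however, made explicit in the proof of Lemma~\ref{fractional linear}: by definition $\Phi_{q,\infty}$ is a composition of the tubular mutation functors $T_{\tau^\bullet S}$, $T_{\tau^\bullet S}^{-1}$, $T_{\tau^\bullet\O}$ (see \cite[Theorem~5.2.6]{Meltzer}), where $S$ is a simple sheaf of $\tau$-period $p$. These are intrinsic autoequivalences of $\D^b(\X)$ defined by functorial triangles, acting on slopes via $\begin{pmatrix}1&1\\0&1\end{pmatrix}$ and $\begin{pmatrix}1&0\\-1&1\end{pmatrix}$ respectively; since they are autoequivalences from the outset, no identification of an auxiliary $\X'$ with $\X$ is ever needed.

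Your outline is in the right spirit but differs from this and contains one slip. Picard twists send $\coh^\mu\X$ to $\coh^{\mu+\delta(\vec{x})}\X$ for finite $\mu$ but fix $\coh_0\X=\coh^\infty\X$ setwise, so they cannot realize any $\Phi_{n,\infty}$; your phrase ``up to precomposition with an autoequivalence preserving $\coh_0\X$'' does not help, since such a precomposition still leaves $\coh_0\X$ inside $\coh_0\X$. What Picard twists (or $T_{\tau^\bullet S}$) give you is $\Phi_{q+n,q}$ for finite $q$; you still need one genuine $\infty\leftrightarrow\text{finite}$ flip before the translation generators become useful. In Meltzer's approach that flip is $T_{\tau^\bullet\O}$. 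Your alternative---a derived equivalence $R\hom(T,-)$ for a suitable tilting bundle---would also work, but, as you acknowledge, it requires checking that $\End(T)$ is a canonical algebra with the \emph{same} parameters $(\underline{p},\underline{\lambda})$, which is delicate for weight type $(2,2,2,2)$ where $\underline{\lambda}$ carries moduli. The tubular-mutation route bypasses this identification entirely, which is its chief advantage.
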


Denote $\coh^\mu_\lambda\X=\Phi_{\mu,\infty}(\coh_\lambda\X)$.
The next theorem summarizes well-known and basic properties of vector bundles over a tubular weighted projective line. 
\begin{thm}\label{tubular bundle}
	Let $\X$ be a weighted projective line of tubular type.

	\begin{enum2}
	\item We have $\coh_\lambda^\mu\X\simeq \coh_\lambda\X$ and $\coh^\mu\X$ decomposes as $\coh^\mu\X=\coprod_{\lambda\in\P^1}\coh^\mu_\lambda\X$. In particular, each $\coh^\mu_\lambda\X$ as well as $\coh^\mu\X$ is a uniserial abelian category.
	
	\item Each indecomposable bundle over $\X$ is semistable. $\coh^\mu_\lambda\X$ is closed under the formation of Auslander-Reiten sequences and the Auslander-Reiten quiver of $\coh^\mu_\lambda\X$ is a tube of rank $\w(\lambda)$, where $\w$ is the weight function of $\X$. In particular, the Auslander-Reiten quiver of $\vect\X$ is a family of tubes parametrized by $\Q\times \P^1$.

	\item  An indecomposable bundle in $\coh^\mu_\lambda\X$ is exceptional iff its quasi-length is less than $\w(\lambda)$. An indecomposable bundle over $\X$ is stable iff  it is quasi-simple. A stable bundle in $\coh^\mu_\lambda\X$ has $\tau$-period $\w(\lambda)$. 
	\end{enum2}
\end{thm}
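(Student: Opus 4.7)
The plan is to use the telescopic functor $\Phi_{\mu,\infty}$ to transport the already known structure of $\coh_0\X=\coh^\infty\X$ to each $\coh^\mu\X$, with the single exception of the semistability assertion in (2), which requires an independent argument exploiting the tubular identity $\delta(\vec{\omega})=0$.

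For (1), recall that $\Phi_{\mu,\infty}$ is an exact autoequivalence of $\D^b(\X)$ sending $\coh^\infty\X=\coh_0\X$ onto $\coh^\mu\X$ and, by definition, $\coh_\lambda\X$ onto $\coh^\mu_\lambda\X$. Since $\coh_0\X$ decomposes as the coproduct $\coprod_{\lambda\in\P^1}\coh_\lambda\X$ with each $\coh_\lambda\X\simeq \nilp\,k\tilde{\AA}_{\w(\lambda)-1}$ uniserial, transporting along $\Phi_{\mu,\infty}$ yields both the asserted block decomposition of $\coh^\mu\X$ and the uniserial property of each block.

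The semistability of indecomposable bundles is the main obstacle. Given an indecomposable bundle $E$ with nontrivial Harder--Narasimhan filtration and last short exact sequence $0\to F\to E\to A\to 0$ with $\mu(F)>\mu(A)$, the tubular condition $\delta(\vec{\omega})=0$ together with Lemma~\ref{twist slope} forces $\tau$ to preserve slopes. Serre duality then gives $\ext^1(A,F)\cong D\hom(F,\tau A)$, and the right hand side vanishes because all semistable components of $F$ have slope strictly greater than $\mu(\tau A)=\mu(A)$, while $\tau A$ is semistable of slope $\mu(A)$; a standard Harder--Narasimhan dev\^issage combined with the vanishing $\hom(\coh^{\mu_1}\X,\coh^{\mu_2}\X)=0$ for $\mu_1>\mu_2$ delivers the claim. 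The sequence then splits, contradicting indecomposability. Once semistability is established, each indecomposable bundle lies in a unique $\coh^\mu_\lambda\X$ by (1); since an exact autoequivalence preserves Auslander--Reiten sequences, the closure of $\coh_\lambda\X$ under AR sequences (with tube AR quiver of rank $\w(\lambda)$) transports to $\coh^\mu_\lambda\X$. Parametrising by $(\mu,\lambda)\in \Q\times \P^1$ then produces the AR quiver of $\vect\X$.

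For (3), exceptionality, quasi-length, quasi-simplicity, and tube rank are categorical invariants preserved by any exact autoequivalence, while the $\tau$-period is preserved because such an autoequivalence commutes up to natural isomorphism with the Serre functor $\tau(-)[1]$ and hence with $\tau$. Via the equivalence $\coh^\mu_\lambda\X\simeq\coh_\lambda\X\simeq\nilp\,k\tilde{\AA}_{\w(\lambda)-1}$, all three assertions reduce to well-known facts in a homogeneous tube of rank $t=\w(\lambda)$: an indecomposable is exceptional iff its length (equivalently quasi-length) is strictly less than $t$; the quasi-simple objects are precisely the simples of the length category $\coh^\mu_\lambda\X$, which coincide with the stable objects of $\coh^\mu\X$ (as simple objects in the length category $\coh^\mu\X$); and each such stable bundle has $\tau$-period equal to $t$.
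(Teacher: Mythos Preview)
Your proof is correct and follows the same approach as the paper: transport the known structure of $\coh_0\X$ through the telescopic functor $\Phi_{\mu,\infty}$, using that any exact autoequivalence commutes with the Serre functor and hence with $\tau$. The only difference is that you supply a direct argument for semistability (the tubular identity $\delta(\vec{\omega})=0$ makes $\tau$ slope-preserving, so Serre duality forces the last Harder--Narasimhan extension to split), whereas the paper simply cites \cite[Proposition 5.5(ii)]{GL}.
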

\begin{proof}
	The assertion that each indecomposable bundle is semistable is \cite[Proposition 5.5(ii)]{GL}.  The remaining assertions follow from facts on $\coh_0\X$  by applying a suitable telescopic functor. We remark that a telescopic functor commutes with $\tau$ since any exact autoequivalence commutes with a Serre functor. 
\end{proof}

Here we make an observation needed in the following two lemmas. 
Let $(p_1,\dots, p_t)$ be the weight sequence of $\X$. Recall that we denote by $p=\lcm(p_1,\dots, p_t)$. Since $\X$ is of tubular type, there is some $p_i$ equal to $p$. So there exists a simple sheaf  $S$ with $\tau$-period $p$. 

For $F\in \coh(\X)$ and $n\in\Z$, we define the slope $\mu(F[n])$ of the object $F[n]\in \D^b(\X)$ to be $\mu(F[n])=\mu(F)$. We will need to know the effect of the telescopic functor $\Phi_{\infty,q}$ on slope and the essential image of $\coh^\mu\X$ under $\Phi_{\infty,q}$.  
\begin{lem}\label{fractional linear}
	\begin{enum2}
	\item There is a fractional linear map 
\begin{equation}\label{phi_q}
	\phi_q: \bar{\R}\ra \bar{\R},\,\, \mu\mapsto \frac{a\mu+b}{c\mu+d}, 
	\end{equation}
	where $\left(\begin{matrix} a & b\\c & d\end{matrix}\right)\in SL(2,\Z)$, such that \[\mu(\Phi_{\infty,q}(E))=\phi_q(\mu(E))\] for a sheaf $E$. 

	\item  For $\mu\in \bar{\Q}$, we have 

	\begin{equation}
		\Phi_{\infty,q}(\coh^\mu \X)=\left\{\begin{array}{ll} \coh^{\phi_q(\mu)}\X & \text{if}\,\, \mu\leq q,\\
	\coh^{\phi_q(\mu)}\X[1] & \text{if}\,\,\mu>q.\end{array}\right.
	\end{equation}
\end{enum2}
\end{lem}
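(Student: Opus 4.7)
The plan is to exploit that $\Phi := \Phi_{\infty,q}$, being an exact autoequivalence of $\D^b(\X)$, commutes with the Serre functor $\tau[1]$ (hence with $\tau$) and preserves the Euler form $\chi$; it therefore also preserves the averaged Euler form $\bar\chi$. The tubular hypothesis $g_\X=1$ collapses Riemann--Roch to $\bar\chi(E,F)=\deg(F)\rk(E)-\deg(E)\rk(F)$, the $2\times 2$ determinant of the matrix with rows $(\rk(E),\deg(E))$ and $(\rk(F),\deg(F))$. Hence $\bar\chi$ factors through the charge map $Z:=(\rk,\deg)\colon K_0(\X)\to\Z^2$, whose kernel equals the radical of $\bar\chi$ and is therefore $\Phi$-invariant. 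A direct check using $[\O]$ and a simple torsion sheaf of minimal degree shows $Z(K_0(\X))=\Z^2$ in all four tubular cases, so $\Phi$ descends to a determinant-preserving automorphism of $\Z^2$, yielding a matrix $M=\bigl(\begin{smallmatrix}\alpha&\beta\\\gamma&\delta\end{smallmatrix}\bigr)\in GL(2,\Z)$ characterized by $(\rk(\Phi E),\deg(\Phi E))^T = M(\rk(E),\deg(E))^T$ for every $E$.

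For part (1), dividing $\deg(\Phi E)$ by $\rk(\Phi E)$ gives $\mu(\Phi E)=(\delta\mu(E)+\gamma)/(\beta\mu(E)+\alpha)$, the asserted M\"obius form with coefficient determinant equal to $\det M$. To upgrade $M$ to $SL(2,\Z)$, I plan to invoke the explicit construction of the telescopic functors in \cite{LM3, Meltzer2}: each $\Phi_{q',q}$ is a composition of line-bundle twists $-(\vec{x})$ and elementary tubular mutations at exceptional stable bundles, and each of these acts on $Z$ by an explicit $SL(2,\Z)$-matrix. Alternatively, combining $\Phi(\coh^q\X)=\coh^\infty\X$ with the analogous statement for $\Phi^{-1}=\Phi_{q,\infty}$ forces $\det M=+1$: a stable bundle in $\coh^q\X$ with class $(n_0,m_0)$ ($n_0>0$, $\gcd(n_0,m_0)=1$, $m_0/n_0=q$) must be sent to a stable torsion sheaf of class $(0,d_+)$ with $d_+>0$, which together with the inverse positivity condition pins down the signs.

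For part (2), the key observation is that for $E\in\coh^\mu\X$ indecomposable, $\Phi E$ is indecomposable in $\D^b(\X)$, and since $\coh\X$ is hereditary, $\Phi E \cong F[n]$ for a unique indecomposable $F\in\coh\X$ and some $n\in\Z$. The identity $\alpha+\beta q=0$ (from $\Phi(\coh^q\X)=\coh^\infty\X$) rewrites $\rk(\Phi E)$ as $\rk(E)\beta(\mu-q)$; fixing the sign of $\beta$ once from the construction above shows $\beta<0$, so $\rk(\Phi E)>0$ for $\mu<q$, $=0$ for $\mu=q$, and $<0$ for $\mu>q$. Combined with $\rk(F[n])=(-1)^n\rk(F)$ and $\rk(F)\geq 0$, this forces $n=0$ for $\mu\leq q$ and $n=1$ for $\mu>q$. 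Applying the exact functor $\Phi$ to a Jordan--H\"older filtration of $E$ in $\coh^\mu\X$ (whose factors are stable of slope $\mu$), the resulting filtration lies entirely in $\coh\X$ (resp.\ $\coh\X[1]$); by part (1) every factor has slope $\phi_q(\mu)$, so $F\in\coh^{\phi_q(\mu)}\X$ as required. The reverse inclusion follows by symmetry, applying the same argument to $\Phi^{-1}=\Phi_{q,\infty}$.

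The main obstacle I anticipate is making the sign determinations (both $\det M=+1$ in part (1) and $\beta<0$ in part (2)) rigorous without appealing circularly to the formula being proved. The cleanest route should be to reduce to a single elementary tubular mutation at an exceptional stable bundle, where the induced action on $(\rk,\deg)$ can be written down explicitly and both signs read off; the composition formula $\Phi_{q'',q}=\Phi_{q'',q'}\circ\Phi_{q',q}$ then transports this to arbitrary $q$.
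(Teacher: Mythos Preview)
Your route to part~(1) via preservation of the averaged Euler form $\bar\chi$ is correct and somewhat more conceptual than the paper's. The paper goes directly to the explicit construction of $\Phi_{\infty,q}$ as a composition of the tubular mutations $T_{\tau^\bullet S}^{\pm1}$ and $T_{\tau^\bullet\O}$ (not line-bundle twists---that is a small inaccuracy in your proposal), and reads off from \cite[Corollary~5.2.3, Theorem~5.2.6]{Meltzer} the $SL(2,\Z)$-matrix each contributes on slopes. Since you too must fall back on this same explicit description to pin down $\det M=+1$ and the sign of $\beta$, the two arguments converge; your $\bar\chi$-argument is a pleasant way to see \emph{a priori} why some $GL(2,\Z)$-matrix must exist, but it does not save work.

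Part~(2) has a genuine gap. From $\rk(\Phi E)=(-1)^n\rk(F)$ with $\rk(F)\ge0$, the sign of $\rk(\Phi E)$ determines only the \emph{parity} of $n$; nothing in your argument excludes $n=\pm2,\pm4,\ldots$ for $\mu<q$ or $n=-1,3,\ldots$ for $\mu>q$. You also do not establish that the shift is uniform across the different tubes of $\coh^\mu\X$, which are pairwise Hom-orthogonal. The paper handles both points: from the building-block description it already knows that $\Phi_{\infty,q}(\coh^\mu\X)$ is a single well-defined shift $\coh^{\phi_q(\mu)}\X[n(\mu)]$, and then invokes the Hom-nonvanishing $\hom(\coh^\mu\X,\coh^{\mu'}\X)\neq0$ for $\mu<\mu'$ (a consequence of Riemann--Roch in the tubular case). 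Hereditarity forces $n(\mu')-n(\mu)\in\{0,1\}$; since $\phi_q$ is an orientation-preserving M\"obius map with its pole at $q$, one gets $n(\mu')-n(\mu)=0$ when $\mu,\mu'$ lie on the same side of $q$ and $=1$ when they straddle it. Anchored at $n(q)=0$ this gives the claim. Your rank computation is consistent with this outcome but cannot replace the Hom argument; the ``obstacle'' you flagged is real, and it is not only about signs of $\beta$ and $\det M$ but about pinning down $n$ itself.
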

\begin{proof}
	Recall from  \cite[Chapter 5]{Meltzer} that for an indecomposable coherent sheaf $E$ over $\X$ with $\tau$-period $p_E$,  the tubular mutation functor $T_{\tau^\bullet E}$ with respect to the $\tau$-orbit of $E$, which is an exact autoequivalence of $\D^b(\X)$, fits into a triangle \[\oplus_{j=0}^{p_E-1}\hom^\bullet(\tau^jE,-)\otimes \tau^jE\ra \id\ra T_{\tau^\bullet E}\cra .\]
	Define an action of $SL(2,\Z)$ on $\bar{\Q}$ by \[\begin{pmatrix} a & b \\ c & d\end{pmatrix}.q=\frac{aq+b}{cq+d}.\] 
	By \cite[Corollary 5.2.3]{Meltzer},  $T_{\tau^\bullet \O}(\coh^q\X)$  is a shift of $\coh^{\frac{q}{1-q}}\X$ for each $q\in \bar{\Q}$.
	Let $S$ be a simple sheaf with $\tau$-period $p$. 
	From the triangle \[\oplus_{j=0}^{p-1}\hom^\bullet(\tau^j S, -)\otimes \tau^j S\ra \id\ra T_{\tau^\bullet S}\cra,\] we see that  $T_{\tau^\bullet S}(\coh^q\X)=\coh^{1+q}\X$ for $q\in \bar{\Q}$.  So $T_{\tau^\bullet S}$ ($T_{\tau^\bullet S}^{-1}$,  $T_{\tau^\bullet \O}$, respectively) acts on slopes by $\begin{pmatrix} 1 & 1 \\ 0 & 1\end{pmatrix}$ ($\begin{pmatrix} 1 & -1 \\ 0 & 1\end{pmatrix}$,  $\begin{pmatrix} 1 & 1 \\ 0 & 1\end{pmatrix}$, respectively). 
	By definition, $\Phi_{q,\infty}=\Phi_{\infty, q}^{-1}$ is a composition of a sequence of the functors $T_{\tau^\bullet S}, T_{\tau^\bullet S}^{-1}, T_{\tau^\bullet \O}$  (see \cite[Theorem 5.2.6]{Meltzer}).  
	So we have a unique function $\phi_q: \bar{\Q}\ra \bar{\Q}$ such that $\phi_q(\mu)=\frac{aq+b}{cq+d}$ for some $\begin{pmatrix}a & b\\c& d\end{pmatrix}\in SL(2,\Z)$ and such that $\Phi_{\infty,q}(\coh^\mu\X)$ is a shift of $\coh^{\phi_q(\mu)}\X$ for each $\mu\in \bar{\Q}$. We extend $\phi_q$ to be the function \[\phi_q: \bar{\R}\ra \bar{\R},\quad r\mapsto \frac{ar+b}{cr+d}.\]  
	By Riemann-Roch Theorem, we have $\hom(\coh^\mu\X, \coh^{\mu'}\X)\neq 0$ for $\mu< \mu'$. 
 Now that $\Phi_{\infty,q}(\coh^q\X)=\coh^{\infty}\X$, (2) follows immediately. 
	
\end{proof}

	It's well-known that a stable bundle over an elliptic curve defined over an algebraically closed field has coprime rank and degree. We have the following analogue\footnote{Prof. Lenzing informed me of this fact as an answer to my question.} for a stable bundle over a tubular weighted projective line, which is implicit in \cite{LM3}. Actually, there is a parallel proof for an elliptic curve. 
\begin{lem}\label{stable gcd}
	Let $\X$ be a weighted projective line of tubular type and $E$ a stable vector bundle over $\X$ with $\tau$-period $p_E$. Then \[\gcd(\rk(E), \deg(E))=\frac{p}{p_E}.\] 
\end{lem}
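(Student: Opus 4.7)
The plan is to reduce to the case of a simple torsion sheaf via a telescopic functor. Set $\mu=\mu(E)\in\bar{\Q}$. By definition $\Phi_{\infty,\mu}$ maps $\coh^\mu\X$ onto $\coh_0\X$, so $T:=\Phi_{\infty,\mu}(E)$ is a torsion sheaf. Since $\Phi_{\infty,\mu}$ restricts to an exact equivalence between the abelian categories $\coh^\mu\X$ and $\coh_0\X$, and since stability equals simplicity in $\coh^\mu\X$, $T$ is a simple torsion sheaf. Moreover $\Phi_{\infty,\mu}$ commutes with $\tau$ (any exact autoequivalence commutes with the Serre functor $\tau[1]$), so $T$ has $\tau$-period $p_E$. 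Hence $T$ is supported at some $\lambda\in\P^1$ with $\w(\lambda)=p_E$, and the defining short exact sequences of simple sheaves recorded in \S\ref{sec: wpl def} give $\rk(T)=0$ and $\deg(T)=p/\w(\lambda)=p/p_E$; in particular $\gcd(\rk T,\deg T)=p/p_E$.

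It then suffices to argue that $\Phi_{\infty,\mu}$ preserves $\gcd(\rk(-),\deg(-))$. Since $g_\X=1$, Riemann--Roch collapses to
\begin{equation*}
\bar{\chi}(F,G)=\det\begin{pmatrix}\rk(F) & \rk(G)\\ \deg(F) & \deg(G)\end{pmatrix},
\end{equation*}
so the radical of $\bar{\chi}$ on $K_0(\X)$ equals $\ker(\rk,\deg)$, and $(\rk,\deg)$ identifies the quotient $K_0(\X)/\textup{rad}(\bar{\chi})$ with $\Z^2$ carrying the nondegenerate symplectic form $\det$. A direct computation using $\chi(\Phi F,\Phi G)=\chi(F,G)$ together with $\Phi\tau\cong\tau\Phi$ shows that every exact autoequivalence $\Phi$ preserves $\bar{\chi}$. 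Therefore $\Phi_{\infty,\mu}$ descends to an automorphism of $\Z^2$ preserving $\det$, i.e.\ an element of $SL(2,\Z)$; since such transformations preserve the $\gcd$ of integer pairs, $\gcd(\rk E,\deg E)=\gcd(\rk T,\deg T)=p/p_E$. The main conceptual ingredient is this symplectic reinterpretation of $\bar{\chi}$ in tubular type, which replaces a case-by-case matrix computation for each generator in the decomposition of $\Phi_{\infty,\mu}$.
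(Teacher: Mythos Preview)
Your proof is correct. Both your argument and the paper's share the two essential ingredients: the telescopic functor $\Phi_{\infty,\mu}$ sending $E$ to a simple torsion sheaf of $\tau$-period $p_E$, and the invariance of the averaged Euler form $\bar{\chi}$ under exact autoequivalences. The organization, however, differs. The paper proceeds in two separate steps: it first shows $\frac{p}{p_E}\mid\gcd(\rk E,\deg E)$ directly from the periodicity $\tau^{p_E}E\cong E$ (rewriting $\deg E=\bar{\chi}(\O,E)$ and $\rk E=\bar{\chi}(E,S)$ as $\frac{p}{p_E}$ times an integer), and only then invokes the telescopic functor to produce an explicit B\'ezout identity $\deg(F)\rk(E)-\deg(E)\rk(F)=\frac{p}{p_E}$ for the reverse divisibility. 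You instead transport $E$ to the torsion side where the gcd is read off trivially (rank zero, degree $p/p_E$), and then observe once and for all that the induced action on $K_0(\X)/\ker(\rk,\deg)\cong\Z^2$ lies in $SL(2,\Z)$ and hence preserves gcd. Your packaging via the symplectic quotient lattice is cleaner and handles both divisibilities simultaneously; the paper's first step, on the other hand, is more elementary in that it avoids any appeal to autoequivalences for the ``easy'' direction.
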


\begin{proof}
Let $S$ be a simple sheaf with $\tau$-period $p$. 
 By Riemann-Roch Theorem, the linear form $\deg: K_0(\X)\ra \Z$ coincides with $\bar{\chi}(\O,-)$ and the linear form $\rk: K_0(\X)\ra \Z$ with $\bar{\chi}(-,S)$. 
 So we have  \[\deg(E)=\bar{\chi}(\O,E)=\frac{p}{p_E}\sum_{j=0}^{p_E-1}\chi(\tau^i\O, E),\quad \rk(E)=\bar{\chi}(E,S)=\frac{p}{p_E}\sum_{j=0}^{p_E-1}\chi(\tau^iE, S),\] whence  $\frac{p}{p_E}\mid \gcd(\deg(E), \rk(E))$. 
Let $S'=\Phi_{\infty, \mu(E)}(E)$. $S'$ is a simple sheaf with $\tau$-period $p_{S'}=p_E$. Observe that there exists $\vec{x}\in L(\udp)$ such that $\bar{\chi}(\O(\vec{x}), S')=\frac{p}{p_E}$. Take $F=\Phi_{\mu(E), \infty}(\O(\vec{x}))$. 
	Then we have 
	\[\deg(F)\rk(E)-\deg(E)\rk(F)  =\bar{\chi}(F,E)=\bar{\chi}(\O(\vec{x}),S')=\frac{p}{p_E}.\] Hence $\gcd(\rk(E), \deg(E))=\frac{p}{p_E}$.
\end{proof}
\subsection{Perpendicular categories}\label{sec: wpl perp}
Let $\X=\X(\udp, \udl)$ be a weighted projective line with weight sequence $\udp=(p_1,\dots, p_t)$. For convenience, we will denote $\A=\coh\X, \D=\D^b(\X)$. 
For a collection $\SSS$ of objects in $\coh\X$,  we have $\SSS^{\perp_\A}={}^{\perp_\A}\tau \SSS$ by Serre duality. So it sufficies to describe right perpendicular categories. We are   concerned about perpendicular categories of an exceptional sequence.

A (possibly empty) collection  of  simple sheaves over $\X$ is called \emph{proper} if it does not contain  a complete set of simple sheaves supported at $\lambda$ for each $\lambda\in \P^1$ and simple sheaves in the collection are pairwise non-isomorphic. In particular, it contains only exceptional simple sheaves.
\begin{thm}[{\cite{GL2}}]\label{simple perp}
Let $\SSS=\bigcup_{i=1}^t\SSS_i$ be a collection  of simple sheaves, where $\SSS_i$ is a  proper collection of simple sheaves  supported at $\lambda_i$. 

\begin{enum2}
\item We have an equivalence $\SSS^{\perp_\A}\simeq \coh\X'$ preserving rank, where $\X'=\X(\udp',\udl)$ is a weighted projective line with weight sequence \[\udp'=(p_1-\sharp\SSS_1,\dots, p_i-\sharp\SSS_i,\dots, p_t-\sharp\SSS_t).\]
	
\item The inclusion of the exact subcategory $\SSS^{\perp_\A}$ into $\A=\coh\X$ admits an exact left adjoint and an exact right adjoint, both of which preserve rank. 
\end{enum2}
\end{thm}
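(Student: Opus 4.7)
The plan is to prove both parts by induction on $\sharp\SSS$, reducing to the case $\SSS=\{S\}$ where $S=S_{i,j}$ is a single exceptional simple sheaf supported at an exceptional point $\lambda_i$. The reduction works because each $S\in\SSS$ has projective dimension at most $1$ in $\A$ (witnessed by the short exact sequence $0\to \O((j-1)\vec{x}_i)\to \O(j\vec{x}_i)\to S_{i,j}\to 0$), so by the fact cited in the notation section $S^{\perp_\A}$ is an exact subcategory of $\A$ closed under extensions; the remaining simples of $\SSS\setminus\{S\}$ continue to form a proper collection inside $S^{\perp_\A}$, and the inductive hypothesis applied there finishes the argument, with the rank-preserving adjoints composing.

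For the base case of (1), I would exhibit $S^{\perp_\A}\simeq \coh\X'$ where $\X'=\X(\udp',\udl)$ has $p_i$ replaced by $p_i-1$, via tilting theory. Starting from the canonical tilting bundle $T=\bigoplus_{0\leq\vec{x}\leq\vec{c}}\O(\vec{x})$ from Theorem~\ref{der canonical algebra}, I would build a tilting object $T'$ of $S^{\perp_\A}$ by discarding the summand $\O(j\vec{x}_i)$ and compensating with the composition $\O((j-1)\vec{x}_i)\xrightarrow{X_i^2}\O((j+1)\vec{x}_i)$ so as to land in $S_{i,j}^{\perp_\A}$, then verify that $T'$ classically generates $S^{\perp_\A}$ and compute that $\End T'$ is isomorphic to a canonical algebra with parameter $\udp'$. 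Combining the derived equivalences $\D^b(S^{\perp_\A})\simeq \D^b(\End T')\simeq \D^b(\coh\X')$ (the second given by Theorem~\ref{der canonical algebra} applied to $\X'$) with the hereditariness of both $S^{\perp_\A}$ and $\coh\X'$ descends this to an abelian equivalence of standard hearts. Rank preservation is built into the construction because line bundles go to line bundles.

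For (2), I would construct the adjoints by explicit universal approximations. Given $F\in \A$, the left adjoint is built in two steps: first form the cokernel of the canonical evaluation $S\otimes \hom(S,F)\to F$ to annihilate $\hom(S,-)$, then form the universal extension by $S$ indexed by $\ext^1(S,-)$ to annihilate $\ext^1(S,-)$; since $\ext^{\geq 2}(S,-)=0$ by hereditariness, the resulting object lies in $S^{\perp_\A}$ and has the required universal property. The right adjoint is obtained dually (kernel of a coevaluation followed by a universal extension). Both functors are exact by inspection of these constructions, and they preserve rank because $S$ has rank zero; the general case then follows by iterating through the simples in $\SSS$.

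The main obstacle is the base-case identification $S^{\perp_\A}\simeq \coh\X'$. A priori one only knows that the two categories are Hom-finite hereditary abelian categories with matching numerical data (Grothendieck groups and Euler forms), and it is not formal that these coincidences upgrade to an abelian equivalence with a geometric interpretation on the weight side. The real work is the explicit computation of $\End T'$, i.e., matching the quiver with the relations $x_i^{p_i-1}=x_2^{p_2}-\lambda_i x_1^{p_1}$ of the canonical algebra with parameter $\udp'$; once this is accomplished, everything else (adjoints, rank preservation, the inductive reduction) follows smoothly.
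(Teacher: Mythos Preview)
The paper does not prove this theorem; it is quoted from \cite{GL2} without proof. So there is no ``paper's proof'' to compare against, and I will simply assess your proposal.

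There is a genuine gap in your argument for (1). The step ``Combining the derived equivalences $\D^b(S^{\perp_\A})\simeq \D^b(\End T')\simeq \D^b(\coh\X')$ with the hereditariness of both $S^{\perp_\A}$ and $\coh\X'$ descends this to an abelian equivalence of standard hearts'' is false as stated. Hereditariness alone does not force a derived equivalence to be t-exact: for instance, path algebras of two different orientations of the same Dynkin graph are derived equivalent but their module categories are typically not equivalent. Concretely, under $\RHom(T',-):\D^b(S^{\perp_\A})\to\D^b(\Lambda')$ the heart $S^{\perp_\A}$ is sent to a tilted heart of $\D^b(\Lambda')$, not to $\mod\Lambda'$; likewise $\coh\X'$ is sent to another tilted heart under $\RHom(T_{\X'},-)$. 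You have given no reason why these two tilted hearts coincide, and in general they need not. To repair this you would have to verify directly that the composite equivalence sends each summand of $T'$ to the corresponding line bundle on $\X'$ \emph{and} matches the induced torsion pairs --- essentially redoing the work by hand. The original argument in \cite{GL2} bypasses derived categories entirely and works with the $L(\udp)$-graded coordinate algebras, which is why the abelian equivalence comes for free there. A smaller issue: your description of $T'$ (``compensating with the composition $\O((j-1)\vec{x}_i)\xrightarrow{X_i^2}\O((j+1)\vec{x}_i)$'') names a morphism, not an object, and you have not checked that the remaining summands $\O(\vec{x})$ actually lie in $S_{i,j}^{\perp_\A}$.

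Your sketch for (2) is closer to correct, but ``exact by inspection'' is too quick. The two-step approximation (cokernel of evaluation, then universal extension) does produce the left adjoint for a single exceptional $S$ of projective dimension $\leq 1$ --- this is Geigle--Lenzing's construction --- but its exactness is not formal: one must use that $S$ is simple (so $\hom(S,-)$ and $\ext^1(S,-)$ send short exact sequences to sequences whose failure of exactness is controlled) together with hereditariness. The rank-preservation claim is fine since $\rk S=0$. The inductive step also needs you to check that the simples in $\SSS\setminus\{S\}$ remain simple sheaves supported at exceptional points under the identification $S^{\perp_\A}\simeq\coh\X'$, which you assert but do not justify.
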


\begin{lem}\label{extor perp}
	Let $E$ be an exceptional torsion sheaf. Denote 
\begin{equation}\label{extor simple}
	\SSS_E=\{\tau^i\top(E)\mid 0\leq i<l(E)\},\quad \SSS_E'=\SSS_E\backslash \{\top(E)\}.
\end{equation} Then $E^{\perp_\A}$ decomposes as 
	\[E^{\perp_\A}=\SSS_E^{\perp_\A}\coprod \pair{\SSS_E'}_{\A},\]
	and we have an equivalence  $\SSS_E^{\perp_\A}\simeq \coh\X'$ preserving rank,  where $\X'=\X(\udp', \udl)$ is a weighted projective line with weight sequence \[\udp'=(p_1,\dots, p_i-l(E), \dots, p_t),\] and an equivalence $ \pair{\SSS_E'}_{\A}\simeq \mod k\vec{\AA}_{l(E)-1},$ where $k\vec{\AA}_{l}$ is the path algebra of the equi-oriented $\AA_{l}$-quiver. 
\end{lem}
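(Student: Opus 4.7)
The approach is to verify the orthogonal decomposition of $E^{\perp_\A}$ piece by piece, then identify each summand via Theorem~\ref{simple perp} and a direct local analysis inside the block $\coh_{\lambda_i}\X$.

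First, since $E$ is an exceptional indecomposable torsion sheaf, the block decomposition $\coh_0\X=\coprod_\lambda \coh_\lambda\X$ places $E\in \coh_{\lambda_i}\X$ for some exceptional point $\lambda_i$, and the vanishing $\ext^1(E,E)\cong D\hom(E,\tau E)=0$ combined with the uniseriality of $\coh_{\lambda_i}\X$ forces $l(E)<p_i$. The composition factors of $E$ from top to socle are then the pairwise distinct simples $\{\tau^j S\mid 0\leq j<l(E)\}=\SSS_E$. For $M\in\pair{\SSS_E'}_\A$, uniseriality of $E$ (with top $S\notin\SSS_E'$) and of $\tau E$ (with socle $\tau^{l(E)}S\notin\SSS_E'$, using $l(E)<p_i$) forces $\hom(E,M)=0=\hom(M,\tau E)$ by a top/socle argument on images, hence $\pair{\SSS_E'}_\A\subset E^{\perp_\A}$ via Serre duality; the inclusion $\SSS_E^{\perp_\A}\subset E^{\perp_\A}$ follows by devissage along the composition series of $E$, and the same devissage together with Serre duality (using $\tau^{-1}\SSS_E'\subset\SSS_E$) yields Hom-orthogonality in both directions between the two subcategories.

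The crucial step is to split an arbitrary $Z\in E^{\perp_\A}$ accordingly. Decomposing $Z=V\oplus T_{\lambda_i}\oplus\bigoplus_{\lambda\neq \lambda_i}T_\lambda$ into the vector-bundle part and the torsion blocks, the pieces $V$ and $T_\lambda$ ($\lambda\neq\lambda_i$) automatically land in $\SSS_E^{\perp_\A}$: the Hom-vanishing is trivial, while $\ext^1(\SSS_E,V)=0$ is deduced from $\ext^1(E,V)=0$ by induction along the short exact sequence $0\to E_{l(E)-1}\to E\to S\to 0$, using $\hom(\coh_0\X,\vect\X)=0$ to kill the connecting terms in the long exact sequence. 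For the single nontrivial block $T_{\lambda_i}\in\coh_{\lambda_i}\X\simeq \A_{p_i}$, the desired splitting $E^{\perp_{\coh_{\lambda_i}\X}}=(\SSS_E^{\perp_\A}\cap\coh_{\lambda_i}\X)\coprod \pair{\SSS_E'}_\A$ is exactly the decomposition already established inside such a uniserial category in the argument of Lemma~\ref{simple ext-proj A} (and reused in the proof of Lemma~\ref{simple Ext-proj}), via the dichotomy on indecomposables in $E^{\perp}\cap\coh_{\lambda_i}\X$ according to whether their composition factors lie in $\SSS_E'$ or not.

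Finally, to identify the summands: Theorem~\ref{simple perp} applied to the proper collection $\SSS_E$ (proper because $l(E)<p_i$) immediately yields the rank-preserving equivalence $\SSS_E^{\perp_\A}\simeq \coh\X'$ with $\X'=\X(\udp',\udl)$ and $\udp'=(p_1,\dots,p_i-l(E),\dots,p_t)$; and since $\SSS_E'$ consists of $l(E)-1<p_i$ consecutive simples in the cyclic quiver $\tilde{\AA}_{p_i-1}$ with no wrap-around, the object $\bigoplus_{j=1}^{l(E)-1}{}^{[l(E)-j]}\tau^j S$ is a projective generator of $\pair{\SSS_E'}_\A$ with endomorphism algebra $k\vec{\AA}_{l(E)-1}$, giving $\pair{\SSS_E'}_\A\simeq \mod k\vec{\AA}_{l(E)-1}$. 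The main obstacle is the internal block decomposition inside $\coh_{\lambda_i}\X$, but this is essentially already packaged in the $\A_t$-analysis of \S\ref{sec: A_t}, so the bulk of the work is in setting up and reducing to it.
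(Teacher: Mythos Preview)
Your proof is correct and follows essentially the same approach as the paper: both split $E^{\perp_\A}$ along the decomposition $\coh\X=\vect\X\oplus\coprod_\lambda\coh_\lambda\X$, handle the block at the support point via the same $\A_t$-style dichotomy from Lemma~\ref{simple ext-proj A}, handle the other torsion blocks trivially, and identify the summands via Theorem~\ref{simple perp} and the projective-generator argument. The one minor variation is the bundle step: you deduce $\ext^1(\SSS_E,V)=0$ from $\ext^1(E,V)=0$ by induction along the composition series using $\hom(\coh_0\X,\vect\X)=0$, whereas the paper argues contrapositively via Serre duality, converting a hypothetical $\ext^1(S,F)\neq0$ to $\hom(F,\tau S)\neq0$ and then using that $\tau S$ is a composition factor of $\tau E$ together with exactness of $\hom(F,-)$ on $\coh_\lambda\X$; these are equivalent and equally short.
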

Note that if $\X$ is of tubular type then $\X'$ is of domestic type.
\begin{proof}
	Suppose $E$ is supported at $\lambda$. We have a decomposition  \[E^{\perp_\A}\cap \coh_\lambda\X=E^{\perp_{\coh_\lambda\X}}=(\SSS_E^{\perp_\A}\cap \coh_\lambda\X)\coprod \pair{\SSS_E'}_\A.\] The argument for showing this is similar to that in showing $N^{\perp}=\A_1\coprod \A_2$ in the proof of Lemma~\ref{simple ext-proj A}. 
	For $\lambda\neq \lambda'\in \P^1$, since $\hom(\coh_\lambda\X, \coh_{\lambda'}\X)=0$, we have \[E^{\perp_\A}\cap \coh_{\lambda'}\X=\coh_{\lambda'}\X=\SSS_E^{\perp_\A}\cap\coh_{\lambda'}\X.\] 
	We continue to show  \[E^{\perp_\A}\cap \vect\X=\SSS_E^{\perp_\A}\cap \vect\X.\] It sufficies to show that  each nonzero bundle  $F$ lying in $E^{\perp_\A}$ lies in $\SSS_E^{\perp_\A}$. Assume for a contradiction that $F\notin \SSS_E^{\perp_\A}$. Then for some $S\in \SSS_E$, $\ext^1(S, F)\neq 0$, whence $\hom(F,\tau S)\neq 0$ by Serre duality. Since $\tau S$ is a composition factor of $\tau E$ and since $\hom(F,-): \coh_\lambda\X\ra \mod k$ is an exact functor,   $\hom(F,\tau S)\neq 0$ implies $\hom(F, \tau E)\neq 0$. Hence $\ext^1(E, F)\neq 0$, a contradiction to $F\in E^{\perp_\A}$. So indeed we have \[E^{\perp_\A}\cap \vect\X=\SSS_E^{\perp_\A}\cap \vect\X.\] By Serre duality, this implies $\hom(E^{\perp_\A}\cap \vect\X, \pair{\SSS'_E}_\A)=0$. 
	Now that  each coherent sheaf over $\X$ is a direct sum of a bundle and a torsion sheaf and that  $\coh_0\X=\coprod_{\lambda\in \P^1}\coh_\lambda\X$, we can conclude   \[E^{\perp_\A}=\SSS_E^{\perp_\A}\coprod \pair{\SSS_E'}_{\A}.\]
	One easily sees  $\pair{\SSS_E'}_{\A}\simeq \mod k\vec{\AA}_{l(E)-1}$. 
 By Theorem~\ref{simple perp}, we have an equivalence $\SSS_E^{\perp_\A}\simeq \coh\X'$ preserving rank, where $\X'$ has a weight sequence as asserted. 
\end{proof}

\begin{thm}\label{bundle perp} \label{line bundle perp}
	\begin{enum2}
	\item \textup{(\cite{HL}; see also \cite[Kapitel 5]{H96})} Let $E$ be an exceptional bundle over $\X$. Then $E^{\perp_\A}\simeq \mod \Lambda$ for some finite dimensional hereditary algebra $\Lambda$.
	
	\item \textup{(\cite{HL}; see also \cite[Proposition 2.14]{Lenzing})}  Let $L$ be a line bundle in $\coh\X$. Then \[L^{\perp_\A} \simeq \mod k[p_1,\dots, p_t],\] where $k[p_1,\dots,p_t]$ is the path algebra of the equioriented star quiver $[p_1,\dots,p_t]$.
	\end{enum2}
 \end{thm}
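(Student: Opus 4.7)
The plan is to treat both parts within the admissible-subcategory framework of Section~2, with (1) obtained as a general structure theorem and (2) via an explicit projective generator. In either case, since the exceptional object $E$ (or $L$, which is automatically exceptional as one checks using the normal form $\vec{\omega}=\sum_i(p_i-1)\vec{x}_i-2\vec{c}$) has projective dimension at most $1$ in the hereditary category $\A=\coh\X$, the perpendicular $E^{\perp_\A}$ is an exact subcategory of $\A$ closed under extensions, hence itself hereditary with finite-dimensional Hom and Ext spaces. Because $\pair{E}_\D\simeq\D^b(k)$ is admissible in $\D=\D^b(\X)$, Proposition~\ref{admissible Serre functor} shows that $E^{\perp_\D}$ is admissible and inherits a Serre functor; one identifies $E^{\perp_\D}=\D^b(E^{\perp_\A})$, and the semi-orthogonal decomposition of Grothendieck groups gives $\rk K_0(E^{\perp_\A})=\rk K_0(\X)-1$.

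For (1), I would extend $E$ to a full exceptional sequence $(E,E_1,\ldots,E_n)$ in $\D$---possible since $\D$ has a canonical tilting bundle and hence admits full exceptional sequences---so that $E^{\perp_\D}=\pair{E_1,\ldots,E_n}_\D$. Using mutations together with appropriate shifts inside $E^{\perp_\D}$, the aim is to produce a tilting object $T'$ whose indecomposable summands all lie inside the abelian heart $E^{\perp_\A}$ rather than being genuine complexes; the induced derived equivalence $\D^b(\End T')\simeq E^{\perp_\D}$ then restricts to $\mod\End T'\simeq E^{\perp_\A}$, and $\Lambda:=\End T'$ is hereditary since $T'$ has no higher self-extensions in the hereditary category $E^{\perp_\A}$. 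The key subtlety is arranging that all summands sit in the single heart $E^{\perp_\A}$; this genuinely uses that $E$ is a bundle, because for an exceptional torsion sheaf Lemma~\ref{extor perp} already exhibits an extra $\mod k\vec{\AA}_{l(E)-1}$ direct summand, preventing $E^{\perp_\A}$ from being of the form $\mod\Lambda$ alone.

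For (2), after twisting by $L^{-1}$ we reduce to $L=\O$. I would construct a progenerator of $\O^{\perp_\A}$ by starting from the canonical tilting bundle summands $\O(\vec{x})$ with $0<\vec{x}\leq\vec{c}$ and projecting each onto $\O^{\perp_\A}$ (e.g.\ using the adjoints associated with the admissible subcategory $\pair{\O}_\D$ and then taking $H^0$). One expects $1+\sum_i(p_i-1)$ indecomposable summands: a rank-one ``central'' summand coming from $\O(\vec{c})$, and for each exceptional point $\lambda_i$ a chain of $p_i-1$ intermediate objects coming from the $\O(j\vec{x}_i)$ with $0<j<p_i$, providing the arm of length $p_i-1$ of the star quiver. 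The Hom-arrows between these summands can be read off from the defining sequences $0\to\O(j\vec{x}_i)\to\O((j+1)\vec{x}_i)\to S_{i,j}\to 0$ and reproduce $[p_1,\ldots,p_t]$; vanishing of higher Ext follows from Serre duality and explicit calculation in the $L(\udp)$-graded coordinate ring $S$; the rank count matches $\rk K_0(\O^{\perp_\A})=1+\sum_i(p_i-1)$, confirming that the resulting object is a projective generator.

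The main obstacle I anticipate is the mutation bookkeeping in (1). A cleaner alternative would be to deduce (1) from (2) combined with Theorem~\ref{simple perp}: one builds the exceptional bundle $E$ starting from a line bundle via an iterated sequence of exceptional pairs, at each step passing to a weighted projective line with strictly smaller total weight, until the perpendicular becomes of the form $\mod k[p'_1,\ldots,p'_{t'}]$ for some reduced weight data and Theorem~\ref{simple perp} identifies the intermediate perpendiculars along the way.
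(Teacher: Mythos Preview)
The paper does not prove this theorem: it is stated with attribution to \cite{HL}, \cite{H96}, and \cite{Lenzing}, and is used as a black box. So there is no ``paper's own proof'' to compare against beyond those references; your proposal should therefore be judged on its own merits.

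There is a genuine error in your argument for (1). You assert that once you have a tilting object $T'$ in $E^{\perp_\A}$, the derived equivalence $\D^b(\End T')\simeq E^{\perp_\D}$ ``restricts to $\mod\End T'\simeq E^{\perp_\A}$'', and that $\End T'$ is hereditary ``since $T'$ has no higher self-extensions in the hereditary category $E^{\perp_\A}$''. Both claims are false in general. A tilting object in a hereditary abelian category $\C$ gives a derived equivalence $\D^b(\C)\simeq\D^b(\End T')$, but the standard heart $\mod\End T'$ need not coincide with $\C$: it coincides precisely when $T'$ is already a projective generator of $\C$. Likewise, the vanishing $\ext^{>0}_\C(T',T')=0$ is simply the tilting condition and in no way forces $\End T'$ to be hereditary---tilted algebras are the standard counterexamples. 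So your argument, as written, does not show that $E^{\perp_\A}$ is a module category, nor that the algebra is hereditary.

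What the cited proofs actually do is produce a \emph{projective generator} of $E^{\perp_\A}$ (not merely a tilting object). The inclusion $E^{\perp_\A}\hookrightarrow\A$ has an exact left adjoint $l$ (since $E$ is exceptional of projective dimension $\leq 1$); one then argues that for a line bundle $L$ with $\mu(L)$ sufficiently small relative to $E$, the object $l(L)$ is projective in $E^{\perp_\A}$, and that finitely many such images suffice to generate. Here it is essential that $E$ is a \emph{bundle}: this is what prevents $E^{\perp_\A}$ from containing an entire tube $\coh_\lambda\X$, which would obstruct the existence of enough projectives (contrast Lemma~\ref{extor perp}). Once a progenerator $P$ is found, $E^{\perp_\A}\simeq\mod\End(P)$ by Morita theory, and hereditariness of $\End(P)$ follows from hereditariness of $E^{\perp_\A}$. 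Your sketch for (2) is closer to this correct line of argument; but your proposed ``alternative'' for (1)---reducing an arbitrary exceptional bundle to a line bundle by iterated perpendicular passage via Theorem~\ref{simple perp}---is not obviously workable, since Theorem~\ref{simple perp} handles simple torsion sheaves, not bundles.
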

Here, 
an equioriented star quiver  $[p_1, \dots, p_t]$
refers to the quiver
 \[\xymatrix@R-25pt@C-10pt{
	 \overset{(1, p_1-1)}{\tinybullet}\ar[r] & \overset{(1, p_1-2)}{\tinybullet} \ar@{.}[r] & \overset{(1, 2)}{\tinybullet} \ar[r] & \overset{(1, 1)}{\tinybullet}\ar[ddr] & \\
	 \overset{(2, p_2-1)}{\tinybullet}\ar[r] & \overset{(2, p_2-2)}{\tinybullet} \ar@{.}[r] & \overset{(2,2)}{\tinybullet} \ar[r] & \overset{(2,1)}{\tinybullet}\ar[dr] & \\
	 \vdots & & \vdots & \vdots & \tinybullet \\
	 \underset{(t, p_t-1)}{\tinybullet}\ar[r] & \underset{(t, p_t-2)}{\tinybullet} \ar@{.}[r] & \underset{(t, 2)}{\tinybullet} \ar[r] & \underset{(t,1)}{\tinybullet}\ar[ur] &}\]

In certain cases,  forming a perpendicular category can yield the module category of a representation-finite finite dimensional  hereditary algebra.
 \begin{lem}\label{wpl perp}
	 \begin{enum2}
	 \item If $\X$ is of domestic type and $E$ is an indecomposable bundle then $E^{\perp_\A}$ is equivalent to $\mod \Lambda$ for a representation-finite finite dimensional  hereditary algebra $\Lambda$.

	 \item If $\X$ is of tubular type and $(E,F)$ is an exceptional pair in $\coh\X$ with $\mu(E)\neq \mu(F)$ then $\{E,F\}^{\perp_\A}$ is equivalent to $\mod \Lambda$ for a representation-finite  finite dimensional hereditary algebra $\Lambda$. 
	 \end{enum2}
 \end{lem}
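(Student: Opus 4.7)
For (1), Theorem~\ref{domestic bundle}(1) ensures that the indecomposable bundle $E$ over the domestic $\X$ is exceptional, so Theorem~\ref{bundle perp}(1) already provides an equivalence $E^{\perp_\A}\simeq\mod\Lambda$ for some finite-dimensional hereditary algebra $\Lambda$; the task is to show $\Lambda$ is of Dynkin type. My plan is to analyze the symmetrized Euler form on $K_0(\Lambda)$. Under the derived equivalence $\D^b(\X)\simeq\D^b(k\vec{\tilde{\Delta}})$ of Theorem~\ref{domestic bundle}(2), $K_0(\X)$ is identified with the root lattice of $\tilde{\Delta}$, whose symmetrized Euler form is positive semidefinite with one-dimensional radical spanned by the null root $\delta$; under this identification $\delta$ corresponds in $\coh\X$ to the class $[S_\lambda]$ of a simple torsion sheaf at an ordinary point $\lambda\in\P^1$ (since the homogeneous tube of $\mod k\vec{\tilde{\Delta}}$ transfers to $\coh_\lambda\X$ for ordinary $\lambda$). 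Because $\pair{E}_\D\simeq\D^b(k)$ is admissible in $\D^b(\X)$, $K_0(\Lambda)\cong K_0(E^{\perp_\D})$ is identified with the hyperplane $\{x\in K_0(\X): \chi([E],x)=0\}$, with Euler form preserved. By Serre duality, $\ext^1(E,S_\lambda)\cong D\hom(S_\lambda,\tau E)=0$ because $\tau E$ is a bundle and $S_\lambda$ is torsion, so $\chi([E],[S_\lambda])=\dim\hom(E,S_\lambda)=\rk(E)>0$, whence $\delta\notin K_0(\Lambda)$. The symmetrized Euler form therefore restricts to a positive-definite form on $K_0(\Lambda)$, which by Gabriel-type classification forces the underlying quiver of $\Lambda$ to be Dynkin, making $\Lambda$ representation-finite.

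For (2), the plan is to reduce to (1) via a telescopic functor. Set $\Phi:=\Phi_{\infty,\mu(E)}$; this exact autoequivalence of $\D^b(\X)$ sends $E$ to an exceptional torsion sheaf $E':=\Phi(E)\in\coh_0\X$, and by Lemma~\ref{fractional linear}(2) (using $\mu(F)\neq\mu(E)$) it sends $F$ to $F':=\Phi(F)$, which up to a shift by $0$ or $1$ is an exceptional indecomposable bundle $G'\in\coh\X$. Applying Lemma~\ref{extor perp} to $E'$ yields an equivalence $E'^{\perp_\A}\simeq \coh\X'\coprod\mod k\vec{\AA}_{l(E')-1}$, where $\X'$ is of domestic type: a direct calculation shows that in each of the four tubular cases $(2,2,2,2),(3,3,3),(2,3,6),(2,4,4)$, strictly decreasing any one weight yields a weight sequence with $\delta(\vec{\omega})<0$. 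Since the equivalence $\coh\X'\simeq\SSS_{E'}^{\perp_\A}$ of Theorem~\ref{simple perp} preserves rank, $G'$ corresponds to a nonzero bundle $G$ in $\coh\X'$, which is exceptional by Theorem~\ref{domestic bundle}(1) applied to $\X'$. Part (1) for $\X'$ gives $G^{\perp_{\coh\X'}}\simeq\mod\Lambda'$ for a representation-finite hereditary $\Lambda'$, and assembling the two summands yields $\{E',G'\}^{\perp_\A}\simeq\mod(\Lambda'\times k\vec{\AA}_{l(E')-1})$, a product of representation-finite hereditary algebras.

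The main obstacle is that $\Phi$ does not preserve the standard heart $\coh\X$, so $\Phi(\{E,F\}^{\perp_\A})$ is a priori a different length heart on $\{E',G'\}^{\perp_\D}\simeq\D^b(\Lambda'\times k\vec{\AA}_{l(E')-1})$ than $\{E',G'\}^{\perp_\A}$. To close the argument, I would observe that $\{E,F\}^{\perp_\A}$ is itself a hereditary abelian category, being a full exact subcategory of the hereditary category $\coh\X$ closed under extensions via \cite[Proposition 1.1]{GL2}, and that it is a length heart on $\{E,F\}^{\perp_\D}\simeq\D^b(\Lambda'\times k\vec{\AA}_{l(E')-1})$. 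By the correspondence between silting objects and length hearts (Theorem~\ref{silting t-str}), any hereditary length heart on the derived category of a representation-finite hereditary algebra is itself $\mod\Lambda$ for a hereditary $\Lambda$ derived-equivalent to $\Lambda'\times k\vec{\AA}_{l(E')-1}$; since Dynkin type is a derived invariant among hereditary algebras, such $\Lambda$ is again representation-finite, yielding $\{E,F\}^{\perp_\A}\simeq\mod\Lambda$ as required.
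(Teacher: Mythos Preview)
Your argument for (1) is correct and genuinely different from the paper's. The paper argues concretely: it places $E$ (after a $\tau$-shift) among the summands of the canonical tilting bundle $T$ of Theorem~\ref{domestic bundle}(2), writes $T=E\oplus T_1$, observes that $\Gamma_1=\End(T_1)$ is representation-finite hereditary (because removing a vertex from an extended Dynkin diagram yields a Dynkin diagram), and then uses that $T_1$ is a tilting object in $E^{\perp_\D}$ to conclude $\D^b(\Lambda)\simeq\D^b(\Gamma_1)$. Your Euler-form argument is more conceptual: it bypasses any explicit tilting object and works directly with the positive-semidefinite quadratic form on $K_0(\X)$, showing the radical class $[S_\lambda]$ is excluded from $K_0(E^{\perp_\D})$ by $\chi([E],[S_\lambda])=\rk(E)>0$. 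Both are valid; the paper's approach additionally identifies the underlying graph of $\Lambda$, while yours is cleaner and would generalise to other settings where the radical of the Euler form is well understood.

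Your argument for (2) has a real gap in the choice of telescopic functor. You set $\Phi=\Phi_{\infty,\mu(E)}$, making $E'=\Phi(E)$ a torsion sheaf and $G'$ a bundle. But the exceptional pair condition $(E,F)$ gives $\hom^\bullet(F,E)=0$, i.e.\ $E'\in G'^{\perp_\D}$, \emph{not} $G'\in E'^{\perp_\D}$. So when you write ``$G'$ corresponds to a nonzero bundle $G$ in $\coh\X'$'' via the equivalence $\SSS_{E'}^{\perp_\A}\simeq\coh\X'$, this is unjustified: for $G'$ to lie in $\SSS_{E'}^{\perp_\A}$ you would need $\hom(G',\tau S)=0$ for every $S\in\SSS_{E'}$, which fails in general for a bundle $G'$. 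Consequently the decomposition $\{E',G'\}^{\perp_\A}\simeq\mod(\Lambda'\times k\vec{\AA}_{l(E')-1})$ is not established. The fix is exactly what the paper does: use $\Phi_{\infty,\mu(F)}$ instead, so that $F$ becomes the torsion sheaf $F'$ and $E$ (already in $F^{\perp_\D}$) lands in $F'^{\perp_\D}$, hence as a bundle in the $\coh\X'$-summand; then $\{E,F\}^{\perp_\D}=E'^{\perp_{\D^b(\X')}}\coprod\D^b(k\vec{\AA}_{l(F')-1})$ and part (1) applied to $\X'$ finishes the job. Your final paragraph correctly identifies that one must still pass from a derived equivalence $\D^b(\{E,F\}^{\perp_\A})\simeq\D^b(\Gamma)$ to an abelian equivalence; this step (which the paper leaves implicit) is handled most cleanly by noting that $\{E,F\}^{\perp_\A}$ is hereditary with finitely many indecomposables, and invoking Happel's theorem componentwise.
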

 \begin{proof}
	 (1) Let $(p_1,p_2,p_3)$ be the weight sequence of $\X$. If some $p_i=1$, say $i=1$, then $E$ is a line bundle and by Theorem~\ref{line bundle perp}(2) we have $E^{\perp_\A}\simeq \mod k[p_2,p_3]$. Otherwise $p_i\geq 2$ for all $i$.  Up to the action of some power of $\tau$, we can suppose $\delta(\vec{\omega})<\mu(E)\leq 0$. Let $T$ be the direct sum of a complete set of indecomposable bundles with slope in the interval $(\delta(\vec{\omega}), 0]$ and suppose $T=T_1\oplus E$. Recall that $T$ is a tilting bundle and its endomorphism algebra $\Gamma=\End (T)$ is a tame hereditary algebra whose quiver has a bipartite orientation.  
	 Hence $\Gamma_1=\End(T_1)$ is a representation-finite hereditary algebra. We already know $E^{\perp_\A}\simeq \mod \Lambda$ for a finite dimensional hereditary algebra $\Lambda$. Now that $T_1$ is a tilting object in $E^{\perp_\D}$, we have exact equivalences $\D^b(\Lambda)\simeq \D^b(E^{\perp_\A})=E^{\perp_\D}\simeq \D^b(\Gamma_1)$.  Hence $\Lambda$ is a representation-finite hereditary algebra, the underlying graph of whose quiver is the same as that of the quiver of $\Gamma_1$. 

	 (2) 
By applying Lemma~\ref{extor perp}, we have an equivalence \[F^{\perp_\D}\simeq \Phi_{\infty,\mu(F)}(F)^{\perp_\D}\simeq \D^b(\X')\coprod \D^b(k\vec{\AA}_{l(F)-1}),\] under which  $E\in F^{\perp_\A}$ corresponds to $E'[m]$ for some exceptional bundle $E'$ over  $\X'$ and some $m\in\Z$. 
	 Thus there are exact equivalences \[\D^b(\{E,F\}^{\perp_\A})=\{E,F\}^{\perp_\D}\simeq E'^{\perp_{\D^b(\coh\X')}}\coprod \D^b( k\vec{\AA}_{l(F)-1})\simeq \D^b(\Gamma)\] for a representation-finite  finite dimensional hereditary algebra $\Gamma$. It follows that $\{E,F\}^{\perp_\A}$ is equivalent to $\mod \Lambda$ for a representation-finite  finite dimensional hereditary algebra $\Lambda$.
 \end{proof}

\begin{rmk} 
	\begin{enum2}
	\item There is a more direct proof of (1) using Theorem~\ref{bundle hom nonzero}. The current proof has the advantage that it gives us  additional information on the quiver of $\Lambda$.
		
	\item It can be shown that if $\X$ is of tubular type and $E$ is an exceptional bundle with quasi-length $l$ then $E^{\perp_\A}\simeq \mod \Lambda\coprod \mod k\vec{\AA}_{l-1}$ for a tame hereditary algebra $\Lambda$ and an equioriented $\AA_{l-1}$-quiver. 
	\end{enum2}
\end{rmk}

 \subsection{Some nonvanishing Hom spaces}\label{nonvanish hom}
The following two lemmas are well-known.

\begin{lem}\label{bundle hom subtube-simples} \label{bundle hom torsion}
	Let $E$ be a nonzero bundle over $\X$ and $F$ an non-exceptional indecomposable torsion sheaf. Then $\hom(E,F)\neq 0$, $\ext^1(F,E)\neq 0$. 
\end{lem}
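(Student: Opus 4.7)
The plan is to first use Serre duality to see that the two assertions are equivalent: indeed, $\ext^1(F,E)\cong D\hom(E,\tau F)$, and since $\tau$ is an autoequivalence of $\coh\X$ that preserves each component $\coh_\lambda\X$ and preserves length, $\tau F$ is again a non-exceptional indecomposable torsion sheaf. Applying the first statement to $\tau F$ in place of $F$ then yields the second. So I need only prove $\hom(E,F)\neq 0$.

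Next I would record the vanishing $\ext^1(E',F')=0$ for every bundle $E'$ and every torsion sheaf $F'$: by Serre duality $\ext^1(E',F')\cong D\hom(F',E'(\vec\omega))$, and this is zero since a torsion sheaf admits no nonzero morphism to a bundle. Now choose a line bundle filtration $0=E_0\subset E_1\subset\dots\subset E_n=E$ of $E$ as in Proposition~\ref{line bundle}(2), with line bundle factors $L_i=E_i/E_{i-1}$. Applying $\hom(-,F)$ to each short exact sequence $0\to E_{i-1}\to E_i\to L_i\to 0$ and using $\ext^1(L_i,F)=0$ produces a short exact sequence $0\to \hom(L_i,F)\to \hom(E_i,F)\to \hom(E_{i-1},F)\to 0$. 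An easy induction gives $\dim\hom(E,F)=\sum_{i=1}^n\dim\hom(L_i,F)$, so the problem reduces to showing $\hom(L,F)\neq 0$ for every line bundle $L$.

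Writing $L=\O(\vec x)$, one has $\hom(L,F)\cong\hom(\O,F(-\vec x))$, and $F(-\vec x)$ remains a non-exceptional indecomposable torsion sheaf, so it suffices to prove $\hom(\O,F)\neq 0$. If $F$ is supported at an ordinary point $\lambda$, then $F$ has socle $S_\lambda$, and the defining exact sequence $0\to\O\to\O(\vec c)\to S_\lambda\to 0$ gives $\hom(\O,S_\lambda)\neq 0$; by left exactness this injects into $\hom(\O,F)$. If $F$ is supported at an exceptional point $\lambda_i$, write $F=S_{i,a}^{[\ell]}$ with $\ell\geq p_i$ and let $F'=S_{i,a}^{[p_i]}$ be the length-$p_i$ subobject of $F$. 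Iterating the short exact sequences coming from its composition series, together with $\ext^1(\O,-)=0$ on torsion sheaves, yields
\[
\dim\hom(\O,F')\;=\;\sum_{k=0}^{p_i-1}\dim\hom(\O,S_{i,a+k})\;=\;\sum_{j=0}^{p_i-1}\dim\hom(\O,S_{i,j}),
\]
which is independent of $a$. Applying the same identity to the length-$p_i$ uniserial torsion sheaf $\O(\vec c)/\O$ at $\lambda_i$ and observing that $\hom(\O,\O(\vec c)/\O)\neq 0$ (which follows from $\dim\hom(\O,\O(\vec c))>\dim\hom(\O,\O)=1$ and the sequence $0\to\O\to\O(\vec c)\to\O(\vec c)/\O\to 0$) shows that the above sum is positive. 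Hence $\hom(\O,F')\neq 0$ and therefore $\hom(\O,F)\neq 0$, completing the proof.

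The main obstacle is the exceptional point case, where $\hom(\O,S)$ can vanish for individual exceptional simples $S$, so one cannot simply inject from the socle. The way around it is the \emph{averaging} observation: over a full cycle of length $p_i$ the total dimension $\sum_j\dim\hom(\O,S_{i,j})$ is independent of the socle, so it is enough to detect it on the single explicit sheaf $\O(\vec c)/\O$, and the non-exceptionality hypothesis $\ell\geq p_i$ guarantees that $F$ contains such a full cycle as a subobject.
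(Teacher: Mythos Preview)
Your proof is correct. It shares the same skeleton as the paper's argument: Serre duality reduces $\ext^1(F,E)\neq 0$ to $\hom(E,\tau F)\neq 0$, and the key input is the vanishing $\ext^1(\text{bundle},\text{torsion})=0$, which makes $\hom(E,-)$ exact on torsion sheaves and allows one to pass along composition series.

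The difference is in economy. You reduce from $E$ to a line bundle by taking a full line bundle filtration and summing dimensions, then reduce to $\O$ by twisting, and at an exceptional point you run an averaging argument over a full $p_i$-cycle of simples, comparing with the explicit sheaf $\O(\vec c)/\O$. The paper instead picks a single line bundle \emph{quotient} $L$ of $E$ (so any map out of $L$ pulls back to $E$), observes that for every $\lambda$ there is \emph{some} simple $S$ at $\lambda$ with $\hom(L,S)\neq 0$, and then uses non-exceptionality of $F$ only to ensure that this particular $S$ occurs as a composition factor of $F$. Exactness of $\hom(E,-)$ on $\coh_\lambda\X$ then pushes $\hom(E,S)\neq 0$ up to $\hom(E,F)\neq 0$ directly. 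Your averaging argument is a pleasant alternative that makes the role of ``length $\geq p_i$'' more quantitatively visible, while the paper's version is shorter because it never needs to compute any dimensions, only to locate one nonzero morphism.
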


\begin{proof}
	 Suppose $F$ is supported at $\lambda\in\P^1$.
	  Take a line bundle $L$ such that there is an epimorphism $E\epic L$ and also a simple sheaf $S$ supported at $\lambda$ such that $\hom(L, S)\neq 0$. Then $\hom(E,S)\neq 0$. Since $F$ is  a non-exceptional indecomposable sheaf supported at $\lambda$,   $S$ is a composition factor of $F$. 
	  Then there exist two exact sequences \[0\ra F_1\ra F\ra F_2\ra 0,\quad 0\ra S\ra F_2\ra F_3\ra 0,\] where $F_i\in \coh_\lambda\X$ ($i=1,2,3$).
	   Applying $\hom(E,-)$, one has $\hom(E,S)\monic \hom(E,F_2)$ and $\hom(E,F)\epic \hom(E,F_2)$ therefore $\hom(E,F)\neq 0$. 
	   Note that $\tau F$ is also a non-exceptional indecomposable sheaf and thus $\hom(E, \tau F)\neq 0$. This gives $\ext^1(F,E)\neq 0$ by Serre duality. 
\end{proof}

\begin{lem}\label{tubular bundle hom}
	Let $\X$ be of tubular type. Suppose $E,F$ are two nonzero bundles with $\mu(E)<\mu(F)$. Then $\hom(E,\tau^i F)\neq 0$ for some $i$. If $E$ or $F$ is a non-exceptional indecomposable bundle, $\hom(E,F)\neq 0$ always holds.
\end{lem}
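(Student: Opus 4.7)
The plan is to treat the two assertions separately: the first via Riemann--Roch, the second by applying an appropriate telescopic functor to reduce to Lemma~\ref{bundle hom torsion}.

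For the first assertion, since $\X$ is tubular the virtual genus $g_\X=1$, and Riemann--Roch simplifies to
\[\bar\chi(E,F)=\deg(F)\rk(E)-\deg(E)\rk(F)=\rk(E)\rk(F)(\mu(F)-\mu(E))>0.\]
As $\bar\chi(E,F)=\sum_{j=0}^{p-1}\chi(\tau^jE,F)$, some term $\chi(\tau^jE,F)$ is positive, forcing $\hom(\tau^jE,F)\neq 0$ and hence $\hom(E,\tau^{-j}F)\neq 0$.

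For the second assertion, suppose first that $F$ is non-exceptional indecomposable, so $F$ is semistable. The Harder--Narasimhan filtration of $E$ gives a surjection $E\epic E/E_{m-1}$ whose target is semistable with $\mu(E/E_{m-1})\le\mu(E)<\mu(F)$, and hence an injection $\hom(E/E_{m-1},F)\monic\hom(E,F)$; passing to an indecomposable summand of $E/E_{m-1}$ we reduce to the case $E$ indecomposable semistable with $\mu(E)<\mu(F)$. Now apply $\Phi_{\infty,\mu(F)}$: by Lemma~\ref{fractional linear}(2) no shift appears, so $\Phi_{\infty,\mu(F)}(E)$ is a nonzero bundle (since $\phi_{\mu(F)}(\mu(E))$ is finite), while $\Phi_{\infty,\mu(F)}(F)$ lands in $\coh_0\X$ and is a non-exceptional indecomposable torsion sheaf (autoequivalences preserve indecomposability and exceptionality). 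Lemma~\ref{bundle hom torsion} then yields $\hom(\Phi_{\infty,\mu(F)}(E),\Phi_{\infty,\mu(F)}(F))\neq 0$, giving $\hom(E,F)\neq 0$.

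If instead $E$ is non-exceptional indecomposable, dually the HN filtration of $F$ provides an injection $F_1\monic F$ with $F_1$ semistable of slope $\ge\mu(F)>\mu(E)$; taking an indecomposable summand reduces to $F$ indecomposable semistable with $\mu(F)>\mu(E)$. Apply $\Phi_{\infty,\mu(E)}$: it sends $E$ to a non-exceptional indecomposable torsion sheaf, and by Lemma~\ref{fractional linear}(2) sends $F$ to $F''[1]$ for a nonzero bundle $F''$ (the shift appears because $\mu(F)>\mu(E)$, and $\phi_{\mu(E)}(\mu(F))$ is finite as only $\mu(E)$ maps to $\infty$ under $\phi_{\mu(E)}$). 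Consequently
\[\hom(E,F)=\hom(\Phi_{\infty,\mu(E)}(E),F''[1])=\ext^1(\Phi_{\infty,\mu(E)}(E),F''),\]
which is nonzero by the $\ext^1$ half of Lemma~\ref{bundle hom torsion}. The main technical point is checking that the chosen telescopic functor sends our reduced inputs into $\coh\X$ up to a controlled shift; once the HN reduction concentrates everything in a single slope sector, Lemma~\ref{fractional linear}(2) handles the bookkeeping and Lemma~\ref{bundle hom torsion} does the rest.
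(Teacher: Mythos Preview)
Your argument is correct. For the first assertion you follow the paper's Riemann--Roch computation verbatim. For the second assertion you take a genuinely different route: the paper argues directly from the tube structure, observing that a non-exceptional indecomposable bundle $E$ has a filtration whose factors are all the $\tau$-translates $\tau^i G$ ($0\le i<p_E$) of its quasi-top $G$; since $\ext^1(\tau^i G,F)=0$ for each $i$ and the first assertion gives $\hom(\tau^j G,F)\neq 0$ for some $j$, additivity of $\chi$ on the filtration forces $\hom(E,F)\neq 0$ (and dually when $F$ is non-exceptional). Your proof instead performs an HN reduction to concentrate each input in a single slope, then applies a telescopic functor $\Phi_{\infty,q}$ to push the non-exceptional object to $\coh_0\X$ and invoke Lemma~\ref{bundle hom torsion}, tracking the shift via Lemma~\ref{fractional linear}(2). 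The paper's approach is lighter---it needs only the tube combinatorics and the first assertion, avoiding telescopic functors entirely---while your approach nicely illustrates the ``reduce to slope $\infty$'' principle that pervades the tubular theory, at the cost of importing the heavier Lemma~\ref{fractional linear}.
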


\begin{proof}
	By Riemann-Roch Theorem, we have 
	\[\sum_{j=0}^{p-1}(\dim_k\hom(\tau^jE,F)-\dim_k\ext^1(\tau^jE,F)) =\bar{\chi}(E,F)=\rk(E)\rk(F)(\mu(F)-\mu(E))>0.\]
Since $\ext^1(\tau^j E,F)=0$ for each $j$, $\hom(\tau^m E, F)\neq 0$ for some $0\leq m<p$, whereby $\hom(E, \tau^i F)\neq 0$ for some $i$. If $E$ is non-exceptional indecomposable bundle then $E$ has a filtration with factors $\tau^i G$ ($0\leq i<p_E$), where $G$ is the quasi-top of $E$ and $p_E$ is the $\tau$-period of $E$. Now that $\hom(\tau^i G, F)\neq 0$ for some $i$, $\hom(E,F)\neq 0$. Similar argument applies to the case when $F$ is a non-exceptional indecomposable bundle.   
\end{proof}

Using stability argument, \cite{LP} showed the following  fact. 
\begin{thm}[{\cite[Theorem 2.7]{LP}}]\label{bundle hom nonzero}
	Let $F,G$ be nonzero bundles on $\X$ with $\mu(G)-\mu(F)>\delta(\vec{c}+\vec{\omega})=p+\delta(\vec{\omega})$ then $\hom(F,G)\neq 0$.
\end{thm}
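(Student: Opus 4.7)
The argument is stability-theoretic, combining Harder--Narasimhan reduction with Serre duality and Riemann--Roch. First I would reduce to the case where both $F$ and $G$ are semistable. Let $F'=F/F_{m-1}$ be the last semistable factor in the HN filtration of $F$, so that $F\twoheadrightarrow F'$ with $\mu(F')=\mu^-(F)\leq\mu(F)$, and let $G'=G_1$ be the first semistable subbundle of $G$, so that $G'\hookrightarrow G$ with $\mu(G')=\mu^+(G)\geq\mu(G)$. Any nonzero morphism $F'\to G'$ composes with $F\twoheadrightarrow F'$ and $G'\hookrightarrow G$ to yield a nonzero morphism $F\to G$, and since $\mu(G')-\mu(F')\geq\mu(G)-\mu(F)>p+\delta(\vec\omega)$ the hypothesis is preserved, so I may assume $F,G$ semistable from the start.

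For semistable $F,G$ with the slope gap above, Serre duality (Theorem~\ref{serre duality}) combined with Lemma~\ref{twist slope} gives
\[\ext^1(\tau^j F,G)\;\cong\;D\hom(G,\tau^{j+1}F),\]
where $\tau^{j+1}F$ is semistable of slope $\mu(F)+(j+1)\delta(\vec\omega)$. Since $\hom(\coh^\mu\X,\coh^{\mu'}\X)=0$ whenever $\mu>\mu'$, this Ext group vanishes once $\mu(G)>\mu(F)+(j+1)\delta(\vec\omega)$. In particular $\ext^1(F,G)=0$ already, because $p>0$ gives $\mu(G)-\mu(F)>\delta(\vec\omega)$; and in the domestic/tubular regime ($\delta(\vec\omega)\leq 0$) the same vanishing holds for every $j\geq 0$ automatically, since then $\mu(\tau^{j+1}F)\leq\mu(F)<\mu(G)$. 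Thus $\hom(F,G)=\chi(F,G)$, and the task reduces to showing $\chi(F,G)>0$.

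Riemann--Roch supplies the averaged form
\[\bar\chi(F,G)\;=\;\sum_{j=0}^{p-1}\chi(\tau^j F,G)\;=\;\rk(F)\,\rk(G)\Bigl[\mu(G)-\mu(F)-\tfrac{p}{2}\delta(\vec\omega)\Bigr],\]
which in the domestic and tubular cases is strictly positive under the hypothesis, and combined with the vanishing of all $\ext^1(\tau^j F,G)$ gives $\bar\chi(F,G)=\sum_{j=0}^{p-1}\dim_k\hom(\tau^j F,G)$, hence some $\hom(\tau^{j_0}F,G)\neq 0$.

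The main obstacle, and what I expect to be the technical core of the proof, is twofold: transferring this nonvanishing from $\tau^{j_0}F$ back to $F$ itself, and handling the wild regime where the Riemann--Roch estimate on $\bar\chi$ alone is no longer sharp enough. My plan for the first issue is to re-run the averaged Euler form argument after replacing $G$ by $\tau^{-j}G$ for each relevant $j$; the slope gap $\mu(\tau^{-j}G)-\mu(F)=\mu(G)-\mu(F)-j\delta(\vec\omega)$ still satisfies the required lower bound in the regimes where $\delta(\vec\omega)\leq 0$, so iterating localizes the nonvanishing at the $j=0$ summand. For the second issue I would fall back on Proposition~\ref{line bundle}(2): the line bundle filtration of $F$ reduces the problem inductively on rank to the base case $F=\O(\vec x)$, where the statement becomes $H^0(\X,G(-\vec x))\neq 0$, and the constant $\delta(\vec c+\vec\omega)$ in the hypothesis is precisely the threshold that Serre duality $H^1(\X,H)\cong D\hom(H,\tau\O)$ together with semistability of $G(-\vec x)$ force $H^1$ to vanish while leaving room for a section. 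Aligning this sharp bookkeeping uniformly across all types is the technical heart of Lenzing--de la Pe\~na's argument.
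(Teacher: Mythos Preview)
The paper does not supply its own proof of this statement: it is quoted verbatim as \cite[Theorem 2.7]{LP} and prefaced only by the remark that Lenzing and de la Pe\~na proved it ``using stability argument''. So there is no in-paper proof to compare against; the result is used as a black box.

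As for your sketch itself: the reduction to the semistable case via Harder--Narasimhan is sound, and in the domestic and tubular regimes your Ext-vanishing plus averaged Riemann--Roch argument does yield $\hom(\tau^{j_0}F,G)\neq 0$ for some $j_0$. However, the step ``iterate by replacing $G$ with $\tau^{-j}G$'' to localize at $j_0=0$ is not an argument as written: re-running Riemann--Roch against $\tau^{-j}G$ again only tells you that \emph{some} $\tau$-shift of $F$ maps nontrivially to $\tau^{-j}G$, not that $F$ itself does; you are chasing a moving target. One clean way to close this in the non-wild case is to observe that with $\delta(\vec\omega)\leq 0$ all $p$ terms $\chi(\tau^jF,G)=\dim_k\hom(\tau^jF,G)$ are nonnegative and their sum is $\bar\chi(F,G)$, but you still need to isolate the $j=0$ term---for instance by noting that the hypothesis $\mu(G)-\mu(F)>p+\delta(\vec\omega)$ is $\tau$-stable in both directions when $\delta(\vec\omega)\leq 0$, so in fact every summand has the same slope gap and one can argue each is individually positive via the unaveraged Euler form once $\ext^1$ vanishes. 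In the wild case your fallback to a line bundle filtration of $F$ needs care: a quotient line bundle $L$ of $F$ satisfies $\mu(L)\geq\mu^-(F)$ and $\hom(L,G)\neq 0$ does give $\hom(F,G)\neq 0$, but the intermediate factors in Proposition~\ref{line bundle}(2) need not be quotients of $F$, so the induction has to be set up more carefully than you indicate. These are exactly the points where the original Lenzing--de la Pe\~na argument does the real work.
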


For $E[n]\in \D^b(\X)$ ($E\in \coh\X$), we defined the slope of $E[n]$ by $\mu(E[n])=\mu(E)$. 
For  a nonzero subcategory $\CC$ of $\D$ closed under nonzero direct summands, define  
\begin{equation}\label{def of mu}
	\mu(\CC)  =\{\mu(E)\mid E\,\,\text{an indecomposable object in}\,\, \CC\}.
\end{equation} 
We emphasize that we only count in indecomposables. We will talk about limit points of subsets of $\mu(\CC)$. In doing so, we will deem $\mu(\CC)$ as a subspace of $\bar{\R}$, where $\bar{\R}$ is equipped with the topology obtained via one point compactification of $\R$. 

If $\X$ is of  tubular type, by Lemma~\ref{fractional linear}, for each $q\in\bar{\Q}$, 
 there is  a fractional linear function $\phi_q$ on $\bar{\R}$ with integer coefficients such that  $\mu(\Phi_{\infty, q}(E))=\phi_q(\mu(E))$, where $\Phi_{\infty, q}$ is a telescopic functor. Evidently, $\phi_q$ is a homeomorphism of $\bar{\R}$ and restricts to a homeomorphism of the subspace $\bar{\Q}$.

 \begin{lem}\label{tubular limit point}
	 Suppose $\X$ is of tubular type and let $E$  be an exceptional sheaf over $\X$. Then $\mu(E)$ is the unique limit point of $\mu (E^{\perp_\A})$ (and $\mu ({}^{\perp_\A} E)$).
	\end{lem}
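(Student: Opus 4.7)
The plan is to reduce to the case where $E$ is an exceptional torsion sheaf via a telescopic functor, then analyze the perpendicular category using Lemma~\ref{extor perp} together with properties of domestic weighted projective lines. Explicitly, apply the telescopic functor $\Phi := \Phi_{\infty, \mu(E)}$, an exact autoequivalence of $\D^b(\X)$. By Lemma~\ref{fractional linear}, $\Phi(\coh^{\mu(E)}\X) = \coh^{\infty}\X = \coh_0\X$ (without a shift, since $\mu(E) \leq \mu(E)$), so $E' := \Phi(E)$ is an exceptional torsion sheaf. Because every indecomposable sheaf on a tubular $\X$ is semistable (Theorem~\ref{tubular bundle}) and $\coh\X$ is hereditary, each indecomposable object of $E^{\perp_\D}$ is a shift of an indecomposable sheaf in $E^{\perp_\A}$, and likewise for $E'$. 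Lemma~\ref{fractional linear} then shows that $\Phi$ sends each indecomposable of slope $\mu_0$ to one of slope $\phi_{\mu(E)}(\mu_0)$, so one obtains a bijection $\mu(E^{\perp_\A}) \longleftrightarrow \mu({E'}^{\perp_\A})$ that is the restriction of the homeomorphism $\phi_{\mu(E)}$ of $\bar{\R}$. Since $\phi_{\mu(E)}(\mu(E)) = \infty$ and homeomorphisms preserve limit points, it suffices to treat the case when $E$ is an exceptional torsion sheaf.

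Assume now $E$ is an exceptional torsion sheaf. By Lemma~\ref{extor perp}, $E^{\perp_\A} = \SSS_E^{\perp_\A} \coprod \pair{\SSS_E'}_\A$, where $\pair{\SSS_E'}_\A$ has only finitely many indecomposables (all torsion, of slope $\infty$), and $\SSS_E^{\perp_\A} \simeq \coh\X'$ via a rank-preserving equivalence, with $\X'$ a domestic weighted projective line (a quick virtual-genus check: decreasing one weight of a tubular sequence by $l(E) \geq 1$ strictly reduces $\sum_i(1-1/p_i)$ below $2$). By Theorem~\ref{domestic bundle}, indecomposable bundles in $\coh\X'$ have bounded rank, hence so do the indecomposable bundles in $\SSS_E^{\perp_\A}$: say $\rk(F) \leq R$ for some fixed $R$. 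For any bounded interval $[a,b] \subset \R$, any bundle $F \in \SSS_E^{\perp_\A}$ with $\mu(F) \in [a,b]$ satisfies $\rk(F) \in \{1, \dots, R\}$ and $\deg(F) = \mu(F)\rk(F)$ lies in a bounded subset of $\Z$; hence only finitely many pairs $(\rk, \deg)$, and therefore only finitely many slope values, can arise. Since the torsion sheaves in $E^{\perp_\A}$ all contribute slope $\infty$, it follows that $\mu(E^{\perp_\A}) \cap [a,b]$ is finite, so no real number is a limit point.

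To show $\infty$ is a limit point, note that $\pic\X' \cong L(\udp')$ is infinite, so $\coh\X'$ contains infinitely many isoclasses of line bundles; the fully faithful inclusion $\SSS_E^{\perp_\A} \monic \coh\X$ sends these to infinitely many distinct line bundles of $\X$ inside $\SSS_E^{\perp_\A}$. The degree map $\deg: \pic\X = L(\udp) \to \Z$ has finite kernel (as $L(\udp)$ has rank $1$ and $\delta$ becomes an isomorphism after tensoring with $\Q$), so these line bundles have infinitely many distinct degrees --- equivalently, unbounded slopes in $\mu(E^{\perp_\A})$. For ${}^{\perp_\A} E$, Serre duality gives ${}^{\perp_\A} E = (\tau^{-1}E)^{\perp_\A}$, and for tubular $\X$ one has $\mu(\tau^{-1}E) = \mu(E)$ (since $\delta(\vec{\omega})=0$), so the result transfers directly. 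I expect the main obstacle to be making the telescopic reduction fully rigorous --- in particular, verifying that the bijection on indecomposables induced by $\Phi$ intertwines the slope sets via $\phi_{\mu(E)}$ while properly tracking the shifts between different semistable subcategories.
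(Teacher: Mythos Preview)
Your proof is correct and follows essentially the same route as the paper: reduce via a telescopic functor to the case of an exceptional torsion sheaf, then invoke Lemma~\ref{extor perp} and the boundedness of rank on indecomposables over the resulting domestic $\X'$. The paper resolves your final worry about shifts by simply passing through $\mu(E^{\perp_\D})$ (slopes being shift-invariant, one has $\mu(E^{\perp_\A})=\mu(E^{\perp_\D})=\phi_q^{-1}(\mu(\Phi_{\infty,q}(E)^{\perp_\A}))$), and it obtains unbounded slopes more directly by noting that $L(n\vec{c})\in E^{\perp_\A}$ for any line bundle $L\in E^{\perp_\A}$ and all $n\in\Z$, since torsion sheaves are fixed under twisting by $\vec{c}$.
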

	
\begin{proof}
	First suppose that $E$ is an exceptional torsion sheaf. By Lemma~\ref{extor perp} (and with the notation there), we have  \[E^{\perp_\A}=\SSS_E^{\perp_\A}\coprod \pair{\SSS_E'}_\A \simeq \coh\X'\coprod \mod k\vec{\AA}_{l(E)-1},\] where $\X'$ is a weighted projective line of domestic type, and the equivalence $\SSS_E^{\perp_\A}\simeq \coh\X'$ preserves rank. By Theorem~\ref{domestic bundle}, the rank function $\rk$  is bounded on indecomposable sheaves in $E^{\perp_\A}$. Moreover, $L(n\vec{c})\in E^{\perp_\A}$ for a line bundle $L\in E^{\perp_\A}$ and $n\in\Z$. Thus $\infty$ is the unique limit point of $\mu (E^{\perp_\A})$.

	Now consider  an exceptional bundle $E$ with slope $q$.  Since $\Phi_{\infty,q}(E)$ is an exceptional torsion sheaf, $\infty$ is the unique limit point of $\mu(\Phi_{\infty,q}(E)^{\perp_\A})$. Now that \[\mu(E^{\perp_\A})=\mu(E^{\perp_\D})=\phi_q^{-1}(\mu(\Phi_{\infty,q}(E)^{\perp_\D}))=\phi_q^{-1}(\mu(\Phi_{\infty,q}(E)^{\perp_\A})),\]  $q=\phi_q^{-1}(\infty)$ is the unique limit point of $\mu(E^{\perp_\A})$. 

Recall that ${}^{\perp \A} E=(\tau^{-1} E)^{\perp_\A}$. Hence $\mu(E)=\mu (\tau^{-1} E)$ is the unique limit point of $\mu({}^{\perp_\A}E)=\mu((\tau^{-1} E)^{\perp_\A})$.
\end{proof}

\begin{cor}\label{tubular nonvanish hom}
	Suppose $\X$ is of tubular type. Let $E$ be an indecomposable sheaf and $\E=\{E_i\mid i\in I\}$ a collection of indecomposable sheaves with $\mu(\E)$ a bounded subset of $\R$.  Suppose $\mu$ is a limit point of $\mu(\E)$. If $\mu<\mu(E)$ then there is some $E_i$ with $\hom(E_i,E)\neq 0$; if $\mu>\mu(E)$ then there is some $E_i$ with $\hom(E,E_i)\neq 0$.
\end{cor}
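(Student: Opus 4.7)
The plan is to split on the sign of $\mu - \mu(E)$. The two cases are dual, so I describe only $\mu < \mu(E)$ in detail; the case $\mu > \mu(E)$ will follow by replacing ${}^{\perp_\A}E$ with $E^{\perp_\A}$ and $\hom(-,E)$ with $\hom(E,-)$. Since $\mu(\E) \subset \R$ is bounded and torsion sheaves have slope $\infty$, every $E_i$ is a vector bundle. Because $\mu < \mu(E)$ is a limit point of $\mu(\E)$, the sub-collection $\E' := \{E_i : \mu(E_i) < \mu(E)\}$ still has $\mu$ as a limit point of its slopes.

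If $E$ is non-exceptional, the conclusion is immediate: Lemma~\ref{bundle hom torsion} (when $E$ is a non-exceptional indecomposable torsion sheaf) or Lemma~\ref{tubular bundle hom} (when $E$ is a non-exceptional indecomposable bundle, using $\mu(E_i) < \mu(E)$) supplies $\hom(E_i, E) \neq 0$ for any $E_i \in \E'$.

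The substantive case is when $E$ is exceptional, and I will argue by contradiction. Suppose $\hom(E_i, E) = 0$ for every $E_i \in \E'$. I claim each such $E_i$ lies in ${}^{\perp_\A}E$; since $\hom(E_i, E) = 0$ holds by assumption, it remains to verify $\ext^1(E_i, E) = 0$. By Serre duality this group is $D\hom(E, \tau E_i)$. If $E$ is a torsion sheaf this Hom vanishes trivially since $\tau E_i$ is a bundle; if $E$ is an exceptional bundle then $\mu(\tau E_i) = \mu(E_i) < \mu(E)$ (using $\delta(\vec{\omega}) = 0$ in the tubular case, so $\tau$ preserves slope), and both $E$ and $\tau E_i$ are semistable (indecomposables are semistable in the tubular case by Theorem~\ref{tubular bundle}(2)), hence $\hom(E, \tau E_i) = 0$. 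Therefore $\E' \subset {}^{\perp_\A}E$, so $\mu$ is a limit point of $\mu(\E') \subset \mu({}^{\perp_\A}E)$, contradicting Lemma~\ref{tubular limit point}, which asserts that $\mu(E) \neq \mu$ is the unique limit point of $\mu({}^{\perp_\A}E)$.

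The main obstacle is precisely this verification that the assumed vanishing of $\hom(E_i, E)$ forces $E_i \in {}^{\perp_\A}E$. It rests crucially on the tubular-specific identity $\delta(\vec{\omega}) = 0$ (so $\tau$ preserves slope) together with semistability of indecomposable bundles in the tubular case; without either ingredient, Serre duality would not deliver the vanishing of $\ext^1$ for free. Once this is in place, Lemma~\ref{tubular limit point} closes the argument essentially immediately.
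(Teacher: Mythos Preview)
Your proof is correct and follows essentially the same route as the paper: reduce to $\mu(E_i)<\mu(E)$, handle non-exceptional $E$ via Lemmas~\ref{bundle hom torsion} and~\ref{tubular bundle hom}, and for exceptional $E$ derive a contradiction with Lemma~\ref{tubular limit point} by showing each $E_i$ lies in ${}^{\perp_\A}E$. You spell out the justification of $\ext^1(E_i,E)=0$ more explicitly (via Serre duality and $\delta(\vec{\omega})=0$) than the paper, which simply asserts it from the slope inequality, but the arguments are otherwise identical.
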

\begin{proof}
	We will consider the case $\mu<\mu(E)$ and the other case is similar. If $E$ is non-exceptional then our assertion follows from Lemma~\ref{bundle hom torsion} and Lemma~\ref{tubular bundle hom}. So  we consider exceptional $E$. We can assume that $\mu(E_i)<\mu(E)$ for all $i$ by dropping the other $E_i$'s. Then $\ext^1(E_i,E)=0$ for all $i$. If $\hom(E_i,E)=0$ for all $i$ then $E_i\in {}^{\perp_\A} E$ for all $i$ and thus $\mu$ is limit point of $\mu({}^{\perp_\A}E)$. This is a contradiction to Lemma~\ref{tubular limit point}. Thus we have 
	$\hom(E_i,E)\neq 0$ for some $i$.
\end{proof}

\subsection{Full exceptional sequences in $\coh\X$}
It's well-known that if a $k$-linear  essentially small triangulated category  $\D$ of finite type contains an exceptional sequence of length $n$ then the rank $\rk K_0(\D)$ of the Grothendieck group $K_0(\D)$ of $\D$ satisfies $\rk K_0(\D)\geq n$. In general, the exceptional sequence is not full even if $n=\rk K_0(\D)$. But this is the case in our setup.

\begin{lem}\label{full exseq rk}
	An exceptional sequence $(E_1,\dots, E_n)$ in $\D^b(\X)$ is full iff $n=\rk K_0(\X)$.
\end{lem}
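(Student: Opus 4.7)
The ``only if'' direction is immediate: for any exceptional sequence $(E_1,\dots,E_n)$, the matrix $(\chi(E_i,E_j))$ is upper unitriangular ($\chi(E_i,E_i)=1$ by exceptionality, and $\chi(E_i,E_j)=0$ for $i>j$ since $\hom^\bullet(E_i,E_j)=0$), so $[E_1],\dots,[E_n]$ are $\Z$-linearly independent in $K_0(\D)$; if the sequence is also full, they generate $K_0(\D)$, and thus $n=\rk K_0(\D)=\rk K_0(\X)$.

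For the converse, set $\C=\pair{E_1,\dots,E_n}_\D$. By \S\ref{sec: admissible subcategory} (exceptional sequences generate admissible subcategories) together with Proposition~\ref{admissible Serre functor}(3), both $\C$ and $\C^{\perp_\D}$ are admissible, yielding a semi-orthogonal decomposition $\D=\pair{\C^{\perp_\D},\C}$. It suffices to show $\C^{\perp_\D}=0$, forcing $\C=\D$.

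The plan is a two-step reduction. First, a standard feature of semi-orthogonal decompositions gives a direct sum decomposition
\[K_0(\D)=K_0(\C)\oplus K_0(\C^{\perp_\D}).\]
The unitriangularity argument above shows $\{[E_1],\dots,[E_n]\}$ is a $\Z$-basis of $K_0(\C)$, so $\rk K_0(\C)=n=\rk K_0(\D)$; since $K_0(\D)=K_0(\X)\cong\Z^n$ is torsion-free, the complementary summand $K_0(\C^{\perp_\D})$ is forced to vanish. Second, suppose for contradiction $\C^{\perp_\D}\neq 0$. Admissible subcategories are closed under direct summands and $\D^b(\X)$ is Krull-Schmidt, so $\C^{\perp_\D}$ contains a nonzero indecomposable object. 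As $\coh\X$ is hereditary, such an object has the form $F[m]$ with $F\in\coh\X$ indecomposable, and $[F[m]]=(-1)^m[F]$ in $K_0(\D)$. The key numerical fact is that $[F]\neq 0$ in $K_0(\X)$: if $F\in\vect\X$ then $\rk(F)>0$, while if $F\in\coh_0\X$ then the exact sequences defining the simple torsion sheaves recalled in \S\ref{sec: wpl def} show that each simple torsion sheaf has strictly positive degree, so $\deg(F)>0$. This contradicts $K_0(\C^{\perp_\D})=0$.

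The main obstacle is the last nonvanishing step: one must pass from $\C^{\perp_\D}\neq 0$ as a triangulated category to $K_0(\C^{\perp_\D})\neq 0$, which requires using the hereditary structure of $\coh\X$ to reduce to nonzero classes in $K_0(\coh\X)$ and then exploit the concrete positivity of the invariants $\rk$ and $\deg$ on indecomposable coherent sheaves. The remainder is formal semi-orthogonal-decomposition technology.
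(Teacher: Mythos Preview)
Your argument is correct. The paper's proof, by contrast, simply invokes \cite[Lemma~4.1.2]{Meltzer} for the nontrivial implication (an exceptional sequence of length $\rk K_0(\X)$ generates $\D^b(\X)$), so your write-up is genuinely more self-contained. The key ingredients you use---the $K_0$-splitting $K_0(\D)\cong K_0(\C)\oplus K_0(\C^{\perp_\D})$ coming from the recollement adjoints, and the positivity of $\rk$ (on bundles) and $\deg$ (on torsion sheaves) to rule out phantom objects in $\C^{\perp_\D}$---are all available in the paper's framework, and the conclusion that a torsion-free rank-zero summand of $\Z^n$ vanishes is clean. Meltzer's original argument proceeds differently (via the braid group action on exceptional sequences and reduction to a known full sequence); your route is shorter and more elementary, at the cost of relying on the specific numerical invariants of $\coh\X$, whereas the mutation approach would transfer verbatim to other settings where transitivity is known.
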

\begin{proof}
We always have $n\leq \rk K_0(\D^b(\X))=\rk K_0(\X)$. \cite[Lemma 4.1.2]{Meltzer} showed that an exceptional sequence in $\D^b(\X)$ of length $\rk K_0(\X)$  generates $\D^b(\X)$.  So an exceptional sequence $(E_1,\dots, E_n)$ in $\D^b(\X)$ is  full iff $n=\rk K_0(\X)$.
\end{proof}

Observe that by Serre duality,  if $(E_1,\dots,E_n)$ is a full exceptional sequence in $\coh\X$ then \[(\tau E_{i+1},\dots, \tau E_n,E_1,\dots, E_i)\] is also a full exceptional sequence. 
We show that a full exceptional sequence in $\coh\X$ can possess certain nice term. 
\begin{lem}\label{exseq simple}
	If a full exceptional sequence  in $\coh\X$ contains a torsion sheaf then it contains a simple sheaf.
\end{lem}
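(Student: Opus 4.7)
The plan is to move a chosen torsion term of the full exceptional sequence $(E_1,\dots,E_n)$ to the first position, decompose the resulting perpendicular subcategory via Lemma~\ref{extor perp}, and then conclude by invoking Corollary~\ref{A_s exseq}. First I would pick a torsion term $E_i$ and verify that
\[(E_i, E_{i+1}, \dots, E_n, \tau^{-1} E_1, \dots, \tau^{-1} E_{i-1})\]
is again a full exceptional sequence in $\coh\X$: the new pairs to check have the form $(\tau^{-1} E_a, E_b)$ with $a < i \leq b$, and Serre duality converts $\hom^m(\tau^{-1} E_a, E_b)$ into $D\hom^{1-m}(E_b, E_a)$, which vanishes by the original exceptionality; fullness then follows from Lemma~\ref{full exseq rk} since the length is again $n = \rk K_0(\X)$. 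After this reshuffle I may assume the torsion term is $E_1$. If $E_1$ is simple we are done; otherwise $E_1$ has length $l := l(E_1) \geq 2$ and is supported at an exceptional point of $\X$.

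Next, since $E_1$ now sits at the first position, $\hom^\bullet(E_j, E_1) = 0$ for every $j \geq 2$, so $(E_2,\dots,E_n)$ forms a full-length exceptional sequence in ${}^{\perp_\D} E_1 = F^{\perp_\D}$, where $F := \tau^{-1} E_1$ is another non-simple exceptional torsion sheaf with $l(F) = l$. Each $E_j$ ($j \geq 2$) lies in $\coh\X \cap F^{\perp_\D} = F^{\perp_\A}$ because $\coh\X$ is hereditary, and Lemma~\ref{extor perp} yields
\[F^{\perp_\A} = \SSS_F^{\perp_\A} \coprod \pair{\SSS'_F}_\A \simeq \coh\X' \coprod \mod k\vec{\AA}_{l-1}.\]
Indecomposability forces each $E_j$ to lie wholly in one of the two summands, so $(E_2,\dots,E_n)$ splits into two exceptional subsequences, one in $\D^b(\coh\X')$ and one in $\D^b(\mod k\vec{\AA}_{l-1})$. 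A quick $K_0$-rank count --- $(n-l) + (l-1) = n-1$ --- forces both subsequences to attain the maximal possible length in their ambient category and hence, by Lemma~\ref{full exseq rk} and its obvious analogue for the path algebra, to be full in their respective derived categories.

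It remains to observe that the subsequence in $\D^b(\mod k\vec{\AA}_{l-1})$ has length $l - 1 \geq 1$, so by Corollary~\ref{A_s exseq} it contains some simple module. Under the equivalence $\pair{\SSS'_F}_\A \simeq \mod k\vec{\AA}_{l-1}$, simples correspond to simples, and the simple objects of $\pair{\SSS'_F}_\A$ are precisely the elements of $\SSS'_F = \{\tau^j\top(F) \mid 1 \leq j < l\}$, each of which is already a simple sheaf of $\coh\X$. Thus the corresponding $E_j$ is a simple sheaf in the original full exceptional sequence.

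The hard part will be the bookkeeping in the middle step: ensuring that no $E_j$ straddles the two summands of $F^{\perp_\A}$ (which in fact follows from indecomposability once we know $E_j \in F^{\perp_\A}$), and that the $K_0$-rank equation really compels both subsequences to be full rather than one of them being shorter. Once those points are pinned down, the conclusion drops out directly from combining Lemma~\ref{extor perp} with Corollary~\ref{A_s exseq}.
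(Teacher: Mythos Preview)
Your argument is correct and follows essentially the same strategy as the paper's proof: both rotate the torsion term to an end of the sequence, apply Lemma~\ref{extor perp} to split the perpendicular category as $\coh\X'\coprod\mod k\vec{\AA}_{l-1}$, and then invoke Corollary~\ref{A_s exseq} on the $\AA_{l-1}$-piece. The only cosmetic difference is that the paper moves the torsion term to the \emph{last} position and works directly with $E_n^{\perp_\A}$, whereas you move it to the first position and pass through ${}^{\perp_\D}E_1=(\tau^{-1}E_1)^{\perp_\D}$; one small point to make explicit is that if the simple you find is among the $\tau^{-1}E_k$ terms, then the original $E_k$ is itself simple since $\tau$ preserves simple sheaves.
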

\begin{proof}
	Let $(E_1,\dots,E_n)$ be a full exceptional sequence with $E_i$ a torsion sheaf. We can suppose $i=n$. Note that $(E_1,\dots, E_{n-1})$ is a full exceptional sequence in $E_n^{\perp_\A}$. If $E_n$ is already simple then there is nothing to prove. Suppose $l(E_n)>1$. Then by Lemma~\ref{extor perp}, we have an equivalence
	\begin{equation}\label{torsionsimple}
		E_n^{\perp_\A}\simeq \coh\X'\coprod \mod k\vec{\AA}_{l(E_n)-1}
	\end{equation}
	for some weighted projective line $\X'$ and an equioriented $\AA_{l(E_n)-1}$-quiver. Via this equivalence, a subsequence of $(E_1,\dots, E_{n-1})$ yields a full exceptional sequence in $\mod k\vec{\AA}_{l(E_n)-1}$, which contains a simple module by Corollary~\ref{A_s exseq}. 
	Note that a simple $k\vec{\AA}_{l(E_n)-1}$-module maps to a simple sheaf under the equivalence \eqref{torsionsimple}, which is clear from Lemma~\ref{extor perp}. So some $E_i$ is a simple sheaf.
\end{proof}

\begin{prop}\label{domestic exceptional seq}\label{domestic exseq}
	For $\X$ of domestic type, each full exceptional sequence  in $\coh\X$ contains a line bundle.
	\end{prop}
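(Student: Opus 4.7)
The plan is to proceed by induction on $n := \rk K_0(\X) = \sum_{i=1}^{t}(p_i-1)+2$. The base case $n=2$ corresponds to $\X = \P^1$, where every indecomposable exceptional sheaf is a line bundle (simple torsion sheaves over the ordinary projective line have nonzero self-extension), so the conclusion is immediate.

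For the inductive step, let $(E_1,\dots,E_n)$ be a full exceptional sequence in $\coh\X$ and suppose, aiming for a contradiction, that no $E_i$ is a line bundle. If some weight $p_j = 1$, then Theorem~\ref{domestic bundle}(1) forces every indecomposable bundle to be a line bundle; hence no $E_i$ is a bundle, so every $E_i$ is torsion, and $\pair{E_1,\dots,E_n}_\D \subseteq \D^b(\coh_0\X) \subsetneq \D^b(\X)$, contradicting fullness by Lemma~\ref{full exseq rk}. So from here on all $p_j \geq 2$.

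First suppose some $E_j$ is a torsion sheaf. By Lemma~\ref{exseq simple} some $E_k = S$ is a simple sheaf, which must be supported at an exceptional point (since simple torsion sheaves at ordinary points have self-extension). Using the Serre-duality cyclic rotation recorded just before Lemma~\ref{exseq simple}, the sequence $(\tau E_{k+1},\dots,\tau E_n, E_1,\dots,E_{k-1}, S)$ is again a full exceptional sequence in $\coh\X$ with $S$ at the last position. Its first $n-1$ terms then form a full exceptional sequence in $S^{\perp_\A}$, and by Theorem~\ref{simple perp} there is a rank-preserving equivalence $S^{\perp_\A} \simeq \coh\X'$ where $\X'$ is again domestic with $\rk K_0(\X') = n-1$. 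The induction hypothesis applied to $\X'$ yields a line bundle in the reduced sequence; pulling back through the rank-preserving equivalence---and noting that $\tau$ sends line bundles to line bundles---produces a line bundle among the original $E_i$, a contradiction.

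It remains to treat the case in which every $E_i$ is a bundle of rank $\geq 2$; this is the main obstacle. My plan is to pass to the perpendicular $^{\perp_\A}E_n = (\tau E_n)^{\perp_\A}$, which by Lemma~\ref{wpl perp}(1) is equivalent to $\mod \Lambda$ for some representation-finite hereditary algebra $\Lambda$, so that $(E_1,\dots,E_{n-1})$ becomes a full exceptional sequence in $\mod \Lambda$. One then invokes a Dynkin-quiver analogue of Corollary~\ref{A_s exseq}---proved by its own perpendicular-reduction induction---to extract a simple $\Lambda$-module among the $E_i$. Finally, using the explicit tilting description of the equivalence $(\tau E_n)^{\perp_\A} \simeq \mod \Lambda$, one identifies the corresponding ``relative simple'' object of $^{\perp_\A}E_n \subset \coh\X$ and shows that it must be either a torsion sheaf (returning us to the case already handled) or a line bundle (directly contradicting the standing assumption). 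Carrying out this identification of relative simples---especially ruling out higher-rank relatively simple bundles---constitutes the technical heart of the argument.
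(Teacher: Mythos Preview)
Your reduction in the presence of a torsion term is essentially the paper's (the paper puts the torsion sheaf, rather than the simple, at the end and uses Lemma~\ref{extor perp}, but both routes work). The difficulty is in the remaining case where every $E_i$ is a bundle of rank $\geq 2$, and here your plan has a genuine gap.

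First a minor point: from $\hom^\bullet(E_n,E_i)=0$ for $i<n$ you get $(E_1,\dots,E_{n-1})\subset E_n^{\perp}$, not ${}^{\perp}E_n$; and ${}^{\perp_\A}E_n=(\tau^{-1}E_n)^{\perp_\A}$, not $(\tau E_n)^{\perp_\A}$. This is fixable, since Lemma~\ref{wpl perp}(1) still applies to $E_n^{\perp_\A}$.

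The real problem is twofold. Your ``Dynkin-quiver analogue of Corollary~\ref{A_s exseq}'' (that every full exceptional sequence in $\mod kQ$, $Q$ Dynkin, contains a simple module) is not in the paper, and the perpendicular-reduction induction you sketch runs into the same issue you face here: a simple object of $E^{\perp}$ need not be simple in the ambient category. More seriously, even granting that some $E_i$ is simple in $E_n^{\perp_\A}\simeq\mod\Lambda$, you then need that this relative simple is a line bundle or a torsion sheaf in $\coh\X$. The equivalence of Theorem~\ref{bundle perp} is an abstract abelian equivalence; simplicity in $E_n^{\perp_\A}$ only says that $E_i$ has no proper nonzero subobject in $\coh\X$ lying in $E_n^{\perp_\A}$, which does not preclude $\rk(E_i)\geq 2$. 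You acknowledge this as the ``technical heart'' but give no argument, and I do not see one that avoids essentially re-proving the proposition.

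The paper handles the all-bundles case by an entirely different, self-contained mechanism. If every $E_i$ has rank $\geq 2$ then $\oplus_i E_i$ is not a tilting bundle (tilting bundles over domestic $\X$ always contain a line bundle summand, \cite[Corollary~3.7]{Lenzing}), so $\ext^1(E_i,E_j)\neq 0$ for some $i<j$. After a reordering argument based on Happel--Ringel (Proposition~\ref{Happel-Ringel lemma}) one may assume $(i,j)=(1,2)$ with $\hom(E_1,E_2)=0$; the left mutation $L_{E_1}E_2$ then strictly increases $\|\underline{E}\|$. Iterating produces indecomposable bundles of arbitrarily large rank, contradicting the boundedness of rank on indecomposable bundles in the domestic case (Theorem~\ref{domestic bundle}(1)). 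This argument uses only mutation and the rank bound, and does not require understanding simples in perpendicular categories.
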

	\begin{proof}
		Let  $(E_1,\dots,E_n)$ be a full exceptional sequence in $\coh\X$. 
		We use induction to show our assertion. Consider the weight type $(1,p_1,p_2)$, in which case each indecomposable bundle over $\X$ is a line bundle. Since $(E_1,\dots, E_n)$ classically generates $\D^b(\X)$, some $E_i$ is an indecomposable bundle and thus a line bundle. We continue to consider a domestic weight type different than $(1,p_1,p_2)$ even up to permutation. 
		We claim that if each $E_i$ is a bundle then the assertion holds, which is proved later. So consider the case that some $E_i$ is a torsion sheaf.  We can assume that $i=n$.  
		Moreover, $(E_1,\dots, E_{n-1})$ is a full exceptional sequence in $E_n^{\perp_{\A}}$. By Lemma~\ref{extor perp} (and with the notation there), we have \[E_n^{\perp_{\A}}=\SSS_{E_n}^{\perp_\A}\coprod \pair{\SSS_{E_n}'}\simeq \coh\X'\coprod \mod k\vec{\AA}_{l(E_n)-1},\] where $\X'$ is a weighted projective line with a weight function dominated by the weight function of $\X$ (in the sense of \cite{GL2}),  and the equivalence $\SSS_{E_n}^{\perp_\A}\simeq \coh\X'$ preserve rank. By induction, we know that some $E_i$ ($i\in \{1,\dots, n-1\}$) is a line bundle.

	It remains to prove our claim that if each $E_i$ is a bundle then some $E_i$ is a line bundle. 
		The proof is inspired by the proof of \cite[Proposition 4.3.6]{Meltzer}. As in \cite[\S 4.3.6]{Meltzer}, for an exceptional sequence  
		$\underline{E}=(E_1,\dots, E_n)$, define \[\|\underline{E}\|=(\rk( E_{\pi(1)}), \dots, \rk( E_{\pi(n)})),\] where $\pi$ is a permutation on $\{1,\dots,n\}$ such that $\rk( E_{\pi(1)})\geq \dots \geq \rk( E_{\pi(n)})$.  
		
		Suppose for a contradiction that $\rk(E_i)\geq 2$ for each $i$. In particular, $\oplus E_i$ is not a tilting bundle since each tilting bundle contains a line bundle summand for $\X$ of domestic type by \cite[Corollary 3.7]{Lenzing} (reproved with Corollary~\ref{tilting bundle}(1)). Hence for some $i<j$, $\ext^1(E_i, E_j)\neq 0$. 
		We can assume that $\ext^1(E_k, E_l)=0$ for $i\leq k<l\leq j$. By \cite[Lemma 3.2.4]{Meltzer}, $\hom(E_i, E_j)=0$. 
		
		Consider $i<k<j$ such that $\hom(E_i,E_k)\neq 0$. Let $f: E_i\ra E_k$ be a nonzero morphism, which is either a monomorphism or an epimorphism by Happel-Ringel Lemma (see Proposition~\ref{Happel-Ringel lemma}). $f$ being a monomorphism implies 
		\[0=\ext^1(E_k,E_j)\epic \ext^1(E_i,E_j)\neq 0,\] a contradiction. Hence $f$ is an epimorphism. Thus $\hom(E_i,E_j)=0$ implies $\hom(E_k,E_j)=0$.  
		
		Let $P$ be the subsequence of $(E_{i+1},\dots, E_{j-1})$ consisting of those $E_k$ satisfying $\hom(E_i,E_k)\neq 0$. Then for each term $E_k$ in $P$, we have an epimorphism in $\hom(E_i,E_k)$ and $\hom(E_k,E_j)=0$. Let  $Q$ be the subsequence of $(E_{i+1},\dots, E_{j-1})$ consisting of the remaining terms, i.e., those $E_l$ satisfying $\hom(E_i,E_l)=0.$ 
We want to show that $\hom(E_k,E_l)=0$ for $E_k\in P, E_l\in Q$.  
Each nonzero morphism $g: E_k\ra E_l$ is either a monomorphism or an epimorphism by Happel-Ringel Lemma. 
If $g$ is a monomorphism then  $\hom(E_i,E_k)\neq 0$ implies $\hom(E_i, E_l)\neq 0$, a contradiction to $\hom(E_i,E_l)=0$;  
if  $g$ is an epimorphism then composing with an epimorphism in $\hom(E_i, E_k)$ yields  an epimorphism in $\hom(E_i,E_l)$, again a contradiction to $\hom(E_i,E_l)=0$.  
These show that $\hom(E_k,E_l)=0$ for $E_k\in P, E_l\in Q$. 
Moreover, $\hom(E_k,E_j)=0$ for $E_k\in P$. Therefore the sequence \[(E_1,\dots, E_{i-1}, Q, E_i,E_j, P, E_{j+1}, \dots, E_n)\] 
is a full exceptional sequence. This gives us a full exceptional sequence $(F_1,F_2,\dots, F_n)$ with $\rk(F_i)\geq 2$, $\ext^1(F_1,F_2)\neq 0$ and $\hom(F_1,F_2)=0$. 

Now we use mutation of an exceptional sequence. Let $L_{F_1}F_2$ be the universal extension: \[0\ra F_2\ra L_{F_1}F_2\ra \ext^1(F_1,F_2)\otimes F_1\ra 0.\] Then \[\underline{F'}=(L_{F_1}F_2, F_1, F_3,\dots, F_n)\] is a full exceptional sequence with $\|\underline{F'}\|>\|\underline{F}\|$. As before,  since each bundle in the sequence has rank $\geq 2$, the direct sum of bundles in $\underline{F'}$ is not a tilting bundle. 
		This allows us to repeat the argument above. Successive repeating will give us indecomposable bundles with arbitrary large rank.  
	This is  a contradiction to the fact that the rank function is bounded on indecomposable bundles over a weighted projective line of domestic type. We have thus shown our claim that each full exceptional sequence $(E_1,\dots, E_n)$ with each $E_i$ a bundle indeed contains a line bundle. 
\end{proof}

\begin{cor}\label{tubular exseq}
	Suppose  $\X$ is of tubular type. If a full exceptional sequence  in $\coh\X$ contains a torsion sheaf then it contains a line bundle and a simple sheaf.
\end{cor}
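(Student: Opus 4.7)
The plan is to reduce the tubular case to the already-proved domestic case (Proposition~\ref{domestic exseq}) via a single passage to the perpendicular category of a simple sheaf, using the Serre-duality rearrangement noted just before Lemma~\ref{exseq simple}.

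First, by Lemma~\ref{exseq simple} the given full exceptional sequence $(E_1,\dots,E_n)$ contains a simple sheaf $E_j=S$. Applying the cyclic rearrangement $(E_1,\dots,E_n)\rightsquigarrow(\tau E_{i+1},\dots,\tau E_n,E_1,\dots,E_i)$ with $i=j$, I obtain a full exceptional sequence whose last term is $S$. Writing its first $n-1$ terms as $F_1,\dots,F_{n-1}$, the subsequence $(F_1,\dots,F_{n-1})$ is a full exceptional sequence in $S^{\perp_\A}$. Each $F_l$ is either some $E_k$ or some $\tau E_k$ coming from the original sequence, and in either case $\rk(F_l)=\rk(E_k)$ since $\tau$ is an autoequivalence of $\coh\X$ preserving rank.

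Second, since $\{S\}$ is a proper collection of simple sheaves, Theorem~\ref{simple perp} yields a rank-preserving equivalence $S^{\perp_\A}\simeq\coh\X'$, where $\X'$ has the same parameters as $\X$ except that the weight $p_i$ at the point supporting $S$ is reduced by one. A short case-by-case inspection of the four tubular weight sequences (up to permutation) $(2,2,2,2)$, $(3,3,3)$, $(2,3,6)$, $(2,4,4)$ shows that every one-step reduction yields a domestic weight sequence: respectively one gets $(2,2,2)$, $(2,3,3)$, and $\{(3,6),(2,2,6),(2,3,5)\}$, and $\{(4,4),(2,3,4)\}$, all of which have $\delta(\vec\omega)<0$. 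Hence $\X'$ is of domestic type.

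Third, transport $(F_1,\dots,F_{n-1})$ along the equivalence $S^{\perp_\A}\simeq\coh\X'$ to get a full exceptional sequence in $\coh\X'$. By Proposition~\ref{domestic exseq}, some $F_l$ corresponds to a line bundle in $\coh\X'$. Rank preservation then forces $F_l$ to be a rank-one object in $\coh\X$, i.e.\ a line bundle in $\coh\X$. By the rank observation in Step~1, the original term $E_k$ associated to $F_l$ is itself a line bundle. Combined with Lemma~\ref{exseq simple}, which already supplies a simple sheaf in $(E_1,\dots,E_n)$, this proves the corollary.

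The only delicate point in the argument is the verification in Step~2 that each admissible one-step weight reduction of a tubular weight sequence lands in the domestic list; this is a brief finite check but is the place where the hypothesis \emph{tubular} genuinely intervenes. Everything else is a direct reduction using tools already established in \S\ref{sec: wpl perp}.
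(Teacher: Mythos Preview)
Your proof is correct and follows essentially the same route as the paper's: use Lemma~\ref{exseq simple} to locate a simple sheaf, pass to its right perpendicular category (a domestic weighted projective line by Theorem~\ref{simple perp} and the note after Lemma~\ref{extor perp}), and apply Proposition~\ref{domestic exseq}. The paper simply writes ``Suppose $j=n$'' where you spell out the cyclic rearrangement and the rank-preservation argument needed to transport the conclusion back to the original sequence; your version is more explicit but not genuinely different.
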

\begin{proof}
	Let $(E_1,\dots,E_n)$ be a full exceptional sequence in $\coh\X$. By Lemma~\ref{exseq simple}, if some $E_i$ is torsion then some $E_j$ is simple. Suppose $j=n$. Since $(E_1,\dots, E_{n-1})$ is a full exceptional sequence in $E_n^{\perp_\A}\simeq \coh\X'$, where $\X'$ is a weighted projective line of domestic type and the equivalence preserves rank, it follows from Proposition~\ref{domestic exseq} that some $E_k$  is a line bundle.
\end{proof}

\subsection{Torsion pairs in $\coh\X$}\label{sec: torsion pair}
In this subsection, we discuss some properties of torsion pairs in $\coh\X$ and also give some preparatory descriptions of  torsion pairs (see \S\ref{sec: torsion final} for the final description).
 We first describe two simple classes of torsion pairs in $\coh\X$. Obviously, any  torsion pair in $\coh\X$ restricts to a torsion pair in $\coh_\lambda\X$ for each $\lambda\in\P^1$.

 \begin{lem}\label{torsion pair contained in torsion part} Let $(\T, \F)$  be  a pair of subcategories of $\coh\X$. 

	 \begin{enum2}
	 \item  $(\T,\F)$ is a torsion pair in $\coh\X$ with $\T\subset \coh_0\X$ iff for each $\lambda \in \P^1$, there is a  torsion pair  $(\T_\lambda, \F_\lambda)$ in  $\coh_\lambda\X$ such that \[\T=\add\{\T_\lambda \mid \lambda \in \P^1\},\quad \F=\add\{\vect\X, \F_\lambda \mid \lambda \in \P^1\}.\]

	 \item $(\T,\F)$ is a torsion pair in $\coh\X$ with $\F\subset \coh_0\X$ iff  \[\F=\add\{\F_\lambda\mid \lambda\in \P^1\},\quad \T=\{E\in\coh\X\mid \hom(E,\F)=0\},\] where each $\F_\lambda$ is a torsion-free class in $\coh_\lambda\X$ without non-exceptional indecomposable object.
	 \end{enum2}
\end{lem}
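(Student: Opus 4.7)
My approach will rely on three structural facts about $\coh\X$: the Hom-vanishing $\hom(\coh_0\X,\vect\X)=0$; the coproduct decomposition $\coh_0\X=\coprod_{\lambda\in\P^1}\coh_\lambda\X$; and that every coherent sheaf splits as a direct sum of a bundle and a torsion sheaf. Given any torsion pair $(\T,\F)$ in $\coh\X$, I set $\T_\lambda:=\T\cap\coh_\lambda\X$ and $\F_\lambda:=\F\cap\coh_\lambda\X$; restricting the torsion sequence of an object of $\coh_\lambda\X$ immediately shows that $(\T_\lambda,\F_\lambda)$ is a torsion pair in $\coh_\lambda\X$.

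For (1), the forward implication is routine: $\hom(\T,\vect\X)\subset\hom(\coh_0\X,\vect\X)=0$ gives $\vect\X\subset\F$, and closure of $\T,\F$ under direct summands together with the coproduct decomposition of $\coh_0\X$ forces the stated form of $\T$ and $\F$. For the converse, the required Hom-vanishing follows by a case analysis from the orthogonalities $\hom(\coh_\lambda\X,\coh_{\lambda'}\X)=0$ for $\lambda\neq\lambda'$, $\hom(\coh_0\X,\vect\X)=0$, and $\hom(\T_\lambda,\F_\lambda)=0$; for $E=E_v\oplus\bigoplus_\lambda E_\lambda$, combining the torsion sequences of each $E_\lambda$ in $\coh_\lambda\X$ yields the torsion sequence $0\to\bigoplus_\lambda T_\lambda\to E\to E_v\oplus\bigoplus_\lambda F_\lambda\to 0$ for $(\T,\F)$.

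For the forward direction of (2), the same restriction argument together with $\F\subset\coh_0\X$ produces $\F=\add\{\F_\lambda\mid\lambda\in\P^1\}$, and $\T=\{E\mid\hom(E,\F)=0\}$ is then axiomatic. To rule out non-exceptional indecomposables in $\F_\lambda$: if $F\in\F_\lambda$ were such an object, Lemma~\ref{bundle hom torsion} would give $\hom(L,F)\neq 0$ for every nonzero bundle $L$, so no nonzero bundle could lie in $\T$; but the torsion sequence of any line bundle $L$ with respect to $(\T,\F)$ must have nonzero $\T$-part (otherwise $L\in\F\subset\coh_0\X$), a contradiction.

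The main obstacle is the converse of (2). First, $\F=\add\{\F_\lambda\}$ is closed under subobjects and extensions by the coproduct decomposition of $\coh_0\X$ together with the closure properties of each $\F_\lambda$. The crucial observation is that $\F$ has only finitely many isoclasses of indecomposables: at an ordinary point $\lambda$ the unique simple has $\tau$-period one, so the hypothesis forces $\F_\lambda=0$, while at each exceptional point $\lambda_i$ there are at most $p_i(p_i-1)$ isoclasses of exceptional indecomposables in $\coh_{\lambda_i}\X$. Taking $M$ to be the direct sum of one copy of each indecomposable of $\F$ then yields a coherent cogenerator of $\F$ with $\hom(E,M)$ finite-dimensional for every $E$. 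I define $t(E)$ to be the kernel of the canonical evaluation morphism $E\to\hom(E,M)^*\otimes_k M$; then $E/t(E)$ embeds into a finite power of $M$ and hence lies in $\F$. To see $t(E)\in\T$, suppose $g:t(E)\to F$ is nonzero with $F\in\F$: the pushout of $g$ along $t(E)\hookrightarrow E$ produces a coherent sheaf $P$ fitting in a short exact sequence $0\to F\to P\to E/t(E)\to 0$, hence $P\in\F$ by closure under extensions, while the induced map $E\to P$ has kernel $\ker g\subsetneq t(E)$, so $E/\ker g\hookrightarrow P\in\F$. This contradicts the minimality of $t(E)$, which holds because any subobject $E'\subset E$ with $E/E'\in\F$ gives, via the cogenerator embedding $E/E'\hookrightarrow M^n$, a map $E\to M^n$ with kernel $E'$ whose components lie in $\hom(E,M)$ and hence vanish on $t(E)$, forcing $E'\supset t(E)$.
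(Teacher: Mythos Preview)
Your proof is correct. Part~(1) and the forward direction of~(2) match the paper's treatment (the paper calls (1) ``clear'' and argues~(2)$\Rightarrow$ exactly as you do, via Lemma~\ref{bundle hom torsion}).

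For the converse of~(2) you take a genuinely different route. The paper first invokes noetherianity of $\coh\X$: since $\T={}^{\perp_0}\F$ is closed under quotients and extensions, it is automatically a torsion class, so one only has to check $\T^{\perp_0}=\F$. The paper then rules out nonzero bundles in $\T^{\perp_0}$ by producing, via Lemma~\ref{order sequence orthogonal}, a simple sheaf at each exceptional point that is not a composition factor of anything in $\F_\lambda$; this yields a line bundle $L$ with $L(n\vec c)\in\T$ for every $n$, and $\hom(L(n\vec c),E)\neq 0$ for $n\ll 0$ kills any putative bundle $E\in\T^{\perp_0}$. Your argument instead observes that the hypothesis forces $\F$ to have only finitely many indecomposables (since $\F_\lambda=0$ at ordinary points), takes their direct sum $M$ as a cogenerator, and builds the torsion radical $t(E)$ directly as the kernel of the coevaluation $E\to D\hom(E,M)\otimes M$; the pushout/minimality step then shows $\hom(t(E),\F)=0$. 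Your approach avoids both the noetherian shortcut and the specific line-bundle combinatorics, and would work verbatim in any abelian category where $\F$ admits a finite cogenerator; the paper's approach is a bit quicker once noetherianity is granted and connects more visibly to the ambient geometry of $\X$.
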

\begin{proof}
	We prove (2) as  (1) is clear.

	\nec Suppose $\F\subset \coh_0\X$. $\F$ restricts to a torsion-free class $\F_\lambda$ in $\coh_\lambda\X$ for each $\lambda\in\P^1$. If $\F_\lambda$ contains a non-exceptional indecomposable sheaf then by Lemma~\ref{bundle hom torsion}, $\T$ contains no nonzero bundle and thus $\vect\X\subset \F$, a contradiction. Hence each $\F_\lambda$ contains no non-exceptional indecomposable sheaf. 

	\suf By the definition of $\T$, $\T$ is closed under quotient and extension. Therefore $\T$ is a torsion class in $\coh\X$ since $\coh\X$ is noetherian.  
Then $(\T,\T^{\perp_{0}})$ is a torsion pair in $\coh\X$ and thus we need to show  $\F=\T^{\perp_{0}}$. $\hom(\T,\F)=0$ implies $\F\subset \T^{\perp_{0}}$ and it remains to  show $\T^{\perp_0}\subset \F$. 
For each $\lambda\in \P^1$,  $\T\cap \coh_\lambda\X={}^{\perp_{0,\coh_\lambda\X}}\F_\lambda$ is the torsion class in $\coh_\lambda\X$ corresponding to the torsion-free class $\F_\lambda$, which implies $\T^{\perp_0}\cap \coh_\lambda\X\subset \F_\lambda$. Hence $\T^{\perp_0}\cap \coh_0\X\subset \F$. We claim that $\T^{\perp_0}$ contains no nonzero bundle, which implies $\T^{\perp_0}\subset \F$. 
	Suppose for a contradiction that $\T^{\perp_0}$ contains a nonzero bundle $E$. For each $\lambda\in \P^1$, by Lemma~\ref{order sequence orthogonal}, it is impossible that each simple sheaf in $\coh_\lambda\X$ occurs as a composition factor of some indecomposable sheaf in $\F_\lambda$. Hence we have a line bundle $L$ such that $L(n\vec{c})\in\T$ for all $n\in\Z$.  
	But $\hom(L(n\vec{c}), E)\neq 0 $ for $n\ll 0$, a contradiction. This shows our claim.
\end{proof}
\begin{rmk}
	For an ordinary point $\lambda$, either $\T_\lambda= 0$ or $\F_\lambda= 0$.
\end{rmk}

Recall that for each $\mu \in\bar{\R}$, we have torsion pairs \[(\coh^{\geq \mu}\X, \coh^{<\mu}\X),\quad (\coh^{>\mu}\X, \coh^{\leq \mu}\X).\] These are very useful for our analysis. 

A torsion pair in $\coh\X$ is either tilting or cotilting.

\begin{lem}\label{cotilting torsion theory}
	Let $(\T,\F)$ be a torsion pair in $\coh\X$. 

	\begin{enum2}
	\item If $\F$ contains  a nonzero bundle then $\F$ is a cotilting torsion-free class and $\coh^{\leq \mu}\X\subset \F$  for some $\mu\in \R$.
	
	\item If $\T$ contains a nonzero bundle then $\T$ is a tilting torsion class. If $\coh_0\X\subsetneq \T$ then  $\coh^{\geq \nu}\X\subset \T$ for some $\nu\in\R$. 
	\end{enum2}
\end{lem}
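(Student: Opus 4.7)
The plan is to prove both parts by first establishing slope bounds for $\T$ or $\F$ via Harder-Narasimhan filtrations together with Theorem~\ref{bundle hom nonzero}, and then deducing the (co)tilting and inclusion statements.

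For part (1), let $E \in \F$ be a nonzero bundle. I would first show $\coh^{\leq \mu}\X \subset \F$ for suitable $\mu$. Given any bundle $T \in \T$, the bottom HN factor $A_m$ is a quotient of $T$, hence lies in $\T$ (since $\T$ is closed under quotients), so $\hom(A_m,E)=0$. By Theorem~\ref{bundle hom nonzero}, this forces $\mu(E)-\mu^-(T)=\mu(E)-\mu(A_m)\leq p+\delta(\vec{\omega})$. Since torsion sheaves satisfy $\mu^-=\infty$, taking $\mu<\mu(E)-p-\delta(\vec{\omega})$ gives $\coh^{\leq\mu}\X\cap\T=0$, i.e., $\coh^{\leq\mu}\X\subset\F$. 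For the cotilting (generator) property, any $A\in\coh\X$ admits a surjection $\O(-n\vec{c})^{r}\twoheadrightarrow A$ for $n,r\gg 0$ (global generation after twist), and for $n$ large, $\deg(\O(-n\vec{c}))=-np\leq \mu$, so $\O(-n\vec{c})\in\F$, making $\F$ a generator.

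For part (2), let $T_0\in\T$ be a nonzero bundle. The dual HN argument applied to the top factor (a subobject, hence in $\F$) gives $\mu^+(F)\leq \mu(T_0)+p+\delta(\vec{\omega})$ for any bundle $F\in\F$. Moreover, by Lemma~\ref{bundle hom torsion}, $\F\cap\coh_\lambda\X$ cannot contain a non-exceptional indecomposable; so by Lemma~\ref{A_t torsion pair}, $\T\cap\coh_\lambda\X$ is tilting in $\coh_\lambda\X$ for every $\lambda$. Thus the torsion part $A_t$ of any $A$ embeds componentwise into $\T$. For (2b), under $\coh_0\X\subsetneq \T$ we have $\F\cap\coh_0\X=0$; since $\F$ is closed under subobjects, every object of $\F$ is torsion-free, i.e., $\F\subset\vect\X$, and combined with the slope bound we get $\F\subset\coh^{<\nu}\X$ for $\nu>\mu(T_0)+p+\delta(\vec{\omega})$, whence $\coh^{\geq\nu}\X\subset\T$.

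The main obstacle is (2a), embedding the bundle part $A_b$ into $\T$. I first show $\T$ contains line bundles of arbitrarily high degree: take a line bundle quotient $L$ of $T_0$ (using Proposition~\ref{line bundle}(2)), so $L\in\T$; for each $m\geq 0$, choose a generic section $\sigma\in S_{m\vec{c}}$ vanishing only at ordinary points, giving $0\to L\to L(m\vec{c})\to S_1\oplus\cdots\oplus S_m\to 0$ with each $S_i$ a simple sheaf at an ordinary point, hence non-exceptional. Each such simple lies in $\T$ (being simple, its torsion decomposition forces it into $\T$ once it is excluded from $\F$ by Lemma~\ref{bundle hom torsion}), and closure under extension yields $L(m\vec{c})\in\T$. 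Now, given $A_b$ with line bundle filtration $0=A_0\subset A_1\subset \cdots\subset A_r=A_b$ (Proposition~\ref{line bundle}(2)), each line bundle factor $A_i/A_{i-1}$ embeds into some $L(m_i\vec{c})\in\T$ for large $m_i$ (nonzero morphisms between line bundles are injective, and $\hom\neq 0$ follows from Theorem~\ref{bundle hom nonzero}). Using that $\coh\X$ is hereditary, the surjection $\ext^1(L(m_i\vec{c}),-)\twoheadrightarrow\ext^1(A_i/A_{i-1},-)$ allows us to lift each extension class; iterated pushouts along these line-bundle embeddings glue into an embedding $A_b\hookrightarrow P\in\T$, where $P$ is an iterated extension of objects in $\T$. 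Combining with the torsion-part embedding yields tilting of $\T$.

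The delicate step is the iterated extension-gluing for higher-rank $A_b$: one must track that each pushout preserves the injection of the previous layer, which follows from the standard diagram chase but requires careful bookkeeping along the filtration.
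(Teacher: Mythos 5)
Your slope-bound arguments and your part (2) are sound, but the cotilting step in part (1) contains a genuine error. You claim that every $A\in\coh\X$ admits a surjection $\O(-n\vec{c})^{r}\epic A$ for $n,r\gg 0$. This fails whenever some weight $p_i\geq 2$, i.e.\ for every genuinely weighted $\X$. Twisting by $\vec{c}=p_i\vec{x_i}$ fixes each simple $S_{i,j}$ (recall $S_{i,j}(\vec{x})\cong S_{i,j+l_i}$ for $\vec{x}=\sum l_i\vec{x_i}$), and from the defining sequences $0\ra \O(j\vec{x_i})\overset{X_i}{\ra}\O((j+1)\vec{x_i})\ra S_{i,j}\ra 0$ together with $\ext^1(\O,\O(j\vec{x_i}))=0$ one computes $\hom(\O,S_{i,j})=0$ unless $j= p_i-1$. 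Hence $\hom(\O(-n\vec{c}),S_{i,j})\cong\hom(\O,S_{i,j}(n\vec{c}))\cong \hom(\O,S_{i,j})=0$ for \emph{all} $n$ when $j\neq p_i-1$, so no direct sum of twists $\O(-n\vec{c})$ can surject onto such a simple sheaf: ``global generation after twisting by multiples of $\vec{c}$'' is precisely what breaks on a weighted projective line, because $\vec{c}$-twists cannot change the residue of the $\vec{x_i}$-component. The gap is repairable in two ways: either precompose an arbitrary presentation $\oplus_i\O(\vec{y_i})\epic A$ with the surjections $\O(\vec{y_i}-n\vec{c})^{2}\epic \O(\vec{y_i})$ given by the sections $x_1^{np_1},x_2^{np_2}$ (which have no common zeros), so that the covering line bundles have slope $\leq\mu$ and lie in $\coh^{\leq\mu}\X\subset\F$; or argue as the paper does, by induction on $\rk(A)$: embed a line bundle $L_1\in\F$ of very negative slope into $A$, cover $A/L_1$ by some $F_1\in\F$ inductively, and form the pullback, using that $\F$ is closed under extensions.

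The remainder of your proposal is correct and largely parallels the paper: the bounds $\T\subset\coh^{>\mu}\X$ and $\mu^+(F)\leq\mu(T_0)+p+\delta(\vec{\omega})$ via Harder--Narasimhan factors and Theorem~\ref{bundle hom nonzero}, the exclusion of non-exceptional indecomposables from $\F_\lambda$ via Lemma~\ref{bundle hom torsion} and the resulting tilting of $\T_\lambda$ via Lemma~\ref{A_t torsion pair}, the deduction of $\coh^{\geq\nu}\X\subset\T$ from $\F\subset\vect\X\cap\coh^{<\nu}\X$, and the step $L(m\vec{c})\in\T$ (your generic-section version is equivalent to the paper's use of a single simple at an ordinary point plus closure under extension). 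Your treatment of (2a) deviates from the paper in one step: where the paper simply cites \cite[Corollary 2.7]{GL} to embed an arbitrary bundle into a direct sum of line bundles, each of which embeds into some $L(n\vec{c})\in\T$, you reprove such an embedding by lifting extension classes along the surjections $\ext^1(L(m_i\vec{c}),P_{i-1})\epic \ext^1(A_i/A_{i-1},P_{i-1})$ (surjective since $\coh\X$ is hereditary) and gluing by iterated pushouts; this works --- the pushout of a monomorphism along a monomorphism is a monomorphism, and the snake lemma identifies $\ker(E\ra P_i)$ with $\ker(A_i/A_{i-1}\monic L(m_i\vec{c}))=0$ --- it is just a longer route than the citation.
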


\begin{proof}

	 Suppose that $\F$ contains a nonzero  bundle  $A$.  If $\T$ contains no nonzero bundle, then $\vect\X\subset \F$. Now suppose that $\T$ contains a nonzero bundle $T$.  
	 Let $\mu=\mu(A)-\delta(\vec{c}+\vec{\omega})$. Then for each  bundle $B\in \T$, we have $\mu (B)>\mu$. Indeed, if $\mu(B)\leq \mu$ then $\mu(A)-\mu (B)\geq \delta(\vec{c}+\vec{\omega})$ and $\hom(B,A)\neq 0$ by Theorem~\ref{bundle hom nonzero}, a contradiction to $\hom(\T,\F)=0$. 
	 Since $\T$ is closed under quotient, for each nonzero bundle $E$ in $\T$, the last semistable factor of $E$ lies in $\T$ and hence $\mu^-(E)>\mu$. This shows $\vect\X\cap \T\subset \coh^{>\mu}\X$. Recall that a coherent sheaf over $\X$ decomposes as a direct sum of a torsion sheaf and a vector bundle. So we have $\T\subset \coh^{>\mu}\X$ and thus $\coh^{\leq \mu}\X\subset \F$. Similarly one shows that if $\T$ contains a nonzero bundle then $\vect\X\cap \F\subset \coh^{<\nu}\X$ for some $\nu\in\R$, which implies $\coh^{\geq \nu}\X\subset \T$ provided $\coh_0\X\subsetneq \T$. 

	 Now we  show that $\F$ is a cotilting torsion-free class  if $\F$ contains a nonzero bundle. That is, we need to show that for each  sheaf $E$, there is some sheaf $F\in\F$ and an epimorphism $F\epic E$. We do induction on $\rk(E)$. We already have $\coh^{\leq \mu}\X\subset \F$ for some $\mu\in \R$. If  $E$ is an indecomposable torsion sheaf then we can take a line bundle $L\in\F$ such that $L\epic E$. If $\rk(E)>0$, take a line bundle $L_1\in \F$ with $\mu(L)\ll \mu (E)$.  Then we have an exact sequence $0\ra L_1\ra E\ra E_1\ra 0$ with $\rk(E_1)< \rk(E)$. By the induction hypothesis, there is some $F_1\in F$ and an epimorphism $F_1\epic E_1$. The pullback diagram 
	\[\xymatrix@R-5pt{0 \ar[r] & L_1 \ar[r] \ar@{=}[d] & F\ar[r] \ar[d] & F_1\ar[r] \ar[d] & 0\\
0\ar[r] & L_1\ar[r] & E\ar[r] \ar[r] & E_1\ar[r] & 0}\]
gives us an object $F\in \F$ and an epimorphism $F\epic E$, as desired.

If $\T$ contains a nonzero bundle, we show that $\T$ is a tilting torsion class.  For each $\lambda\in \P^1$, consider the torsion pair $(\T_\lambda,\F_\lambda)=(\T\cap \coh_\lambda\X, \F\cap \coh_\lambda\X)$ in $\coh_\lambda\X$. 
By Lemma~\ref{bundle hom subtube-simples},  $\F_\lambda$ contains no non-exceptional object and thus $\T_\lambda$ contains a non-exceptional object. Then $S\in \T$ for a simple sheaf $S$ supported at an ordinary point. Moreover,  $\T_\lambda$ is a tilting torsion class in $\coh_\lambda\X$ by Lemma~\ref{A_t torsion pair}. Hence each indecomposable torsion sheaf in $\coh_\lambda\X$ is a subobject of some object in $\T_\lambda$.  
Since $\T$ is closed under quotient, $\T$ contains a line bundle $L$ by Proposition~\ref{line bundle}(2). $L, S\in \T$ implies $L(n\vec{c})\in \T$ for $n\geq 0$. 
By \cite[Corollary 2.7]{GL}, for each $E\in \vect\X$, $E$ is a subbundle of $\oplus_{i=1}^mL_i$ for some line bundles $L_1,\dots,L_m$. Now that $L_i$ is a subbundle of $L(n\vec{c})$ for $n\gg 0$, $E$ is a subbundle of $\oplus_{i=1}^mL(n\vec{c})\in \T$.   This shows that $\T$ is a tilting torsion class if $\T$ contains a nonzero bundle.
\end{proof}

\begin{lem}\label{no quasi-simple}
	Let $(\T,\F)$ be a torsion pair in $\coh\X$ with $\coh_0\X\subsetneq \T\subsetneq \coh\X$. 

	\begin{enum2}
	\item If $\X$ is of domestic type then the $\tau$-orbit of each line bundle contains some line bundle $L$ such that $L\in \T$ and $\tau L\in\F$.

	\item If $\X$ is of tubular type then exactly one of the following holds:
\begin{enumerate}[label={$(\alph*)$},leftmargin=0.6cm]
	\item there exists some quasi-simple bundle $E$ in $\T$ with $\tau E\in\F$;
	\item for some $\mu \in\R\backslash \Q$, $(\T,\F)=(\coh^{>\mu}\X, \coh^{<\mu}\X)$;
	\item for some $\mu\in \Q$ and some $P\subset \P^1$, \[(\T,\F)=(\add\{\coh^{>\mu}\X, \coh^{\mu}_\lambda\X\mid \lambda\in P\},\,\,\add\{\coh^\mu_\lambda\X,   \coh^{<\mu}\X\mid \lambda\notin P\}).\]
	\end{enumerate}
\end{enum2}
\end{lem}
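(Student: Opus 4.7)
My plan is first to record a structural observation that underlies both parts. Under the hypothesis $\coh_0\X \subsetneq \T \subsetneq \coh\X$, I claim that both $\T$ and $\F$ must contain a nonzero bundle: the inclusion $\coh_0\X \subsetneq \T$ directly produces a bundle in $\T$, while if instead $\vect\X \subset \T$, every object of $\F$ would be forced to be torsion (a nonzero bundle summand would lie in $\T$), and every nonzero torsion sheaf receives a nonzero morphism from some line bundle in $\vect\X \subset \T$, so $\F=0$, contradicting $\T \subsetneq \coh\X$. Lemma~\ref{cotilting torsion theory} then yields $\mu^\pm \in \R$ with $\coh^{\leq \mu^-}\X \subset \F$ and $\coh^{\geq \mu^+}\X \subset \T$.

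For (1), pick any line bundle $L_0$. Since $\X$ is domestic, $\delta(\vec{\omega}) < 0$, so by Lemma~\ref{twist slope} the slopes $\mu(\tau^n L_0)$ tend to $+\infty$ as $n \to -\infty$ and to $-\infty$ as $n \to +\infty$. Consequently $\tau^n L_0 \in \coh^{\geq \mu^+}\X \subset \T$ for $n \ll 0$ and $\tau^n L_0 \in \coh^{\leq \mu^-}\X \subset \F$ for $n \gg 0$. Each $\tau^n L_0$ is an indecomposable line bundle, hence lies in exactly one of $\T$ and $\F$. Letting $n_0$ be the minimal integer with $\tau^{-n_0}L_0 \in \T$ and setting $L = \tau^{-n_0}L_0$, one obtains $L \in \T$ while $\tau L = \tau^{-n_0+1}L_0 \in \F$, as required.

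For (2), assume (a) fails and deduce (b) or (c). The failure of (a) says that no quasi-simple bundle $E$ simultaneously satisfies $E \in \T$ and $\tau E \in \F$; combined with the fact that each indecomposable lies in $\T$ or $\F$, this gives $E \in \T \Rightarrow \tau E \in \T$ (and, applied to $\tau^{-1}E$, $E \in \F \Rightarrow \tau^{-1}E \in \F$) for every quasi-simple $E$. Because $\tau$ has finite order on each tube, every tube $\coh^\mu_\lambda\X$ ($\mu \in \bar{\Q}$, $\lambda \in \P^1$) has all its quasi-simples in $\T$ or all in $\F$; extension-closure of $\T$ together with extension- and subobject-closure of $\F$ then forces the entire tube into $\T$ or into $\F$. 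Next I would prove a monotonicity step: if $\coh^{\mu_1}_\lambda\X \subset \T$ and $\mu_1 < \mu_2 \in \bar{\Q}$, then $\coh^{\mu_2}_{\lambda'}\X \subset \T$ for every $\lambda' \in \P^1$. Otherwise $\coh^{\mu_2}_{\lambda'}\X \subset \F$; taking nonzero $E \in \coh^{\mu_1}_\lambda\X$ and nonzero $F \in \coh^{\mu_2}_{\lambda'}\X$, Lemma~\ref{tubular bundle hom} produces some $i$ with $\hom(E, \tau^i F) \neq 0$, but $\tau$ stabilizes the tube $(\mu_2, \lambda')$, so $\tau^i F \in \F$, violating $\hom(\T, \F) = 0$.

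Finally I would set $\alpha = \inf\{\mu \in \bar{\Q} : \exists\, \lambda \in \P^1,\ \coh^\mu_\lambda\X \subset \T\}$; the slope bounds from the first paragraph pin $\alpha$ to $\R$. The monotonicity step then gives $\coh^\mu\X \subset \T$ for every rational $\mu > \alpha$ and $\coh^\mu\X \subset \F$ for every rational $\mu < \alpha$. Since indecomposable bundles are semistable of rational slope and torsion sheaves correspond to $\mu = \infty > \alpha$, this accounts for every indecomposable. If $\alpha \in \R \setminus \Q$ then no sheaf has slope $\alpha$ and case (b) results; if $\alpha \in \Q$ then setting $P = \{\lambda \in \P^1 : \coh^\alpha_\lambda\X \subset \T\}$ yields case (c). Uniqueness of $\mu$ and $P$ is immediate from this description, and pairwise exclusivity with (a) follows because in both (b) and (c) a quasi-simple $E$ and $\tau E$ lie in the same tube, hence on the same side of the torsion pair. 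The main technical obstacle is the monotonicity step: one must carefully verify that $\tau$ preserves each tube $\coh^\mu_\lambda\X$ so that $\tau^i F$ stays in $\F$, and that Lemma~\ref{tubular bundle hom} supplies a nonzero Hom no matter how close $\mu_1$ and $\mu_2$ are.
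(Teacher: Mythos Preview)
Your proposal is correct and follows essentially the same route as the paper: both use Lemma~\ref{cotilting torsion theory} to obtain slope bounds, handle part~(1) via $\delta(\vec\omega)<0$ and the indecomposability of line bundles, and for part~(2) show that when (a) fails each tube $\coh^\mu_\lambda\X$ lies entirely in $\T$ or $\F$, then use Lemma~\ref{tubular bundle hom} together with $\tau$-stability of tubes to get a slope cut at some $\mu_1\in\R$. The only cosmetic differences are that the paper reaches tube-indivisibility via the quasi-top of an indecomposable in $\T$ (using closure under quotients) rather than via the single $\tau$-orbit of quasi-simples, and the paper defines $\mu_1=\inf\{\mu^-(T):T\in\T\}$ first and argues monotonicity afterwards, whereas you isolate the monotonicity step before taking the infimum; neither difference is substantive, and your worry about $\tau$ preserving $\coh^\mu_\lambda\X$ is unfounded since $\delta(\vec\omega)=0$ in the tubular case and $\tau$ is the AR translation on each tube.
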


\begin{proof}
	Note that $\coh_0\X\subsetneq \T\subsetneq \coh\X$ implies $\{0\}\subsetneq \F\subsetneq \vect\X$. By Lemma~\ref{cotilting torsion theory}, $\coh^{\leq \mu_0}\X\subset \F$ for some $\mu_0\in\R$ and $\coh^{\geq \nu_0}\X\subset \T$ for some $\nu_0\in\R$.

	(1)	By Lemma~\ref{twist slope}, $\mu(\tau^n L)=\mu(L)+n\delta(\vec{\omega})$. Since $\delta(\vec{\omega})<0$, for each line bundle $L$, $\tau^{n}L\in \F$ for $n\gg 0$  and $\tau^{n}L\in \T$ for $n\ll 0$. Moreover $\coh_0\X\subsetneq \T$ implies that each line bundle lies in $\T$ or $\F$ and therefore there must be a line bundle $\tau^n L\in \T$ with $\tau^{n+1} L\in \F$.

	(2) Obviously, the three types are disjoint. If $(\T,\F)$ is not of type $(a)$ then $\tau E\in \T$ for each quasi-simple $E\in \T$. For $\mu\in \Q$ and $\lambda\in \P^1$, let $X$ be an indecomposable bundle in $\coh_\lambda^\mu\X \cap \T$. Take the quasi-top $Y$ of $X$. 
Then $\tau$-orbit of $Y$ lies in $\T$, which implies $\coh_\lambda^\mu\X\subset \T$. Hence if $\T\cap \coh_\lambda^\mu\X\neq 0$ then $\coh_\lambda^\mu\X\subset \T$. Since $\coh^{\leq \mu_0}\X\subset \F$, we have $\T\subset \coh^{>\mu_0}\X$. Thus we can take $\mu_1=\inf\{T\in\T\mid \mu^-T\}\in \R$.
$\T\subset \coh^{\geq \mu_1}\X$ implies $\coh^{<\mu_1}\X\subset \F$. Let $E$ be any indecomposable bundle with $\mu(E)>\mu_1$ and take an indecomposable bundle  $T\in \T$ with $\mu_1\leq \mu(T)<\mu(E)$.  
 Then  
 Lemma~\ref{tubular bundle hom} implies that $\hom(\tau^j T,E)\neq 0$ for some $j$. Since $\tau^j T\in\T$,  $E\notin \F$. This shows that $\mu^+F\leq \mu_1$ for $F\in \F$. 
 Thus $\F\subset \coh^{\leq \mu_1}\X$ and $\coh^{>\mu_1} \X\subset \T$.  If $\mu_1\in \R\backslash \Q$ then $\T=\coh^{>\mu_1}\X$ and $\F=\coh^{<\mu_1}\X$. If $\mu_1\in \Q$ then $(\coh^{\mu_1}_\lambda \X\cap \T, \coh^{\mu_1}_\lambda \X\cap \F)$ is a torsion pair in $\coh^{\mu_1}_\lambda\X$. We already know that for $\lambda\in \P^1$, if $\T\cap \coh^{\mu_1}_\lambda\X\neq 0$ then $\coh_\lambda^{\mu_1}\X\subset \T$ and hence either $\coh_\lambda^{\mu_1}\X\subset \T$ or $\coh_\lambda^{\mu_1}\X\subset\F$. Consequently, for some $P\subset \P^1$, 
 \[(\T, \F) = (\add\{\coh^{>\mu_1}\X, \coh^{\mu_1}_\lambda\X \mid \lambda\in P\},\,\, \add\{\coh^{\mu_1}_\lambda\X,  \coh^{<\mu_1}\X\mid \lambda\in \P^1\backslash P\}).\]
\end{proof}

We establish bijective correspondences between tilting sheaves, certain bounded t-structures on $\D^b(\X)$ and certain torsion pairs in $\coh\X$.

\begin{prop}\label{length torsion pair}
	 Denote $\A=\coh\X$. There are bijective correspondences between 
	 \begin{enum2}	
	 \item torsion pairs $ (\T,\F)$ in $\A$ such that the tilted heart $\F[1]*\T$ is a length category;

	 \item bounded t-structures whose heart is a length category contained in $\A[1]*\A$;

	 \item  isomorphism classes of basic tilting sheaves in $\A$;

	 \item torsion pairs $(\T,\F)$ such that there is $n=\rk K_0(\X)$ pairwise non-isomorphic indecomposable sheaves $E_1,\dots, E_n$ in $\T$ with $\tau E_i\in \F$ for all $i$.
	 \end{enum2}
	Moreover, torsion pairs $(\T, \F)$ in $(1)$ with the additional assumption $\coh_0\X\subsetneq \T\subsetneq \coh\X$ are in bijection with isoclasses of basic tilting bundles.
\end{prop}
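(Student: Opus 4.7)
The plan is to prove (1) $\Leftrightarrow$ (2), (1) $\Leftrightarrow$ (4), and (1) $\Leftrightarrow$ (3) in turn, and then deduce the additional bijection for basic tilting bundles. The equivalence (1) $\Leftrightarrow$ (2) is immediate from Proposition~\ref{HRS-tilt} combined with the observation recorded in \S\ref{sec: HRS-tilt} that bounded t-structures on $\D^b(\X)$ with heart contained in $\A[1]\ast\A$ are precisely the HRS-tilts of the standard t-structure; the length condition on $\B=\F[1]\ast\T$ transfers directly.

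For (1) $\Leftrightarrow$ (4), the key tool is Lemma~\ref{ext-proj} applied to the Serre functor $\tau[1]$ of $\D^b(\X)$. With $\D^{\leq 0}=\D_{\std}^{\leq -1}\ast\T$ and $\D^{\geq 1}=\F\ast\D_{\std}^{\geq 1}$, and using that $\A$ is hereditary (so every indecomposable in $\D^b(\X)$ is a shift of a sheaf), the indecomposable $\D^{\leq 0}$-projectives lying in $\A$ are exactly the indecomposable sheaves $E\in\T$ with $\tau E\in\F$. For (4) $\Rightarrow$ (1), Proposition~\ref{order silting} orders the $n$ sheaves $E_i$ into an exceptional sequence of length $n=\rk K_0(\X)$, hence full by Lemma~\ref{full exseq rk}; so $T:=\bigoplus E_i$ classically generates $\D^b(\X)$, and being in $\A$ it satisfies $\hom^{<0}(T,T)=0$, making it a basic tilting object. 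By Theorem~\ref{silting t-str} the associated heart is of finite length and equivalent to $\mod \End T$, and the classical Happel-Reiten-Smal\o\ identification with $\F[1]\ast\T$ follows because $\tau T\in\F$ (by Serre duality $\hom(T,\tau T)=D\ext^1(T,T)=0$) and because $\{E\in\A\mid \ext^1(T,E)=0\}$ coincides with $\T$ (one inclusion is Serre duality $\ext^1(T,E)=D\hom(E,\tau T)$ together with $\tau T\in\F$; the other since $T\in\T$ and $\T$ is closed under quotients and extensions, so $\mathrm{Gen}(T)\subseteq\T$). For (1) $\Rightarrow$ (4), a length heart $\B$ has exactly $n$ simples, whose projective covers in $\B$ are $n$ indecomposable $\D^{\leq 0}$-projectives; a shift $X[1]$ with $X\in\F$ is ruled out as such a projective, because nonzero Hom from $X$ into $\T$-summands of $\B$ would produce nonzero $\ext^1_\B(X[1],-)$, so all indecomposable projectives lie in $\T\subset\A$.

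The bijection (1) $\Leftrightarrow$ (3) is then immediate: the tilting object $T=\bigoplus E_i$ built above is the desired basic tilting sheaf, and conversely a basic tilting sheaf $T$ defines $\T=\{E\in\A\mid \ext^1(T,E)=0\}$ and $\F=\{E\in\A\mid \hom(T,E)=0\}$, whose indecomposable summands witness (4). For the tilting-bundle statement: if $T\in\vect\X$ then $\tau T\in\vect\X$, so $\hom(\coh_0\X,\tau T)=0$ forces $\coh_0\X\subseteq\T$; if instead $T$ had a torsion summand $T_0$ supported at some $\lambda$, any simple $S\in\coh_\lambda\X$ would give $\hom(S,\tau T_0)\neq 0$, hence $\ext^1(T,S)\neq 0$ and $S\notin\T$, contradicting $\coh_0\X\subseteq\T$. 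The strict inclusions $\coh_0\X\subsetneq\T\subsetneq\A$ hold since $T\in\T\cap\vect\X$ and since $\F\neq 0$ (otherwise $\A$ itself would be a length category). The main obstacle I anticipate is the (4) $\Rightarrow$ (1) step, specifically verifying that the heart determined by the silting object $T=\bigoplus E_i$ via K\"onig-Yang indeed equals $\F[1]\ast\T$; this is where the Happel-Reiten-Smal\o\ torsion-pair identification above is essential.
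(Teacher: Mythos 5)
Your overall route is sound and, in its toolkit, essentially the paper's: (1)$\leftrightarrow$(2) via Proposition~\ref{torsion pair t-structure}; ordering the $E_i$ of (4) into a full exceptional sequence via Serre duality, Proposition~\ref{hereditary ext vanish}/\ref{order silting} and Lemma~\ref{full exseq rk} to obtain a tilting sheaf; and K\"onig--Yang (Theorem~\ref{silting t-str}) together with right t-exactness of the Serre functor (Lemma~\ref{silting tilting}) to handle the length condition. The organizational difference is that you prove (1)$\leftrightarrow$(4) directly through Ext-projectives of the tilted aisle, where the paper goes (2)$\leftrightarrow$(3) (silting becomes tilting, and $T,\tau T[1]\in\B\subset\A[1]*\A$ forces $T$ to be a sheaf) and then (3)$\leftrightarrow$(4) by declaring the obvious associations inverse; your explicit verification that the K\"onig--Yang heart of $T=\oplus E_i$ equals $\F[1]*\T$ spells out a detail the paper compresses into a remark.

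However, two of your stated justifications would fail as written. First, in the torsion-pair identification you need $\{E\in\A\mid \ext^1(T,E)=0\}\subseteq \T$, and your appeal to $\mathrm{Gen}(T)\subseteq\T$ only helps if you already know $\{E\mid \ext^1(T,E)=0\}=\mathrm{Gen}(T)$, a nontrivial fact about tilting sheaves that you never prove; moreover the parenthetical ``$\tau T\in\F$ by Serre duality'' is backwards, since $\hom(T,\tau T)=0$ alone does not place $\tau T$ in $\F$ (in the (4)$\Rightarrow$(1) direction, $\tau T\in\F$ is simply the hypothesis). The repair is the maximality argument: $(\T,\F)$ and the induced pair $(\T_T,\F_T)$ are both torsion pairs with $\T\subseteq\T_T$ (Serre duality plus $\tau T\in\F$) and $\F\subseteq\F_T$ (since $T\in\T$), and nested torsion pairs coincide because $\F_T=\T_T^{\perp_0}\subseteq\T^{\perp_0}=\F$. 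Second, in (1)$\Rightarrow$(4) you rule out projectives of the form $X[1]$ with $X\in\F$ ``because nonzero Hom from $X$ into $\T$-summands\dots'' --- but nothing guarantees $\hom(X,\T)\neq 0$ for every nonzero $X\in\F$. The correct argument is already in your setup: by Lemma~\ref{ext-proj}, if $X[1]$ were Ext-projective then $\SS(X[1])=\tau X[2]$ would lie in $\D^{\geq 0}=\F[1]*\D_{\std}^{\geq 0}\subseteq\D_{\std}^{\geq -1}$, impossible since $\tau X[2]$ sits in cohomological degree $-2$; equivalently, use the paper's argument that $T$ and $\tau T[1]$ both lie in $\B\subseteq\A[1]*\A$, which kills the degree $-1$ part of $T$. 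A minor slip in the tilting-bundle part: not every simple $S$ at $\lambda$ satisfies $\hom(S,\tau T_0)\neq 0$, since $\coh_\lambda\X$ is uniserial; take $S=\soc(\tau T_0)$.
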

\begin{proof}
	The second assertion follows readily from the first one. We show the first assertion. 
	The bijection between (1) and (2) follows from Proposition~\ref{torsion pair t-structure}.  
	Note that for those $E_i$'s in (4), we have $\hom(\oplus E_i, \oplus \tau E_i)=0$. By Serre duality, we have $\ext^1(\oplus E_i, \oplus E_i)=0$. Thus $E_i$'s can be ordered to be a full exceptional sequence by Proposition~\ref{hereditary ext vanish} and Lemma~\ref{full exseq rk}. 
 So $\oplus E_i$ is a tilting sheaf. Then the obvious associations between (3) and (4) are evidently inverse to each other. 
	
	Now we establish the bijection between (2) and (3). 
	By Theorem~\ref{der canonical algebra}, $\A=\coh\X$ is derived equivalent to $\mod \Lambda$ for a canonical algebra $\Lambda$. Hence we can apply Theorem~\ref{silting t-str} to conclude   that bounded t-structures on $\D^b(\A)$ with length heart  are in bijection with equivalence classes of silting objects in $\D^b(\X)$. 
	Note that if a bounded t-structure $(\D^{\leq 0}, \D^{\geq 0})$ has heart $\B\subset  \A[1]*\A$ then $\D_\A^{\leq -1}\subset \D^{\leq 0}\subset \D_\A^{\leq 0}$ and thus the Serre functor $\tau(-)[1]$ of $\D^b(\A)$ is right t-exact with respect to $(\D^{\leq0 }, \D^{\geq 0})$. 
	By Lemma~\ref{silting tilting}, in this bijection, a bounded t-structure with length heart $\B\subset \A[1]*\A$ corresponds to some equivalence class of tilting objects in $\D^b(\X)$. It remains to show that such a tilting object $T$ is a sheaf. 
	By Lemma~\ref{silting tilting}, $T, \tau T[1]\in \B\subset \A[1]*\A$.  This forces $T$ to be a sheaf. 
\end{proof}
\begin{rmk}
	Recall that we have a torsion pair $(\T,\F)$ induced by a tilting sheaf $T$, where  \[\T=\{E\in \coh\X \mid \ext^1(T,E)=0\}, \quad \F=\{E\in \coh\X\mid \hom(T,E)=0\}.\] Since $T\in \T, \tau T\in \F$, this torsion pair is just the one corresponding to $T$.  

\end{rmk}

\begin{exm}
	Consider the torsion pair $(\T,\F)=(\coh^{\geq \mu}\X, \coh^{<\mu}\X)$ for $\mu\in \R$. If $\X$ is of domestic type, 
		similar argument to that in the proof of \cite[Theorem 3.5]{Lenzing} shows that the direct sum of  a  complete set of indecomposable bundles with slope in the interval $[\mu, \mu-\delta(\vec{\omega}))$ is a tilting bundle, whose endomorphism algebra turns out to be a tame hereditary algebra. 
	The induced torsion pair is exactly $(\coh^{\geq \mu}\X, \coh^{<\mu}\X)$. 
	If $\X$ is not of domestic type then $\T$ (resp. $\F$) is closed under $\tau$ (resp. $\tau^{-1}$) since $\delta(\vec{\omega})\geq 0$. Therefore $(\coh^{>\mu}\X, \coh^{\leq\mu}\X)$ cannot be induced by a tilting sheaf and the tilted heart $\coh^{\leq\mu}\X[1]*\coh^{> \mu}\X$ is not a length category. 
\end{exm}

We obtain the following known results as a corollary of Proposition~\ref{length torsion pair}.
\begin{cor}\label{tilting bundle}
	\begin{enum2}
	\item \textup{(\cite[Corollary 3.7]{Lenzing}).} If $\X$ is of domestic type then each tilting bundle $T$ contains at least $[L(\udp):\Z\vec{\omega}]$ pairwise nonisomorphic line bundles  as its direct summands. 

	\item \textup{({\cite[Corollary 3.5]{LM}}).} If $\X$ is of tubular type then each tilting bundle $T$ contains a quasi-simple bundle direct summand. For some $q\in\bar{\Q}$, $\Phi_{\infty, q}(T)$ is a tilting sheaf with an exceptional simple sheaf as its direct summand.
	\end{enum2}
\end{cor}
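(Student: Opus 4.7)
Both parts rest on the bijective correspondence in Proposition~\ref{length torsion pair}: isoclasses of basic tilting bundles $T$ correspond to torsion pairs $(\T,\F)$ in $\coh\X$ with $\coh_0\X\subsetneq\T\subsetneq\coh\X$, and by condition (4) the indecomposable direct summands of such a $T$ are precisely the indecomposable sheaves $E$ satisfying $E\in\T$ and $\tau E\in\F$. The plan is to apply Lemma~\ref{no quasi-simple} to the torsion pair associated with $T$.

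For (1), Lemma~\ref{no quasi-simple}(1) asserts that in each $\tau$-orbit of line bundles there is some $L$ with $L\in\T$ and $\tau L\in\F$, so such an $L$ is an indecomposable summand of $T$. By Proposition~\ref{line bundle}(1) isoclasses of line bundles correspond bijectively to $L(\udp)$ and $\tau=-(\vec{\omega})$ acts by translation by $\vec{\omega}$, so the number of $\tau$-orbits equals $[L(\udp):\Z\vec{\omega}]$. Choosing one such $L$ in every orbit produces the required number of pairwise non-isomorphic line bundle summands.

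For (2), first part: because $\X$ is tubular, $\delta(\vec{\omega})=0$, hence by Lemma~\ref{twist slope} the functor $\tau$ preserves slopes; consequently every $\coh^{>\mu}\X$ and every $\coh_\lambda^\mu\X$ is $\tau$-closed. Then the torsion classes in cases (b) and (c) of Lemma~\ref{no quasi-simple}(2) are $\tau$-closed, which forbids any indecomposable $E\in\T$ from having $\tau E\in\F$ and contradicts the existence of indecomposable summands of $T$. Hence the torsion pair falls under case (a), producing a quasi-simple bundle $E\in\T$ with $\tau E\in\F$, which is a quasi-simple direct summand of $T$.

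For the second part of (2), set $q=\max\{\mu(T_j):T_j\ \text{indecomposable summand of }T\}\in\Q$. By Lemma~\ref{fractional linear}(2), $\Phi_{\infty,q}$ sends summands of slope $<q$ to vector bundles and summands of slope $q$ to torsion sheaves, so $\Phi_{\infty,q}(T)\in\coh\X$ and, as the image of a tilting object under an exact autoequivalence, is a tilting sheaf. For every indecomposable summand $T_j$, Serre duality yields $\ext^1(T,\tau T_j)\cong D\hom(T_j,T)\neq 0$, so $\tau T_j\in\F$; applied at slope $q$ and restricted to the tube $\coh_\lambda^q\X\simeq\coh_\lambda\X$ in which some summand of $T$ lies, this shows that the induced torsion pair in that tube is not $\tau$-closed. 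A tube-local analogue of the reasoning behind Lemma~\ref{no quasi-simple}(2)(a) then produces a quasi-simple indecomposable $E'$ in that tube with $E'\in\T,\ \tau E'\in\F$, which is a quasi-simple summand of $T$ at slope $q$; its image under $\Phi_{\infty,q}$ is a simple torsion sheaf summand of $\Phi_{\infty,q}(T)$, exceptional because any indecomposable summand of a tilting object is exceptional and $\Phi_{\infty,q}$ is an autoequivalence. The main obstacle is precisely this last step: extracting a \emph{quasi-simple} summand (rather than just some summand) at the maximal slope $q$. Serre duality alone forces $\tau T_j\in\F$ but not quasi-simplicity, so one must carry out the tube-local analysis, using the explicit Auslander-Reiten structure of $\coh_\lambda^q\X$ and the fact that the restricted torsion pair fails to be $\tau$-closed (otherwise $T$ would have no summand of slope $q$ at all).
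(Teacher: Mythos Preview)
Your proofs of (1) and of the first part of (2) are essentially the paper's: use the torsion pair $(\T,\F)$ attached to $T$ via Proposition~\ref{length torsion pair}, apply Lemma~\ref{no quasi-simple}, and count $\tau$-orbits of line bundles in the domestic case.

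For the second statement in (2), however, the paper takes a different and much cleaner route that sidesteps exactly the obstacle you flag. Rather than trying to locate a \emph{quasi-simple} summand at the maximal slope $q$, the paper simply observes that $\Phi_{\infty,q}(T)$ is a tilting sheaf with a nonzero torsion direct summand (any summand of $T$ at slope $q$ becomes torsion). Since the indecomposable summands of a basic tilting sheaf can be ordered into a full exceptional sequence (Proposition~\ref{hereditary ext vanish} and Lemma~\ref{full exseq rk}), Lemma~\ref{exseq simple} applies: a full exceptional sequence in $\coh\X$ containing a torsion sheaf already contains a simple sheaf. This gives the simple summand directly, with exceptionality coming from Happel--Ringel.

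Your proposed tube-local argument, by contrast, is incomplete as stated. Knowing that each summand $T_j$ of slope $q$ satisfies $\tau T_j\in\F$ tells you the restricted torsion pair on $\coh_\lambda^q\X$ is not $\tau$-invariant, but it does not immediately hand you a quasi-simple $E'$ with $E'\in\T$ and $\tau E'\in\F$: the summands present might all have quasi-length $>1$, and while the quasi-top of such a $T_j$ lies in $\T$ (closure under quotients), there is no obvious reason its $\tau$-translate lies in $\F$. One could presumably push this through with more care inside the tube, but Lemma~\ref{exseq simple} makes the detour unnecessary.
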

\begin{proof}
Let $(\T, \F)$ be the torsion pair corresponding to $T$. Since $T$ is a bundle, $\coh_0\X\subsetneq \T\subsetneq \coh\X$.

(1)  By Lemma~\ref{no quasi-simple}, each $\tau$-orbit of a line bundle contains a line bundle $L\in \T$ with $\tau L\in F$. Each such $L$ is a direct summand of $T$. By Proposition~\ref{line bundle}, we have precisely  $[L(\udp): \Z\vec{\omega}]$ $\tau$-orbits of line bundles. So $T$ contains at least $[L(\udp): \Z\vec{\omega}]$ pairwise nonisomorphic line bundles.

	(2) Note that in Lemma~\ref{no quasi-simple}, a torsion pair $(\U, \V)$ in $\coh\X$ of type  ~\ref{no quasi-simple}$(2b)$ or ~\ref{no quasi-simple}$(2c)$ contains no nonzero sheaf $F$ with $F\in \U$ and $\tau F\in\V$. So $(\T, \F)$ is of type ~\ref{no quasi-simple}$(2a)$, i.e., there exists a quasi-simple bundle $E$ with  $E\in \T, \tau E\in \F$. $E$ is then a  direct summand of $T$. Let $q$ be the maximal slope of indecomposable direct summands of $T$. Then $\Phi_{\infty,q}(T)$ is a tilting sheaf with a nonzero torsion direct summand. Since its indecomposable direct summands can be ordered to be a full exceptional sequence, by Lemma~\ref{exseq simple}, one of the direct summands is a simple sheaf. This finishes the proof.  
\end{proof}

We end this subsection by  determining whether certain torsion pairs yield a noetherian or artinian tilted heart. 
For $P\subset \P^1$, denote by $(\T_P, \F_P)$ the torsion pair in $\coh\X$
\begin{equation}\label{P torsion pair}
	(\add\{\coh_\lambda\X\mid \lambda\in P\},\,\,\add\{\vect\X, \coh_\lambda\X\mid \lambda\in \P^1\backslash P\}).
\end{equation}

\begin{lem}\label{tilted heart no-ar-le}
	Let $P\subset \P^1$.

	\begin{enum2}
	\item The tilted heart $\B=\F_P[1]*\T_P$ is noetherian resp. artinian iff $P=\emptyset$ resp. $P=\P^1$.

	\item Suppose $\X$ is of tubular type. 
	If $\mu\in \R\backslash\Q$ then the tilted heart $\B=\coh^{<\mu}\X[1]*\coh^{>\mu}\X$ 
	is neither noetherian nor artinian. 
	If $\mu\in\bar{\Q}$, the tilted heart $\B=\F[1]*\T$ is noetherian resp. artinian iff $P=\emptyset$ resp. $P=\P^1$, where 
	\[(\T,\F)=(\add\{\coh^{>\mu}\X, \coh^{\mu}_\lambda\X\mid \lambda\in P\},\,\,\add\{\coh^\mu_\lambda\X, \coh^{<\mu}\X\mid \lambda\notin P\}).\]
\end{enum2}
\end{lem}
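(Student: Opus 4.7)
The proof is a case-by-case analysis of the subobject lattices in the HRS-tilted heart $\B = \F[1] * \T$. The central technical device is that, since $\coh\X$ is hereditary and thus $\ext^2$ vanishes, every $B \in \B$ splits canonically as $B \cong H^{-1}(B)[1] \oplus H^{0}(B)$ with $H^{-1}(B) \in \F$ and $H^0(B) \in \T$; subobjects of $B$ in $\B$ are then controlled by the long exact cohomology sequence of a defining triangle $X \to B \to Y \to X[1]$ with $X, Y \in \B$, which in particular forces $H^{-1}(X) \subset H^{-1}(B)$ to be a \emph{saturated} subsheaf (its quotient lying in $\F$).

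For part (1), I first handle the extremes. When $P = \emptyset$, $\B = \coh\X[1]$ is $\coh\X$ up to shift: noetherian, and not artinian via $\O \supsetneq \O(-\vec{c}) \supsetneq \O(-2\vec{c}) \supsetneq \cdots$. When $P = \P^1$, $\B = \vect\X[1] * \coh_0\X$; artinianness follows because a descending chain in $\B$ induces a descending chain of saturated subbundles of $H^{-1}(B)$, which stabilizes once the rank does (since a strict saturated inclusion of equal rank would yield a rank-$0$ torsion-free quotient), together with a descending chain in the finite-length torsion part $H^0(B) \in \coh_0\X$. Failure of noetherianness uses any weight $p_i > 1$: the ascending chain $T_n := \O(n\vec{x_i})/\O \hookrightarrow \O[1]$ in $\B$ arises from rotating $0 \to \O \to \O(n\vec{x_i}) \to T_n \to 0$, since $T_n \in \coh_{\lambda_i}\X \subset \T_P$ and $\O[1], \O(n\vec{x_i})[1] \in \vect\X[1] \subset \B$. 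For intermediate $P \notin \{\emptyset, \P^1\}$, one combines both directions: fixing any $\lambda_i \in P$ gives the same ascending chain (breaking noetherianness), while fixing any $\lambda_j \notin P$ produces the descending chain $\O[1] \supsetneq \O(-\vec{x_j})[1] \supsetneq \O(-2\vec{x_j})[1] \supsetneq \cdots$, whose successive cones are simple sheaves at $\lambda_j$ shifted by $[1]$, hence lie in $\F_P[1] \subset \B$.

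For part (2) when $\mu \in \bar{\Q}$, one reduces to part (1) via the telescopic autoequivalence $\Phi_{\infty, \mu}$ of $\D^b(\X)$: by Lemma~\ref{fractional linear}, it carries $\coh^\mu_\lambda\X$ to $\coh_\lambda\X$ and transforms $(\T, \F)$ into a torsion pair of the form $(\T_{P'}, \F_{P'})$ on $\coh\X$ (the case $\mu = \infty$ is literally part (1)). Since exact autoequivalences preserve noetherianness and artinianness of hearts, the conclusion transfers. For irrational $\mu \in \R \setminus \Q$, since $\tau$ preserves slope on a tubular weighted projective line, both $\coh^{>\mu}\X$ and $\coh^{<\mu}\X$ are $\tau$-stable; using Lemma~\ref{stable gcd} to produce stable bundles of rational slopes converging to $\mu$ from each side, and nonzero extensions between them guaranteed by Riemann--Roch, one constructs both unbounded ascending and unbounded descending chains of subobjects inside $\B$.

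The main obstacle is the irrational case (2a): for a single pure object $F[1]$ or $E$ in $\B$ (with $F \in \coh^{<\mu}\X$, $E \in \coh^{>\mu}\X$), both chain conditions in fact hold, by noetherianness of $\coh\X$ combined with the slope-bound constraints. Genuine failure of both chain conditions in $\B$ thus requires mixed objects $F[1] \oplus E$ and a careful use of the nonzero $\hom_\B(T, F[1]) = \ext^1_{\coh\X}(T, F)$ for $T \in \T$, $F \in \F$, combined with iterated extensions of stable bundles whose slopes approach $\mu$ from either side.
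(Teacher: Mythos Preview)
Your treatment of part~(1) and of the rational case in~(2) is essentially correct and close in spirit to the paper's, though two points deserve comment. First, your chains in~(1) are built from $\vec{x_i}$ and $\vec{x_j}$, which tacitly assumes that $P$ (resp.\ its complement) contains an exceptional point; the paper instead uses $\vec{c}$ throughout, producing for \emph{any} $\lambda \in \P^1$ a torsion sheaf supported at $\lambda$ fitting into $0 \to \O(n\vec{c}) \to \O((n+1)\vec{c}) \to F_\lambda \to 0$, which works uniformly and also covers the case of trivial weight type. Second, for $P = \P^1$ you argue artinianness directly via saturated subbundles (a correct argument), whereas the paper simply observes $\vect\X[1]*\coh_0\X \simeq (\coh\X)^{\op}$ via the duality $R\chom(-,\O)$; your route works but is longer.

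The genuine gap is the irrational case of~(2). Your assertion that ``for a single pure object $F[1]$ or $E$ in $\B$, both chain conditions in fact hold'' is \emph{false}, and this misdiagnosis leads you away from the actual argument. The paper shows that for every indecomposable $F \in \coh^{>\mu}\X$ there is a short exact sequence $0 \to E \to G \to F \to 0$ in $\coh\X$ with $E \in \coh^{<\mu}\X$ and $G \in \coh^{>\mu}\X$; rotating the triangle gives $0 \to G \to F \to E[1] \to 0$ in $\B$, so $G$ is a proper $\B$-subobject of the pure object $F$, and iterating on the indecomposable summands of $G$ produces an infinite descending chain. The construction of this sequence is the substantive step you are missing: one chooses a quasi-simple bundle $A$ with coprime rank $k$ and degree $h$ (existence by Lemma~\ref{stable gcd}) such that, by Diophantine approximation, $h/k$ lies just above $\mu$ with $k > \rk(F)$ and $0 < h/k - \mu < 1/k^2$; then the evaluation map $\bigoplus_i \hom(\tau^i A, F)\otimes \tau^i A \to F$ is an epimorphism (by \cite[Theorem~5.1.3]{Meltzer}), and an explicit Riemann--Roch computation forces its kernel into $\coh^{<\mu}\X$. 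Non-noetherianness then follows for free from the duality $R\chom(-,\O)$, which identifies $\B$ with the opposite of the analogous heart for $-\mu$. Your sketch (``nonzero extensions guaranteed by Riemann--Roch\ldots iterated extensions of stable bundles'') does not supply this, and the detour through mixed objects is unnecessary.
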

\begin{proof}
	(1) If $P=\emptyset$ then $\B=\coh\X[1]$, which is noetherian. If $P=\P^1$ then $\B=\vect\X[1]*\coh_0\X\simeq (\coh\X)^{\text{op}}$ is artinian, where the equivalence is induced by  the duality functor $R\mathcal{H}om(-,\O)$. Otherwise, $\emptyset\neq P\neq \P^1$. Let $\lambda\in P, \lambda'\notin P$. Take a line bundle $L$ over $\X$. 
	We have $L(n\vec{c})[1]\in \F[1]*\T$ for all $n\in\Z$. Take an indecomposable torsion sheaf $F_1$ resp. $F_2$ supported at $\lambda$ resp. $\lambda'$ such that $F_i$ fits into an exact sequence $0\ra L(n\vec{c})\ra L((n+1)\vec{c})\ra F_i\ra 0$ in $\coh\X$.  Then for each $n\in \Z$, we have exact sequences in $\F[1]*\T$  
	\[0\ra F_1\ra L(n\vec{c})[1]\ra L((n+1)\vec{c})[1]\ra 0,\,\,\, 0\ra L(n\vec{c})[1]\ra L((n+1)\vec{c})[1]\ra F_2[1]\ra 0.\]  
	The first (resp. second) exact sequence implies the existence of a strict infinite ascending (resp. decending) chain of quotient  objects (resp. subobjects) of $L[1]$ in $\B=\F[1]*\T$. 
	Hence $\B$ is neither noetherian nor artinian in this case. 

	(2) The assertion for $\mu\in\bar{\Q}$ is reduced to (1) by using the telescopic functor $\Phi_{\infty,\mu}$. So we consider $\mu\in\R\backslash \Q$. 
	By applying the duality functor $R\mathcal{H}om(-,\O)$, we know that \[\B=\coh^{< \mu}\X[1]*\coh^{>\mu}\X\simeq  (\coh^{<-\mu}\X[1]*\coh^{>-\mu}\X)^{\text{op}}.\]
		To show that  $\B$ is neither noetherian nor artinian, it sufficies to show that $\B$ is not artianian, which in turn follows readily from our claim that each indecomposable bundle $F$ of slope $>\mu$ fits into an exact sequence $0\ra E\ra G\ra F\ra 0$, where $E\in \coh^{<\mu}\X$ and $G\in \coh^{>\mu}\X$.  

	Let us show our claim. 
	For a  quasi-simple bundle  $A\in \coh^\nu_\lambda\X$ with $\nu<\mu(F)$, consider the evaluation map \[\ev:  \bigoplus_{i=0}^{p_\lambda-1}\hom(\tau^i A, F)\otimes \tau^i A\lra F.\] By \cite[Theorem 5.1.3]{Meltzer}, 
	$\ev$ is either a monomorphism or an epimorphism. By \cite[Theorem 1.7]{N}, there  exists a pair of coprime integers $(h,k)$ such that \[k>\rk(F),\quad \frac{h}{k}<\mu(F),\quad 0<\frac{h}{k}-\mu<\frac{1}{k^2}.\] By Lemma~\ref{stable gcd}, there is  
	a quasi-simple bundle $A\in\coh^{\frac{h}{k}}\X$ with coprime rank and degree. 
	In particular, we have $\rk(A)=k, \deg(A)=h$. Then $\ev$ is an epimorphism and $E:=\ker\,\ev$ is indecomposable.
	Moreover, we have 
	\[
		\begin{aligned}
			\mu(E) & =\frac{\bar{\chi}(A,F)\deg(A)-\deg(F)}{\bar{\chi}(A,F)\rk(A)-\rk(F)}\\
			&= \frac{(\mu(F)-\mu(A))\mu(A)-\frac{1}{\rk(A)^2}\mu(F)}{(\mu(F)-\mu(A))-\frac{1}{\rk(A)^2}} \quad\text{(by Riemann-Roch theorem)}\\
			& <\mu.
		\end{aligned}
		\]
		Hence \[0\lra E\lra \bigoplus_{i=0}^{p-1}\hom(\tau^i A, F)\otimes\tau^i  A\lra F\lra 0\] is the desired exact sequence. We are done.
\end{proof}

\section{Bounded t-structures on $\D^b(\X)$}\label{sec: X t-str}
Throughout this section,
 $\X$ will denote a weighted projective line, $\A=\coh\X$ the category of coherent sheaves over $\X$ and $\D=\D^b(\X)$ the bounded derived category of $\coh\X$. Moreover,  $(\D^{\leq 0},\D^{\geq 0})$ will denote a bounded t-structure  on $\D$ and its heart will be denoted by $\B$. The standard t-structure on $\D^b(\X)$ is denoted by $(\D_\A^{\leq 0}, \D_\A^{\geq 0})$.  

\begin{lem}\label{all width bounded}
	Each bounded t-structure on $\D^b(\X)$ is  width-bounded with respect to the standard  t-structure. In particular, $\B\subset \D_\A^{[m,n]}$ for some $m,n\in \Z$.
\end{lem}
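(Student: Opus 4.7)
The plan is to reduce this to Example~\ref{fin simple width bounded}(1), which states that if a triangulated category $\D$ admits \emph{some} bounded t-structure with length heart containing only finitely many isoclasses of simple objects, then \emph{all} bounded t-structures on $\D$ are width-bounded with respect to each other. So the task becomes producing one such bounded t-structure on $\D^b(\X)$.

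For this, I would invoke Theorem~\ref{der canonical algebra}: the tilting bundle $T=\bigoplus_{0\leq \vec{x}\leq \vec{c}}\O(\vec{x})$ yields a triangle equivalence $\D^b(\X)\simeq \D^b(\Lambda)$, where $\Lambda=\End(T)$ is a canonical algebra. Transporting the standard t-structure of $\D^b(\Lambda)$ across this equivalence gives a bounded t-structure on $\D^b(\X)$ whose heart is equivalent to $\mod\,\Lambda$. Since $\Lambda$ is a finite dimensional algebra, $\mod\,\Lambda$ is a length category containing only finitely many isoclasses of simple modules (one for each primitive idempotent). This is precisely the hypothesis of Example~\ref{fin simple width bounded}(1), so every pair of bounded t-structures on $\D^b(\X)$ is width-bounded with respect to one another. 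In particular, any bounded t-structure $(\D^{\leq 0},\D^{\geq 0})$ is width-bounded with respect to the standard t-structure $(\D_\A^{\leq 0},\D_\A^{\geq 0})$, using the symmetry of the relation $\sim$ established in Lemma~\ref{width equiv relation}.

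For the second assertion, I would appeal to the observation made in \S\ref{sec: HRS-tilt} just after Lemma~\ref{width equiv relation}: if $\B$ is the heart of $(\D^{\leq 0},\D^{\geq 0})$, then the width-boundedness of $(\D_\B^{\leq 0},\D_\B^{\geq 0})$ with respect to the standard t-structure $(\D_\A^{\leq 0},\D_\A^{\geq 0})$ is equivalent to the existence of $m\leq n$ with $\B\subset\D_\A^{[m,n]}$. Combining with the first part, we conclude $\B\subset \D_\A^{[m,n]}$ for some integers $m\leq n$.

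There is no serious obstacle here: the argument is essentially a one-line application of Example~\ref{fin simple width bounded}(1) once the derived equivalence with a canonical algebra is noted. The only subtle point worth mentioning is that one must not try to apply the example directly to the standard heart $\coh\X$ itself (which has infinitely many simple objects, parametrized by the closed points of $\X$); the trick is that the existence of \emph{any} bounded t-structure with length heart and finitely many simples is enough, and the canonical tilting bundle supplies one.
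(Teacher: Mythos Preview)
Your proposal is correct and follows essentially the same route as the paper: invoke the derived equivalence $\D^b(\X)\simeq \D^b(\Lambda)$ with a canonical algebra $\Lambda$ (Theorem~\ref{der canonical algebra}) to obtain a bounded t-structure with heart $\mod\,\Lambda$, then apply Example~\ref{fin simple width bounded}(1). Your added remark that one should not try to use $\coh\X$ itself as the length heart (since it has infinitely many simples) is a useful clarification, though not present in the paper's terse proof.
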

\begin{proof} Recall that for each $\X$, there is a canonical algebra $\Lambda$ such that $\D^b(\X)\simeq \D^b(\Lambda)$. Henceforth we have a bounded t-structure on $\D^b(\X)$ with heart equivalent to $\mod \Lambda$. 
	So bounded t-structures are width-bounded with respect to each other (see Example~\ref{fin simple width bounded}). 
\end{proof}

\subsection{Bounded t-structures which restrict to a t-structure on $\D^b(\coh_0\X)$} In this subsection, we characterize when a bounded t-structure  on $\D^b(\X)$ restricts to a t-structure on $\D^b(\coh_0\X)$ and then describe this class of t-structures. 

The following fact is very useful in analyzing direct summands of truncations of an object.
\begin{lem}\label{direct summand nonzero}
	Let $\T$ be a triangulated category. Assume that $A\overset{f}{\ra} B\overset{g}{\ra} C\cra $ is a  triangle in $\T$ with $\hom^{-1}(A,C)=0$. If $A=A_1\oplus A_2$ and correspondingly $f=(f_1,f_2)$ then $f_1 =0$ implies $A_1=0$. If $C=C_1\oplus C_2$ and $g=(g_1,g_2)^t$ then $g_1= 0$ implies $C_1=0$.
\end{lem}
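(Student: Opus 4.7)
The plan is to apply a suitable representable functor to the given triangle and exploit the vanishing hypothesis $\hom^{-1}(A,C)=0$ to force the canonical inclusion $\iota_1: A_1\monic A$ (respectively the canonical projection $\pi_1: C\epic C_1$) to vanish, whence $A_1=0$ (respectively $C_1=0$) because $\id_{A_1}=\pi_1\circ \iota_1$ (respectively $\id_{C_1}=\pi_1\circ \iota_1$).

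For the first assertion, I will apply $\hom(A_1,-)$ to the triangle $A\overset{f}{\ra} B\overset{g}{\ra} C\cra$ to produce the exact sequence
\[
\hom^{-1}(A_1,C)\lra \hom(A_1,A)\overset{f_*}{\lra}\hom(A_1,B).
\]
The hypothesis $f_1=f\circ \iota_1=0$ says precisely that $\iota_1$ lies in the kernel of $f_*$, so by exactness it is the image of some element of $\hom^{-1}(A_1,C)$. Since $A_1$ is a direct summand of $A$, the group $\hom^{-1}(A_1,C)$ is a direct summand of $\hom^{-1}(A,C)$, which vanishes by hypothesis; hence $\iota_1=0$ in $\hom(A_1,A)$. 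Composing with the canonical projection $\pi_1:A\ra A_1$ yields $\id_{A_1}=\pi_1\circ \iota_1=0$, forcing $A_1=0$.

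The second assertion is formally dual: I would apply the contravariant functor $\hom(-,C_1)$ to the same triangle, use that $g_1=\pi_1\circ g=0$ to lift the canonical projection $\pi_1:C\ra C_1$ along the boundary to an element of $\hom(A[1],C_1)=\hom^{-1}(A,C_1)$, and observe that this group is a direct summand of $\hom^{-1}(A,C)=0$; hence $\pi_1=0$, and composition with the inclusion $C_1\monic C$ yields $\id_{C_1}=0$, so $C_1=0$.

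I do not anticipate any real obstacle: both halves are short diagram chases in the cohomological long exact sequence attached to a triangle. The only mildly subtle point is to observe that representable functors split compatibly with finite direct sums, so that the vanishing of $\hom^{-1}(A,C)$ transfers correctly to the summands $\hom^{-1}(A_1,C)$ and $\hom^{-1}(A,C_1)$ needed in each half.
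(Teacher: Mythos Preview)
Your proof is correct and essentially the same as the paper's. The paper phrases it slightly more concretely---observing that $f_1=0$ forces $C\cong\cone(f_2)\oplus A_1[1]$, so $\hom(A_1,A_1)\subset\hom^{-1}(A,C)=0$---but this is just your long-exact-sequence argument unwound one step.
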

\begin{proof}
	$f_1=0$ implies $C\iso \cone(f_2)\oplus A_1[1]$ and then $\hom(A_1,A_1)\subset \hom^{-1}(A,C)= 0$ thus $A_1= 0$. Similarly one shows the second assertion.
\end{proof}

\begin{lem}\label{trun bundle}
If $\D^{[m,n]}$ contains a nonzero bundle then for some $m\leq l\leq n$, $\B[-l]$ contains a nonzero bundle.
\end{lem}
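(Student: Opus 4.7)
The plan is to induct on $n - m$. When $n = m$ the conclusion is immediate: $\D^{[m,m]} = \B[-m]$, so the given bundle already lies in $\B[-m]$ and $l = m$ works.

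For $n > m$, I will consider the $\B$-truncation triangle
$$C \to E \to D \to C[1]$$
of a nonzero bundle $E \in \D^{[m,n]}$, where $C = \tau_{\leq n-1}^\B E \in \D^{[m, n-1]}$ and $D = H_\B^n(E)[-n] \in \B[-n]$. The case $D = 0$ gives $E \cong C \in \D^{[m,n-1]}$ and the inductive hypothesis finishes the proof; the case $C = 0$ gives $E \in \B[-n]$ and $l = n$ works. Otherwise I will exploit that $\A = \coh\X$ is hereditary, so every object of $\D$ splits as $\bigoplus_i H_\A^i(-)[-i]$. Combining the vanishings $\ext^i_\A(X,Y) = 0$ for $i < 0$ (trivially) and for $i \geq 2$ (hereditariness) with Lemma~\ref{direct summand nonzero}, whose hypothesis $\hom^{-1}(C,D) = 0$ is automatic from $C \in \D^{\leq n-1}$ and $D[-1] \in \D^{\geq n+1}$, the maps $C \to E$ and $E \to D$ force the reductions
$$C \cong C_0 \oplus C_1[-1], \qquad D \cong D_{-1}[1] \oplus D_0,$$
where $C_i := H_\A^i(C)$ and $D_i := H_\A^i(D)$ are coherent sheaves.

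With this in hand, the long exact sequence in $\A$-cohomology, together with the fact that $H_\A^i(E)$ equals $E$ for $i = 0$ and vanishes otherwise, collapses to
$$0 \to D_{-1} \to C_0 \to E \to D_0 \to C_1 \to 0.$$
Now I split into cases using the canonical splitting $F = F^{\mathrm{tor}} \oplus F^{\mathrm{tf}}$ of a coherent sheaf on $\X$ into its torsion and torsion-free (i.e.\ bundle) parts. If $D_0^{\mathrm{tf}} \neq 0$, then this torsion-free part is a nonzero bundle and is a direct summand of $D$; since $\B[-n]$ is closed under direct summands, $l = n$ works. Otherwise $D_0$ is purely torsion (possibly zero), and the nonzero bundle $E$ cannot embed in a torsion sheaf, so $C_0/D_{-1} = \ker(E \to D_0)$ is a nonzero subsheaf of $E$, hence itself a nonzero bundle. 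Since $C_0/D_{-1}$ is torsion-free, the equality $\mathrm{tor}(C_0) = \mathrm{tor}(D_{-1})$ holds, and quotienting $0 \to D_{-1} \to C_0 \to C_0/D_{-1} \to 0$ by this common torsion gives $0 \to D_{-1}^{\mathrm{tf}} \to C_0^{\mathrm{tf}} \to C_0/D_{-1} \to 0$, whence $C_0^{\mathrm{tf}}$ is a nonzero bundle. It is a direct summand of $C$, and so a nonzero bundle in $\D^{[m, n-1]}$; the inductive hypothesis supplies the required $l \in [m, n-1]$.

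The main obstacle is the reduction step isolating $C = C_0 \oplus C_1[-1]$ and $D = D_{-1}[1] \oplus D_0$: ext vanishing alone kills the relevant components of the maps, but one still needs Lemma~\ref{direct summand nonzero} (and the verification of its Hom-vanishing hypothesis) to conclude that the offending summands themselves vanish. Once this reduction is in place, the torsion/bundle splitting peculiar to $\coh\X$ makes the case analysis entirely formal.
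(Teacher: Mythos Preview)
Your proof is correct and follows essentially the same route as the paper: the same induction on $n-m$, the same truncation triangle, the same use of hereditariness together with Lemma~\ref{direct summand nonzero} to reduce $C$ and $D$ to two-term complexes, and the same five-term exact sequence in $\A$-cohomology. The only cosmetic difference is in the final case split: the paper branches on whether $C_0=0$ and, when $C_0\neq 0$, invokes Lemma~\ref{direct summand nonzero} once more (via $\hom(\coh_0\X,\vect\X)=0$) to see that $C_0$ cannot be purely torsion, whereas you branch on whether $D_0^{\mathrm{tf}}\neq 0$ and argue directly with torsion parts; both variants are equally short and valid.
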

\begin{proof}
	We use induction on $n-m$. If $n=m$ then there is nothing to prove. Assume $n>m$. Let $E$ be a nonzero bundle  lying in $\D^{[m,n]}$.  Consider the triangle $E_1\ra E\ra E_2\cra $, where $E_1=\tau_{\leq n-1} E\in \D^{[m,n-1]}, E_2=\tau_{\geq n}E\in \B[-n]$. 
	Recall that	 since $\coh\X$ is hereditary, each object $X$ in $\D^b(\X)$ decomposes as $X\cong \oplus \cH^i(X)[-i]$, where $\cH^i(X)$ is the $i$-th cohomology of $X$. Since $\hom^{-1}(E_1, E_2)=0$, by Lemma~\ref{direct summand nonzero}, $\cH^i(E_1)=0$ for $i\neq 0,1$ and $\cH^j(E_2)=0$ for $j\neq 0,-1$. Hence $E_1$ decomposes as a direct sum $A\oplus B[-1]$ and $E_2$ as a direct sum $C\oplus D[1]$, where $A,B,C,D$ are sheaves. Taking cohomology yields a long exact sequence \[0\lra D\lra A\lra E\lra C\lra B\lra 0.\] If $A=0$ then $D=0$ and thus $\rk(C)>0$, that is, $C$ contains a nonzero bundle direct summand. Since $C\in \B[-n]$, such a direct summand gives a desired   bundle. 
Since $\hom(\coh_0\X, \vect\X)=0$, if $A\neq 0$ then $A$ cannot be a torsion sheaf  by Lemma~\ref{direct summand nonzero}. Thus $A$ contains a nonzero bundle direct summand $F$. Now that $F\in \D^{[m,n-1]}$, the induction hypothesis assures the existence of the desired bundle.
\end{proof}   

Let us make our basic observation on bounded t-structures on $\D^b(\X)$.
\begin{lem}\label{restrict to coh0}\label{cycle of ind torsion}
	The following are equivalent:
	\begin{enum2}
	\item $ \{i\mid \vect\X[i]\cap \B\neq 0\}\subset \{j,j+1\}$ for some $j\in \Z$;

	\item   $(\D^{\leq 0},\D^{\geq 0})$ restricts to a bounded t-structure on $\D^b(\coh_\lambda\X)$ for each $\lambda\in \P^1$;

	\item  $\B$ contains a shift of some non-exceptional indecomposable torsion sheaf. 
	\end{enum2}

\end{lem}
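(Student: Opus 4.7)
\medskip\noindent\textbf{Plan.} I would prove the equivalences along the cycle $(3)\Rightarrow (1)\Rightarrow (2)\Rightarrow (3)$. For $(3)\Rightarrow (1)$, suppose $F[j]\in \B$ with $F$ a non-exceptional indecomposable torsion sheaf, and let $E[i]\in \B\cap\vect\X[i]$ for some nonzero bundle $E$. Using $\hom(\coh_0\X,\vect\X)=0$, Serre duality, and Lemma~\ref{bundle hom torsion}, among the Ext spaces $\ext^*(E,F)$ and $\ext^*(F,E)$ only $\ext^0(E,F)=\hom(E,F)$ and $\ext^1(F,E)$ are nonzero, and both are nonzero. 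The heart condition $\hom^{<0}_{\D}(\B,\B)=0$ applied to $(E[i],F[j])$ and $(F[j],E[i])$ becomes $\ext^{j-i+k}(E,F)=0$ and $\ext^{i-j+k}(F,E)=0$ for all $k<0$, forcing $j\leq i$ and $i\leq j+1$ respectively, so $i\in\{j,j+1\}$.

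For $(2)\Rightarrow (3)$, pick any ordinary point $\lambda\in \P^1$ (exists as exceptional points are finite). By hypothesis the t-structure restricts to a bounded t-structure on $\D^b(\coh_\lambda\X)\simeq \D^b(\A_1)$, which by Proposition~\ref{A_t t-str} must be a shift of the standard one. Hence $S[m]\in\B$ for the unique simple sheaf $S\in \coh_\lambda\X$ and some $m$, and $S$ is non-exceptional since $\ext^1(S,S)=D\hom(S,\tau S)=Dk\neq 0$.

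The main work is $(1)\Rightarrow (2)$. By Lemma~\ref{trun bundle} contrapositively, combined with Lemma~\ref{all width bounded}, condition (1) forces $\D^{\leq j-1}\cap \vect\X=0$. I first show that the t-structure restricts to $\D^b(\coh_0\X)$. Since $\tau_{\leq l'}(X)=\tau_{\leq l}(X[l'-l])[l-l']$, the property that $\tau_{\leq l}$ preserves $\D^b(\coh_0\X)$ is independent of $l$; and since every object of $\D^b(\coh_0\X)$ decomposes as a direct sum of shifts of its standard cohomologies (as $\coh\X$ is hereditary), it suffices to show $\tau_{\leq j-1}T\in \D^b(\coh_0\X)$ for each $T\in \coh_0\X$ placed in standard degree $0$. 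Set $A:=\tau_{\leq j-1}T$ and decompose $A=A_{\vect}\oplus A_0$ via standard cohomology, with $A_{\vect}=\bigoplus_k V_k[-k]$, $V_k\in \vect\X$. The degree-zero summand $V_0\in \vect\X\cap \D^{\leq j-1}=0$; for $k\geq 1$, $\ext^{-k}(V_k,T)$ vanishes trivially; for $k=-1$, Serre duality gives $\ext^1(V_{-1},T)=D\hom(T,\tau V_{-1})=0$; for $k\leq -2$, $\ext^{-k}(V_k,T)$ vanishes by hereditariness. Thus the restricted map $A_{\vect}\to T$ in the triangle $A\to T\to \tau_{\geq j}T$ is zero, and Lemma~\ref{direct summand nonzero}, applicable since $\hom^{-1}(\D^{\leq j-1},\D^{\geq j})=0$, yields $A_{\vect}=0$, so $A\in \D^b(\coh_0\X)$. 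Finally, restriction to each $\D^b(\coh_\lambda\X)$ follows from restriction to $\D^b(\coh_0\X)=\coprod_\lambda \D^b(\coh_\lambda\X)$ and the mutual orthogonality of the pieces: for $X\in \D^b(\coh_\lambda\X)$, decompose $\tau_{\leq 0}X=\bigoplus_\mu Y_\mu$ with $Y_\mu\in \D^b(\coh_\mu\X)$; the components $Y_\mu\to X$ vanish for $\mu\neq \lambda$, so Lemma~\ref{direct summand nonzero} again forces $Y_\mu=0$.

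The main obstacle is orchestrating the vanishings in $(1)\Rightarrow (2)$: the judicious choice $l=j-1$ kills the degree-zero bundle summand via $\D^{\leq j-1}\cap\vect\X=0$, while Hom-vanishings from hereditariness and Serre duality handle the remaining summands of $A_{\vect}$, all combined via Lemma~\ref{direct summand nonzero}.
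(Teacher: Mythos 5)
Your proofs of $(3)\Rightarrow(1)$ and $(2)\Rightarrow(3)$ are correct and coincide in substance with the paper's (which runs the cycle $(2)\Rightarrow(3)\Rightarrow(1)\Rightarrow(2)$): the same ordinary-point argument via Proposition~\ref{A_t t-str}, and the same two nonvanishings $\hom(E,F)\neq 0$, $\ext^1(F,E)\neq 0$ from Lemma~\ref{bundle hom torsion}. The problem is in $(1)\Rightarrow(2)$, where your direct route contains a genuine gap. The reduction ``it suffices to show $\tau_{\leq j-1}T\in\D^b(\coh_0\X)$ for each $T\in\coh_0\X$ placed in standard degree $0$'' is not valid: to conclude that $\tau_{\leq j-1}$ preserves the subcategory $\D^b(\coh_0\X)$ you must apply it to all objects $\oplus_k T_k[-k]$, and by the very shift identity you quote this amounts to knowing $\tau_{\leq l}T\in\D^b(\coh_0\X)$ for \emph{every} $l\in\Z$. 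Your argument does cover every $l\leq j-1$ verbatim (since $\D^{\leq l}\subset\D^{\leq j-1}$), but it breaks for $l\geq j$: there the degree-zero bundle summand $V_0$ lies only in $\D^{\leq l}\cap\vect\X$, which need not vanish --- the index set in (1) is nonempty by Lemma~\ref{trun bundle}, so $\D^{\leq j+1}\cap\vect\X\neq 0$ always, and $\D^{\leq j}\cap\vect\X\neq 0$ whenever $\vect\X[j]\cap\B\neq 0$ --- and the component $V_0\to T$ in $\hom(V_0,T)$ has no reason to vanish.

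The gap is fixable inside your framework. Condition (1) also forces $\D^{\geq j+2}\cap\vect\X=0$, by the same contrapositive use of Lemma~\ref{trun bundle} and boundedness, this time on the co-aisle side. For $l\geq j$ run your direct-summand analysis on $C:=\tau_{\geq l+1}T$ instead of on $A$: a bundle summand $W[m]$ of $C$ receives its component from $T$ in $\ext^m(T,W)$, which vanishes for $m=0$ (as $\hom(\coh_0\X,\vect\X)=0$), for $m<0$ trivially, and for $m\geq 2$ by hereditariness; for $m=1$ one has $W\in\D^{\geq l+2}\cap\vect\X\subset\D^{\geq j+2}\cap\vect\X=0$. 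Hence $C\in\D^b(\coh_0\X)$ by Lemma~\ref{direct summand nonzero}, and then $\tau_{\leq l}T=\cocone(T\to C)\in\D^b(\coh_0\X)$, completing all levels. Note that the paper's own proof sidesteps the level problem entirely by arguing contrapositively: from a single failing truncation $\tau_{\leq l}T\notin\D^b(\coh_0\X)$, with $l$ arbitrary, Lemma~\ref{direct summand nonzero} produces bundle summands in $\D^{\leq l}$ and in $\D^{\geq l+2}$ simultaneously, and Lemma~\ref{trun bundle}, applied on both sides, yields indices $r\leq l$ and $s\geq l+2$ in $\{i\mid \vect\X[i]\cap\B\neq 0\}$, contradicting (1) no matter where $l$ sits relative to $j$. (A cosmetic point: with your summand written $V_k[-k]$, the component map lives in $\ext^{k}(V_k,T)$, not $\ext^{-k}$; your case analysis makes clear which convention you actually intend.)
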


\begin{proof}

(2) $\Ra$ (3)  
Take an ordinary point $\lambda$. The induced bounded t-structure on $\D^b(\coh_\lambda\X)$ has heart $\B_\lambda=\B\cap \D^b(\coh_\lambda\X)$. Since $\lambda$ is ordinary, each bounded t-structure on $\D^b(\coh_\lambda\X)$ is a shift of the standard one by Proposition~\ref{A_t t-str}.  Hence a shift of the simple torsion sheaf $S$ supported at $\lambda$ lies in $\B_\lambda\subset \B$.  

(3) $\Ra$ (1) Suppose $T$ is a non-exceptional indecomposable torsion sheaf such that $T[j]\in \B$.  
	 By Lemma~\ref{bundle hom subtube-simples}, for each nonzero bundle $E$, $\ext^1(T,E)\neq 0$ and $\hom(E,T)\neq 0$. Now that $T[j]\in \B$, if $E[i]\in\B$ then  $m\neq j,j+1$ will yield a contradiction to $\hom^n(\B,\B)=0$ for $n<0$. Hence $\{i\mid \vect\X[i]\cap \B\neq 0\}\subset \{j,j+1\}$.
	
(1) $\Ra$ (2) We will show that (1) implies that $(\D^{\leq 0}, \D^{\geq 0})$ restricts to a bounded t-structure on $\D^b(\coh_0\X)$. Then (2) follows since $\coh_0\X=\coprod_{\lambda\in\P^1}\coh_\lambda\X$. 
	Suppose that $(\D^{\leq 0}, \D^{\geq 0})$ does not restrict to a t-structure on $\D^b(\coh_0\X)$.  Then for some torsion sheaf $T$ and some $l\in\Z$, $\tau_{\leq l} T\notin \D^b(\coh_0\X)$. 
By Lemma~\ref{direct summand nonzero}, $\tau_{\leq l} T$ decomposes as $A\oplus B[-1]$ with $A\in \coh\X, B\in \coh_0\X$ and $\tau_{> l} T$ decomposes as $C\oplus D[1]$ with $C\in \coh_0\X, D\in \coh\X$. $\tau_{\leq l}T\notin \D^b(\coh_0\X)$ implies that $A$ contains a nonzero bundle $E$ as its direct summand. Since $\rk(A)=\rk(D)$, $D$ also contains such a direct summand $F$.  
	Now that $E\in \D^{\leq l}, F\in \D^{\geq l+2}$ and the t-structure is bounded, by Lemma~\ref{trun bundle}, both $\B[-r]$ and $\B[-s]$ contain  nonzero bundles for some $r\leq l, s\geq l+2$. It is then impossible that $\{i\mid \vect\X[i]\cap \B\neq 0\}\subset \{j,j+1\}$ for some $j$.
\end{proof}
	
We are going to give a description of  bounded t-structures on $\D^b(\X)$ satisfying the conditions in the above lemma. 
Recall the definition of a proper collection  of simple sheaves in \S\ref{sec: wpl perp}. Two such collections are said to be equivalent if they yield the same isoclasses of simple sheaves. 
Recall also that for $P\subset\P^1$, the pair $(\T_P,\F_P)$ denotes the torsion pair ~\eqref{P torsion pair} in $\coh\X$. Moreover, we have a split torsion pair $(\SSS^{\perp_\A}\cap \T_P, \SSS^{\perp_\A}\cap \F_P)$ in $\SSS^{\perp_\A}$.

\begin{prop}\label{restrict t-str}

	Suppose  $\{i\in \Z\mid \vect\X[i]\cap \B\neq 0\}=\{j\}$ or $\{j-1,j\}$ for some $j\in\Z$. Then there is a unique (up to equivalence) proper collection $\SSS$ of  simple sheaves such that 
	\begin{itemize}
		\item $(\D^{\leq 0},\D^{\geq 0})$ is compatible with the recollement
			\[ \xymatrix{ \D^b(\SSS^{\perp_\A})=\SSS^{\perp_\D}  \ar[rr]|{i_*}   & &\ar@/_1pc/[ll]|{i^*} \ar@/^1pc/[ll]|{i^!}\D^b(\X) \ar[rr]|{j^*} & &\ar@/_1pc/[ll]|{j_!} \ar@/^1pc/[ll]|{j_*} \pair{\SSS}_\D,}\] where $i_*, j_!$ are the inclusion functors;
		\item if $\{i\mid \vect\X[i]\cap \B\neq 0\}=\{j\}$ then for a unique $P\subset \P^1$, the corresponding t-structure on $\SSS^{\perp_\D}$ is a shift of the HRS-tilt with respect to the torsion pair $(\SSS^{\perp_\A}\cap \T_P,\SSS^{\perp_\A}\cap \F_P)$ in $\SSS^{\perp_\A}$;  

\item if $\{i\mid \vect\X[i]\cap \B\neq 0\}=\{j-1,j\}$ then the corresponding t-structure on $\SSS^{\perp_\D}$ is a shift of the HRS-tilt with respect to some torsion pair $(\T,\F)$ in $\SSS^{\perp_\A}$ with $\SSS^{\perp_\A}\cap \coh_0\X\subsetneq  \T\subsetneq \SSS^{\perp_\A}$.
	\end{itemize}
			
\end{prop}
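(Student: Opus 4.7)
The plan is to construct $\SSS$ by applying Proposition~\ref{A_t t-str} at each $\lambda \in \P^1$ to the restricted t-structure, transfer local Ext-projectivity to global Ext-projectivity via Serre duality, then induct on $|\SSS|$ to get compatibility with the recollement; the identification of the induced heart as a shifted HRS-tilt will then follow from Proposition~\ref{torsion pair t-structure} plus an analysis of the allowed bundle and torsion shifts. In detail: Lemma~\ref{restrict to coh0} and the hypothesis ensure that $(\D^{\leq 0}, \D^{\geq 0})$ restricts to a bounded t-structure on each $\D^b(\coh_\lambda\X)$, with heart $\B_\lambda$. Via the equivalence $\coh_\lambda\X \simeq \A_{\w(\lambda)}$, Proposition~\ref{A_t t-str} produces a proper collection $\SSS_\lambda$ (unique up to equivalence) of simples in $\coh_\lambda\X$ such that the restricted t-structure is compatible with $\D^b(\SSS_\lambda^{\perp_{\coh_\lambda\X}}) \hookrightarrow \D^b(\coh_\lambda\X)$ and the induced heart on the perpendicular equals $\SSS_\lambda^{\perp_{\coh_\lambda\X}}[m_\lambda]$ for some $m_\lambda \in \Z$. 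Properness forces $\SSS_\lambda = \emptyset$ at ordinary $\lambda$, so $\SSS := \bigsqcup_\lambda \SSS_\lambda$ is a finite proper collection; by Theorem~\ref{simple perp}, $\SSS^{\perp_\A} \simeq \coh\X'$ preserving rank.

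The Ext-projectivity transfer is the technical heart of the argument. Serre duality gives $\hom^1(S[l], Y) \cong D\hom(Y, \tau S[l])$ both in $\D$ and in $\D^b(\coh_\lambda\X)$ (whose Serre functor is the restriction $\tau[1]$), so $S[l]$ is Ext-projective in $\D^{\leq l'}$ (respectively $\D^{\leq l'} \cap \D^b(\coh_\lambda\X)$) iff $\tau S[l]$ lies in the corresponding $\D^{\geq l'+1}$ (respectively its restriction); since the t-structure restricts and $\tau S[l] \in \D^b(\coh_\lambda\X)$, these conditions coincide. One then inducts on $|\SSS|$. If some $\SSS_\lambda \neq \emptyset$, then $\B_\lambda$ is not a shift of $\coh_\lambda\X$, so Lemma~\ref{simple Ext-proj} yields a simple $S \in \coh_\lambda\X$ with $S[l]$ locally—and therefore globally—Ext-projective in some $\D^{\leq l'}$. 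Proposition~\ref{admissible Serre functor} makes $S^{\perp_\D}$ admissible, so Lemma~\ref{ext-proj recollement} gives compatibility of $(\D^{\leq 0}, \D^{\geq 0})$ with $S^{\perp_\D}$; under $S^{\perp_\D} = \D^b(S^{\perp_\A}) \simeq \D^b(\X_1)$, where $\w_{\X_1}(\lambda) = \w(\lambda) - 1$, the induced t-structure still satisfies the bundle-shift hypothesis with proper collection $\SSS \setminus \{S\}$, so by induction it is compatible with $(\SSS \setminus \{S\})^{\perp_{S^{\perp_\D}}} = \SSS^{\perp_\D}$. Lemma~\ref{compatible filtration} then combines the two steps into compatibility of $(\D^{\leq 0}, \D^{\geq 0})$ with the admissible filtration $\SSS^{\perp_\D} \subset S^{\perp_\D} \subset \D$.

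To identify the induced heart $\B \cap \SSS^{\perp_\D}$: rank preservation places its bundles at the prescribed shifts $\{j\}$ or $\{j-1, j\}$, and its torsion at each $\lambda$ is concentrated at cohomological degree $-m_\lambda$. Applying $\hom^{<0}(\B, \B) = 0$ to the pairs $(E[j], T_\lambda[m_\lambda])$ in both directions, and using that $\ext^n(\text{bundle}, \text{torsion})$ vanishes unless $n = 0$ while $\ext^n(\text{torsion}, \text{bundle})$ vanishes unless $n = 1$, forces $m_\lambda \in \{j-1, j\}$ in both cases. In case 1 this yields $\B \cap \SSS^{\perp_\D} \subset \SSS^{\perp_\A}[j+1] * \SSS^{\perp_\A}[j]$, so by Proposition~\ref{torsion pair t-structure} the heart is a shift of the HRS-tilt of $\SSS^{\perp_\A}$ with respect to a unique torsion pair, which a direct reading of the torsion and torsion-free components identifies as $(\SSS^{\perp_\A} \cap \T_P, \SSS^{\perp_\A} \cap \F_P)$ with $P = \{\lambda : m_\lambda = j-1\}$. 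In case 2, the presence of bundles at both shifts forces the corresponding torsion pair $(\T, \F)$ to split the bundles nontrivially, giving the strict inclusions $\SSS^{\perp_\A} \cap \coh_0\X \subsetneq \T \subsetneq \SSS^{\perp_\A}$. The main obstacle is this final identification in case 2, where $(\T, \F)$ is not explicit: one must carefully exploit the simultaneous presence of bundles at two consecutive shifts to describe how $\T$ meets $\vect\X \cap \SSS^{\perp_\A}$ and verify the two strict inclusions.
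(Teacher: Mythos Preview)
Your overall plan is sound and close to the paper's, but there are two points where your sketch diverges or leaves a genuine gap.

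\textbf{Building the recollement.} The paper does \emph{not} induct on $|\SSS|$. Instead, after obtaining each local proper collection $\SSS_\lambda$, it takes the basic silting object $T_\lambda$ in $\pair{\SSS_\lambda}_{\D_\lambda}$ (via K\"onig--Yang), observes that $T_\lambda$ is Ext-projective in $\D_\lambda^{\leq 0}$ and hence in $\D^{\leq 0}$ (by the same Serre-duality transfer you describe), sets $T=\bigoplus_\lambda T_\lambda$, orders its summands into an exceptional sequence (Proposition~\ref{order silting}), and applies Corollary~\ref{ex collection compatible filt} once. Your inductive peeling of one simple $S$ at a time works in principle, but the step ``the induced t-structure on $\D^b(\X_1)$ has proper collection $\SSS\setminus\{S\}$'' is not literally correct: if $\tau S\in\SSS$ then $\tau S\notin S^{\perp_\A}$, so $\SSS\setminus\{S\}$ is not even a set of objects of $S^{\perp_\A}$, let alone simples. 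One must instead argue (as the paper does in the proof of Proposition~\ref{A_t t-str}) that the new proper collection $\SSS_1$ in $\coh\X_1$ satisfies $\SSS_1^{\perp_{\D^b(\X_1)}}=\SSS^{\perp_\D}$ for a suitable $\SSS$. The silting-object route sidesteps this bookkeeping entirely.

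\textbf{The heart in case~2.} Your bound $m_\lambda\in\{j-1,j\}$ comes from pairing a non-exceptional torsion sheaf $F_\lambda$ (you need non-exceptional, via Lemma~\ref{bundle hom subtube-simples}, to guarantee the relevant Hom and $\ext^1$ are nonzero) only against a bundle at shift $j$. In case~2 you have bundles at \emph{both} shifts $j-1$ and $j$; pairing $F_\lambda$ against a bundle at shift $j-1$ gives the additional constraint $m_\lambda\in\{j-2,j-1\}$, and the intersection forces $m_\lambda=j-1$ for \emph{every} $\lambda$. This is exactly what yields $\SSS^{\perp_\A}\cap\coh_0\X\subset\T$: all torsion sits at the lower shift and hence lands in $\T$. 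Your final paragraph locates the obstacle in ``how $\T$ meets $\vect\X\cap\SSS^{\perp_\A}$'', but the bundle splitting is automatic (bundles at shift $j-1$ lie in $\T$, those at shift $j$ in $\F$); the real content is pinning the torsion down to a single shift. Without that, the first strict inclusion $\SSS^{\perp_\A}\cap\coh_0\X\subsetneq\T$ does not follow. (Also, a minor indexing slip: with torsion and bundles at shifts in $\{j-1,j\}$ one has $\B\cap\SSS^{\perp_\D}\subset\SSS^{\perp_\A}[j]*\SSS^{\perp_\A}[j-1]$, not $[j+1]*[j]$.)
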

 
\begin{proof}
 By Lemma~\ref{restrict to coh0}, $(\D^{\leq 0}, \D^{\geq 0})$ restricts to a bounded t-structure on $\D^b(\coh_\lambda\X)$ for each $\lambda\in \P^1$.
 Denote \[\A_\lambda=\coh_\lambda\X,\quad \D_\lambda=\pair{\coh_\lambda\X}_\D=\D^b(\coh_\lambda\X),\] \[\D_\lambda^{\leq 0}=\D^{\leq 0}\cap \D_\lambda,\quad \D_\lambda^{\geq 0}=\D^{\geq 0}\cap \D_\lambda,\quad \B_\lambda=\B\cap \D_\lambda.\] Then $(\D^{\leq 0}_\lambda, \D^{\geq 0}_\lambda)$ is a bounded t-structure on $\D_\lambda$ with heart $\B_\lambda$.  Observe that each  Ext-projective object in $\D^{\leq 0}_\lambda$ is Ext-projective in $\D^{\leq 0}$. Indeed, if $X\in \D_\lambda^{\leq 0}\subset \D^{\leq 0}$ is $\D_\lambda^{\leq 0}$-projective then  $\tau X[1] \in \D_\lambda^{\geq 0}\subset \D^{\geq 0}$, which implies $X$ is $\D^{\leq 0}$-projective. 

	For each $\lambda\in \P^1$, by Proposition~\ref{A_t t-str}, there is a unique proper collection $\SSS_\lambda$ of  simple sheaves supported at $\lambda$ such that $(\D_\lambda^{\leq 0}, \D_\lambda^{\geq 0})$ is compatible with
	\[ \xymatrix{ \SSS_\lambda^{\perp_{\D_\lambda}}  \ar[rr]|{f_*}   & &\ar@/_1pc/[ll]|{f^*} \ar@/^1pc/[ll]|{f^!}\D_\lambda=\D^b(\coh_\lambda\X) \ar[rr]|{g^*} & &\ar@/_1pc/[ll]|{g_!} \ar@/^1pc/[ll]|{g_*} \pair{\SSS_\lambda}_{\D_\lambda},}\] where $f_*, g_!$ are the inclusion functors,
	and the corresponding t-structure on $\SSS_\lambda^{\perp_{\D_\lambda}}$ has heart $\B_\lambda\cap \SSS_\lambda^{\perp_{\D_\lambda}}=\SSS_\lambda^{\perp_{\A_\lambda}}[m_\lambda]$ for some $m_\lambda$. 
	If $\SSS_\lambda=\emptyset$ (say when $\lambda$ is an ordinary point), let $T_\lambda=0$. Otherwise, $\pair{\SSS_\lambda}_{\D_\lambda}$ is triangle equivalent to $\D^b(\coprod_{i=1}^{n_\lambda} \mod k\vec{\AA}_{l_{i,\lambda}})$ for some positive integers $n_\lambda,l_{i,\lambda}$, where $k\vec{\AA}_l$ is the path algebra of the equioriented $\AA_l$-quiver. By Theorem~\ref{silting t-str},  the  t-structure $(g^*\D_\lambda^{\leq 0}, g^*\D_\lambda^{\geq 0})$ on $\pair{\SSS_\lambda}_{\D_\lambda}$ corresponds to a basic silting object $T_\lambda$ in $\pair{\SSS_\lambda}_{\D_\lambda}$ so that $\pair{T_\lambda}_{\D_\lambda} =\pair{\SSS_\lambda}_{\D_\lambda}$ and $T_\lambda$ is $g^*\D_\lambda^{\leq 0}$-projective. 
	By Lemma~\ref{ext-proj from recollement}, $T_\lambda=g_! T_\lambda$ is $\D^{\leq 0}_\lambda$-projective  and hence $T_\lambda$ is $\D^{\leq 0}$-projective. By Proposition~\ref{order silting}, the indecomposable direct summands of $T_\lambda$ can be ordered to form an exceptional sequence.  Let $T=\oplus_\lambda T_\lambda$, $\SSS=\cup_\lambda\SSS_\lambda$. We have $\pair{T}_\D =\pair{\SSS}_\D$ and the indecomposable direct summands of $T$ can be ordered to form an exceptional sequence.  
	Then by Lemma~\ref{ex collection compatible filt}, $(\D^{\leq 0}, \D^{\geq 0})$ is compatible with the recollement 
	\[\xymatrix{ \SSS^{\perp_{\D}}=T^{\perp_\D}  \ar[rr]|{i_*}   & &\ar@/_1pc/[ll]|{i^*} \ar@/^1pc/[ll]|{i^!}\D \ar[rr]|{j^*} & &\ar@/_1pc/[ll]|{j_!} \ar@/^1pc/[ll]|{j_*} \pair{T}_\D =\pair{\SSS}_{\D},}\] where $i_*, j_!$ are the inclusion functors.

	Now let us show that the corresponding t-structure on $\SSS^{\perp_\D}$ takes the asserted form. 	 Let $\B_1=\B\cap \SSS^{\perp_\D}$ be its heart.
 We have $\SSS_\lambda^{\perp_{\A_\lambda}}[m_\lambda]=\B_1\cap \D_\lambda\subset \B_1$. 
	Hence for each $\lambda\in \P^1$, there is  a nonexceptional indecomposable torsion sheaf $F_\lambda$ such that $F_\lambda[m_\lambda]\in \B$.  Up to a shift of $\B$, we can suppose $\{i\mid \vect\X[i]\cap \B\neq 0\}=\{1\}$ or $\{0,1\}$. If $\{i\mid \vect\X[i]\cap \B\neq 0\}=\{1\}$, let $E$ be a nonzero bundle such that $E[1]\in\B$. $\hom(E,F_\lambda)\neq 0$ and $\ext^1(F_\lambda,E)\neq 0$ imply that $m_\lambda\in \{0,1\}$. 
	If $\{i\mid \vect\X[i]\cap \B\neq 0\}=\{0,1\}$ then we have nonzero bundles $E_1,E_2$ with $E_1, E_2[1]\in \B$. $\hom(E_i,F_\lambda)\neq 0$ and $\ext^1(F_\lambda, E_i)\neq 0$ ($i=1,2$) imply $m_\lambda=0$. Consequently, in either case, we have $\B_1\subset \SSS^{\perp_\A}[1]*\SSS^{\perp_\A}$ and thus $\B_1=\F[1]*\T$ for some torsion pair $(\T,\F)$ in $\SSS^{\perp_\A}$. 
	Moreover, if $\{i\mid \vect\X[i]\cap \B\neq 0\}=\{1\}$ then $\T=\add\{\SSS_\lambda^{\perp_{\A_\lambda}}\mid \lambda \in P\}=\SSS^{\perp_\A}\cap \T_P$,   
	where $P=\{\lambda\in\P^1\mid m_\lambda=0\}$; if $\{i\mid \vect\X[i]\cap \B\neq 0\}=\{0,1\}$ then $\SSS^{\perp_{\A}}\cap \coh_0\X\subsetneq \T\subsetneq \SSS^{\perp_\A}$.   

	Finally, the uniqueness of $\SSS$ follows from  the uniqueness of $\SSS_\lambda$; the uniqueness of $P$ follows from Lemma~\ref{t-str recollement bijection}.
\end{proof}
\begin{rmk} Actually, for each bounded t-structure $(\D^{\leq 0}, \D^{\geq 0})$ on $\D$, there exists a unique maximal proper collection $\SSS$ of simple sheaves such that $(\D^{\leq0 },\D^{\geq 0})$ is compatible with the admissible subcategory $\SSS^{\perp_\D}$.  The crucial point to show this is that $\pair{T}_\D =\pair{\SSS}_\D$, where $T$ is the direct sum of a complete set of  indecomposable $\D^{\leq 0}$-projectives of the form $E[n]$ with $E$ a torsion sheaf.
\end{rmk}

\begin{rmk}\label{perp torsion pair}
	Recall from Theorem~\ref{simple perp} that we have an equivalence $\SSS^{\perp_\A}\simeq \coh\X'$ for some weighted projective line $\X'$.
	Via such an equivalence,  the torsion pair $(\SSS^{\perp_\A}\cap \T_P,\,\, \SSS^{\perp_\A}\cap \F_P)$ in $\SSS^{\perp_\A}$ corresponds to the torsion pair $(\T'_P,\F'_P)$ in $\coh\X'$; 
	a torsion pair $(\T,\F)$ in $\SSS^{\perp_\A}$ with $\SSS^{\perp_\A}\cap \coh_0\X\subsetneq \T\subsetneq \SSS^{\perp_\A}$ corresponds to a torsion pair $(\T',\F')$ in $\coh\X'$ with $\coh_0\X'\subsetneq \T'\subsetneq \coh\X'$.
\end{rmk}
Here we characterize when the heart of a bounded t-structure just described is noetherian, artinian or of finite length.
\begin{cor}\label{restrict heart}
	With the notation in Proposition~\ref{restrict t-str}, in the  case  $\{i\mid \vect\X[i]\cap \B\neq 0\}=\{j\}$, the heart $\B$ is not of finite length and $\B$ is noetherian resp. artinian iff $P=\emptyset$ resp. $P=\P^1$; 
	in the case $\{i\mid \vect\X[i]\cap \B\neq 0\}=\{j-1,j\}$, the heart $\B$  is noetherian (artianian or of finite length) iff so is the tilted heart $\F[1]*\T$. 
\end{cor}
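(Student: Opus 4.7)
The plan is to reduce everything to properties of the two ``pieces'' of the compatible recollement given by Proposition~\ref{restrict t-str} and then invoke the already-proved Lemma~\ref{tilted heart no-ar-le}(1). Write $\B_1$ (resp.\ $\B_2$) for the heart of the corresponding t-structure on $\SSS^{\perp_\D}$ (resp.\ $\pair{\SSS}_\D$). Since $(\D^{\leq 0},\D^{\geq 0})$ is compatible with the recollement, Lemma~\ref{no-ar-le} says $\B$ is noetherian (resp.\ artinian, resp.\ of finite length) iff both $\B_1$ and $\B_2$ are.

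The first observation is that $\B_2$ is \emph{always} of finite length. Indeed, from the description of $\SSS$ preceding~\eqref{simple gen}, we have $\pair{\SSS}_\D\simeq\coprod_i\D^b(\mod k\vec{\AA}_{l_i})$, and by Lemma~\ref{finite rep length heart} every bounded t-structure on such a derived category has length heart. Therefore the no\-etherian/artinian/finite-length properties of $\B$ coincide with those of $\B_1$.

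Next I translate $\B_1$ into the setting of Lemma~\ref{tilted heart no-ar-le}. By Theorem~\ref{simple perp} we have a rank-preserving equivalence $\SSS^{\perp_\A}\simeq \coh\X'$ for a weighted projective line $\X'$ (of the same type class insofar as the ambient context allows), and by Remark~\ref{perp torsion pair} this equivalence sends $(\SSS^{\perp_\A}\cap\T_P,\SSS^{\perp_\A}\cap\F_P)$ to $(\T'_P,\F'_P)$ in $\coh\X'$. In the first case, $\B_1$ is thus a shift of $\F'_P[1]*\T'_P$, and Lemma~\ref{tilted heart no-ar-le}(1) applied to $\X'$ gives directly: $\B_1$ is noetherian iff $P=\emptyset$, artinian iff $P=\P^1$, and in every case $\B_1$ is not of finite length (since the extremal hearts $\coh\X'[1]$ and $\vect\X'[1]*\coh_0\X'$ both contain objects of arbitrarily large length, while the intermediate tilted hearts fail even to be noetherian or artinian). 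Transferring these statements to $\B$ via the first paragraph gives the claim in the first case. In the second case, $\B_1$ is by construction a shift of $\F[1]*\T$, so $\B_1$ has a given property iff $\F[1]*\T$ does, which is exactly the second claim.

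I do not anticipate a serious obstacle: the only mildly delicate step is to justify that the hearts appearing in Lemma~\ref{tilted heart no-ar-le}(1) are never of finite length, and this is immediate because $\coh\X'$ contains torsion sheaves of arbitrarily large length supported at each ordinary point (yielding ascending or descending chains of subobjects of a line bundle in the tilted hearts, exactly as in the proof of Lemma~\ref{tilted heart no-ar-le}(1)). Everything else is bookkeeping via Lemma~\ref{no-ar-le} and the equivalence $\SSS^{\perp_\A}\simeq\coh\X'$.
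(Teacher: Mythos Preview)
Your proposal is correct and follows essentially the same approach as the paper: reduce via Lemma~\ref{no-ar-le} to the two pieces of the recollement, note that $\B_2$ always has length heart by Lemma~\ref{finite rep length heart}, and then handle $\B_1$ via the equivalence $\SSS^{\perp_\A}\simeq\coh\X'$ together with Lemma~\ref{tilted heart no-ar-le}(1). Your extra paragraph justifying ``not of finite length'' is unnecessary, since Lemma~\ref{tilted heart no-ar-le}(1) already gives noetherian iff $P=\emptyset$ and artinian iff $P=\P^1$, and $\P^1\neq\emptyset$ forces the heart to fail at least one of these.
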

\begin{proof}
	Recall that there exist  integers $n, l_1,\dots, l_n$ such that $\pair{\SSS}_\A\simeq  \coprod_{i=1}^n\mod k\vec{\AA}_{l_i}$. By Lemma~\ref{finite rep length heart}, each bounded t-structure on $\pair{\SSS}_\D=\D^b(\pair{\SSS}_\A)\simeq \D^b(\coprod_{i=1}^n\mod k\vec{\AA}_{l_i})$ has length heart. So the assertion for the case $\{i\mid \vect\X[i]\cap \B\neq 0\}=\{j-1,j\}$ follows from Lemma~\ref{no-ar-le}. For the case $\{i\mid \vect\X[i]\cap \B\neq 0\}=\{j\}$, by virtue of the equivalence $\SSS^{\perp_\A}\simeq \coh\X'$ in Theorem~\ref{simple perp}, the assertion follows from Lemma~\ref{tilted heart no-ar-le}(1) and Lemma~\ref{no-ar-le}. 
\end{proof}

\subsection{Bounded t-structures which do not even up to action of $\aut\D^b(\X)$}
Now we deal with bounded t-structures on $\D^b(\X)$ which does not satisfy the condition considered above even up to the action of $\Aut \D^b(\X)$. We only have results for the domestic and tubular cases and 
we rely heavily on the telescopic functors in the tubular case.    

The key feature of this class of t-structures is given in the following lemma.  

\begin{lem}\label{exceptional ind}  
	\begin{enum2}
	\item  If $\X$ is of domestic type then each indecomposable object in $\B$ is exceptional iff $\{i\mid \vect\X[i]\cap \B\neq 0\}\nsubseteq \{j,j+1\}$ for any $j\in\Z$. 

	\item $\X$ is of tubular type then each indecomposable object in $\B$ is exceptional iff $\{i\mid \vect\X[i]\cap \Phi_{\infty,q}(\B)\neq 0\}\nsubseteq \{j,j+1\}$ for any $q\in \bar{\Q}$ and $j\in\Z$.
	\end{enum2}
\end{lem}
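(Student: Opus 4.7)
My strategy is to deduce both parts directly from Lemma~\ref{cycle of ind torsion} (the equivalence of conditions (1) and (3) there), combined with structural facts about indecomposable sheaves over domestic and tubular weighted projective lines. The key observation is that since $\coh\X$ is hereditary, every indecomposable object of $\D^b(\X)$ is a shift of an indecomposable sheaf; and by Happel--Ringel Lemma (Proposition~\ref{Happel-Ringel lemma}), an indecomposable sheaf is exceptional iff it has no self-extension. Thus ``$\B$ contains a non-exceptional indecomposable object'' is equivalent to ``$\B$ contains a shift of a non-exceptional indecomposable sheaf.''

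For (1), consider the contrapositive on both sides. First I would note that in the domestic case Theorem~\ref{domestic bundle}(1) says every indecomposable bundle is exceptional, so any non-exceptional indecomposable sheaf must be torsion. Therefore, the existence of a non-exceptional indecomposable object in $\B$ is equivalent to the existence of a shift of a non-exceptional indecomposable torsion sheaf in $\B$, which by Lemma~\ref{cycle of ind torsion}$((3)\Leftrightarrow(1))$ is equivalent to $\{i\mid \vect\X[i]\cap\B\neq 0\}\subset\{j,j+1\}$ for some $j\in\Z$. Negating yields (1).

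For (2), the additional subtlety is that the tubular case admits non-exceptional indecomposable bundles. I would exploit the telescopic functors to reduce to the torsion case. By Theorem~\ref{tubular bundle}(1)(3), every indecomposable sheaf over a tubular $\X$ is semistable with some slope $q\in\bar{\Q}$, and is exceptional iff its quasi-length is less than the corresponding weight. Since the telescopic functor $\Phi_{\infty,q}$ is an exact autoequivalence of $\D^b(\X)$ sending $\coh^q\X$ to $\coh_0\X$, it preserves indecomposability and the property of being exceptional, and it sends a non-exceptional indecomposable sheaf of slope $q$ to a non-exceptional indecomposable torsion sheaf. Now for the forward direction, if $\{i\mid \vect\X[i]\cap \Phi_{\infty,q}(\B)\neq 0\}\subset\{j,j+1\}$ for some $q\in\bar{\Q}$, $j\in\Z$, then Lemma~\ref{cycle of ind torsion} applied to the heart $\Phi_{\infty,q}(\B)$ produces a shift of a non-exceptional indecomposable torsion sheaf in $\Phi_{\infty,q}(\B)$; applying $\Phi_{\infty,q}^{-1}$ gives a non-exceptional indecomposable object in $\B$, contradicting the hypothesis. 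For the reverse direction, if $\B$ contains a non-exceptional indecomposable object $E[m]$ where $E\in\coh\X$ is non-exceptional indecomposable, pick $q\in\bar{\Q}$ to be the (unique) slope of $E$; then $\Phi_{\infty,q}(E)[m]$ is a shift of a non-exceptional indecomposable torsion sheaf lying in $\Phi_{\infty,q}(\B)$, and Lemma~\ref{cycle of ind torsion} yields $\{i\mid \vect\X[i]\cap \Phi_{\infty,q}(\B)\neq 0\}\subset\{j',j'+1\}$ for some $j'$, again a contradiction.

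The argument is essentially a routine unwrapping given the heavy lifting done by Lemma~\ref{cycle of ind torsion}; the only place requiring care is the tubular case, where one must invoke the semistability of all indecomposable sheaves and the fact that each rational slope can be transported to $\infty$ by a telescopic functor. No new technical difficulties arise.
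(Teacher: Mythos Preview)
Your proposal is correct and follows essentially the same approach as the paper's proof: both reduce to Lemma~\ref{cycle of ind torsion} via the fact that in the domestic case all indecomposable bundles are exceptional (Theorem~\ref{domestic bundle}(1)), and in the tubular case every indecomposable sheaf is semistable so a suitable telescopic functor transports it to a torsion sheaf before invoking Lemma~\ref{cycle of ind torsion}. Your write-up is slightly more explicit about the two directions in~(2), but the argument is the same.
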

\begin{proof}	Each indecomposable object in $\B$ is of the form $E[n]$ for some $n\in\Z$ and some indecomposable bundle  or some indecomposable torsion sheaf $E$.

	(1) By Theorem~\ref{domestic bundle}, if $\X$ is of domestic type then each indecomposable bundle is exceptional. So $\B$ contains a non-exceptional indecomposable object iff $\B$ contains a shift of a non-exceptional torsion sheaf, which is equivalent to  $\{i\mid \vect\X[i]\cap \B\neq 0\}\subseteq \{j,j+1\}$ for some $j\in\Z$ by Lemma~\ref{restrict to coh0}. So our assertion holds.
	
	(2) By Theorem~\ref{tubular bundle}, if $\X$ is of tubular type then each indecomposable sheaf is semistable and thus lies in $\coh^\mu\X$ for some $\mu\in \bar{\Q}$. $\B$ contains a non-exceptional indecomposable object $E[n]$, where $E$ is a sheaf with slope $q$, iff the heart $\Phi_{\infty,q}(\B)[-n]$ contains the non-exceptional torsion sheaf $\Phi_{\infty,q}(E)$. Thus our assertion  follows from Lemma~\ref{restrict to coh0}.
\end{proof}

We show that $\B$ contains no cycle if each indecomposable object in $\B$ is exceptional.

\begin{lem}\label{ordered ind}
	Suppose $\X$ is of dometic or tubular type. If each indecomposable object in $\B$ is exceptional then a complete set of pairwise non-isomorphic indecomposable objects in $\B$ can be totally ordered as $\{X_i\}_{i\in I}$ such that $\hom(X_i,X_j)=0$ if $i<j$.
\end{lem}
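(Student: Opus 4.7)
My plan is to order the isoclasses of indecomposable objects of $\B$ lexicographically by three invariants---shift, slope, and a within-tube position---and to use the heredity of $\A$ together with the standing hypothesis to verify the required Hom-vanishing.

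Since $\A$ is hereditary, every indecomposable object of $\B$ has the form $E_X[n_X]$ for a unique indecomposable sheaf $E_X\in\A$ and a unique $n_X\in\Z$; for $X=E[n]$ and $Y=F[l]$ one has $\hom(X,Y)=\hom^{l-n}_\A(E,F)$, which vanishes unless $l-n\in\{0,1\}$. Ordering by strictly decreasing shift (larger $n_X$ first) therefore gives $\hom(X_i,X_j)=0$ between objects at different shifts: for $n>l$ the Hom vanishes by heredity, and for $l=n+1$ the earlier object has shift $n+1$ and $\hom(F[n+1],E[n])=\hom^{-1}_\A(F,E)=0$.

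Within a fixed shift, I would next order by strictly decreasing slope $\mu(E_X)\in\bar{\Q}$, placing torsion sheaves (slope $\infty$) first. Indecomposable sheaves in $\A$ are semistable by Theorem \ref{domestic bundle}(1) and Theorem \ref{tubular bundle}(2), so $\hom_\A(E,F)=0$ whenever $\mu(E)>\mu(F)$, and $\hom_\A(\coh_0\X,\vect\X)=0$ handles torsion-before-bundle. Within a fixed slope $\mu$, using $\coh^\mu\X=\coprod_{\lambda\in\P^1}\coh^\mu_\lambda\X$ (a telescopic functor reduces the tubular bundle case to the torsion case), sheaves supported on distinct $\lambda$ have no Homs either way, so the problem reduces to ordering within a single tube $\coh^\mu_\lambda\X\simeq \A_{\w(\lambda)}$.

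The main obstacle is this within-tube step: in $\A_t$, distinct exceptional indecomposables can a priori admit nonzero Homs in both directions (e.g.\ the length-$3$ indecomposables ${}^{[3]}S_1$ and ${}^{[3]}S_3$ in $\A_4$). Here the standing hypothesis is crucial. Using the explicit parametrization of $\hom$ and $\ext^1$ in $\A_t$ via composition-factor intervals (cf.\ \S\ref{sec: A_t}), one checks that any Hom-cycle $M_1\to M_2\to\cdots\to M_s\to M_1$ of distinct exceptional indecomposables inside $\B\cap \coh^\mu_\lambda\X$ forces some $\ext^1_\A(M_i,M_{i+1})\neq 0$, and a direct dimension-vector bookkeeping in $\A_{\w(\lambda)}$ shows that any nontrivial such extension contains an indecomposable summand of length $\geq \w(\lambda)$. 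Since $\B$ is extension-closed in $\D$, this summand would be an indecomposable object of $\B$ of length $\geq \w(\lambda)$, hence non-exceptional, contradicting the hypothesis. The restricted Hom-quiver on $\B\cap \coh^\mu_\lambda\X$ is thus acyclic and admits a linear ordering.

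Assembling the three layers yields a partial order on the (countable) set of isoclasses of indecomposables of $\B$ compatible with non-vanishing Hom; the standard order-extension principle then gives the required total order $\{X_i\}_{i\in I}$.
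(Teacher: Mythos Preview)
Your three-layer architecture (shift, then slope, then position within a tube) is exactly the paper's strategy, and the first two layers are handled correctly. The gap is in the within-tube step.

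Your two claims there are both false. First, a Hom-cycle of exceptional indecomposables in $\A_t$ need not have any $\ext^1_\A(M_i,M_{i+1})\neq 0$. In $\A_6$, take $M_1={}^{[3]}S_1$, $M_2={}^{[3]}S_3$, $M_3={}^{[3]}S_5$: one checks $\hom(M_i,M_{i+1})\neq 0$ cyclically, yet $\ext^1(M_i,M_{i+1})=0$ for every $i$ (it is the reverse direction $\ext^1(M_{i+1},M_i)$ that is nonzero, as Happel--Ringel would suggest). Second, even granting a nonzero $\ext^1$ between two consecutive members, a single extension need not contain a non-exceptional summand: in the same example the nontrivial extension $0\to M_1\to E\to M_2\to 0$ has composition factors $\{S_1,S_1,S_2,S_3,S_5,S_6\}$, so $S_4$ never appears and $E$ cannot contain any indecomposable summand of length $\ge 6$.

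The paper closes this step differently. If the exceptional indecomposables in $\B\cap\coh^\mu_\lambda\X$ failed to cover some simple, they would all lie in a subcategory equivalent to $\mod k\vec{\AA}_{t-1}$, which is representation-directed and hence has no Hom-cycle. Thus a Hom-cycle forces \emph{every} simple of the tube to appear as a composition factor. This is exactly the hypothesis of Lemma~\ref{order sequence orthogonal}, whose proof then \emph{iterates} extensions around the full $\ext^1$-cycle, shortening the cycle at each step until a single indecomposable with self-extension (hence non-exceptional) is produced. One extension does not suffice; the whole cycle is needed. Replace your ``dimension-vector bookkeeping'' sentence with this reduction to Lemma~\ref{order sequence orthogonal} and the argument goes through.
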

\begin{proof}
Each indecomposable object in $\B$ is of the form $E[n]$ for some  indecomposable sheaf $E$. Since $\hom(E[n], F[m])=0$ for $E,F\in \A$ and $n>m$, it sufficies to order indecomposables in $\B\cap \A[n]$, or rather, indecomposables in $\B[-n]\cap \A$. For $\X$ of domestic or tubular type, each idecomposable sheaf is semistable and $\hom(E,F)=0$ for indecomposable sheaves $E,F$ with $\mu(E)>\mu(F)$. Thus we only need to consider indecomposable sheaves with the same slope, i.e., those in $\B[-n]\cap \coh^\mu\X$. We have assumed these indecomposables to be exceptional.
	
We consider $\mu=\infty$ at first.  
	If indecomposables in $\B[-n]\cap \coh^\infty\X=\B[-n]\cap \coh_0\X$ cannot be totally ordered as desired then   $\B[-n]\cap \coh_0\X$ will contain a cycle  of indecomposables in some  $\coh_\lambda\X$. 
	By Lemma~\ref{order sequence orthogonal}, $\B$ contains a non-exceptional object, a contradiction. Hence indecomposables in $\B[-n]\cap \coh^\infty\X$ can be totally ordered as desired.  
	Now we consider $\mu\in\Q$. If $\X$  is of domestic type then   indecomposable bundles in $\coh^\mu\X$ are stable  and thus the morphism spaces between each other vanish, whence any  order is satisfying.  If $\X$ is of tubular type then using the telescopic functor $\Phi_{\infty, \mu}$, we know from the conclusion for $\mu=\infty$ that the desired ordering also exists.
\end{proof}

Recall the definition of $\mu(\B)$ from ~\eqref{def of mu}. Observe that each limit point in $\mu(\B)$ is a limit point of $\mu(\B[l]\cap \A)$ for some $l$ since $\A$ is hereditary and since $\B\subset \D_\A^{[m,n]}$ for some $m,n\in\Z$ by Lemma~\ref{all width bounded}.

\begin{lem}\label{unbounded slope}\label{inf limit point}
	 $\infty$ is a limit point of $\mu(\B)$  iff $\{i\mid \vect\X[i]\cap \B\neq 0\}\subset \{j,j+1\}$ for some $j\in\Z$.
 \end{lem}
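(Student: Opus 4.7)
The plan is to prove both implications of the iff, with the forward direction handled by its contrapositive.

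For ($\Rightarrow$), suppose there exist $i_1 < i_2$ with $i_2 - i_1 \geq 2$ and nonzero bundles $E, F$ such that $E[i_1], F[i_2]\in \B$. For an arbitrary bundle $G$ with $G[k]\in\B$, I will extract enough vanishings from $\hom^{<0}(\B,\B)=0$ to bound $\mu(G)$. Concretely, unfolding $\hom(G[k], F[i_2 - m]) = 0$ for $m \geq 1$ gives $\hom^{i_2 - k - m}(G, F) = 0$ for all $m \geq 1$; since $\coh\X$ is hereditary this yields $\hom(G, F)=0$ for $k \leq i_2-1$ and $\ext^1(G, F)=0$ for $k \leq i_2 - 2$, and symmetrically $\hom(E, G)=0$ for $k \geq i_1+1$ and $\ext^1(E, G)=0$ for $k \geq i_1+2$. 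Combining these with Theorem~\ref{bundle hom nonzero} and Serre duality (Theorem~\ref{serre duality}) converts each vanishing into a one-sided slope inequality such as $\mu(G)\leq \mu(F)+p$ or $\mu(G)\geq \mu(E)-p$, etc. A brief case split on $k$ (the cases $k \leq i_2-2$, $k \geq i_1 + 2$, and the borderline $k = i_1+1 = i_2-1$ occurring only when $i_2 = i_1+2$, where only $\hom$-vanishings are available but $\hom(E,G)=0$ and $\hom(G,F)=0$ together still bound both sides) shows that in every case two complementary bounds pin $\mu(G)$ into a finite interval determined by $\mu(E), \mu(F), p, \delta(\vec\omega)$. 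Since $\B\subset \D_\A^{[m,n]}$ for some $m,n$ by Lemma~\ref{all width bounded}, the range of $k$ is finite, so slopes of bundle indecomposables in $\B$ are bounded and $\infty$ is not a limit point of $\mu(\B)$.

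For ($\Leftarrow$), assume $\{i\mid \vect\X[i]\cap\B\neq 0\}\subset\{j,j+1\}$; this set is nonempty since $\B$ classically generates $\D$ while $\D^b(\coh_0\X)\subsetneq \D$. By Lemma~\ref{restrict to coh0}, the t-structure restricts to each $\D^b(\coh_\lambda\X)$, so Proposition~\ref{restrict t-str} supplies a unique proper collection $\SSS$ of simple sheaves such that $\B_1 := \B\cap \SSS^{\perp_\D}$ is, up to shift, the HRS-tilt of the standard heart $\SSS^{\perp_\A}$ with respect to a torsion pair $(\T,\F)$. In the singleton case $\F = \SSS^{\perp_\A}\cap \F_P$ automatically contains every bundle of $\SSS^{\perp_\A}$; in the two-level case the presence of bundle-indecomposables in $\B$ at both shifts forces $\F$ itself to contain a nonzero bundle. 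Transporting $(\T,\F)$ through the rank-preserving equivalence $\SSS^{\perp_\A}\simeq \coh\X'$ of Theorem~\ref{simple perp} and applying Lemma~\ref{cotilting torsion theory}(1) in $\coh\X'$, we deduce $\F$ contains infinitely many pairwise non-isomorphic line bundles of $\coh\X$. Since line bundles in $\coh\X$ are parametrized by $L(\udp)$, which has rank one, and $\delta\colon L(\udp)\to \Z$ is a non-zero homomorphism with finite fibers, any infinite family of line bundles in $\coh\X$ has unbounded $\X$-degree hence unbounded $\X$-slope. Placed at their fixed shift inside $\B_1\subset\B$, these bundles exhibit indecomposables of $\B$ with $|\mu|\to \infty$, so $\infty$ is a limit point of $\mu(\B)$.

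The principal obstacle is the case analysis in ($\Rightarrow$), especially the borderline $i_2 = i_1+2$, $k = i_1+1$, where no $\ext^1$-vanishing is available from either end; fortunately the two remaining $\hom$-vanishings (one with $E$, one with $F$) still combine into a two-sided slope bound. A minor subtlety in ($\Leftarrow$) is that $\SSS^{\perp_\A}\simeq \coh\X'$ preserves ranks but not necessarily slopes, which is why we argue via cardinality and the structure of $L(\udp)$ rather than directly comparing slopes.
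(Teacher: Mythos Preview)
Your proof is correct, with one small point to tighten in the backward direction.

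For ($\Rightarrow$), your contrapositive argument and the paper's direct argument are two sides of the same coin: the paper fixes a shift $l$ carrying bundles of arbitrarily large slope and uses Theorem~\ref{bundle hom nonzero} (plus Serre duality) to force every other bundle shift into $\{l,l+1\}$; you fix two bundle shifts $i_1<i_2$ with gap $\geq 2$ and use the same non-vanishing theorem to bound all slopes. Your case split is fine, including the borderline $i_2=i_1+2$.

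For ($\Leftarrow$), the paper takes a more direct route: it simply picks a line bundle $L\in\SSS^{\perp_\A}$ lying in $\B[-j]$ and a simple sheaf $S$ at an ordinary point, and then uses the short exact sequences $0\to L(n\vec c)\to L((n+1)\vec c)\to S\to 0$ (or their reverse) to exhibit the family $L(n\vec c)\in\B[-j]$ with $\mu\to\pm\infty$. Your approach---passing to $\coh\X'$, invoking Lemma~\ref{cotilting torsion theory}, and then arguing that any infinite family of line bundles in $\coh\X$ has unbounded degree via the rank-one group $L(\udp)$---is more structural and also works; it has the mild cost of importing more machinery.

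The only point that needs a cleaner justification is your claim that in the two-level case ``the presence of bundle-indecomposables in $\B$ at both shifts forces $\F$ to contain a nonzero bundle.'' As stated this would require knowing that every bundle summand of $\B$ already lies in $\B_1=\B\cap\SSS^{\perp_\D}$, which is not immediate. The clean argument is purely from the conclusion of Proposition~\ref{restrict t-str}: since $\SSS^{\perp_\A}\cap\coh_0\X\subsetneq \T$, transporting along $\SSS^{\perp_\A}\simeq\coh\X'$ gives $\coh_0\X'\subset\T'$, hence $\F'\cap\coh_0\X'\subset\T'\cap\F'=0$; combined with $\T\subsetneq\SSS^{\perp_\A}$ (so $\F'\neq 0$) this forces $\F'$ to consist of nonzero bundles. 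With this adjustment your argument goes through.
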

 \begin{proof}
	 \nec If $\infty$ is a limit point of $\mu(\B)$ then there is a sequence $(E_i)_{i=1}^\infty$ of objects in some $\B[l]$, where $E_i$'s are indecomposable bundles, such that \[\mu(E_i)\ra +\infty\,\, \text{or}\,\, \mu(E_i)\ra -\infty\,\,\text{as}\,\,i\ra +\infty.\] If $\mu(E_i)\ra +\infty$ then by Theorem~\ref{bundle hom nonzero}, for each nonzero bundle $F$, $\hom(F,E_i)\neq 0$ and $\ext^1(E_i,F)\neq 0$ for $i\gg 1$. Consequently, $F[k]\in \B$ implies $k\in\{l,l+1\}$. Similar arguments apply to the case $\mu(E_i)\ra -\infty$.  

 \suf Suppose $\{i\mid \vect\X[i]\cap \B\neq 0\}=\{j\}$ or $\{j,j+1\}$. By Proposition~\ref{restrict t-str}, we can take a line bundle $L$ such that $L\in \B[-j]$. Moreover, for a simple sheaf   $S$ supported at an ordinary point, i) $S\in \B[-j]$ or ii) $S[1]\in \B[-j]$. If case i) happens, $L(n\vec{c})\in \B[-j]$ for $n\geq 0$; if case ii) happens, $L(n\vec{c})\in \B[-j]$ for $n\leq 0$. In either case, $\infty$ is  a limit point of $\mu(\B)$.
\end{proof}

The following  lemma allows us to apply a telescopic functor in the next proposition. 
\begin{lem}\label{rational limit point}
	If $\X$ is of tubular type and $\mu(\B)$ has an irrational number as its limit point then for some $q\in \bar{\Q}$, $\Phi_{\infty,q}(\B)$ coincides with a shift of the tilted heart with respect to some torsion pair in $\A$.
\end{lem}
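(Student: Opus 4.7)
The plan is to exploit the irrational limit point $\mu_0$ to force $\B$, after an appropriate telescopic twist $\Phi_{\infty,q}$, to be concentrated in two consecutive cohomological degrees with respect to the standard t-structure on $\D$; Proposition~\ref{HRS-tilt} will then identify the twist with a shift of an HRS-tilt heart in $\A$. By Lemma~\ref{all width bounded} the heart $\B$ lies in $\D_\A^{[m,n]}$ for some $m,n$, so its indecomposables occupy only finitely many shifts of $\A$; since every slope in $\mu(\B)$ is rational and $\mu_0$ is irrational, pigeonhole yields an integer $n_0$ and a sequence of indecomposable sheaves $E_i\in\A$ with $E_i[n_0]\in\B$ and $\mu(E_i)\to \mu_0$.

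For any indecomposable sheaf $F$ with $F[m]\in\B$, I would prove that $m\in\{n_0-1,n_0\}$ if $\mu(F)>\mu_0$ and $m\in\{n_0,n_0+1\}$ if $\mu(F)<\mu_0$. In the case $\mu(F)>\mu_0$, Corollary~\ref{tubular nonvanish hom} produces some $E_i$ with $\hom(E_i,F)\neq 0$ and, applied to $\tau F$ (same slope since $\delta(\vec{\omega})=0$), some $E_j$ with $\hom(E_j,\tau F)\neq 0$, whence $\ext^1(F,E_j)\neq 0$ by Serre duality; combining these non-vanishings with $\hom_\D(\B,\B[k])=0$ for $k<0$ and the hereditary property of $\A$ yields the stated constraint, and the case $\mu(F)<\mu_0$ is symmetric.

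Write $A_{-1}$ (resp.\ $A_0$, $A_1$) for the set of indecomposable sheaves $F$ with $F[n_0-1]\in\B$ (resp.\ $F[n_0]\in\B$, $F[n_0+1]\in\B$); by the previous step every slope in $A_{-1}$ exceeds $\mu_0$ and every slope in $A_1$ is strictly less than $\mu_0$. I would next establish the dichotomy: if $A_{-1}$ contains a sequence of slopes converging to $\mu_0$ from above, then $A_1=\emptyset$, and symmetrically. For such a sequence $E_i'\in A_{-1}$ and any $F\in A_1$, Corollary~\ref{tubular nonvanish hom} applied to $\{\tau E_i'\}$ yields some $\hom(F,\tau E_i')\neq 0$, hence $\ext^1(E_i',F)\neq 0$ by Serre duality; but the $\B$-heart property applied to $E_i'[n_0-1],F[n_0+1]\in\B$ forces $\hom_\D(E_i'[n_0-1],F[n_0])=\ext^1(E_i',F)=0$ for every $i'$, a contradiction.

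It remains to choose $q$ and conclude. If $A_{-1}$ does not accumulate at $\mu_0$ from above (in particular if $A_{-1}=\emptyset$), I choose a rational $q$ strictly between $\mu_0$ and the infimum of the slopes in $A_{-1}$; otherwise by the dichotomy $A_1=\emptyset$ and I choose any rational $q<\mu_0$. In either case Lemma~\ref{fractional linear} implies, by a direct check on each of $A_{-1}, A_0, A_1$, that $\Phi_{\infty,q}$ sends every indecomposable of $\B$ into $\A[n_0]\cup\A[n_0+1]$; hence $\Phi_{\infty,q}(\B)[-n_0]\subset \A[1]*\A$, and the equivalence noted after Proposition~\ref{HRS-tilt} identifies $\Phi_{\infty,q}(\B)[-n_0]$ with the HRS-tilt heart of $\A$ with respect to a unique torsion pair in $\A$. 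The main obstacle is the dichotomy, which requires careful use of Corollary~\ref{tubular nonvanish hom} across distinct cohomological degrees together with Serre duality to promote $\hom$-nonvanishing to $\ext^1$-nonvanishing.
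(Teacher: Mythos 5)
Your proof is correct and follows essentially the same route as the paper: extract a sequence of indecomposables in a single cohomological degree with slopes converging to the irrational point, use Corollary~\ref{tubular nonvanish hom} together with Serre duality and the heart axioms to confine $\B$ to three consecutive degrees with the slopes in the outer degrees separated by $\mu_0$, and then choose a rational $q$ on the appropriate side of $\mu_0$ so that $\Phi_{\infty,q}$ collapses everything into two consecutive degrees, concluding via Proposition~\ref{HRS-tilt}. The only (harmless) deviation is in the final dichotomy: the paper reruns the degree-constraint argument at the adjacent degree when its slopes have an irrational limit point, whereas you show directly via Serre duality that accumulation of $A_{-1}$ at $\mu_0$ from above forces $A_1=\emptyset$ --- both yield the same two-degree collapse.
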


\begin{proof} Suppose that for some $l\in\Z$, $\mu(\B\cap \A[l])$ has an irrational number $r$ as its limit point.  
	Then there is a sequence $(E_i)_{i=1}^\infty$ of indecomposable bundles  such that $E_i\in \B[-l]$ and  $\mu(E_i)$ converges to $r$. 
	Let $E$ be an indecomposable sheaf with $\mu(E) < r$. By Corollary~\ref{tubular nonvanish hom}, there are some $E_i$ with $\hom(E, E_i)\neq 0$  and some $E_j$ with $\hom(\tau^{-1}E, E_j)\neq 0$, which implies $\ext^1(E_j, E)\neq 0$. 
  Thus for $h\in\Z$, $E[h]\in \B$ implies $h\in \{l,l+1\}$. 
  Similarly, if $F$ is an indecomposable sheaf with $\mu(F)>r$, then for some $E_i, E_j$, $\hom(E_i, F)\neq 0,$ $\ext^1(F, E_j)\neq 0$. 
For $h\in\Z$,  $F[h]\in \B$ implies $h\in \{l, l-1\}$. 
Consequently, if $\mu(\B\cap \A[l])$ has an irrational limit point $r$ then \[\{k\in\Z\mid \B\cap \A[k]\neq 0\}\subset \{l-1, l, l+1\}\] and an indecomposable sheaf in $\B[-1-l]$ (resp. $\B[1-l]$) has slope $<r$ (resp. $>r$).   
  
  If $\mu(\B\cap \A[l+1])$ also has an irrational number as its limit point then similar arguments as before show that $\{k\mid \B\cap \A[k]\neq 0\}\subset \{l, l+1\}$, that is, $\B\subset \A[l+1]*\A[l]$. Thus $\B$ is a shift of the tilted heart with respect to some torsion pair in $\A$. 
	Consider the case that the set of  limit points  of $\mu(\B\cap \A[l+1])$ is contained  in $\bar{\Q}$. 
  Since each indecomposable sheaf in $\B[-l-1]\cap \A$ has slope less than $r$, there is some rational number $q<r$ such that each indecomposable sheaf $E\in \B[-l-1]\cap \A$ has slope  $\mu(E)\leq q$. 
  Then $\Phi_{\infty,q}(\B\cap \A[l+1])\subset \A[l+1]$. Since an indecomposable object $E\in \B[1-l]\cap \A$ has slope $\mu(E)>r>q$, we have $\Phi_{\infty, q}(\B\cap \A[l-1])\subset \A[l]$. 
  It follows that \[\Phi_{\infty,q}(\B)=\Phi_{\infty,q}(\add\{\B\cap \A[l-1],\B\cap \A[l],\B\cap \A[l+1]\})\subset \A[l+1]*\A[l],\] as desired. 
\end{proof}

 The class of bounded t-structures on $\D^b(\X)$ under consideration  is reminiscent of  bounded t-structures on $\D^b(\Lambda)$, where $\Lambda$ is a representation-finite finite dimensional hereditary algebra, as the following proposition indicates.
\begin{prop}\label{not restrict to coh0}
	If one of the following cases occurs:
	\begin{itemize}
		\item $\X$ is of domestic type and $\{i\mid \vect\X[i]\cap \B\neq 0\}\nsubseteq \{j,j+1\}$ for any $j\in\Z$,
		\item $\X$ is of tubular type and $\{i\mid \vect\X[i]\cap \Phi_{\infty,q}(\B)\neq 0\}\nsubseteq \{j,j+1\}$ for any $q\in \bar{\Q}$ and $j\in\Z$,
	\end{itemize}
	then $\B$ is a length category with finitely many (isoclasses of) indecomposables and each indecomposable object in $\B$ is exceptional. 
\end{prop}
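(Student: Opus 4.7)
The plan is to combine the preceding lemmas on exceptionality and ordering, a limit-point analysis of $\mu(\B)$, and finiteness of exceptional indecomposables per slope, finishing with a peeling argument for the length property.

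First, the hypotheses of the proposition are exactly the conditions in Lemma~\ref{exceptional ind}, so every indecomposable object of $\B$ is exceptional, and Lemma~\ref{ordered ind} provides a total ordering $\{X_i\}_{i\in I}$ of the indecomposables with $\hom(X_i,X_j)=0$ for $i<j$. By Lemma~\ref{all width bounded}, $\B\subset \D_\A^{[m,n]}$ for some $m\leq n$, hence $\mu(\B)\subset \bar{\Q}$ lies in the set of slopes of indecomposable sheaves over $\X$.

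Next, I show $\mu(\B)$ has no limit point in $\bar{\R}$, which makes it a closed discrete, hence finite, subset of the compact space $\bar{\R}$. The point $\infty$ is ruled out by Lemma~\ref{inf limit point}. In the domestic case, rank of indecomposable bundles is bounded by Theorem~\ref{domestic bundle}(1), so slopes of indecomposable bundles lie in a locally finite subset of $\R$, and no finite limit point is possible either. In the tubular case, a rational limit point $q'$ would, via Lemma~\ref{fractional linear}(1), transfer through $\Phi_{\infty,q'}$ to $\infty=\phi_{q'}(q')$ being a limit point of $\mu(\Phi_{\infty,q'}(\B))$, contradicting Lemma~\ref{inf limit point} applied to $\Phi_{\infty,q'}(\B)$; and an irrational limit point is excluded by Lemma~\ref{rational limit point}, as it would place $\Phi_{\infty,q}(\B)$ inside $\A[l+1]*\A[l]$ for some $q,l$, again contradicting the hypothesis.

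Then, for each $\mu\in\mu(\B)$ I claim that $\coh^\mu\X$ contains only finitely many exceptional indecomposables. For $\mu=\infty$, $\coh_0\X=\coprod_\lambda\coh_\lambda\X$ is a family of tubes; tubes of rank $1$ carry no exceptional indecomposable (the unique quasi-simple self-extends by Serre duality), and only the finitely many tubes at exceptional points have rank $\geq 2$, each with finitely many exceptionals. In the tubular case, the telescopic equivalence reduces finite $\mu\in\Q$ to the case $\mu=\infty$. In the domestic case, each indecomposable bundle is exceptional by Theorem~\ref{domestic bundle}(1), and the AR quiver of $\vect\X$ has only $|\tilde{\Delta}|$ many $\tau$-orbits; since $\tau$ shifts slope by $\delta(\vec{\omega})\neq 0$, each orbit contributes at most one bundle per slope. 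Combined with finiteness of $\mu(\B)$ and the boundedness $\B\subset \D_\A^{[m,n]}$, this gives finitely many indecomposables in $\B$.

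Finally, to see $\B$ is of finite length, note that $X_N$ (the maximum in the ordering) receives no maps from $X_i$ with $i<N$, so any subobject of $X_N$ is a power of $X_N$; together with $\End(X_N)=k$, this forces $X_N$ to be simple in $\B$. Iterating this peeling — passing at each step to the Serre subcategory generated by $\{X_1,\dots,X_{N-1}\}$ — identifies each $X_i$ as simple in the corresponding Serre quotient and produces a finite composition series for every object of $\B$. The main obstacle is making this last peeling rigorous: one must check that the Serre quotients inherit Hom-finiteness and the ordering property, and that composition factors behave consistently under quotienting. Conceptually this is forced by the Krull-Schmidt structure plus finitely many exceptional indecomposables, but it is the least routine step.
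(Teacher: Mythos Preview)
Your argument for exceptionality and for finiteness of indecomposables is correct and matches the paper's proof: both run the same limit-point analysis of $\mu(\B)$ (ruling out $\infty$ via Lemma~\ref{inf limit point}, irrational limits via Lemma~\ref{rational limit point}, and in the tubular case rational limits via the telescopic functor), combined with the fact that each $\coh^\mu\X$ contains only finitely many exceptional indecomposables.

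The finite-length step is where you diverge from the paper, and your version has a small error and is more laborious than necessary. The paper does not peel off simples; instead it observes that with the ordering $\hom(X_i,X_j)=0$ for $i<j$ and $\End(X_i)=k$, any monomorphism $\bigoplus X_i^{s_i}\hookrightarrow \bigoplus X_i^{t_i}$ is block upper-triangular, and a short row-reduction shows that a \emph{proper} mono forces $(s_1,\dots,s_n)<(t_1,\dots,t_n)$ in lexicographic order. The descending chain condition on lex-ordered tuples in $\Z_{\geq 0}^n$ then gives finite length immediately, with no quotient categories needed.

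Your peeling idea is salvageable but mis-stated: $\add\{X_1,\dots,X_{N-1}\}$ is \emph{not} in general a Serre subcategory of $\B$, since $\hom(X_N,X_i)$ for $i<N$ need not vanish and $X_N$ may well occur as a subquotient there. What is true is that $\add\{X_N\}$ is a Serre subcategory (because $X_N$ is simple and $\ext^1_\B(X_N,X_N)\hookrightarrow \hom^1_\D(X_N,X_N)=0$), so one should quotient by $\add\{X_N\}$ rather than restrict to the complement. Even then you must check that the images of $X_1,\dots,X_{N-1}$ in $\B/\add\{X_N\}$ remain indecomposable and keep the ordering property, which is doable but is exactly the bookkeeping you flagged as non-routine. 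The paper's lexicographic argument sidesteps all of this.
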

\begin{proof}
	It has been shown in Lemma~\ref{exceptional ind} that each indecomposable object in $\B$ is exceptional under the given condition. We show that $\B$ contains finitely many indecomposables.  If $\B$ contains infinitely many indecomposables then for some $n$, $\B[n]\cap \A$ contains infinitely many indecomposables. 
	But for each $\mu\in \bar{\Q}$, $\coh^\mu \X$ contains finitely many exceptional indecomposables. Thus $\mu(\B[n]\cap \A)$ has a limit point in $\bar{\R}$. Note that an indecomposable object in $\A$ is either a torsion sheaf or a vector bundle. For $\X$ of domestic type, since rank on indecomposables is bounded, $\infty$ is the unique limit point of $\mu(\B[n]\cap \A)$. By Lemma~\ref{unbounded slope}, $\{i\mid \vect\X[i]\cap \B\neq 0\}\subset \{j,j+1\}$ for some $j$, a contradiction. For $\X$ of tubular type, under the given assumption, by Lemma~\ref{rational limit point}, $\mu(\B)$ contains at most limit points in $\bar{\Q}$.
	If $q\in \bar{\Q}$ is a limit point of $\mu(\B)$, $\infty$ is  a limit point of $\mu(\Phi_{\infty, q}(\B))$, whereby yielding a contradiction to our assumption by Lemma~\ref{inf limit point}. Thus in either case, $\B$ contains only finitely many indecomposables. It remains to show that $\B$ is of finite length.
	Let $\{X_1,\dots,X_n\}$ be a complete set of indecomposable objects in $\B$. We have $\End(X_i)=k$. Moreover, by Lemma~\ref{ordered ind}, we can suppose $\hom(X_i,X_j)=0$ for $i<j$. Then one sees that if $\oplus_{i=1}^n X_i^{\oplus s_i}$ is a proper subobject of $\oplus_{i=1}^nX_i^{\oplus t_i}$ then $(s_1,\dots, s_n)<(t_1,\dots, t_n)$, where $<$ refers to the lexicographic order. 
	If follows immediately that $\B$ must be of finite length. This finishes the proof. 
\end{proof}

As a corollary, we obtain a characterization of when a bounded t-structure on $\D^b(\X)$, where $\X$ is of domestic type, has length heart. 
\begin{cor}\label{domestic length heart} 
	If $\X$ is of domestic type then $\B$ is of finite length iff $\sharp\{i\mid \vect\X[i]\cap \B\neq 0\}>1$. 
\end{cor}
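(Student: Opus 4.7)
The plan is to distinguish cases according to $N:=\sharp\{i\mid \vect\X[i]\cap\B\neq 0\}$ and invoke the preceding propositions. First I observe $N\geq 1$: if no shift of a bundle lay in $\B$, then every indecomposable of $\B$ would be a shift of a torsion sheaf, forcing $\B\subset \D^b(\coh_0\X)$ and hence (by boundedness of the t-structure) $\D^b(\X)=\D^b(\coh_0\X)$, which is absurd since $\D^b(\coh_0\X)$ is a proper thick subcategory.

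For the forward implication I argue contrapositively: if $N=1$, the set equals $\{j\}$ for some $j$, and Proposition~\ref{restrict t-str} (first case) together with Corollary~\ref{restrict heart} shows that $\B$ is not of finite length. For the converse, assume $N\geq 2$. If $\{i\mid\vect\X[i]\cap\B\neq 0\}\nsubseteq\{j,j+1\}$ for any $j$, Proposition~\ref{not restrict to coh0} directly yields that $\B$ is of finite length. Otherwise the set equals $\{j-1,j\}$ for some $j$, and Proposition~\ref{restrict t-str} (second case) yields a proper collection $\SSS$ of simple sheaves so that the t-structure is compatible with a recollement through $\SSS^{\perp_\D}$, whose corresponding t-structure on $\SSS^{\perp_\D}$ is a shift of the HRS-tilt of some torsion pair $(\T,\F)$ on $\SSS^{\perp_\A}$ with $\SSS^{\perp_\A}\cap\coh_0\X\subsetneq\T\subsetneq \SSS^{\perp_\A}$. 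By Corollary~\ref{restrict heart} it then suffices to show that the tilted heart $\F[1]*\T$ is of finite length.

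Via the equivalence $\SSS^{\perp_\A}\simeq\coh\X'$ from Theorem~\ref{simple perp}, where $\X'$ is again a weighted projective line of domestic type, $(\T,\F)$ transports to a torsion pair in $\coh\X'$ with $\coh_0\X'\subsetneq\T\subsetneq\coh\X'$. To invoke the equivalence (4)$\Leftrightarrow$(1) of Proposition~\ref{length torsion pair}, it suffices to exhibit $n:=\rk K_0(\X')$ pairwise non-isomorphic indecomposable sheaves $E_1,\dots,E_n\in\T$ with $\tau E_i\in\F$. By Lemma~\ref{cotilting torsion theory} we have $\coh^{\leq\mu}\X'\subset\F$ and $\coh^{\geq\nu}\X'\subset\T$ for some reals $\mu<\nu$. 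Since $\delta(\vec{\omega})<0$ in the domestic case, the slopes $\mu(\tau^m E_0)=\mu(E_0)+m\delta(\vec{\omega})$ strictly decrease with $m$ along the $\tau$-orbit of any indecomposable bundle $E_0$ and tend to $\pm\infty$, so some $m_0\in\Z$ satisfies $\tau^{m_0}E_0\in\T$ and $\tau^{m_0+1}E_0\in\F$. By Theorem~\ref{domestic bundle}(3) the Auslander-Reiten quiver of $\vect\X'$ has the form $\Z\tilde{\Delta}$, so the $\tau$-orbits of indecomposable bundles correspond bijectively to the vertices of $\tilde{\Delta}$, of which there are exactly $\rk K_0(\X')$ many. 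Selecting one boundary indecomposable per orbit produces the required $n$ pairwise non-isomorphic $E_i$'s.

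The main obstacle lies in this last sub-case. While the extraction of $n$ boundary indecomposables is natural, the appeal to Proposition~\ref{length torsion pair} requires that their direct sum $T=\oplus E_i$ is not merely tilting (which is already provided by the argument in that proposition) but also induces precisely the original torsion pair $(\T,\F)$ rather than a strictly larger tilting one. In the domestic case this identification follows from the discreteness of slopes and the fact that indecomposable bundles are stable and exceptional (Theorem~\ref{domestic bundle}(1)), as illustrated on $\P^1$ by the standard tilting bundles $\O(n-1)\oplus\O(n)$.
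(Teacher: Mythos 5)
Your reduction to the case $\{i\mid \vect\X[i]\cap\B\neq 0\}=\{j-1,j\}$ is correct and matches the paper (the case $\{j\}$ via Proposition~\ref{restrict t-str} and Corollary~\ref{restrict heart}, and the case $\nsubseteq\{j,j+1\}$ via Proposition~\ref{not restrict to coh0}, are handled the same way). But in the remaining case your construction has a genuine gap: from $\tau^m E_0\in\T$ for $m\ll 0$ and $\tau^m E_0\in\F$ for $m\gg 0$ you conclude that some $m_0$ satisfies $\tau^{m_0}E_0\in\T$ and $\tau^{m_0+1}E_0\in\F$. This implicitly assumes that every member of the $\tau$-orbit lies in $\T\cup\F$, which is false for a general torsion pair: an indecomposable bundle $E$ of rank $\geq 2$ can have its canonical sequence $0\to tE\to E\to E/tE\to 0$ with both terms nonzero, so $E$ lies in neither class, and an orbit can then pass from $\T$ to $\F$ through such ``neither'' objects without producing a consecutive transition pair. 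The paper proves the dichotomy only for \emph{line} bundles (the step ``each line bundle lies in $\T$ or $\F$'' inside Lemma~\ref{no quasi-simple}(1)), and the proof there uses that a nonzero subsheaf of a line bundle has rank one with torsion quotient, together with $\coh_0\X\subset\T$ --- an argument with no analogue in higher rank. Note also that the number of line-bundle orbits is only $[L(\udp):\Z\vec{\omega}]$, strictly less than $\rk K_0(\X')$ when some $p_i\geq 2$, so the line-bundle case cannot supply your $n$ objects; and in fact the transition indecomposables of a torsion pair induced by a tilting sheaf are exactly the indecomposable summands of that tilting sheaf, which need not meet every $\tau$-orbit of $\Z\tilde{\Delta}$, so the ``one boundary indecomposable per orbit'' selection is in general impossible, not merely unproved. (Your closing worry, by contrast, is a non-issue: if $E_i\in\T$ and $\tau E_i\in\F$ then $\T\subset\T_T$ and $\F\subset\F_T$ for the induced pair of $T=\oplus E_i$, forcing equality of torsion pairs; no appeal to stability or discreteness of slopes is needed.)

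The paper sidesteps all of this by needing only a \emph{single} transition object: Lemma~\ref{no quasi-simple}(1) produces one line bundle $L\in\T'$ with $\tau L\in\F'$, which is Ext-projective in the tilted aisle by Lemma~\ref{ext-proj}; Lemma~\ref{ext-proj recollement} then makes the t-structure compatible with the recollement through $L^{\perp_{\D_1}}=\D^b(L^{\perp_{\coh\X'}})$, and Lemma~\ref{wpl perp}(1) identifies $L^{\perp_{\coh\X'}}$ with $\mod\Lambda$ for a representation-finite hereditary algebra, so every bounded t-structure there has length heart (Lemma~\ref{finite rep t-str}); since $\pair{L}_{\D_1}\simeq\D^b(k)$, Lemma~\ref{no-ar-le} gives finite length of the tilted heart. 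If you want to salvage your route, you would have to first prove that every torsion pair of this kind is induced by a tilting sheaf --- but in the paper that statement is Proposition~\ref{domestic torsion pair}, whose proof relies on the very corollary you are proving, so this direction is circular as the paper is organized.
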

\begin{proof}
	Proposition~\ref{not restrict to coh0} tells us that if $\{i\mid \vect\X[i]\cap \B\neq 0\}\nsubseteq \{j,j+1\}$ for any $j$ then $\B$ is of finite length. So consider those $\B$ with $\{i\mid \vect\X[i]\cap \B\neq 0\}= \{j\}$ or $\{j-1,j\}$ for some $j$.  
	By Corollary~\ref{restrict heart}, if $\{i\mid \vect\X[i]\cap \B\neq 0\}=\{j\}$ then $\B$ is not of finite length. So consider the case $\{i\mid \vect\X[i]\cap \B\neq 0\}=\{j-1,j\}$. We shall apply Proposition~\ref{restrict to coh0} and keep the notation there. By Theorem~\ref{simple perp}, we have an equivalence $\SSS^{\perp_\A}\simeq \coh\X'$, where $\X'$ is also a weighted projective line of domestic type.  
	By Remark~\ref{perp torsion pair}, the corresponding t-structure on $\D^b(\X')\simeq \D^b(\SSS^{\perp_\A})$ has up to shift the tilted heart $\F'[1]*\T'$ for some torsion pair $(\T',\F')$ in $\coh\X'$ with $\coh_0\X'\subsetneq  \T'\subsetneq  \coh\X'$. 
	By Lemma~\ref{no quasi-simple}(1), we have a line bundle $L\in \T'$ with $\tau L\in \F'$. Let $(\D_1^{\leq 0}, \D_1^{\geq 0})$ be the bounded t-structure on $\D_1:=\D^b(\X')$ with heart $\F'[1]*\T'$. By Lemma~\ref{ext-proj}, $L$ is $\D_1^{\leq 0}$-projective.  
	By Lemma~\ref{ext-proj recollement},  $(\D_1^{\leq 0}, \D_1^{\geq 0})$ is compatible with the admissible subcategory $L^{\perp_{\D_1}}=\D^b(L^{\perp_{\coh\X'}})$ of $\D_1=\D^b(\X')$.  
We know from Lemma~\ref{wpl perp}(1) that $L^{\perp_{\coh\X'}}\simeq \mod \Lambda$ for a  representation-finite  finite dimensional hereditary algebra. 
	Then by Lemma~\ref{finite rep t-str}, each bounded t-structure of $L^{\perp_{\D_1}}=\D^b(L^{\perp_{\coh\X'}})$ has length heart. Moreover, ${}^{\perp_{\D_1}}(L^{\perp_{\D_1}})=\pair{L}_{\D_1}\simeq \D^b(k)$. Thus the tilted heart $\F'[1]*\T'$ is of finite length by Lemma~\ref{no-ar-le}. So is $\B$. 
	In conclusion, $\B$ is of finite length iff $\sharp \{i\mid \vect\X[i]\cap \B\neq 0\}>1$. 
\end{proof}

\subsection{Some properties of silting objects}
Recall K\"onig-Yang correspondence (see Theorem~\ref{silting t-str}) that equivalent classes of silting objects in $\D^b(\X)$ are in bijective correspondence with bounded t-structures on $\D^b(\X)$ with length heart.
 So we  continue to describe some properties of silting objects in $\D^b(\X)$, which in turn give information on bounded t-structures with length heart. 

 By Proposition~\ref{hereditary ext vanish}, the direct summands of a basic silting object $T$ in $\D^b(\X)$ can be ordered to form a full exceptional sequence. We obtain the following information on directs summands of $T$ from our previous conclusion on full exceptional sequences. This holds particularly for a tilting object in $\D^b(\X)$.
\begin{prop}\label{silting summand}
	Let $T$ be a  silting object in $\D^b(\X)$. 

	\begin{enum2}
	\item If $T$ contains a shift of a torsion sheaf as its direct summand then $T$ contains a shift of an exceptional simple sheaf as its direct summand. 
	
	\item If $\X$ is of domestic  type  then $T$ contains a  shift of some line bundle as its direct summand. 
 
	\item If $\X$ is of tubular type then  for a suitable $q\in \bar{\Q}$, $\Phi_{\infty, q}(T)$ contains a shift of some exceptional simple torsion sheaf  and a shift of a line bundle as its direct summands.
	\end{enum2}
 \end{prop}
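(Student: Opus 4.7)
The plan is to extract a full exceptional sequence in $\coh\X$ from the silting object $T$ and then invoke the results of \S3.6 (Lemma~\ref{exseq simple}, Proposition~\ref{domestic exseq}, Corollary~\ref{tubular exseq}). First I would write $T=\bigoplus_{i=1}^n T_i$ with $T_i$ pairwise non-isomorphic indecomposables, and using that $\coh\X$ is hereditary, write each $T_i=E_i[n_i]$ with $E_i\in\coh\X$ indecomposable. By Proposition~\ref{hereditary ext vanish} the $T_i$'s can be reordered (say $\sigma$ is the permutation) to form an exceptional sequence in $\D^b(\X)$. Unravelling the condition $\hom^\bullet_{\D^b(\X)}(T_{\sigma(j)},T_{\sigma(i)})=0$ for $j>i$ over all internal shifts, this is equivalent to $\hom_{\coh\X}(E_{\sigma(j)},E_{\sigma(i)})=\ext^1_{\coh\X}(E_{\sigma(j)},E_{\sigma(i)})=0$ for $j>i$, and the exceptionality of each $T_i$ translates to exceptionality of $E_i$ in $\coh\X$. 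Since $T$ classically generates $\D^b(\X)$ the classes $[T_i]$ span $K_0(\X)$, and being exceptional they are linearly independent, so $n=\rk K_0(\X)$. Then Lemma~\ref{full exseq rk} upgrades $(E_{\sigma(1)},\dots,E_{\sigma(n)})$ to a full exceptional sequence in $\coh\X$.

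With this full exceptional sequence in hand, (1) and (2) become immediate. For (1), if some $T_i$ is a shift of a torsion sheaf, then $E_i$ itself is a torsion sheaf lying in the full exceptional sequence in $\coh\X$. By Lemma~\ref{exseq simple}, the sequence then contains a simple sheaf $E_j$, which is automatically exceptional as a member of an exceptional sequence; hence $T_j=E_j[n_j]$ is the desired shift of an exceptional simple sheaf among the summands of $T$. For (2), when $\X$ is of domestic type, Proposition~\ref{domestic exseq} says directly that this full exceptional sequence contains a line bundle $E_j$, so $T_j=E_j[n_j]$ is the required summand.

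For (3), assume $\X$ is of tubular type. I would pick $q=\mu(E_1)\in\bar{\Q}$, the slope of the sheaf part of some (any) fixed indecomposable summand $T_1$ of $T$. The telescopic functor $\Phi_{\infty,q}$ is an exact autoequivalence of $\D^b(\X)$, so $\Phi_{\infty,q}(T)$ is again a basic silting object, and by Lemma~\ref{fractional linear}(2) the image $\Phi_{\infty,q}(E_1)$ lies in $\coh_0\X$ (up to a possible global shift that is absorbed into the $[n_1]$), so $\Phi_{\infty,q}(T_1)$ is a shift of a torsion sheaf. Applying (1) to $\Phi_{\infty,q}(T)$ produces a shift of an exceptional simple sheaf (necessarily a torsion sheaf) as a direct summand. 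Moreover, the full exceptional sequence in $\coh\X$ extracted from $\Phi_{\infty,q}(T)$ by the first paragraph now contains a torsion sheaf, so Corollary~\ref{tubular exseq} guarantees that it also contains a line bundle; the corresponding indecomposable summand of $\Phi_{\infty,q}(T)$ is the desired shift of a line bundle.

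There is no real obstacle in this argument: the entire content is the passage from a silting object in $\D^b(\X)$ to a full exceptional sequence in $\coh\X$, which rests on the hereditariness of $\coh\X$ together with Proposition~\ref{hereditary ext vanish} and Lemma~\ref{full exseq rk}; once that translation is in place, (1), (2), (3) follow by direct appeal to Lemma~\ref{exseq simple}, Proposition~\ref{domestic exseq} and Corollary~\ref{tubular exseq} respectively, with the only genuine choice being the value of $q$ in (3), which is dictated by demanding that some summand become a torsion shift after applying $\Phi_{\infty,q}$.
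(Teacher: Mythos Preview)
Your proposal is correct and follows the same route as the paper: both extract a full exceptional sequence in $\coh\X$ from the indecomposable summands of (the basic version of) $T$ via Proposition~\ref{hereditary ext vanish} and Lemma~\ref{full exseq rk}, and then invoke Lemma~\ref{exseq simple}, Proposition~\ref{domestic exseq}, and Corollary~\ref{tubular exseq} for (1), (2), (3) respectively. The paper's proof is a one-line citation of these three results, the passage to a full exceptional sequence having been recorded in the sentence immediately preceding the proposition; your write-up simply spells that passage out.
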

 \begin{proof}
	 (1) follows immediately from Lemma~\ref{exseq simple}, (2) from Proposition~\ref{domestic exseq} and (3) from Corollary~\ref{tubular exseq}.
\end{proof}
 
 A silting object $T$ in $\D^b(\X)$ is called \emph{concentrated} if $T$ contains nonzero direct summands in $\vect\X[m]$ for a unique $m$. This is a generalization of the notion of a concentrated tilting complex (\cite[Definition 9.3.3]{Meltzer}). 	
 \begin{lem}\label{concentrated silting}
		A silting object $T$ in $\D^b(\X)$ is concentrated iff the corresponding bounded t-structure $(\D^{\leq 0},  \D^{\geq 0})$ satisfies the  property $\{i\in\Z\mid \vect\X[i]\cap \B\neq 0\}\subset \{j,j+1\}$ for some $j\in\Z$. 
	\end{lem}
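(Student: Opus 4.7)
The proof uses the K\"onig--Yang correspondence (Theorem~\ref{silting t-str}), which identifies the heart as $\B = \{N \in \D^b(\X) : \hom^{\neq 0}(T, N) = 0\}$, together with the heredity of $\A = \coh\X$: for $E, F\in\A$, $\hom^k_\D(E, F) = 0$ unless $k\in\{0, 1\}$. Write the basic silting object as $T = T_b \oplus T_t$, where $T_b = \bigoplus_\alpha E_\alpha[m_\alpha]$ collects the bundle summands ($E_\alpha\in\vect\X$) and $T_t = \bigoplus_\beta G_\beta[l_\beta]$ the torsion summands ($G_\beta\in\coh_0\X$). The set $\{i : \vect\X[i]\cap\B\neq 0\}$ is nonempty, for otherwise $\B\subset \D^b(\coh_0\X)$, contradicting $\pair{\B}_\D = \D^b(\X)$.

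For the direction ($\Leftarrow$), assume $\{i : \vect\X[i]\cap\B\neq 0\}\subset \{j, j+1\}$. Since $\B$ is of finite length by K\"onig--Yang, Corollary~\ref{restrict heart} rules out the singleton case, so the set equals $\{j, j+1\}$. Proposition~\ref{restrict t-str} then provides a unique proper collection $\SSS$ of simple sheaves with the t-structure compatible with the recollement $\SSS^{\perp_\D}\to \D^b(\X)\to \pair{\SSS}_\D$, and the corresponding t-structure on $\SSS^{\perp_\D}\simeq \D^b(\coh\X')$ is (after shifting by $-j$) the HRS-tilt by a torsion pair $(\T, \F)$ in $\SSS^{\perp_\A}\simeq \coh\X'$ with $\coh_0\X'\subsetneq \T\subsetneq \coh\X'$. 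Corollary~\ref{restrict heart} together with Proposition~\ref{length torsion pair} identify $(\T,\F)$ as the one induced by a basic tilting sheaf $T'$ over $\X'$. By Lemma~\ref{ext-proj from recollement}, $T$ decomposes as $T_1 \oplus T_2$, with $T_2\in\pair{\SSS}_\D\subset \D^b(\coh_0\X)$ consisting entirely of shifted torsion summands, and the indecomposable summands of $T_1$ corresponding under $i^*$ to those of $T'[-j]$. Since $T'$ is a sheaf and $i^*$ preserves rank (Theorem~\ref{simple perp}), every bundle summand of $T_1$ lies in $\vect\X[-j]$, so $T$ is concentrated with $m = -j$.

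For the direction ($\Rightarrow$), assume $T$ is concentrated with all $m_\alpha = m$; shift so that $m = 0$. Suppose, for a contradiction, that $F\in \vect\X$ is nonzero with $F[i]\in\B$ and $i\notin \{0, 1\}$. Expanding $\hom^n_\D(E_\alpha, F[i]) = \ext^{n+i}(E_\alpha, F)$ and using heredity, the vanishing of $\hom^n(T_b, F[i])$ for all $n\neq 0$ forces both $\hom(E_\alpha, F) = 0$ and $\ext^1(E_\alpha, F) = 0$ for every $\alpha$, i.e., $F \in T_b^{\perp_\A}$; the analogous computation for $T_t$ forces $\ext^1(G_\beta, F) = 0$ for each $\beta$ with $l_\beta\neq i - 1$. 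The main obstacle is to complete the derivation of $F = 0$: if no torsion summand satisfies $l_\beta = i - 1$, then $F\in T^{\perp_\D}$ and the classical generation $\pair{T}_\D = \D^b(\X)$ yields $F = 0$; the delicate ``leakage'' case occurs when some $l_\beta = i - 1$, where the Ext-vanishing condition on $F$ is not forced for those summands. This will be resolved by invoking the silting constraint $\hom^{>0}(T, T) = 0$, which (when $i\geq 2$, so that $l_\beta = i - 1\geq 1$) forces $\ext^1(G_\beta, T_b) = 0$ precisely for those summands, and a case analysis combining $F\in T_b^{\perp_\A}$ with the uniserial structure of $\coh_\lambda\X$ to derive the contradiction; the case $i\leq -1$ is dual.
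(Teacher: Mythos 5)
Your ($\Leftarrow$) direction is essentially the paper's own argument and is fine: finite length of $\B$ (K\"onig--Yang) excludes the singleton case of Proposition~\ref{restrict t-str} via Corollary~\ref{restrict heart}, the torsion pair with $\coh_0\X'\subsetneq \T'\subsetneq \coh\X'$ is induced by a basic tilting \emph{bundle} (last clause of Proposition~\ref{length torsion pair}), and each bundle summand $E_\alpha[m_\alpha]$ of $T$, being Ext-projective and of nonzero rank (hence outside $\im\, j_!\subset \D^b(\coh_0\X)$), maps under the rank-preserving, t-exact functor $i^*$ to a nonzero Ext-projective $(i^*E_\alpha)[m_\alpha]$ in $\add\, T'[-j]$, forcing $m_\alpha=-j$. (Minor overstatement: Lemma~\ref{ext-proj from recollement} gives a bijection only on the $\pair{\SSS}_\D$-side; on the $\SSS^{\perp_\D}$-side it only says $i^*X$ is a nonzero Ext-projective, but that is all you use.)

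The ($\Rightarrow$) direction has a genuine gap, exactly at the ``leakage'' case you flag. Your bookkeeping correctly reduces to: $F\in T_b^{\perp_\A}$ nonzero, and $\ext^1(G_\beta,F)\neq 0$ for some torsion summand with $l_\beta=i-1$ (indeed this nonvanishing is \emph{forced}, since otherwise $F\in T^{\perp_\D}=0$). But the proposed resolution cannot close it: the silting condition $\hom^{>0}(T,T)=0$ only yields $\ext^1(G_\beta,T_b)=0$, i.e.\ $\hom(T_b,\tau G_\beta)=0$ by Serre duality --- it puts no constraint on the pairing $\ext^1(G_\beta,F)\cong D\hom(F,\tau G_\beta)$ itself, which, as just noted, must be nonzero for $F[i]$ to survive in $\B$. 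So no amount of Hom-vanishing bookkeeping between $T$ and $F$ can produce the contradiction; nothing in the uniserial structure of a single tube forbids a nonzero map $F\to\tau G_\beta$ with $F\in T_b^{\perp_\A}$ and $\tau G_\beta\in T_b^{\perp_0}$. The obstruction is global, and the paper sidesteps your computation entirely: by Happel--Ringel (Proposition~\ref{Happel-Ringel lemma}) the torsion summands of $T$ are exceptional, hence supported at \emph{exceptional} points; so for a simple sheaf $S$ at an ordinary point and $T_b\in\vect\X[l]$, one has $\hom^\bullet(T_t,S[l])=0$ by tube-orthogonality and $\hom^{\neq 0}(T_b,S[l])=0$ (using $\ext^1(E,S)\cong D\hom(S,\tau E)=0$), whence $S[l]\in\B$ is a shifted \emph{non-exceptional} torsion sheaf. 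Lemma~\ref{restrict to coh0}, implication (3)$\Rightarrow$(1) --- whose proof is where the genuinely global input (Lemma~\ref{trun bundle} and boundedness, via truncation triangles) lives --- then gives $\{i\mid \vect\X[i]\cap\B\neq 0\}\subset\{j,j+1\}$ at once. Replacing your case analysis by this argument repairs the proof.
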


	\begin{proof}
		Recall that in K\"onig-Yang correspondence, the t-structure $\tstr$ corresponding to  $T$ has heart \[\B=\{X\in \D^b(\X)\mid \hom^{\neq 0}(T,X)=0\}.\] Let $T$ be a concentrated silting object, say $T=T_1\oplus T_2$ with $T_1\in \vect\X[l]$ and $T_2\in\D^b(\coh_0\X)$. By Happel-Ringel Lemma (see Proposition~\ref{Happel-Ringel lemma}), the indecomposable direct summands of $T_2$ are exceptional.  Hence $T_2$ is supported at exceptional points. For a simple sheaf $S$ supported at an ordinary point, we have $\hom^{\neq 0}(T_1, S[l])=0$ and $\hom^k(T_2,S[l])=0$ for any $k\in\Z$ and thus $S[l]$ lies in $\B$. If follows from Lemma~\ref{restrict to coh0} that $\{i\in\Z\mid \vect\X[i]\cap \B\neq 0\}\subset \{j,j+1\}$ for some $j$.  

	Conversely, suppose $\{i\mid \vect\X[i]\cap \B\neq 0\}\subset \{j,j+1\}$ for some $j$. 
		By Proposition~\ref{restrict t-str} and Remark~\ref{perp torsion pair}, there is a proper collection $\SSS$ of simple sheaves such that the t-structure $(\D^{\leq 0}, \D^{\geq 0})$  is compatible with the admissible subcategory $\D^b(\SSS^{\perp_\A})$ of $\D^b(\X)$ and  up to shift the corresponding t-structure $(\D_1^{\leq 0}, \D_1^{\geq 0})$ on $ \D^b(\X')\simeq \D^b(\SSS^{\perp_\A})$ has heart $\F'[1]*\T'$ for  some torsion pair $(\T', \F')$ in $\coh\X'$ with $\coh_0\X'\subsetneq \T'\subsetneq \coh\X'$. 
		Since $\B$ is of finite length, so is $\F'[1]*\T'$ by Lemma~\ref{no-ar-le} and thus $(\T', \F')$ is induced by a tilting bundle in $\coh\X'$ by Proposition~\ref{length torsion pair}. Hence indecomposable Ext-projectives in $\D_1^{\leq 0}$ are bundles. If $X[n]$ is an indecomposable direct 
		summand of $T$ with $X$ a bundle then $X[n]$ is $\D^{\leq 0}$-projective and thus $i^* X[n]$ is nonzero $\D_1^{\leq 0}$-projective by Lemma~\ref{ext-proj from recollement}, where $i^*$ is the left adjoint of the composition $\D^b(\X')\overset{\sim}{\ra} \D^b(\SSS^{\perp\A})\monic \D^b(\X)$. This implies that $i^* X[n]$ is a nonzero bundle.
		By Theorem~\ref{simple perp}(2),   $i^*$ is t-exact with respect to the standard t-structures. So we have $n=0$. Hence $T$ is concentrated.
\end{proof}

We now give some properties of the endomorphism algebra of a silting object in $\D^b(\X)$. This generalizes parts of \cite[Theorem 9.4.1, 9.5.3]{Meltzer}. 

\begin{prop}\label{silting property}
	Let $T$ be a silting object in $\D^b(\X)$ and $\Gamma=\End(T)$. 

	\begin{enum2}
	\item The quiver of $\Gamma$ has no oriented cycle. In particular, $\Gamma$ has finite global dimension. 
 
	\item If $\X$ is of domestic or tubular type then $\Gamma$ is either representation infinite or representation directed. 
	
	\item For $\X$ of domestic type, $\Gamma$ is representation infinite iff $T$ is concentrated. 
	
	\item For $\X$ of tubular type, $\Gamma$ is representation infinite iff $\Phi_{\infty, q}(T)$ is concentrated for some $q\in \bar{\Q}$. 
	\end{enum2}
\end{prop}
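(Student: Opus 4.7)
The plan is to leverage the K\"onig--Yang correspondence (Theorem~\ref{silting t-str}) to identify $\B \simeq \mod \Gamma$, where $\B$ is the heart of the bounded t-structure $\tstr$ associated to $T$, and then translate the structural results of \S\ref{sec: X t-str} for such hearts into properties of $\Gamma$. With this dictionary in hand, (2) will follow from (3) and (4) together with Lemma~\ref{ordered ind}, so the essential work lies in (1) and in the two representation-infinite--vs--representation-directed dichotomies (3) and (4).

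For (1), I will order a complete set of pairwise non-isomorphic indecomposable summands of $T$ as an exceptional sequence $(T_1, \dots, T_n)$, which is possible by Proposition~\ref{hereditary ext vanish}. Writing $e_i$ for the primitive idempotents of $\Gamma$ corresponding to these summands, the exceptional condition gives $e_i \Gamma e_j = \hom(T_j, T_i) = 0$ for $j > i$ and $e_i \Gamma e_i = k$; hence the Gabriel quiver $Q$ of $\Gamma$ has no loops and no oriented cycle. Since $\Gamma$ is finite dimensional and $Q$ is acyclic, taking a topological ordering of the vertices and inducting on $i$ from the sink end shows that the simple $S_i$ admits a finite projective resolution whose higher terms involve only simples $S_j$ with $j$ strictly later in the order, giving $\gldim \Gamma < \infty$.

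The core of (3) and (4) is the following dichotomy, applied with $T$ itself in the domestic case and with $T' := \Phi_{\infty,q}(T)$ (which is silting in $\D^b(\X)$ with $\End(T') \cong \Gamma$) for suitable $q$ in the tubular case. Case A: $T$ is not concentrated (resp.\ $\Phi_{\infty,q}(T)$ is not concentrated for any $q \in \bar{\Q}$). By Lemma~\ref{concentrated silting} this is exactly the hypothesis of Proposition~\ref{not restrict to coh0}, which gives that $\B$ has only finitely many indecomposables, each exceptional; therefore $\Gamma$ is representation-finite, and Lemma~\ref{ordered ind} totally orders these indecomposables with Homs going one way, proving $\Gamma$ is representation-directed. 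Case B: $T$ is concentrated (resp.\ some $\Phi_{\infty,q}(T)$ is). Since $\B$ is of finite length, Corollary~\ref{domestic length heart} (and its tubular analogue via the telescopic functor) forces $\sharp\{i \mid \vect\X[i] \cap \B \neq 0\} = 2$, so Proposition~\ref{restrict t-str} produces a proper collection $\SSS$ of simple sheaves with $(\D^{\leq 0},\D^{\geq 0})$ compatible with the recollement along $\SSS^{\perp_\D}$, and the induced t-structure on $\SSS^{\perp_\D}$ is (up to shift) the HRS-tilt of a torsion pair $(\T,\F)$ in $\SSS^{\perp_\A}$ with $\SSS^{\perp_\A} \cap \coh_0\X \subsetneq \T$. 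Because $\SSS$ contains only exceptional simples, $\T$ contains $\coh_\lambda\X$ for every ordinary $\lambda \in \P^1$, which has infinitely many indecomposables $S_\lambda^{[n]}$; via the fully faithful Serre embedding ${}^p i_* : \B_1 \hookrightarrow \B$ of \S\ref{sec: glue heart}, this forces $\B$, hence $\mod\Gamma$, to be representation-infinite.

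Combining the two cases proves both equivalences in (3) and (4), and as noted at the outset (2) is then immediate: if $\Gamma$ is representation-finite, the relevant half of (3) or (4) places us in Case A, where Lemma~\ref{ordered ind} gives the representation-directedness. The main technical subtlety I foresee lies in the Case B argument for (4): one must verify that after applying the telescopic functor $\Phi_{\infty,q}$ the combinatorial setup of Proposition~\ref{restrict t-str} still produces an admissible subcategory whose corresponding heart contains an ordinary tube $\coh_\lambda\X$, but this is automatic because the analysis of Proposition~\ref{restrict t-str} is carried out in $\D^b(\X)$ itself for $\Phi_{\infty,q}(\B)$ and the collection $\SSS$ there still avoids all simples at ordinary points by definition of properness.
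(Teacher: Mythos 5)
Your proof is correct, and for part (1), part (2), and the non-concentrated direction of (3)--(4) it follows essentially the paper's own argument: ordering the summands of $T$ into an exceptional sequence via Proposition~\ref{hereditary ext vanish} for acyclicity, and combining Lemma~\ref{concentrated silting}, Proposition~\ref{not restrict to coh0} and Lemma~\ref{ordered ind} for representation-finiteness and directedness (your derivation of (2) from Case A of (3)--(4) is sound and not circular, since both cases are established independently; the paper instead proves (2) directly from Lemma~\ref{restrict to coh0}). Where you genuinely diverge is the implication ``concentrated $\Rightarrow$ representation infinite'': the paper gets this in one line from Lemma~\ref{unbounded slope} --- the condition $\{i\mid \vect\X[i]\cap \B\neq 0\}\subset\{j,j+1\}$ is equivalent to $\infty$ being a limit point of $\mu(\B)$, which already forces infinitely many pairwise non-isomorphic indecomposables in $\B$ --- whereas you route through the finite length of $\B\simeq \mod\Gamma$, the two-element case of Proposition~\ref{restrict t-str}, and the strict inclusion $\SSS^{\perp_\A}\cap\coh_0\X\subsetneq \T$ to exhibit a full ordinary tube $\coh_\lambda\X$ (up to shift) inside $\B$. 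Your route is longer but equally rigorous, and it buys an explicit infinite family of indecomposables in the heart; the paper's is shorter and does not need to invoke the length property of $\B$ at all. One citation in your Case B should be repaired: you appeal to ``the tubular analogue of Corollary~\ref{domestic length heart}'', but the full analogue (finite length iff $\sharp\{i\mid \vect\X[i]\cap\B\neq 0\}>1$) is false for tubular $\X$ --- the HRS-tilt at $(\coh^{>\mu}\X,\coh^{<\mu}\X)$ for irrational $\mu$ has $\sharp\{i\mid \vect\X[i]\cap\B\neq 0\}=2$ yet its heart is neither noetherian nor artinian. What your argument actually uses is only the direction ``$\{i\mid \vect\X[i]\cap\B\neq 0\}=\{j\}$ implies $\B$ is not of finite length'', which is exactly Corollary~\ref{restrict heart} and is stated for arbitrary $\X$; cite that instead and your argument goes through verbatim in both the domestic and tubular cases.
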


\begin{proof}
 Let $(\D^{\leq 0}, \D^{\geq 0})$ be the bounded t-structure corresponding to $T$ in K\"onig-Yang correspondence. Its heart $\B$ is equivalent to $\mod\, \Gamma$. 

  (1)  We can assume $T$ is basic. Then by Proposition~\ref{hereditary ext vanish}, indecomposable direct summands of $T$ can be ordered to form an exceptional sequence. Hence the quiver of $\Gamma=\End(T)$ has no oriented cycle. 

  (2)  If $\Gamma$ is not representation infinite then $\B\simeq \mod\,\Gamma$ contains finitely many indecomposables. Thus $\B$ contains no non-exceptional object by Lemma~\ref{restrict to coh0} (for the tubular case, we may need an additional application of a telescopic functor to apply Lemma~\ref{restrict to coh0}.). By Lemma~\ref{ordered ind}, each object in $\mod\,\Gamma\simeq \B$ is directed. So $\Gamma$ is representation directed.

  (3)  Suppose  $T$ is concentrated. By Lemma~\ref{concentrated silting}, we have $\{i\in\Z\mid \vect\X[i]\cap \B\neq 0\}\subset \{j,j+1\}$ for some $j\in\Z$. By Lemma~\ref{unbounded slope}, $\B$ contains infinitely many  indecomposables. Since $\mod\,\Gamma\simeq \B$, $\Gamma$ is representation infinite. Conversely, suppose $\Gamma$ is representation infinite, then $\B$ contains infinitely many indecomposables. By Proposition~\ref{not restrict to coh0}, we have  $\{i\in\Z\mid \vect\X[i]\cap \B\neq 0\}\subset \{j,j+1\}$ for some $j\in\Z$. Then Lemma~\ref{concentrated silting} implies that $T$ is concentrated.  

  (4) The argument is similar to that for (3), except that we need to take into account the action of a suitable telescopic functor $\Phi_{\infty,q}$. We remark that $\Phi_{\infty,q}(T)$ corresponds to the bounded t-structure with heart $\Phi_{\infty, q}(\B)$. 
\end{proof}

\subsection{Description of bounded t-structures on $\D^b(\X)$}\label{sec: thm}
We are in a position to formulate our description of bounded t-structures on $\D^b(\X)$ using HRS-tilt and recollement.  
 Recall once again that for $P\subset \P^1$, $(\T_P,\F_P)$ denotes the torsion pair ~\eqref{P torsion pair} in $\coh\X$.

We begin with the domestic case.
\begin{thm}\label{thm: domestic}
	Let $\X$ be a weighted projective line of domestic type. Suppose $(\D^{\leq 0}, \D^{\geq 0})$ is a bounded t-structure on $\D^b(\X)$ with heart $\B$. Then exactly one of the following holds: 

	\begin{enum2}
	\item up to the action of $\pic\X$, $(\D^{\leq 0}, \D^{\geq 0})$ is compatible with the recollement \[ \xymatrix{\O^{\perp_\D}  \ar[rr]|{i_*}   & &\ar@/_1pc/[ll] \ar@/^1pc/[ll]\D=\D^b(\X) \ar[rr] & &\ar@/_1pc/[ll]|{j_!} \ar@/^1pc/[ll] \pair{\O}_\D,}\] 
	where $i_*,j_!$ are the inclusion functors, in which case $\B$ is of finite length;

\item for a unique (up to equivalence) proper collection $\SSS$ of  simple sheaves and a unique $P\subset \P^1$, $(\D^{\leq 0}, \D^{\geq 0})$ is compatible with the recollement
	\[ \xymatrix{ \D^b(\SSS^{\perp_\A})=\SSS^{\perp_\D}  \ar[rr]|{i_*}   & &\ar@/_1pc/[ll] \ar@/^1pc/[ll]\D=\D^b(\X) \ar[rr] & &\ar@/_1pc/[ll]|{j_!} \ar@/^1pc/[ll] \pair{\SSS}_\D,}\]
	where $i_*,j_!$ are the inclusion functors, 
	such that the corresponding t-structure on $\SSS^{\perp_\D}$ is a shift of the HRS-tilt with respect to the torsion pair $(\SSS^{\perp_\A}\cap \T_P, \SSS^{\perp_\A}\cap \F_P)$ in $\SSS^{\perp_\A}$, 
	in which case $\B$ is not of finite length and $\B$ is noetherian resp. artinian iff $P=\emptyset$ resp. $P=\P^1$.
	\end{enum2}	
\end{thm}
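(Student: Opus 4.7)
The plan is to dichotomize on whether the heart $\B$ is of finite length, using Corollary~\ref{domestic length heart}: for $\X$ domestic, $\B$ is of finite length if and only if $\sharp\{i\mid \vect\X[i]\cap \B\neq 0\}>1$. This produces a partition that aligns exactly with the two asserted cases (case (2) yields non-length hearts, case (1) yields length hearts), so exhaustiveness and exclusivity are automatic once we match each side to the correct description.

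For case (2), the assumption that $\B$ is not of finite length forces $\{i\mid \vect\X[i]\cap \B\neq 0\}$ to be a singleton $\{j\}$. I would then invoke Proposition~\ref{restrict t-str} in its $\{j\}$-subcase: it manufactures a unique (up to equivalence) proper collection $\SSS$ of simple sheaves with respect to which $(\D^{\leq 0},\D^{\geq 0})$ is compatible with the recollement \[ \xymatrix{ \D^b(\SSS^{\perp_\A})=\SSS^{\perp_\D}  \ar[rr]|{i_*}   & &\ar@/_1pc/[ll] \ar@/^1pc/[ll]\D\ar[rr] & &\ar@/_1pc/[ll]|{j_!} \ar@/^1pc/[ll] \pair{\SSS}_\D,}\] and a unique $P\subset \P^1$ such that the corresponding t-structure on $\SSS^{\perp_\D}$ is a shift of the HRS-tilt associated with $(\SSS^{\perp_\A}\cap \T_P,\SSS^{\perp_\A}\cap \F_P)$. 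The final assertion about noetherianness and artinianness of $\B$ being equivalent to $P=\emptyset$ respectively $P=\P^1$ is delivered by Corollary~\ref{restrict heart}.

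For case (1), when $\B$ is of finite length, I would pass through König-Yang correspondence. By Theorem~\ref{silting t-str} (applied via the canonical-algebra derived equivalence $\D^b(\X)\simeq \D^b(\Lambda)$ of Theorem~\ref{der canonical algebra}), the t-structure corresponds to a basic silting object $T\in \D^b(\X)$ which is $\D^{\leq 0}$-projective. Proposition~\ref{silting summand}(2) then guarantees that $T$ contains a direct summand of the form $L[n]$ for some line bundle $L$ and $n\in\Z$. Since $\pic\X\cong L(\udp)$ acts transitively on isoclasses of line bundles via tensoring, the autoequivalence $\Phi=L^{-1}\otimes(-)\in\pic\X$ converts $T$ into a silting object containing $\O[n]$ as a direct summand, and correspondingly transforms the t-structure so that $\O$ becomes Ext-projective in $\Phi(\D^{\leq -n})$. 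An application of Lemma~\ref{ext-proj recollement} to the exceptional Ext-projective $\O$ (whose right perpendicular $\O^{\perp_\D}$ is admissible by Proposition~\ref{admissible Serre functor}, since $\D^b(\X)$ has a Serre functor) then yields the desired compatibility with the $\O^{\perp_\D}$-recollement.

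The two cases being mutually exclusive and jointly exhaustive reduces to the heart being finite-length or not, hence no further argument is needed for the ``exactly one'' statement. No step is really an obstacle: the main conceptual inputs—Proposition~\ref{restrict t-str} for case (2), and Proposition~\ref{silting summand}(2) together with Lemma~\ref{ext-proj recollement} for case (1)—were built up earlier precisely for this purpose. The mildly delicate point is transporting the line-bundle-summand observation through the Picard action cleanly; here I would emphasize that tensoring by a line bundle is an exact autoequivalence, so it preserves silting objects, Ext-projectivity, and admissible subcategories simultaneously.
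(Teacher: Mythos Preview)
Your proposal is correct and follows essentially the same approach as the paper: dichotomize on whether $\B$ has finite length via Corollary~\ref{domestic length heart}, handle the non-length case with Proposition~\ref{restrict t-str} and Corollary~\ref{restrict heart}, and handle the length case via K\"onig--Yang plus Proposition~\ref{silting summand}(2) and Lemma~\ref{ext-proj recollement}. The only point the paper makes explicit that you leave tacit is the converse ``compatibility with the $\O^{\perp_\D}$-recollement forces $\B$ to be of finite length'' (needed for mutual exclusivity), which follows immediately from Lemma~\ref{no-ar-le} together with the fact that $\O^{\perp_\A}\simeq \mod k[p_1,p_2,p_3]$ is representation-finite hereditary.
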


\begin{proof}

If $\B$ is of finite length then the corresponding basic silting object is the direct sum of  a complete set of indecomposable Ext-projectives in $\D^{\leq 0}$. 
	By Lemma~\ref{silting summand}, $\D^{\leq 0}$ has an Ext-projective object which is a shift of  a line bundle and thus up to the action of $\pic\X$, $(\D^{\leq 0}, \D^{\geq 0})$ is compatible with the recollement given in (1). 
	Conversely, if $(\D^{\leq 0}, \D^{\geq 0})$ is compatible with the recollement in (1), then $\B$ is of finite length since bounded t-structures on $\O^{\perp_\D}$ and $\pair{\O}_\D$ have length heart. 
	If $\B$ is not of finite length then by Proposition~\ref{not restrict to coh0},  $\B$ satisfies the assumption of Proposition~\ref{restrict t-str}. 
	By Corollary~\ref{domestic length heart}, $\B$ is not of finite length iff $\{i\mid \vect\X[i]\cap \B\neq 0\}=\{j\}$ for some $j\in\Z$ and thus $(\D^{\leq 0}, \D^{\geq 0})$ fits into type (2) by Proposition~\ref{restrict t-str}. The assertion on the noetherianness or artianness of $\B$ in this case is shown in  Corollary~\ref{restrict heart}.
\end{proof}

For the tubular case, we need one more lemma characterizing when the heart $\B$ is of finite length.

\begin{lem}\label{tubular length ext-proj}
	Suppose $\X$ is of tubular type. Then $\B$ is of finite length iff there are two indecomposable sheaves $E,F$ with $\mu(E)\neq \mu(F)$ for which $E[m],F[n]$ are $\D^{\leq 0}$-projectives for some $m,n$.
\end{lem}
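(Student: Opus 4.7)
The plan is to prove both directions, exploiting the telescopic functors $\Phi_{q,q'}$ and K\"onig--Yang correspondence in the forward direction, and an exceptional-pair reduction based on Lemma~\ref{wpl perp}(2) in the backward direction.

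For the forward direction, suppose $\B$ is of finite length. Since $\D^b(\X)\simeq \D^b(\Lambda)$ for a canonical algebra $\Lambda$ of finite global dimension (Theorem~\ref{der canonical algebra}), Theorem~\ref{silting t-str} and the discussion following it give that $(\D^{\leq 0},\D^{\geq 0})$ corresponds to a silting object $T$ which is the direct sum of a complete set of indecomposable $\D^{\leq 0}$-projectives. By Proposition~\ref{silting summand}(3), for some $q\in\bar{\Q}$ the silting object $\Phi_{\infty,q}(T)$ contains both a shift of an exceptional simple (torsion) sheaf (slope $\infty$) and a shift of a line bundle (some slope $r\in\Q$) as direct summands. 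Applying $\Phi_{q,\infty}=\Phi_{\infty,q}^{-1}$, the corresponding indecomposable direct summands of $T$ have the form $E[m]$ and $F[n]$ for indecomposable sheaves $E,F$; by Lemma~\ref{fractional linear}, $\mu(E)=\phi_q^{-1}(\infty)$ and $\mu(F)=\phi_q^{-1}(r)$, which are distinct since $\phi_q^{-1}$ is a bijection of $\bar{\R}$. As direct summands of $T$, both $E[m]$ and $F[n]$ are $\D^{\leq 0}$-projective, as required.

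For the backward direction, assume $E[m],F[n]$ are $\D^{\leq 0}$-projectives with $\mu(E)\neq\mu(F)$. Since $\ext^1(E,E)=\hom^1(E[m],E[m])=0$ and $E$ is an indecomposable sheaf, Happel--Ringel Lemma (Proposition~\ref{Happel-Ringel lemma}) yields that $E$ is exceptional; similarly $F$ is exceptional. Without loss of generality assume $\mu(E)<\mu(F)$. Because $\X$ is tubular, every indecomposable sheaf is semistable (Theorem~\ref{tubular bundle}(2), with torsion sheaves assigned slope $\infty$), so $\hom(F,E)=0$; and since $\delta(\vec{\omega})=0$, Lemma~\ref{twist slope} gives $\mu(\tau F)=\mu(F)>\mu(E)$, whence $\ext^1(F,E)=D\hom(E,\tau F)=0$ by Serre duality. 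Hence $\hom^\bullet_\D(F,E)=0$, so $(F[n],E[m])$ is an exceptional sequence consisting of $\D^{\leq 0}$-projectives. By Corollary~\ref{ex collection compatible filt}, $(\D^{\leq 0},\D^{\geq 0})$ is compatible with the admissible filtration $\{E,F\}^{\perp_\D}\subset E^{\perp_\D}\subset \D^b(\X)$. By Lemma~\ref{wpl perp}(2), $\{E,F\}^{\perp_\A}\simeq \mod \Lambda'$ for a representation-finite finite dimensional hereditary algebra $\Lambda'$, so every bounded t-structure on $\{E,F\}^{\perp_\D}=\D^b(\{E,F\}^{\perp_\A})$ has length heart (Lemma~\ref{finite rep length heart}); the corresponding t-structures on $\pair{E}_\D\simeq \D^b(k)$ and on $\pair{F}_{E^{\perp_\D}}\simeq \D^b(k)$ trivially have length heart. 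Corollary~\ref{filt no-ar-le} then gives that $\B$ is of finite length.

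The main obstacle lies in the backward direction, where the critical input is Lemma~\ref{wpl perp}(2): that in the tubular case the perpendicular category of an exceptional pair with distinct slopes is representation-finite hereditary. This is what converts the Ext-projective hypothesis into the length-heart conclusion via K\"onig--Yang. The argument genuinely uses $\delta(\vec{\omega})=0$ (so that $\tau$ preserves slope and semistability makes morphisms decrease-slope in the derived sense) and is specific to the tubular setting.
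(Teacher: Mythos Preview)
Your overall strategy matches the paper's proof exactly: the forward direction via the silting object and Proposition~\ref{silting summand}(3), and the backward direction via Corollary~\ref{ex collection compatible filt}, Lemma~\ref{wpl perp}(2), Lemma~\ref{finite rep length heart} and Corollary~\ref{filt no-ar-le}. However, the backward direction contains a genuine error in the step where you try to establish that $(F[n],E[m])$ is an exceptional pair directly from slopes.

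You argue that $\ext^1(F,E)=D\hom(E,\tau F)=0$ because $\mu(\tau F)=\mu(F)>\mu(E)$. This is backwards: semistability gives $\hom(\coh^\mu\X,\coh^{\mu'}\X)=0$ only for $\mu>\mu'$, so $\mu(E)<\mu(\tau F)$ does \emph{not} force $\hom(E,\tau F)=0$; in fact there typically are nonzero maps from lower to higher slope. Hence your computation of $\hom^\bullet(F,E)$ is not valid. Moreover, even if $\hom^\bullet(F,E)=0$ held, with the paper's convention (an exceptional sequence $(E_1,E_2)$ requires $\hom^\bullet(E_2,E_1)=0$) this would make $(E[m],F[n])$ exceptional, not $(F[n],E[m])$; so your labeling and the filtration $\{E,F\}^{\perp_\D}\subset E^{\perp_\D}\subset \D$ would not match. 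The paper avoids this pitfall by invoking Proposition~\ref{order silting}: since $E[m]\oplus F[n]$ is a partial silting object, its two summands can be ordered into an exceptional pair, without deciding from slopes which order it is; the rest of the argument then goes through verbatim in either case. Replacing your slope computation by this appeal to Proposition~\ref{order silting} fixes the proof.
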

\begin{proof}
	\nec Let $T$ be  a corresponding silting object. Then by Proposition~\ref{silting summand}, for some $q\in \bar{\Q}$, $\Phi_{\infty,q}(T)$ contains a shift of some simple sheaf and a shift of some line bundle as its direct summands. The assertion follows immediately.

	\suf  By Proposition~\ref{order silting}, either $(E,F)$ or $(F,E)$ is an exceptional pair. We only consider the case that $(F,E)$ is an exceptional pair since the other case is similar. 
	By Corollary~\ref{ex collection compatible filt}, $(\D^{\leq 0},\D^{\geq 0})$ is compatible with the admissible filtration \[\D^b(\{E,F\}^{\perp_\A})=\{E,F\}^{\perp_\D}\subset E^{\perp_\D}\subset \D.\] If $\mu(E)\neq \mu(F)$ then by Lemma~\ref{wpl perp}(2), $\{E,F\}^{\perp_\A}\simeq \mod \Lambda$ for some representation-finite  finite dimensional hereditary algebra $\Lambda$. It follows from Corollary~\ref{filt no-ar-le} and Lemma~\ref{finite rep length heart} that $\B$ is of finite length.
\end{proof}

Here comes our description of bounded t-structures in the tubular case.
\begin{thm}\label{thm: tubular}
	Let $\X$ be a weighted projective line of tubular type. Suppose $(\D^{\leq 0}, \D^{\geq 0})$ is a bounded t-structure on $\D^b(\X)$ with heart $\B$. Then exactly one of the following holds:

	\begin{enum2}
	\item for a unique $\mu \in\R\backslash \Q$, $\tstr$ is a shift of the HRS-tilt with respect to the torsion pair $(\coh^{>\mu}\X, \coh^{<\mu}\X)$ in $\coh\X$, in which case $\B$ is neither noetherian nor artinian;

	\item for a unique $\mu\in\bar{\Q}$ and a unique $P\subset \P^1$, $\tstr$ is a shift of the HRS-tilt with respect to the torsion pair 
		\[(\add\{\coh^{>\mu}\X, \coh_\lambda^\mu\X\mid \lambda\in P\},\,\,\add\{\coh_\lambda^\mu\X, \coh^{<\mu}\X\mid \lambda\in \P^1\backslash P\})\] in $\coh\X$, in which case $\B$ is not of finite  length and $\B$ is noetherian resp. artinian iff $P=\emptyset$ resp. $P=\P^1$;
	
		\item for a unique $q\in \bar{\Q}$, a unique (up to equivalence) nonempty proper collection $\SSS$ of simple sheaves and a unique $P\subset \P^1$, $\Phi_{\infty,q}((\D^{\leq 0}, \D^{\geq 0}))$ is compatible with the recollement  
			\[ \xymatrix{ \D^b(\SSS^{\perp_\A})=\SSS^{\perp_\D}  \ar[rr]|{i_*}   & &\ar@/_1pc/[ll] \ar@/^1pc/[ll]\D=\D^b(\X) \ar[rr] & &\ar@/_1pc/[ll]|{j_!} \ar@/^1pc/[ll] \pair{\SSS}_\D,}\]
	where $i_*,j_!$ are the inclusion functors, such that the corresponding t-structure on $\D^b(\SSS^{\perp_\A})$ is a shift of the HRS-tilt with respect to the torsion pair $(\SSS^{\perp_\A}\cap \T_P, \SSS^{\perp_\A}\cap \F_P)$ in $\SSS^{\perp_\A}$, 
	in which case $\B$ is not of finite length and $\B$ is noetherian resp. artinian iff $P=\emptyset$ resp. $P=\P^1$;
	
	\item for some $q\in \bar{\Q}$ and some exceptional simple sheaf $S$,  $\Phi_{\infty,q}((\D^{\leq 0},\D^{\geq 0}))$ is compatible with the recollement  
		\[ \xymatrix{ \D^b(S^{\perp_\A})=S^{\perp_\D}  \ar[rr]|{i_*}   & &\ar@/_1pc/[ll] \ar@/^1pc/[ll]\D=\D^b(\X) \ar[rr] & &\ar@/_1pc/[ll]|{j_!} \ar@/^1pc/[ll] \pair{S}_\D,}\]
	where $i_*,j_!$ are the inclusion functors, such that the corresponding t-structure on $\D^b(S^{\perp_\A})$ has length heart, in which case $\B$ is of finite length.
\end{enum2}
\end{thm}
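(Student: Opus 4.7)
The plan is to follow the finite-length/not-finite-length dichotomy on the heart $\B$, reducing to Proposition~\ref{restrict t-str} via a suitable telescopic functor $\Phi_{\infty, q}$ with $q\in\bar{\Q}$. Assume first that $\B$ is of finite length. Lemma~\ref{tubular length ext-proj} supplies indecomposable sheaves $E,F$ with $\mu(E)\neq \mu(F)$ such that $E[m], F[n]$ are $\D^{\leq 0}$-projective. Letting $T$ be the direct sum of a complete set of pairwise non-isomorphic indecomposable $\D^{\leq 0}$-projectives, $T$ is silting, and Proposition~\ref{silting summand}(3) produces $q\in\bar{\Q}$ for which $\Phi_{\infty,q}(T)$ has a shift $S[l]$ of an exceptional simple torsion sheaf as a direct summand. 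By Lemma~\ref{ext-proj recollement} applied to $\Phi_{\infty,q}((\D^{\leq 0},\D^{\geq 0}))$, this t-structure is compatible with the recollement through $S^{\perp_\D}$; length heart on the piece $S^{\perp_\D}$ then follows from Lemma~\ref{no-ar-le} (since $\pair{S}_\D\simeq \D^b(k)$ trivially has length heart). This places the original t-structure in case~(4).

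Now suppose $\B$ is not of finite length. By Proposition~\ref{not restrict to coh0} there exists $q\in\bar{\Q}$ with $\{i\mid \vect\X[i]\cap \Phi_{\infty,q}(\B)\neq 0\}\subset\{j,j+1\}$ for some $j$; among all such $q$ we fix one maximizing the cardinality of the proper collection $\SSS$ produced from $\Phi_{\infty,q}(\B)$ by Proposition~\ref{restrict t-str}. If $\SSS$ is nonempty, then the $\{j-1,j\}$ sub-branch of Proposition~\ref{restrict t-str} is excluded: via $\SSS^{\perp_\A}\simeq \coh\X'$ of Theorem~\ref{simple perp} (with $\X'$ necessarily domestic, as removing any nonempty proper collection from a tubular weight sequence yields a domestic one), Corollary~\ref{domestic length heart} would force the induced heart to be of finite length, whence Lemma~\ref{no-ar-le} would give $\B$ of finite length, contradicting assumption. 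The $\{j\}$ sub-branch then gives case~(3), with uniqueness of $q$ from Lemma~\ref{fractional linear}, as the simple torsion sheaves in $\SSS$ arise precisely from indecomposable sheaves of slope $q$ under $\Phi_{\infty,q}$.

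If instead $\SSS=\emptyset$, then $\Phi_{\infty,q}(\B)$ is up to shift the HRS-tilt of a torsion pair $(\T',\F')$ in $\coh\X$: either of the form $(\T_P,\F_P)$ in the $\{j\}$ branch of Proposition~\ref{restrict t-str}, or with $\coh_0\X\subsetneq\T'\subsetneq\coh\X$ in the $\{j-1,j\}$ branch. The first option transports back under $\Phi_{q,\infty}$ to case~(2) with $\mu=q$, using $\Phi_{q,\infty}(\coh_\lambda\X)=\coh_\lambda^q\X$ and the description in Lemma~\ref{fractional linear} of how $\vect\X$ decomposes across slopes. In the second option, Lemma~\ref{no quasi-simple}(2) splits $(\T',\F')$ into three mutually exclusive types; type~(a), featuring a quasi-simple bundle $E\in\T'$ with $\tau E\in\F'$, is ruled out by our maximality choice, since a further telescopic $\Phi_{\infty,\mu(E)}$ would convert $E$ into an exceptional simple torsion sheaf and produce a strictly larger $\SSS$. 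Types~(b) and (c) transport back via $\Phi_{q,\infty}$ to cases~(1) and (2) respectively, irrationality of the slope being preserved because $\phi_q$ has integer coefficients.

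Finally, disjointness of the four cases and uniqueness of $\mu$, $q$, $\SSS$, $P$ follow from the following observations. Case~(1) is the only case admitting an irrational limit point in $\mu(\B)$ by Lemma~\ref{rational limit point}; case~(4) is the only finite-length case; cases~(2) and~(3) are distinguished by whether $\SSS=\emptyset$; uniqueness within each case uses Lemma~\ref{t-str recollement bijection} and the uniqueness statements in Proposition~\ref{restrict t-str} and Proposition~\ref{HRS-tilt}. The noetherian/artinian statements in cases~(1)-(3) are then read off from Lemma~\ref{tilted heart no-ar-le} and Corollary~\ref{restrict heart}, applied either directly or after transport by $\Phi_{q,\infty}$. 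The main obstacle is the rigorous execution of the maximality argument for $q$ that excludes type~(a) of Lemma~\ref{no quasi-simple}(2) from the $\SSS=\emptyset$ subcase, together with the bookkeeping needed to match the slope components of $\Phi_{q,\infty}(\vect\X)$ back onto the case~(2) torsion pair.
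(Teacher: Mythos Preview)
Your overall architecture matches the paper's---split on whether $\B$ has finite length, then in the non-finite-length case push through a telescopic functor to invoke Proposition~\ref{restrict t-str}---but the key organizing device in the non-finite-length case is different, and your version has the gap you yourself flag.

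The paper does not maximize $|\SSS|$ over admissible $q$. Instead it fixes any $q$ from Proposition~\ref{not restrict to coh0} and then applies Lemma~\ref{tubular length ext-proj} \emph{as a dichotomy}: since $\B$ is not of finite length, either (I) $\Phi_{\infty,q}(\D^{\leq 0})$ has no nonzero Ext-projective at all, or (II) all indecomposable Ext-projectives there have the same slope. In case~(I), the collection $\SSS$ of Proposition~\ref{restrict t-str} is forced to be empty (any $\SSS$-side silting summand would yield an Ext-projective via Lemma~\ref{ext-proj from recollement}), and---crucially---type~(a) of Lemma~\ref{no quasi-simple}(2) is excluded \emph{immediately}: a quasi-simple $E\in\T'$ with $\tau E\in\F'$ would itself be Ext-projective, contradicting~(I). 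No further telescopic is needed, so the question of whether such a composite preserves the restriction condition never arises. In case~(II) the paper shows the common slope must be $\infty$, forcing $\SSS\neq\emptyset$ and placing the t-structure in type~(3); uniqueness of $q$ then falls out because every indecomposable Ext-projective in the \emph{original} aisle has slope $q$.

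Your maximality argument fails exactly where you suspect: given a quasi-simple bundle $E\in\T'$ with $\tau E\in\F'$, composing with $\Phi_{\infty,\mu(E)}$ does produce a telescopic $\Phi_{\infty,q'}$ sending $E$ to an exceptional simple sheaf, but there is no reason $\Phi_{\infty,q'}(\B)$ should still satisfy $\{i\mid \vect\X[i]\cap\Phi_{\infty,q'}(\B)\neq 0\}\subset\{j',j'+1\}$, since $\F'$ may contain bundles of slope strictly greater than $\mu(E)$, which $\Phi_{\infty,\mu(E)}$ would shift by two relative to low-slope pieces of $\T'$. Without the restriction condition, Proposition~\ref{restrict t-str} does not apply and you cannot produce a larger $\SSS$. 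The fix is simply to replace the maximality device with the (I)/(II) split above. Your disjointness and uniqueness remarks at the end are also too loose: cases~(2) and~(3) are distinguished not by ``whether $\SSS=\emptyset$'' (which depends on $q$) but by whether the aisle admits any nonzero Ext-projective; and uniqueness of $q$ in case~(3) comes from the common slope of Ext-projectives, not from Lemma~\ref{fractional linear} alone.
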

\begin{proof}
	If $\B$ is of finite  length then by Proposition~\ref{silting summand}, for some $q\in \bar{\Q}$, there is some exceptional simple sheaf $S$ which is $\Phi_{\infty,q}(\D^{\leq l})$-projective for some $l$. Hence $\Phi_{\infty,q}((\D^{\leq 0},\D^{\geq 0}))$ is compatible with the recollement of the form in (4). 
	The corresponding t-structure on $\D^b(S^{\perp_\A})$ has length heart by Lemma~\ref{no-ar-le}. Suppose $\B$ is not of finite length.
	By Proposition~\ref{not restrict to coh0}, for some $q\in \bar{\Q}$ and some $j\in\Z$, $\{i\mid \vect\X[i]\cap \Phi_{\infty,q}(\B)\neq 0\}\subset \{j,j+1\}$.  Thus Proposition~\ref{restrict t-str} applies.  
	Moreover, by Lemma~\ref{tubular length ext-proj}, either (I) $\Phi_{\infty,q}(\D^{\leq 0})$ contains no nonzero Ext-projective or (II) all indecomposable $\Phi_{\infty,q}(\D^{\leq 0})$-projectives has the same slope.  
	
	First consider the case (I): $\Phi_{\infty,q}(\D^{\leq 0})$ contains no nonzero Ext-projective. Then the asserted collection $\SSS$ of simple sheaves in Proposition~\ref{restrict t-str} is empty by Lemma~\ref{ext-proj from recollement}. Hence up to shift we have two cases: 1) $\Phi_{\infty,q}(\B)=\F_P[1]*\T_P$ for some $P\subset \P^1$, or 2) $(\T,\F)$ is a torsion pair in $\coh\X$ with $\coh_0\X\subsetneq \T\subsetneq \coh\X$. 
	Moreover, for case 2), there exists no nonzero sheaf $E\in \T$ with $\tau E\in \F$ since $\Phi_{\infty,q}(\D^{\leq 0})$ contains no nonzero Ext-projective. By Lemma~\ref{no quasi-simple}, we have either 2.1) $(\T,\F)=(\coh^{>\mu}\X, \coh^{<\mu}\X)$ for some $\mu\in\R\backslash\Q$, or 2.2) for some $\mu\in\Q$ and some $P\subset \P^1$, \[(\T,\F)=(\add\{\coh^{>\mu}\X, \coh^{\mu}_\lambda\X\mid \lambda\in P\},\,\, \add\{\coh^\mu_\lambda\X, \coh^{<\mu}\X\mid \lambda\notin P\}).\] 
	If case 2.1) occurs then $\Phi_{\infty,q}(\tstr)$ is of type (1); if 1) or 2.2) occurs, $\Phi_{\infty,q}(\tstr)$ is of type (2). Observe that  the class of t-structures of type (1) or (2) is closed under the action of the telescopic functor $\Phi_{q,\infty}=\Phi_{\infty,q}^{-1}$.   
	Hence $(\D^{\leq 0},\D^{\geq 0})$ is of type (1) or (2). It is evident that types (1) and (2) are  disjoint and the assertion on uniqueness is also obvious. The assertion on noetherianness or artinianness is proved in Lemma~\ref{tilted heart no-ar-le}.

	Now consider the case (II): all indecomposable $\Phi_{\infty,q}(\D^{\leq 0})$-projectives has the same slope, which we denote by $\mu$. By Lemma~\ref{ext-proj from recollement}, the compatibility of $\Phi_{\infty,q}(\tstr)$ with the recollement in Proposition~\ref{restrict t-str} implies that there is a torsion sheaf which is Ext-projective in some $\Phi_{\infty,q}(\D^{\leq l})$. Thus $\mu=\infty$. It follows that if an indecomposable sheaf  $E$ is Ext-projective in some $\D^{\leq l}$ then $\mu(E)=q$. This enforces the uniqueness of $q$.   The uniqueness of $\SSS$ and $P$ is then asserted in Proposition~\ref{restrict t-str}. 
	To show that $(\D^{\leq 0}, \D^{\geq 0})$ is of type (3), we will show that it is impossible that $\{i\mid \vect\X[i]\cap \Phi_{\infty,q}(\B)\neq 0\}=\{j,j+1\}$. It sufficies to show that the corresponding t-structure on $\D^b(\X')\simeq \D^b(\SSS^{\perp_\A})$ is not a shift of HRS-tilt with respect to any torsion pair $(\T',\F')$ in $\coh\X'$ with $\coh_0\X'\subsetneq \T'\subsetneq \coh\X'$ (see Remark~\ref{perp torsion pair}). 
	Assume for a contradiction that it was. Since $\X'$ is a weighted projective line of domestic type, by Corollary~\ref{domestic torsion pair}, $\F'[1]*\T'$ would be of finite length. Then so would $\Phi_{\infty,q}(\B)$, a contradiction. This finishes the proof.  
\end{proof}

In light of Lemma~\ref{t-str recollement bijection}, we can already see certain bijective correspondence from our theorems for bounded t-structures whose heart is not of finite length.  In the following corollary, we identify $\Z$ as the group of autoequivalences of $\D^b(\X)$ generated by the translation functor, which acts freely on the set of bounded t-structures on $\D^b(\X)$.
\begin{cor} \label{bijection for not length heart}
	\begin{enum2}
	 \item If $\X$ is of domestic type then there is a bijection
	\begin{multline}\label{bijection for not length}
	\{\text{bounded t-structures on $\D^b(\X)$ whose heart is not of finite  length}\}/\Z \longleftrightarrow\\
	\bigsqcup_\SSS\left( \{P\mid P\subset \P^1\}\times \{\text{bounded t-structures on $\pair{\SSS}_\D$}\}\right),
	\end{multline}
	where 
$\SSS$ runs through all equivalence classes of proper collections of simple sheaves.  

	\item If $\X$ is of tubular type then there is a bijection
	\begin{multline}
		\{\text{bounded t-structures on $\D^b(\X)$ whose heart is not of finite length}\}/\Z \longleftrightarrow\\
		\R\backslash \Q\bigsqcup \left(\bar{\Q}\times \bigsqcup_{\SSS}\left( \{P\mid P\subset \P^1\}\times \{\text{bounded t-structures on $\pair{\SSS}_\D$}\}\right)\right),
	\end{multline}
where  
$\SSS$ runs through all equivalence classes of proper collections of simple sheaves.  
\end{enum2}
\end{cor}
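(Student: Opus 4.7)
The plan is to deduce both bijections as bookkeeping consequences of Theorems~\ref{thm: domestic} and~\ref{thm: tubular}, combined with Lemma~\ref{t-str recollement bijection}. In each case the classification theorem already attaches to a bounded t-structure with non-finite-length heart a canonical set of combinatorial parameters plus a t-structure on a smaller triangulated category, and the remaining work is to quotient cleanly by the shift action of $\Z$.

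For part (1), I will begin by noting that by Theorem~\ref{thm: domestic}, a bounded t-structure $(\D^{\leq 0},\D^{\geq 0})$ on $\D^b(\X)$ with non-finite-length heart is exactly one of type (2): it determines a unique equivalence class of a proper collection $\SSS$, a unique $P\subset \P^1$, and it is compatible with the associated recollement $\SSS^{\perp_\D}\to \D\to \pair{\SSS}_\D$. By Lemma~\ref{t-str recollement bijection}, such a compatible t-structure corresponds bijectively to a pair consisting of a t-structure on $\SSS^{\perp_\D}$ and a t-structure on $\pair{\SSS}_\D$. By construction, the first member of the pair is forced to be $\B_P[-m]$ for some $m\in\Z$, where $\B_P$ denotes the (canonically chosen) HRS-tilt of $(\SSS^{\perp_\A}\cap\T_P,\SSS^{\perp_\A}\cap\F_P)$ in $\SSS^{\perp_\A}$. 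The translation functor $[1]$ acts on pairs by shifting both components by $1$, so within each $\Z$-orbit there is a unique representative with $m=0$. Assigning to this representative the triple $(\SSS,P,(j^*\D^{\leq 0},j^*\D^{\geq 0}))$ yields the required bijection.

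For part (2), I will split the bounded t-structures with non-finite-length heart into Theorem~\ref{thm: tubular}'s types (1), (2), (3) (type (4) is excluded by the finite-length clause). Type (1) is parameterized by a unique $\mu\in\R\setminus\Q$ together with a shift, so its $\Z$-quotient contributes the summand $\R\setminus\Q$. For type (2), the shift action again reduces the data to a unique $(\mu,P)\in\bar{\Q}\times\{P\subset \P^1\}$; I will match this with the $\SSS=\emptyset$ piece of the disjoint union, using the convention that $\pair{\emptyset}_\D$ is the zero category (so the t-structure factor is a singleton). For type (3), the data is a unique $q\in\bar{\Q}$, a unique nonempty proper collection $\SSS$ (up to equivalence), a unique $P$, and, after applying $\Phi_{\infty,q}$, a t-structure compatible with the recollement of $\SSS$; Lemma~\ref{t-str recollement bijection} turns this last piece into a pair of t-structures, and the shift action then normalizes the $\SSS^{\perp_\D}$ component to the HRS-tilt of $(\SSS^{\perp_\A}\cap\T_P,\SSS^{\perp_\A}\cap\F_P)$ (with no extra shift), leaving the free parameter on $\pair{\SSS}_\D$. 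Summing the three types gives the right-hand side $\R\setminus\Q\,\bigsqcup\,\bar{\Q}\times\bigsqcup_\SSS(\{P\}\times\{\text{t-str on }\pair{\SSS}_\D\})$, as stated.

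The main obstacle is ensuring that the $\Z$-quotient is taken faithfully, namely that distinct parameter tuples on the right-hand side cannot yield shift-equivalent t-structures on $\D^b(\X)$. This rests on the uniqueness clauses already proved in Theorems~\ref{thm: domestic} and~\ref{thm: tubular}: the invariants $\SSS$, $P$, and (in the tubular case) $q$ or $\mu$ are intrinsic to the t-structure, so the translation functor cannot mix different values of these parameters. The only remaining checks are that (i) in type (1) of the tubular case the shift acts freely and the $\R\setminus\Q$-parameter is shift-invariant, which is immediate because $\coh^{>\mu}\X$ and $\coh^{<\mu}\X$ contain no shift of each other; and (ii) for types (2) and (3) the shift action acts freely on the t-structure on $\pair{\SSS}_\D$, which is clear since $\pair{\SSS}_\D$ is nonzero precisely when $\SSS\neq \emptyset$, and for $\SSS=\emptyset$ the remaining $(\mu,P)$ data is shift-invariant. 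This completes the verification.
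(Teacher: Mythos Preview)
Your proposal is correct and follows the same approach the paper takes: the paper does not give an explicit proof of this corollary, remarking only that ``in light of Lemma~\ref{t-str recollement bijection}, we can already see certain bijective correspondence from our theorems for bounded t-structures whose heart is not of finite length,'' and your argument spells out precisely this deduction from Theorems~\ref{thm: domestic} and~\ref{thm: tubular} together with Lemma~\ref{t-str recollement bijection}. Your handling of the $\Z$-quotient by normalizing the shift on the $\SSS^{\perp_\D}$-component is the natural way to make the bookkeeping explicit.
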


Suppose $\X$ is  of domestic or tubular type. 
Corollary~\ref{bijection for not length heart} reduces the classification of bounded t-structure on $\D^b(\X)$ whose heart is not of finite length  to the classification of bounded t-structures on $\pair{\SSS}_\D=\D^b(\pair{\SSS}_\A)$. Recall that if $\SSS\neq \emptyset$ then there are positive integers $m, k_1,\dots, k_m$ such that $\pair{\SSS}_\A\simeq \coprod_{i=1}^m\mod k\vec{\AA}_{k_i}$. By Lemma~\ref{finite rep length heart}, each bounded t-structure on $\D^b(\mod k\vec{\AA}_l)$ has length heart. So we can achieve the latter classification by calculating silting objects or simple-minded collections in $\D^b(\mod k\vec{\AA}_{k_i})$ by virtue of K\"onig-Yang correspondences.

For bounded t-structures on $\D^b(\X)$ with length heart, there is no obvious  bijective correspondence from the recollement in Theorem~\ref{thm: domestic}(1) or Theorem~\ref{thm: tubular}(4). Recall that $\D^b(\X)$ is triangle equivalent to $\D^b(\Lambda)$ for a canonical algebra $\Lambda$, whose global dimension is at most $2$.  
So  the powerful K\"onig-Yang correspondences  are still applicable. We can try to compute the collections of simple objects in the heart from the recollements using Proposition~\ref{simple in heart}. Instead, we can try to compute silting objects in $\D^b(\X)$ from these recollements using \cite[Corollary 3.4]{LVY}.  
 
 Anyway, for $\X$ of tubular type, since $S^{\perp_\A}\simeq \coh\X'$ for some weighted projective line of domestic type, Theorem~\ref{thm: tubular}(4) reduces  the combinatorics in classification of bounded t-structures on $\D^b(\X)$ with length heart to that in the classification of bounded t-structures on $\D^b(\X')$ with length heart; 
 for $\X$ of domestic type with weight seqence $(p_1,p_2,p_3)$, Theorem~\ref{thm: domestic}(1) reduces the combinatorics in the classification of bounded t-structures on $\D^b(\X)$ with length heart  to that in the classification of bounded t-structures on $\O^{\perp_\D}=\D^b(\O^{\perp_\A})\simeq \D^b(k[p_1,p_2,p_3])$ (by Theorem~\ref{line bundle perp}(2)), where $k[p_1,p_2,p_3]$ is the path algebra of the equioriented star quiver $[p_1,p_2,p_3]$ (a Dynkin quiver here). 

All in all, for $\X$ of domestic or tubular type, the combinatorics in the classifiction of bounded t-structures on $\D^b(\X)$ can be reduced to that in the classification of bounded t-structures on $\D^b(\Lambda)$ for representation-finite finite dimensional hereditary algebras $\Lambda$.    
	
	The following example  recovers the description of bounded t-structures on $\D^b(\P^1)$ in \cite[\S 6.10]{GKR}.
	\begin{exm}\label{t-str on P^1}
		Let $\X$ be of trivial weight type $(p_1,\dots, p_t)$, that is, each $p_i=1$, and thus $\coh\X\simeq \coh\P^1$.  Then each indecomposable object in $\A=\coh\X$ is isomorphic to either a torsion sheaf $S^{[m]}$ supported at some point $\lambda\in \P^1$ for some $m\in \Z_{\geq 1},$ or a line bundle $\O(n\vec{c})$ for some $n\in\Z$.
		By Theorem~\ref{thm: domestic}, a bounded t-structure whose heart is not a length category  is up to shift of the form $(\D_\A^{\leq -1}*\T_P, \F_P[1]*\D_\A^{\geq 0})$ for some $P\subset \P^1$, where \[(\T_P, \F_P)= (\add\{ \coh_\lambda \X\mid \lambda\in P\},\,\, \add\{\O(n\vec{c}), \coh_\lambda\X\mid  n\in\Z, \lambda\notin P \}).\]
To obtain bounded t-structures with length heart, it is easy enough to compute  silting objects directly. Each basic silting object is up to shift of the form $\O(n\vec{c})\oplus \O((n+1)\vec{c})[l]$ for some $n\in \Z, l\geq 0$. Such an object is a tilting object iff $l=0$.
The t-structure corresponding to the silting object $\O(n\vec{c})\oplus \O((n+1)\vec{c})[l]$ has heart \[\left\{\begin{array}{ll} \add\{\O(n\vec{c}), \O((n-1)\vec{c})[l+1]\}\simeq \mod k\coprod \mod k& \text{if}\,\, l>0,\\ \add\{\coh_0\X\cup \{\O(q\vec{c})[1], \O(m\vec{c})\mid q<n, m\geq n \}\}\simeq \mod k(\bullet\rightrightarrows \bullet) & \text{if}\,\,l= 0.\end{array}\right.\]
\end{exm}

\subsection{Torsion pairs in $\coh\X$ revisited}\label{sec: torsion final} We can now give a more clear description of torsion pairs in $\coh\X$ since  torsion pairs are in bijective correspondence with certain t-structures.

\begin{prop}\label{domestic torsion pair}
	Suppose $\X$ is of domestic type. Each torsion pair $(\T,\F)$ in $\coh\X$ fits into exactly one of the following types:
	\begin{enum2}
	\item $(\T,\F)$ is induced by some tilting sheaf, that is, there is a tilting sheaf $T$ such that \[\T=\{E\in \coh(\X)\mid \ext^1(T,E)=0\},\quad \F=\{E\in \coh(\X) \mid \hom(T,E)=0\}.\]

	\item either $\T\subset \coh_0\X$ or $\F\subset \coh_0\X$, and thus $(\T,\F)$ is of the form given in Lemma~\ref{torsion pair contained in torsion part}.
	\end{enum2}
\end{prop}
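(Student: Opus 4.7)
The strategy is to use the bijection in Proposition~\ref{length torsion pair} between torsion pairs $(\T,\F)$ with length tilted heart $\F[1]*\T$ and isoclasses of basic tilting sheaves, combined with the length-heart criterion from Corollary~\ref{domestic length heart}, to show that the ``tilting sheaf'' type is exactly the complement of the ``contained in $\coh_0\X$'' type.

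First I would verify that the two types are disjoint. Suppose $(\T,\F)$ is induced by a tilting sheaf $T$. Then $T\in \T$ and $\tau T\in \F$ by the remark after Proposition~\ref{length torsion pair}. Since $\X$ is of domestic type, Proposition~\ref{silting summand}(2) (or equivalently Corollary~\ref{tilting bundle}(1)) provides a line bundle $L$ as a direct summand of $T$. Then $L\in \T$ and $\tau L\in\F$ are both nonzero bundles, so neither $\T$ nor $\F$ is contained in $\coh_0\X$.

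For the converse, suppose $(\T,\F)$ satisfies $\T\not\subset \coh_0\X$ and $\F\not\subset \coh_0\X$. Since $\hom(\coh_0\X, \vect\X)=0$, any bundle summand appearing in an object of $\T$ (resp.\ $\F$) is itself an element of $\T$ (resp.\ $\F$); thus $\vect\X\cap \T\neq 0$ and $\vect\X\cap \F\neq 0$. Consider the bounded t-structure on $\D^b(\X)$ with tilted heart $\B=\F[1]*\T$; then a nonzero bundle in $\T$ lies in $\B\cap \vect\X$ and a nonzero bundle in $\F$ lies in $\B\cap \vect\X[1]$, so $\{i\in\Z\mid \vect\X[i]\cap \B\neq 0\}=\{0,1\}$, in particular has cardinality $>1$. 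By Corollary~\ref{domestic length heart}, $\B$ is of finite length. Proposition~\ref{length torsion pair} then produces a tilting sheaf inducing $(\T,\F)$, placing $(\T,\F)$ in type~(1).

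Finally, I would record that within type~(2) the two sub-cases $\T\subset \coh_0\X$ and $\F\subset\coh_0\X$ are themselves mutually exclusive (except when the whole torsion class or torsion-free class degenerates, but in any event they are covered by the descriptions in Lemma~\ref{torsion pair contained in torsion part}): from $\hom(\coh_0\X,\vect\X)=0$ one gets $\vect\X\subset \F$ whenever $\T\subset\coh_0\X$, and dually. Since the rest of the work is already packaged in Corollary~\ref{domestic length heart} and Proposition~\ref{length torsion pair}, no genuine obstacle arises; the only point requiring a modicum of care is the elementary observation that a direct summand of an object in $\T$ (resp.\ $\F$) again lies in $\T$ (resp.\ $\F$), so that a bundle summand of an element of $\T$ really witnesses $\T\not\subset\coh_0\X$.
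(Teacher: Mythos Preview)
Your proof is correct and follows essentially the same route as the paper: reduce to the case where both $\T$ and $\F$ contain nonzero bundles, observe that the tilted heart $\B=\F[1]*\T$ then has $\{i\mid \vect\X[i]\cap\B\neq 0\}=\{0,1\}$, invoke Corollary~\ref{domestic length heart} to get that $\B$ is of finite length, and apply Proposition~\ref{length torsion pair}. The paper's version is terser and leaves the disjointness of the two types implicit (it follows equally well from Corollary~\ref{domestic length heart}, since $\T\subset\coh_0\X$ or $\F\subset\coh_0\X$ forces $\sharp\{i\mid\vect\X[i]\cap\B\neq 0\}=1$), whereas you argue it via the line bundle summand of a tilting sheaf; both are fine.
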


\begin{proof}
	Note that $\T\nsubseteq \coh_0\X$ and $\F\nsubseteq \coh_0\X$ iff both $\T$ and $\F$ contain nonzero bundles. So in this case the tilted heart $\B=\F[1]*\T$ satisfies $\{i\mid \vect\X[i]\cap \B\neq 0\}=\{0,1\}$. By Corollary~\ref{domestic length heart}, $\B$ is of finite length. 
	Then by  Proposition~\ref{length torsion pair}, $(\T, \F)$ corresponds to a tilting sheaf $T$, which is exactly the one induced by $T$. 
\end{proof}

\begin{prop}\label{tubular torsion pair}
Suppose $\X$ is of tubular type. Each torsion pair $(\T,\F)$  in $\coh\X$ fits into  exactly one of the following types:

	\begin{enum2}
	\item $(\T,\F)$ is induced by a tilting sheaf, that is, there is a tilting sheaf $T$ such that \[\T=\{E\in \coh(\X)\mid \ext^1(T,E)=0\},\quad \F=\{E\in \coh(\X) \mid \hom(T,E)=0\}.\]

	\item for some $\mu\in \R\backslash \Q$, $(\T,\F)=(\coh^{>\mu}\X, \coh^{<\mu}\X)$;

	\item for some $\mu \in \bar{\Q}$, there exists a torsion pair $(\T_\lambda,\F_\lambda)$ in $\coh^\mu_\lambda\X$ for each $\lambda\in\P^1$ such that \[\T=\add\{\coh^{>\mu}\X, \T_\lambda\mid \lambda\in\P^1\}, \quad \F=\add\{\F_\lambda, \coh^{<\mu}\X\mid \lambda\in\P^1\};\]

\item $\F\subset \coh_0\X$ and thus $(\T,\F)$ is of the form given in Lemma~\ref{torsion pair contained in torsion part}(2).
	\end{enum2}
\end{prop}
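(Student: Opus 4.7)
The plan is to deduce Proposition~\ref{tubular torsion pair} from Theorem~\ref{thm: tubular} via the bijection (Proposition~\ref{torsion pair t-structure}) between torsion pairs in $\A=\coh\X$ and bounded t-structures on $\D=\D^b(\X)$ whose heart $\B=\F[1]\ast\T$ lies in $\A[1]\ast\A$. I first dispatch the two boundary cases: if $\F\subset\coh_0\X$, Lemma~\ref{torsion pair contained in torsion part}(2) directly gives type~(4); if instead $\T\subset\coh_0\X$ (with $\F\nsubseteq\coh_0\X$), Lemma~\ref{torsion pair contained in torsion part}(1) gives type~(3) with $\mu=\infty$.

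The remaining situation is $\coh_0\X\subsetneq\T\subsetneq\coh\X$, to which I apply Lemma~\ref{no quasi-simple}(2). Its case~(b) is exactly type~(2), and its case~(c) is type~(3) with $\mu\in\Q$ and each tube torsion pair being one of the two trivial choices $(\coh^\mu_\lambda\X,0)$ or $(0,\coh^\mu_\lambda\X)$. Case~(a) remains as the main work: here there is a quasi-simple bundle $E\in\T$ with $\tau E\in\F$, and by Lemma~\ref{ext-proj} the object $E$ is Ext-projective in the aisle of the tilted t-structure. I apply Theorem~\ref{thm: tubular} to this t-structure. The constraint $\B\subset\A[1]\ast\A$ forces the shift parameter to be trivial in each theorem case, and the trivially-shifted forms of cases thm(1) and thm(2) coincide with cases~(b) and~(c) of Lemma~\ref{no quasi-simple}(2) and so are excluded by case~(a). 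Hence only cases thm(3) and thm(4) remain; in case thm(4) the tilted heart is of finite length, and Proposition~\ref{length torsion pair} then identifies $(\T,\F)$ as induced by a tilting sheaf, yielding type~(1).

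The main obstacle is case thm(3): for some $q\in\bar{\Q}$, some nonempty proper collection $\SSS$ of simple sheaves, and some $P\subset\P^1$, $\Phi_{\infty,q}(\B)$ is compatible with the recollement cutting off $\SSS^{\perp_\D}$, and the induced t-structure on $\SSS^{\perp_\D}$ is a shift of the HRS-tilt of $(\SSS^{\perp_\A}\cap\T_P,\SSS^{\perp_\A}\cap\F_P)$. I must transport this structure back through $\Phi_{q,\infty}$ and match it with type~(3) of the proposition with $\mu=q$ and (possibly) nontrivial tube torsion pairs. The slope analysis via Lemma~\ref{fractional linear} forces $q=\mu(E)$, produces $\coh^{>q}\X\subset\T$ and $\coh^{<q}\X\subset\F$, and, using $\pair{\SSS}_\A\simeq\coprod_i\mod\,k\vec{\AA}_{k_i}$, extracts the required torsion pair $(\T_\lambda,\F_\lambda)$ in each tube $\coh^q_\lambda\X$ from the data $\SSS$ and $P$.

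Disjointness of the four types follows from Proposition~\ref{length torsion pair} (type~(1) is the unique type with finite-length tilted heart; the others have non-finite-length hearts by Lemma~\ref{tilted heart no-ar-le} together with the observation that $\vect\X\subset\T$ whenever $\F\subset\coh_0\X$), the rationality dichotomy separating type~(2) from type~(3), and the fact that type~(4) is the only type with $\F\subset\coh_0\X$.
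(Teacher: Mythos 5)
There is a genuine gap at the pivot of your case division. After dispatching the two boundary cases $\F\subset\coh_0\X$ and $\T\subset\coh_0\X$, the remaining situation is only that \emph{both $\T$ and $\F$ contain nonzero bundles}; this does not imply $\coh_0\X\subsetneq\T$, which is the hypothesis of Lemma~\ref{no quasi-simple}(2). Concretely, by Corollary~\ref{tilting bundle}(2) together with Corollary~\ref{tubular exseq} there exists a tilting sheaf $T$ over a tubular $\X$ having both a line bundle $L$ and an exceptional simple sheaf $S$ among its direct summands. For the induced torsion pair one has $L\in\T$ (as $\ext^1(T,L)=0$) and $\tau L,\tau S\in\F$ (since $\hom(T,\tau T)\cong D\ext^1(T,T)=0$). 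So both $\T$ and $\F$ contain nonzero bundles, yet $\tau S\in\F\cap\coh_0\X$, i.e.\ $\coh_0\X\nsubseteq\T$. Your invocation of Lemma~\ref{no quasi-simple}(2), and hence the entire trichotomy (a)/(b)/(c) on which the rest of the argument hangs, is therefore unjustified precisely on the class of torsion pairs in which $\F$ meets $\coh_0\X$ while both classes contain bundles --- and showing that this class consists exactly of tilting-induced pairs is part of what the proposition asserts, so it cannot be waved away. The paper's proof avoids this entirely: it never routes through Lemma~\ref{no quasi-simple} here, but first isolates type (1) as precisely the pairs whose tilted heart $\B=\F[1]*\T$ is of finite length (Proposition~\ref{length torsion pair}), and then applies Theorem~\ref{thm: tubular} directly to the non-finite-length pairs; its case (3) is converted into types (3)/(4) by the slope and telescopic-functor analysis, which is essentially the transport step you sketch for your case thm(3) --- that part of your plan is sound and matches the paper.

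Two secondary flaws occur in your disjointness paragraph. First, the ``observation'' that $\vect\X\subset\T$ whenever $\F\subset\coh_0\X$ is false: by Lemma~\ref{torsion pair contained in torsion part}(2), in that situation $\T=\{E\mid \hom(E,\F)=0\}$, and a bundle $E$ with $\hom(E,\F)\neq 0$ (e.g.\ a line bundle mapping onto an exceptional simple in $\F$) lies in neither class. Second, Lemma~\ref{tilted heart no-ar-le}(2) only treats the torsion pairs with trivial tube components $(\coh^\mu_\lambda\X,0)$ or $(0,\coh^\mu_\lambda\X)$, so it does not show that a general type (3) pair, with arbitrary torsion pairs $(\T_\lambda,\F_\lambda)$ in the tubes, has non-finite-length tilted heart. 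Both points are repaired by the criterion the paper actually uses, Lemma~\ref{tubular length ext-proj}: for pairs of types (2)--(4), every indecomposable Ext-projective in the tilted aisle is of the form $E$ with $E\in\T$, $\tau E\in\F$, and for type (3) this forces $E\in\add\{\T_\lambda\}$ (slope $\mu$), while for type (4) it forces $E$ torsion (slope $\infty$), so all Ext-projectives share one slope and the heart cannot be of finite length.
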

\begin{proof}
	Consider the HRS-tilt $(\D^{\leq 0}_\B, \D^{\geq 0}_\B)$ with heart $\B=\F[1]*\T$. 
	 Obviously types (2), (3) and (4) form disjoint classes. If $(\T,\F)$ is a torsion pair of type (2) or (3) or (4) then either there is no nonzero $\D^{\leq 0}_\B$-projective or all indecomposable $\D_\B^{\leq 0}$-projectives have the same slope and hence $\F[1]*\T$ is not of finite length by Lemma~\ref{tubular length ext-proj}. Thus types (2), (3) and (4) are disjoint from type (1) by Proposition~\ref{length torsion pair}. 
	 Conversely, suppose that $(\T,\F)$ is a torsion pair in $\coh\X$ such that  $\B$ is not of finite length. We want to show that $(\T,\F)$ is of type (2), (3) or (4).  

	 We apply Theorem~\ref{thm: tubular}. If $(\D_\B^{\leq 0},\D_\B^{\geq 0})$ is a t-structure of type Theorem~\ref{thm: tubular}(1) resp. Theorem~\ref{thm: tubular}(2) then obviously $(\T,\F)$ is of type (2) resp. (3). Otherwise, $(\D_\B^{\leq 0}, \D_\B^{\geq 0})$ is of  type Theorem~\ref{thm: tubular}(3). 
	 Denote \[\widetilde{\B}=\Phi_{\infty,q}(\B)=\Phi_{\infty,q}(\F[1]*\T),\] where $q$ is the unique element in $\bar{\Q}$ asserted in Theorem~\ref{thm: tubular}(3). 
	 From the proof of Theorem~\ref{thm: tubular}, we see that $\{i\mid \vect\X[i]\cap \widetilde{\B}\neq 0\}=\{j\}$ for some $j$. 
	 If $\F\subset \coh_0\X$ then $(\T,\F)$ is of type (4).  
	 Suppose $\F$ contains nonzero bundles. Then by Lemma~\ref{cotilting torsion theory}, $\coh^\mu\X\subset \F$ for $\mu\ll q$.  
	 Now that $\coh^\mu\X[1]\subset \F[1]\subset \B$, we have $\vect\X[1]\cap \widetilde{\B}\neq 0$ by Lemma~\ref{fractional linear}(2). Hence $j=1$. Moreover, an indecomposable sheaf $E$ such that $\Phi_{\infty,q}(E)\in \D^b(\coh_0\X)$ has slope $\mu(E)=q$.    
	 It follows that $\widetilde{\B}\subset\A[1]*\coh_0\X\subset \A[1]*\A$. Thus $\widetilde{\B}= \widetilde{\F}[1]*\widetilde{\T}$, where $(\widetilde{\T},\widetilde{\F})$ is the torsion pair \[(\add\{\widetilde{\T}_\lambda\mid \lambda\in  \P^1\},\,\, \add\{\vect\X, \widetilde{\F}_\lambda\mid \lambda\in \P^1\})\] for some torsion pair $(\widetilde{\T}_\lambda, \widetilde{\F}_\lambda)$ in $\coh_\lambda\X$.  
	 Let  \[(\T_\lambda, \F_\lambda)=(\Phi_{q,\infty}(\tilde{\T_\lambda}), \Phi_{q,\infty}(\tilde{\F_\lambda})),\] which is a torsion pair in $\coh^q_\lambda\X$. Then we have \[(\T,\F)=(\add\{\coh^{>q}\X, \T_\lambda\mid \lambda\in\P^1\}, \,\,\add\{\F_\lambda, \coh^{<q}\X\mid \lambda\in\P^1\}),\] which is of type (3). We are done.
\end{proof}

\section{Derived equivalence}\label{sec: der equiv}
\subsection{Serre functor and derived equivalence}

The main theorem of \cite{SR} states that given a finite dimensional hereditary algebra $\Lambda$ and a bounded t-structure  $(\D^{\leq 0},\D^{\geq 0})$ with heart $\B$ on $\D^b(\Lambda)$, the inclusion $\B\monic \D^b(\Lambda)$ extends to a derived equivalence $\D^b(\B)\simeq \D^b(\Lambda)$ iff the Serre functor of $\D^b(\Lambda)$ is right t-exact with respect to the t-structure $(\D^{\leq 0},\D^{\geq 0})$. This motivates us to consider the following 
\begin{assertion}\label{ass}
	For a Hom-finite $k$-linear triangulated category $\D$ with a Serre functor and a bounded t-structure $\tstr$ on $\D$ with heart $\B$, the inclusion of $\B$ into $\D$ extends to an exact equivalence $\D^b(\B)\simeq \D$ iff the Serre functor is right t-exact.
\end{assertion}	
	
The necessity of Assertion~\ref{ass} always holds by \cite[Corollary 4.13]{SR} whereas \cite[Example 9.4, Example 9.5]{SR} show that the sufficiency does not hold in general. We put it in the form only to stress the role of the Serre functor. Hopefully there would exist more classes of triangulated categories such that Assertion~\ref{ass} hold. Observe that if $\T$ is a  $k$-linear triangulated category that is triangle equivalent to $\D$ then Assertion~\ref{ass} holds for $\T$ iff it holds for $\D$. 

	To give an application of our results on bounded t-structures on the bounded derived category $\D^b(\X)$ of coherent sheaves over a weighted projective line $\X$, we will prove the following 

\begin{thm}\label{der equiv}
	If $\X$ is  of domestic or tubular type then Assertion~\ref{ass} holds for $\D=\D^b(\X)$. 
\end{thm}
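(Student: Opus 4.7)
The necessity direction follows from \cite[Corollary 4.13]{SR}, so the plan is to prove sufficiency: assuming the Serre functor $\SS=\tau(-)[1]$ is right t-exact with respect to $(\D^{\leq 0},\D^{\geq 0})$, construct an equivalence $\D^b(\B)\simeq\D^b(\X)$ extending the inclusion $\B\hookrightarrow\D^b(\X)$. For $\X$ of domestic type this will be immediate: by Theorem~\ref{domestic bundle}(2) there is an exact equivalence $\D^b(\X)\simeq\D^b(k\vec{\tilde{\Delta}})$ with a hereditary path algebra, which transports both the Serre functor and the t-structure, so the main theorem of \cite{SR} for finite dimensional hereditary algebras applies directly.

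For $\X$ of tubular type the plan is a case analysis along Theorem~\ref{thm: tubular}. Every telescopic functor $\Phi_{q,q'}$ is an exact autoequivalence commuting with $\SS$, so I may assume no telescopic shift is required. In cases (1) and (2) the heart $\B$ lies in $\coh\X[1]*\coh\X$: the identity $\delta(\vec{\omega})=0$ forces $\tau$ to preserve every slope and every $\coh_\lambda^\mu\X$, hence to preserve the defining torsion pair, so $\SS$ is automatically right t-exact; moreover $\hom^{\geq 2}_{\D^b(\X)}(\B,\B)=0$, which forces $\B$ to be hereditary and makes the natural map $\ext^n_\B(X,Y)\to\hom^n_{\D^b(\X)}(X,Y)$ an isomorphism in every degree, so the realization functor $\D^b(\B)\to\D^b(\X)$ will be fully faithful and an equivalence by boundedness. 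In case (4) the heart is of finite length; via Theorem~\ref{der canonical algebra} I work inside $\D^b(\Lambda)$ for the canonical algebra $\Lambda$, and the K\"onig-Yang correspondence (Theorem~\ref{silting t-str}) provides a silting object $T$ with $\B\simeq\mod\End(T)$, which right t-exactness of $\SS$ upgrades to a tilting object by Lemma~\ref{silting tilting}, yielding the desired derived equivalence.

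The hard part will be case (3), where after the telescopic reduction $\tstr$ is compatible with the recollement $\xymatrix{\SSS^{\perp_\D}\ar[rr]|{i_*} & & \D^b(\X)\ar[rr]|{j^*} & & \pair{\SSS}_\D}$ for a nonempty proper collection $\SSS$; here $\SSS^{\perp_\D}\simeq\D^b(\X')$ with $\X'$ domestic by Theorem~\ref{simple perp} and $\pair{\SSS}_\D\simeq\D^b(\coprod_i\mod k\vec{\AA}_{l_i})$ is representation-finite hereditary. The strategy is: use Proposition~\ref{admissible Serre functor} to transfer right t-exactness of $\SS$ to the Serre functors of both recollement pieces; apply the already-established domestic case to $\SSS^{\perp_\D}$ (whose corresponding t-structure is an HRS-tilt of $\coh\X'$ up to shift) and the hereditary theorem of \cite{SR} to $\pair{\SSS}_\D$, obtaining derived equivalences $\D^b(\B_1)\simeq\SSS^{\perp_\D}$ and $\D^b(\B_2)\simeq\pair{\SSS}_\D$ for the corresponding hearts; and finally glue these into $\D^b(\B)\simeq\D^b(\X)$ via the induced abelian recollement $(\B_1,\B,\B_2)$. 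The delicate step is the gluing: it requires the realization functor to respect the recollement, equivalently the matching $\ext^n_\B({}^pi_*X_1,j_{!*}X_2)=\hom^n_{\D^b(\X)}(i_*X_1,j_{!*}X_2)$ for all $n$ and all $X_i\in\B_i$, which should follow from the Ext-vanishing on each piece combined with the functorial triangles $j_!j^*\to\id\to i_*i^*\cra$ of the recollement.
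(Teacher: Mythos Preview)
Your handling of the domestic case (via reduction to a hereditary algebra and \cite{SR}) and of tubular cases (1), (2), (4) is correct. In particular, for (1) and (2) your observation that $\tau$ preserves the torsion pair gives $\hom^2_\D(\B,\B)=0$, and since the comparison map $\ext^2_\B\to\hom^2_\D$ is always injective for any heart of a t-structure (compare $\ext^1_\B$ and $\hom^1_\D$ along the short exact sequence defining the Yoneda product), $\B$ is hereditary and the realization functor is an equivalence. Case (4) is treated exactly as in the paper.

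The genuine gap is in case (3). First, the transfer of right t-exactness to $\SSS^{\perp_\D}$ via Proposition~\ref{admissible Serre functor} is not automatic: the Serre functor there is $i^!\SS i_*$, and $i^!$ is only \emph{left} t-exact, so $i^!\SS i_*$ need not be right t-exact even when $\SS$ is. More seriously, the gluing step fails in general: knowing $\D^b(\B_1)\simeq\SSS^{\perp_\D}$ and $\D^b(\B_2)\simeq\pair{\SSS}_\D$ does not yield $\D^b(\B)\simeq\D$. The obstruction is that the inclusion ${}^pi_*:\B_1\hookrightarrow\B$ need not have an \emph{exact} adjoint (${}^pi^*$ is only right exact, ${}^pi^!$ only left exact), so $\D^b(\B_1)\to\D^b(\B)$ need not be fully faithful and the identification $\ext^n_\B(i_*X,i_*Y)\cong\hom^n_{\D}(i_*X,i_*Y)$ breaks. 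The cross-term matching $\ext^n_\B({}^pi_*X_1,j_{!*}X_2)\cong\hom^n_\D(i_*X_1,j_{!*}X_2)$ you gesture at is precisely where the difficulty sits and cannot be read off from the functorial triangles alone.

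The paper's approach avoids gluing along the full $\SSS$-recollement entirely. Instead it treats cases (1)--(3) uniformly via Lemma~\ref{concentrated equiv}, which peels off \emph{one} exceptional simple sheaf at a time by induction on the weight sequence. The key is Proposition~\ref{der equiv reduction}: if $X$ is an exceptional Ext-projective with the extra hypothesis that $X$ or $\SS X$ is \emph{simple in the heart} $\B$, then $i^*$ (respectively $i^!$) is genuinely t-exact, which makes $\D^b(\B_1)\to\D^b(\B)$ fully faithful and simultaneously transfers right t-exactness of the Serre functor to the perpendicular. Lemma~\ref{simple simple Ext-proj} is then the nontrivial input guaranteeing that such a simple-in-$\B$ Ext-projective can always be found among the simple sheaves supported at some $\lambda$. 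This simple-in-$\B$ hypothesis is exactly the missing idea in your case (3) argument.
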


Since the result of \cite{SR} embraces the wild case, it is tempting to make the following 

\begin{conj}\label{der equiv all case}
Given an arbitrary weighted projective line $\X$, Assertion~\ref{ass} holds for $\D=\D^b(\X)$.
\end{conj}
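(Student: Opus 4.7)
The plan is to prove the sufficiency direction of Assertion~\ref{ass} (necessity being \cite[Corollary 4.13]{SR}): assuming $\SS = \tau(-)[1]$ of $\D = \D^b(\X)$ is right t-exact with respect to the bounded t-structure, some realization functor $\D^b(\B) \to \D^b(\X)$ is an equivalence. The length-heart case is immediate and uniform across all weight types: by the equivalence $\D^b(\X) \simeq \D^b(\Lambda)$ of Theorem~\ref{der canonical algebra} for a canonical algebra $\Lambda$ of finite global dimension, Lemma~\ref{silting tilting} identifies right t-exactness of $\SS$ with the corresponding silting object $T$ being tilting, whence $\B \simeq \mod \End(T)$ and the realization functor is the standard derived equivalence induced by tilting. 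Thus the nontrivial work is entirely for hearts that are not of finite length.

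For such hearts I propose to adapt the strategy of \cite{SR} to the hereditary category $\A = \coh\X$. Recall that the realization functor is an equivalence iff the natural maps
\[\ext^n_\B(X, Y) \lra \hom_{\D^b(\X)}(X, Y[n])\]
are isomorphisms for all $X, Y \in \B$ and all $n \geq 0$; the cases $n = 0, 1$ are automatic, and the main content lies in $n \geq 2$. The strategy is to use Serre duality $\hom_\D(X, Y[n]) \cong D\hom_\D(Y, \SS X[-n])$ together with right t-exactness of $\SS$ (which yields $\SS^m X \in \D^{\leq 0}$ for all $m \geq 0$) to force vanishing or to reduce to controlled low-degree computations. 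Because $\coh\X$ is hereditary even in the wild case and admits a Serre functor, every object of $\D^b(\X)$ decomposes as $\bigoplus H^i(X)[-i]$, and the formal framework of \cite{SR} is available: the decisive structural input there is precisely the hereditarity of the ambient abelian category, not its realization as a module category.

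The principal obstacle is replacing the effaceability input of \cite{SR}, which uses projective resolutions in $\mod\Lambda$; since $\coh\X$ has no nonzero projectives, one must work intrinsically in $\D^b(\X)$. A natural substitute is to combine the line-bundle filtrations of Proposition~\ref{line bundle}(2) with a recollement reduction via Lemma~\ref{ext-proj recollement}: whenever $\B$ admits an exceptional $\D^{\leq l}$-projective object $E$, the admissible subcategory $E^{\perp_\D}$ corresponds via Lemma~\ref{extor perp} and Theorem~\ref{bundle perp} either to the module category of a representation-finite hereditary algebra (covered by \cite{SR}) or to $\coh\X'$ with $\rk K_0(\X') < \rk K_0(\X)$, opening the door to an induction on $\rk K_0(\X)$. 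Two points must be settled to make this reduction work in the wild case. First, right t-exactness of $\SS$ must be shown to descend to the induced Serre functor $i^!\SS i_*$ on $E^{\perp_\D}$; this is nontrivial because $i^!$ is only left t-exact, and will likely require using the compatibility of the recollement with the t-structure together with the explicit triangles in ~\eqref{recollement triangle}. Second, the existence of such an exceptional Ext-projective $E \in \B$ must be established without appealing to the classification of bounded t-structures, which is unavailable for wild $\X$; one must also rule out ``exotic'' hearts with no Ext-projectives for which $\SS$ is nonetheless right t-exact (in the tubular case these would be the irrational-slope HRS-tilts of Theorem~\ref{thm: tubular}(1), but a direct check shows $\SS$ is not right t-exact on those, since $\coh^{<\mu}\X$ for irrational $\mu$ is not $\tau^{-1}$-stable). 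Producing such an Ext-projective intrinsically from right t-exactness of $\SS$, for wild $\X$ whose $\vect\X$ has only AR components of shape $\Z\AA_\infty$, is where I expect the essential difficulty to lie; a promising route is to combine Lemma~\ref{ext-proj} with Happel-Ringel (Proposition~\ref{Happel-Ringel lemma}) to extract exceptional Ext-projectives from the interplay between the t-structure and the stable locus where $\SS$ preserves both $\D^{\leq 0}$ and $\D^{\geq 0}$.
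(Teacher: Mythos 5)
You are attempting to prove what the paper itself states only as a conjecture: Conjecture~\ref{der equiv all case} is open, the paper proves Assertion~\ref{ass} only for domestic and tubular $\X$ (Theorem~\ref{der equiv}), and its closing remark offers merely a conditional route through Conjecture~\ref{conj1}. Your proposal is essentially an outline of that same conditional route, and it does not close the gap: you yourself flag the two decisive missing steps --- (a) descent of right t-exactness of $\SS$ to the induced Serre functor on $E^{\perp_\D}$, which Proposition~\ref{der equiv reduction} establishes only under the extra hypothesis that $X$ or $\SS X$ is \emph{simple} in the heart, a hypothesis you never secure; and (b) the existence of an exceptional Ext-projective in the aisle for wild $\X$, which in the paper is extracted from the classification of bounded t-structures (Lemma~\ref{simple simple Ext-proj}, Proposition~\ref{restrict t-str}), unavailable in the wild case --- and you supply no argument for either. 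The parts you do carry out (the length-heart case via K\"onig--Yang and Lemma~\ref{silting tilting}, and the concentrated case via HRS/cotilting, i.e.\ Lemma~\ref{concentrated equiv}) are valid for arbitrary weight type, but they are exactly what the paper already proves; what is missing for wild $\X$ is the trichotomy itself, since Proposition~\ref{not restrict to coh0} (``non-concentrated implies length heart'') is proved only in the domestic and tubular cases. So what you have is a research program, not a proof.

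There is also a concrete mathematical error. Your ``direct check'' that $\SS$ fails to be right t-exact on the irrational-slope HRS-tilts of Theorem~\ref{thm: tubular}(1) is false: for tubular $\X$ one has $\delta(\vec{\omega})=0$, so $\tau$ preserves slopes by Lemma~\ref{twist slope}; hence $\coh^{<\mu}\X$ and $\coh^{>\mu}\X$ are $\tau$-stable, the aisle $\D_\A^{\leq -1}*\coh^{>\mu}\X$ is $\tau$-stable, and $\SS=\tau(-)[1]$ \emph{is} right t-exact there. These hearts need not (and must not) be ruled out: since $\coh^{<\mu}\X$ contains nonzero bundles, it is a cotilting torsion-free class by Lemma~\ref{cotilting torsion theory}, so Proposition~\ref{tilt-co-tilt equiv} yields the derived equivalence, consistent with Theorem~\ref{der equiv}; this is precisely how the paper's conditional approach handles aisles with no Ext-projectives via Conjecture~\ref{conj1}. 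Finally, your claim that the decisive input of \cite{SR} is ``hereditarity of the ambient abelian category, not its realization as a module category'' contradicts the paper's own assessment that the key reduction step of \cite{SR} seems to work only for $\D^b(\Lambda)$ with $\Lambda$ a finite dimensional hereditary algebra; absent a proof that the effaceability/simple-top arguments of \cite{SR} survive in $\coh\X$, which has no nonzero projective objects, this transfer is itself an unproved step rather than a usable tool.
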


We will see in Lemma~\ref{concentrated equiv} that this does hold for a certain class of t-structures on $\D^b(\X)$.

Recall that for $\X$ of domestic type, $\coh\X$ is derived equivalent to $\mod\, \Gamma$ for a tame hereditary algebra $\Gamma$. Thus the conclusion for this case is already covered by \cite{SR}. The new part of Theorem~\ref{der equiv} is for the tubular case. Recall that a tubular algebra $\Lambda$, introduced by Ringel in \cite{Ringel}, can be realized as  the endomorphism algebra of a tilting sheaf over a weighted projective line of tubular type. In particular,  $\D^b(\Lambda)$ is triangle equivalent to $\D^b(\X)$ for some weighted projective line $\X$ of tubular type. So Theorem~\ref{der equiv} yields the following  
\begin{cor}\label{tubular serre}
	Assume that $k$ is an algebraically closed field. Assertion~\ref{ass} holds for $\D=\D^b(\Lambda)$ where $\Lambda$ is a tubular algebra over $k$. 
\end{cor}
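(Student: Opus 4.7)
The necessity direction is a direct application of \cite[Corollary 4.13]{SR}, which holds for any Hom-finite $k$-linear triangulated category with a Serre functor. I focus entirely on sufficiency: assuming $\SS = \tau(-)[1]$ is right t-exact with respect to $\tstr$, the plan is to show that some realization functor $\real: \D^b(\B) \to \D^b(\X)$ is an equivalence. The strategy is to run through the classification of bounded t-structures given in Theorems~\ref{thm: domestic} and~\ref{thm: tubular} and establish the realization equivalence case by case. Crucially, the three invariance operations appearing in the classification---shifts, the action of $\pic\X$ in the domestic case, and the telescopic functors $\Phi_{q',q}$ in the tubular case---are exact autoequivalences of $\D^b(\X)$, so they preserve both the hypothesis (Serre right t-exactness) and the conclusion (realization being an equivalence). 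Hence one may reduce to the canonical forms of the classification.

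When $\B$ is of finite length, the argument is short. By Theorem~\ref{der canonical algebra}, $\D^b(\X) \simeq \D^b(\Lambda)$ for a canonical algebra $\Lambda$ of finite global dimension, and by Theorem~\ref{silting t-str} the t-structure corresponds to a basic silting object $T \in \D^b(\X)$. The hypothesis on $\SS$, combined with Lemma~\ref{silting tilting}, forces $T$ to be tilting. Standard tilting theory then produces a derived equivalence $\D^b(\End T) \simeq \D^b(\X)$ identifying $\mod \End T$ with $\B$, which is precisely the realization $\D^b(\B) \to \D^b(\X)$.

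When $\B$ is not of finite length, I split into two subcases. The recollement subcase covers Theorem~\ref{thm: domestic}(2) and Theorem~\ref{thm: tubular}(3) (after a telescopic reduction to $q = \infty$ in the tubular case): the t-structure is compatible with the recollement $\SSS^{\perp_\D} \to \D^b(\X) \to \pair{\SSS}_\D$ for a nonempty proper collection $\SSS$. The corresponding heart $\B_2$ on $\pair{\SSS}_\D \simeq \D^b(\coprod_i \mod k\vec{\AA}_{k_i})$ is of finite length, so the previous case already delivers the realization equivalence on it. The corresponding heart $\B_1$ on $\SSS^{\perp_\D} = \D^b(\coh\X')$ (with $\X'$ of domestic type via Theorem~\ref{simple perp}) is a shift of an HRS-tilt of $\coh\X'$; since $\coh\X'$ is derived equivalent to $\mod\Gamma$ for a tame hereditary algebra $\Gamma$ by Theorem~\ref{domestic bundle}(2), the main theorem of \cite{SR} provides the realization equivalence on this side, once one checks that the Serre functor of $\SSS^{\perp_\D}$ (given by $i^!\SS i_*$ via Proposition~\ref{admissible Serre functor}) is right t-exact with respect to the corresponding t-structure---this is automatic from the HRS-tilt form, since $\tau_{\X'}$ preserves $\coh\X'$. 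A gluing principle for realization functors along recollements then assembles these into the realization equivalence on $\D^b(\X)$. The direct HRS-tilt subcase covers Theorem~\ref{thm: tubular}(1) and~(2) (the latter reducing to $q = \infty$ by a telescopic): $\B$ is an HRS-tilt of $\coh\X$ with respect to a torsion pair $(\T, \F)$ in which $\T$ is tilting and $\F$ is cotilting by Lemma~\ref{cotilting torsion theory}; for this, I invoke the classical Happel--Reiten--Smal{\o} theorem for hereditary abelian categories with such torsion pairs, which yields the realization equivalence directly.

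The principal obstacle is the tubular direct HRS-tilt subcase, especially Theorem~\ref{thm: tubular}(1) with irrational $\mu$: because $\coh\X$ is not derived equivalent to any hereditary algebra for tubular $\X$, one cannot simply invoke \cite{SR}. Establishing the realization equivalence here will require either a direct comparison of the Yoneda $\ext$-spaces in $\B$ with the $\hom$-spaces in $\D^b(\X)$---using that $\tau$ preserves both $\T$ and $\F$ for these slope-based torsion pairs, so the tilted heart is stable under $\SS[-1]$---or an appeal to the HRS theorem formulated intrinsically for hereditary abelian categories rather than only for hereditary algebras. A secondary, more technical, obstacle is to formulate the gluing principle for realization functors along recollements in the precise form required above; while it is essentially present in the literature surrounding \cite{BBD}, it must be set up so that the realization equivalences on $\SSS^{\perp_\D}$ and $\pair{\SSS}_\D$ combine compatibly with the glued t-structure on $\D^b(\X)$.
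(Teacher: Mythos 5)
Your route differs from the paper's at the top level: the paper disposes of this corollary in two lines, observing that a tubular algebra is the endomorphism algebra of a tilting sheaf on a tubular weighted projective line, so $\D^b(\Lambda)\simeq \D^b(\X)$ and Theorem~\ref{der equiv} applies (Assertion~\ref{ass} being invariant under triangle equivalence). You perform the same reduction implicitly and then set out to re-prove Theorem~\ref{der equiv} via the classification; that is legitimate, and your finite-length case coincides with the paper's argument (silting object becomes tilting by Lemma~\ref{silting tilting}, finite global dimension of $\End(T)$ from Proposition~\ref{silting property}(1), then $-\otimes^L T$). Moreover, your self-declared ``principal obstacle'' --- the irrational-slope case of Theorem~\ref{thm: tubular}(1) --- is in fact already settled by the paper's toolkit: by Lemma~\ref{cotilting torsion theory}, every torsion pair in $\coh\X$ has a tilting torsion class or a cotilting torsion-free class (for $(\coh^{>\mu}\X,\coh^{<\mu}\X)$ the torsion class contains nonzero bundles, hence is tilting), and Proposition~\ref{tilt-co-tilt equiv} holds for \emph{arbitrary} abelian categories by \cite{BB}, not just module categories; this is exactly the second alternative you hedge towards, and it works. (Minor slip in the same subcase: for $(\T_P,\F_P)$ only $\F_P$ is cotilting, $\T_P\subset\coh_0\X$ is not tilting --- but one of the two always holds, which suffices.)

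The genuine gap is your recollement subcase. The ``gluing principle for realization functors along recollements'' you invoke is false without additional hypotheses and is not available in the literature in the form you need. The paper's own Example~\ref{t-str on P^1} is a counterexample: the t-structure associated to the silting object $\O(n\vec{c})\oplus\O((n+1)\vec{c})[l]$ with $l>0$ on $\P^1$ is compatible (via Lemma~\ref{ext-proj recollement}) with a recollement whose two factors are copies of $\D^b(k)$ carrying shifts of the standard heart, so the realization functors on both factors are trivially equivalences; yet the glued heart is $\mod k\coprod \mod k$, which is not derived equivalent to $\D^b(\P^1)$. Hence the right t-exactness of $\SS$ must enter the gluing argument itself, and making it enter is precisely the content of Proposition~\ref{der equiv reduction} --- which requires strictly more than compatibility with the recollement: the exceptional Ext-projective $X$ and $\SS X$ must both lie in $\B$ with one of them \emph{simple} in $\B$. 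Securing such an object is where the paper's real work lies (the $j_{!*}$ computation of Lemma~\ref{simple in A_t heart} and the delicate induction of Lemma~\ref{simple simple Ext-proj}), and the reduction must then be run one exceptional simple sheaf at a time, as in the induction on weight sequences in Lemma~\ref{concentrated equiv}, rather than by gluing against all of $\pair{\SSS}_\D$ at once. As written, your assembly step is the missing proof, not a citable fact; your acknowledgement of it as a ``secondary, more technical obstacle'' understates that, absent the simplicity condition, the principle you want is simply false.
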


Here let us review some necessary background. Let $\D$ be a triangulated category equipped with a bounded t-structure whose heart is denoted by $\B$. An exact functor $F: \D^b(\B)\ra \D$ is called \emph{a realization functor} if $F$ is t-exact and the restriction $F_{\mid \B}: \B\ra \B$ is isomorphic to the identity functor of $\B$. This is a reasonable functor but the existence of such a functor is a problem.  
 By virtue of the filtered derived category, \cite[\S 3.1]{BBD} constructed a realization functor for arbitrary bounded t-structure on  a triangulated subcategory  of $\D^+(\A)$, where $\A$ is an abelian category with enough injectives. \cite{B} abstracted this theme and  introduced the notion of a filtered triangulated category.  
 Given a triangulated category $\D$ with  a filtered triangulated category over it (see \cite[Appendix]{B} for the precise definition),  \cite[Appendix]{B} constructed a  realization functor for arbitrary bounded t-structure on $\D$. Recently, \cite[\S 3]{CR} showed that an algebraic triangulated category indeed admits a filtered triangulated category over it and so  generally we have 
 \begin{prop}[{\cite{CR}}]
A realization functor exists
for any bounded t-structure on an algebraic triangulated category.
 \end{prop}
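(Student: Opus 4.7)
The plan is to deduce the proposition from Beilinson's axiomatic machinery in \cite[Appendix]{B} by producing, for every algebraic triangulated category $\D$, a filtered triangulated category $F(\D)\to \D$ satisfying Beilinson's axioms. Recall that $\D$ being algebraic means $\D\simeq H^0(\A)$ for some pretriangulated dg category $\A$ (equivalently, $\D$ is the stable category of a Frobenius exact category). Once a filtered enhancement exists on $\D$, Beilinson's construction attaches to any bounded t-structure $(\D^{\leq 0},\D^{\geq 0})$ on $\D$ with heart $\B$ a canonical realization functor $\real\colon \D^b(\B)\to \D$: one lifts the t-structure to $F(\D)$ using the filtration, identifies the heart of the lifted t-structure with the category $C^b(\B)$ of bounded chain complexes in $\B$, and observes that the ``forget filtration'' functor $\omega\colon F(\D)\to \D$ sends null-homotopic maps to zero and quasi-isomorphisms to isomorphisms, thereby factoring through $K^b(\B)$ and then $\D^b(\B)$. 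This functor is automatically t-exact with $\real_{|\B}\simeq \id_\B$.

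The core task is therefore the production of the filtered enhancement. Working with the dg model $\A$, I would form a dg category $F(\A)$ whose objects are finite decreasing filtrations on objects of $\A$. A convenient model is to take twisted complexes over $\A$ indexed by intervals of $\Z$, or equivalently the pretriangulated hull of $\A\otimes \Phi$ for a small dg category $\Phi$ parametrizing filtrations together with a shift endofunctor. Setting $F(\D):=H^0(F(\A))$ yields a triangulated category, and the universal dg functors ``forget filtration'' and ``shift filtration'' descend to exact functors $\omega\colon F(\D)\to \D$ and $s\colon F(\D)\to F(\D)$. The usual cone constructions inside $F(\A)$ yield the natural transformation $s\to \id$ and the distinguished triangles describing the associated graded pieces.

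The verification that $(F(\D),\omega,s)$ satisfies each of Beilinson's axioms for a filtered triangulated category (existence and uniqueness of filtrations up to shift, compatibility between $\omega$ and $s$, the triangle $s X\to X\to \mathrm{gr}(X)\cra$, admissibility of the subcategories $F(\D)^{\geq n}$ and $F(\D)^{\leq n}$, etc.) is then a matter of unpacking the combinatorics of twisted complexes, since each axiom corresponds to a simple manipulation of indices in a filtration, lifted via the pretriangulated hull.

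The main obstacle is precisely this last step: keeping the dg-categorical bookkeeping rigorous enough to produce, functorially in $\A$, a triangulated enhancement satisfying \emph{all} of Beilinson's axioms, and in particular arranging that the adjunctions defining $F(\D)^{\geq n}$ and $F(\D)^{\leq n}$ genuinely come from dg adjunctions at the level of $F(\A)$. Once this hurdle is cleared, the remaining descent from $\omega$ to $\real\colon \D^b(\B)\to \D$ is formal, following the BBD blueprint verbatim, and yields the asserted realization functor for every bounded t-structure on $\D$.
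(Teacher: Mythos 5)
Your overall architecture coincides with the paper's: the paper establishes this proposition by citation, combining Beilinson's appendix \cite[Appendix]{B} (a filtered triangulated category over $\D$ yields a t-exact realization functor $\D^b(\B)\to \D$ restricting to the identity on the heart $\B$, via precisely the mechanism you describe — lift the t-structure, identify the heart of the lifted t-structure with $C^b(\B)$, and descend along the forget-filtration functor $\omega$) with the theorem of Chen and Ringel \cite{CR} that every algebraic triangulated category admits a filtered triangulated category over it. Your only divergence is the model chosen for the second step: Chen--Ringel work with the Frobenius model of algebraicity — they take a Frobenius exact category $\mathcal{E}$ with stable category $\underline{\mathcal{E}}\simeq \D$, form an exact category of finitely filtered objects in $\mathcal{E}$ (filtrations by admissible monomorphisms, which are strict, not merely homotopy-coherent), show it is again Frobenius, and verify Beilinson's axioms for its stable category, with $s$ the index shift and $\omega$ the forget-filtration functor — whereas you propose a dg model built from twisted complexes over a pretriangulated enhancement $\A$.

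The gap is that your proposal stops exactly where the mathematical content of the cited result begins. The entire substance of \cite{CR} here is the verification of Beilinson's axioms for the constructed enhancement, and you explicitly defer it (``a matter of unpacking the combinatorics of twisted complexes''), naming it yourself as the main obstacle. This is not purely formal in the dg setting: one must fix the model so that filtrations become strict (diagrams of closed degree-zero cofibrations, or one-sided twisted complexes), and then actually prove the orthogonality $\hom(F\D(\geq 1), F\D(\leq 0))=0$, the existence of the functorial truncation triangles (equivalently the admissibility of the subcategories $F\D(\geq n)$ and $F\D(\leq n)$, which you correctly flag requires producing genuine adjunctions, ideally from dg adjunctions), and the axiom that $\omega$ induces isomorphisms on Hom-spaces between objects in complementary filtration ranges — the last is an honest computation with the Hom-complexes of the twisted-complex category, not index bookkeeping. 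None of this should \emph{fail}, and your dg route is a viable alternative to the Frobenius one (it would moreover be functorial in the enhancement $\A$); but as written, the crux of the proposition is asserted rather than proved. A complete argument must either carry out these verifications or, as the paper does, cite \cite[Appendix]{B} and \cite{CR} for them; the strict-filtration feature of the Frobenius model you mention only parenthetically is exactly what lets Chen--Ringel sidestep the coherence issues your sketch leaves open.
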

 A realization functor is not necessarily an equivalence. For example, Example~\ref{t-str on P^1} tells us  that there is a bounded t-structure on $\D^b(\P^1)$ with heart equivalent to $\mod k\coprod \mod k$ but definitely $\mod k\coprod \mod k$ is not derived equivalent to $\coh\P^1$.  The following lemma helps us determine when a realization functor is an equivalence.  

 \begin{lem}[{\cite[Lemma 1.4]{B}}]\label{heart equiv}
	Let $\D_1, \D_2$ be two triangulated categories with bounded t-structures. Suppose $\A_1, \A_2$ are the hearts respectively.  Let $F:\D_1\ra \D_2$ be an exact functor such that $F$ is t-exact and $F_{|\A_1}:\A_1\ra \A_2$ is an equivalence.
	The following are equivalent:
	\begin{enum2}
	\item $F: \D_1\ra \D_2$ is an equivalence;

	\item For each $A,B\in \A_1$, the map $F : \hom^n_{\D_1}(A,B)\ra \hom^n_{\D_2}(F(A),F(B))$ is an isomorphism.

	If $\D_1=\D^b(\A_1)$ then there is an additional equivalent condition:
	
	\item For any $A,B\in \A_1$, $n>0$ and $f\in \hom^n_{\D_2}(F(A),F(B))$, there exists a monomorphism $B\monic B'$ in $\A_1$ effacing $f$.
	\end{enum2}
\end{lem}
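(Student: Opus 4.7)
My proof plan follows the classical Beilinson argument from \cite{BBD}, adapted to the setup of the lemma.

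For the equivalence $(1)\Leftrightarrow(2)$, the direction $(1)\Ra(2)$ is immediate. For $(2)\Ra(1)$, I would argue by d\'evissage using the bounded t-structures. Full faithfulness extends from $\A_1$ to $\D_1$ as follows: for any $X,Y\in\D_1$, the bounded t-structure lets us build a finite filtration of $X$ (and of $Y$) by triangles of the form $\tau_{\leq n-1}X\ra \tau_{\leq n}X\ra H^nX[-n]\cra$, where the cohomologies live in $\A_1$. Since $F$ is t-exact and commutes with cohomology truncation (via its induced t-structure on $\D_2$), applying $\hom(-, Y)$ and $\hom(F-, FY)$ yields long exact sequences, and (2) together with the five-lemma propagates the Hom-isomorphism from the heart to all of $\D_1$. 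Essential surjectivity is similar: an object $Z\in\D_2$ has finitely many nonzero cohomologies $H^i Z\in \A_2$; since $F_{|\A_1}\colon \A_1\to\A_2$ is essentially surjective, each $H^iZ$ lifts, and an induction on the cohomological width glues these lifts through the truncation triangles using fullness already established.

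For the direction $(2)\Ra(3)$ in the case $\D_1=\D^b(\A_1)$: any morphism $f\colon A\to B[n]$ in $\D^b(\A_1)$ with $n>0$ corresponds (via the Yoneda description of $\ext^n$) to an $n$-fold extension of $A$ by $B$, and a standard construction produces a monomorphism $B\monic B'$ in $\A_1$ along which the extension becomes trivial; applying $F$ and using (2) transports this effacement to $\D_2$.

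For $(3)\Ra(2)$: I would induct on $n\geq 0$ to show that \[ F_{A,B}^n\colon \hom^n_{\D_1}(A,B)\lra \hom^n_{\D_2}(FA,FB) \] is an isomorphism for all $A,B\in\A_1$. The case $n=0$ is the hypothesis that $F_{|\A_1}$ is an equivalence. For $n>0$, assume the claim for $n-1$. Given $f\in\hom^n_{\D_2}(FA,FB)$, apply (3) to obtain a monomorphism $\iota\colon B\monic B'$ in $\A_1$ with cokernel $C$ such that $F(\iota)[n]\circ f=0$. The triangle $FB\to FB'\to FC\cra$ (which is $F$ applied to the short exact sequence, since $F$ is t-exact) yields a lift $g\colon FA\to FC[n-1]$ with $\delta[n-1]\circ g=f$, where $\delta\colon C\to B[1]$ is the connecting morphism. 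By the inductive hypothesis $g=F(g')$ for a unique $g'\in \hom^{n-1}_{\D_1}(A,C)$, and then $\delta[n-1]\circ g'\in\hom^n_{\D_1}(A,B)$ is sent to $f$ by $F^n_{A,B}$, giving surjectivity. Injectivity of $F^n_{A,B}$ follows by the same effacement trick: if $F^n_{A,B}(h)=0$, choose $B\monic B'$ effacing $h$ in $\D_1$, then $h$ factors through $C[n-1]$ via some $h'$, and $F(h')$ maps to $F(h)=0$ in $\hom^n_{\D_2}(FA,FB)$; the induction hypothesis, combined with the commutative diagram relating $h$ and $h'$, forces $h=0$.

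The main technical obstacle is making the effacement step in $(3)\Ra(2)$ watertight: one needs to verify that the connecting-map decomposition of $f$ (respectively $h$) genuinely allows the inductive hypothesis to be invoked at level $n-1$, and that the lifts obtained in $\D_2$ are indeed of the form $F(g')$ rather than merely abstract morphisms between objects in the image of $F$. This is handled by (2) at level $n-1$, but requires care because $F$ is a priori only known to be t-exact and an equivalence on hearts, not fully faithful in intermediate stages of the induction. Beyond this point, the argument is the standard one, and the statement of the lemma follows.
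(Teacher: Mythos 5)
Your proposal is correct and follows essentially the same route as the source: the paper gives no proof of this lemma but quotes it from Beilinson \cite[Lemma 1.4]{B}, whose argument is exactly your d\'evissage for $(1)\Leftrightarrow(2)$ together with induction-plus-effacement for $(2)\Leftrightarrow(3)$. The only compressed point is injectivity in $(3)\Rightarrow(2)$: from $F(\delta)[n-1]\circ F(h')=F(h)=0$ you should first lift $F(h')$ along $\hom^{n-1}_{\D_2}(FA,FB')\ra \hom^{n-1}_{\D_2}(FA,FC)$, write the lift as $F(u')$ by surjectivity at level $n-1$, deduce $h'=\pi[n-1]\circ u'$ by injectivity at level $n-1$, and conclude $h=\delta[n-1]\circ\pi[n-1]\circ u'=0$ since $\delta\circ\pi=0$ in the triangle $B\ra B'\overset{\pi}{\ra} C\overset{\delta}{\ra} B[1]$ --- a routine completion of the step you yourself flagged.
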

As remarked in \cite[Remarque 3.1.17]{BBD}, we have always \[\hom_{\D^b(\A_1)}^n(A,B)\overset{\sim}{\ra} \hom_{\D_2}^n(F(A),F(B))\] for  $A,B\in\A_1$ and $n\leq 1$.  

Although we don't know the uniqueness of a realization functor, if some realization functor $F_1: \D^b(\B)\ra \D$ is an equivalence then any realization functor $F_2: \D^b(\B)\ra \D$ is an equivalence by Lemma~\ref{heart equiv}. 
So it makes sense to say that the inclusion $\B\monic \D$ extends to an exact equivalence $\D^b(\B)\simeq \D$ if some realization functor $F: \D^b(\B)\ra \D$ is an equivalence. 

If there exists an exact equivalence $H: \D^b(\B)\overset{\sim}{\ra} \D$ which is moreover t-exact then any realization functor $F: \D^b(\B)\ra \D$ is an equivalence; given an exact autoequivalence $\Phi$ of $\D$, there exists a realization functor $F:\D^b(\B)\ra \D$  iff there exists a realization functor $G: \D^b(\Phi(\B))\ra \D$ and $F$ is an equivalence iff so is $G$. We will use these trivial facts implicitly.

A remarkable instance of a realization functor being an equivalence is  that for a tilted heart with respect to a (co-)tilting torsion theory introduced in \cite{HRS}.  

\begin{prop}\label{tilt-co-tilt equiv} Suppose  that $\A$ is an abelian category and $(\T,\F)$ a torsion pair in $\A$. 
	If $\T$ is a tilting torsion class or $\F$ is a co-tilting torsion-free class then the inclusion of the tilted heart $\F[1]*\T$ into $\D^b(\A)$ extends to an exact equivalence $\D^b(\F[1]*\T)\overset{\sim}{\ra} \D^b(\A)$.  
\end{prop}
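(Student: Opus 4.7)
The plan is to produce a realization functor $F : \D^b(\B) \lra \D^b(\A)$ and then verify it is an exact equivalence by means of Lemma~\ref{heart equiv}. Since $\D^b(\A)$ is algebraic, such an $F$ exists by the proposition of Chen--Ruiter stated just before Lemma~\ref{heart equiv}; by construction $F$ is t-exact and $F_{|\B}$ is (isomorphic to) the inclusion $\B \monic \D^b(\A)$. Thus by Lemma~\ref{heart equiv}(3) the task reduces to checking the effacement condition: for every $A, B \in \B$, $n > 0$ and $f \in \hom^n_{\D^b(\A)}(A, B)$, there must exist a monomorphism $B \monic B'$ in $\B$ along which $f$ becomes zero. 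As noted after Lemma~\ref{heart equiv}, the degrees $n \leq 1$ are automatic, so only $n \geq 2$ requires work. By passing to $\A^{\op}$ (which interchanges tilting torsion classes with cotilting torsion-free classes), I may assume throughout that $\T$ is a tilting torsion class. In particular every object of $\A$ embeds into some object of $\T$, and, using that $\T$ is closed under quotients, every $X \in \A$ fits in an exact sequence $0 \to X \to T_0 \to T_1 \to 0$ with $T_0, T_1 \in \T$.

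The next step is a d\'evissage using the torsion pair $(\F[1], \T)$ in $\B$: any $A, B \in \B$ sit in short exact sequences $0 \to F_A[1] \to A \to T_A \to 0$ and $0 \to F_B[1] \to B \to T_B \to 0$ in $\B$, with $F_A, F_B \in \F$ and $T_A, T_B \in \T$. A standard long-exact-sequence and five-lemma argument then reduces the effacement of an arbitrary $f \in \hom^n_{\D^b(\A)}(A, B)$ to the four subcases in which $A$ and $B$ each lie in $\T$ or in $\F[1]$. The bookkeeping relies on the observation that monomorphisms constructed termwise for $T_B$ and $F_B[1]$ can be glued via the octahedral axiom into a monomorphism for $B$ in $\B$.

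The key construction of effacing monomorphisms is then as follows: when $B = T \in \T$, the sequence $0 \to T \to T_0 \to T_1 \to 0$ lies entirely in $\T \subset \B$ and is therefore short exact in $\B$; the resulting monomorphism $T \monic T_0$ in $\B$ transports any $A \to T[n]$ into a morphism $A \to T_1[n-1]$, permitting induction on $n$ down to the case $n = 1$ where effacement is automatic. When $B = F'[1]$ with $F' \in \F$, choose a monomorphism $F' \monic T_0$ in $\A$; applying the octahedral axiom to $F' \monic T_0$ yields a triangle whose third term lies in $\T \subset \B$, giving a monomorphism $F'[1] \monic T''$ in $\B$ with $T'' \in \T$, and thereby reducing to the previously handled $\T$-case. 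The main obstacle will be the d\'evissage step in the second paragraph, namely verifying that a morphism effaced on the quotient $T_B$ and on the subobject $F_B[1]$ is indeed effaced on $B$ itself --- this is where one must carefully juggle the two long exact sequences and exploit that the monomorphisms produced above are $\B$-monomorphisms (not merely $\D^b(\A)$-morphisms), a point that ultimately rests on the tilting hypothesis guaranteeing sufficient cogenerators for $\B$ inside $\T \subset \B$.
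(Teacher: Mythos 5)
You should first note that the paper itself does not prove Proposition~\ref{tilt-co-tilt equiv}: it defers to the literature, namely to \cite{HRS} (under the extra hypothesis of enough injectives or projectives), to \cite[Proposition 5.4.3]{BB} (general case, via derived categories of exact categories), and to \cite{C1} (an explicit construction of the equivalence; see also \cite{CHZ}). So your attempt has to stand on its own, and it does not: the key construction in your $\F[1]$-case is impossible. In the tilted heart $\B=\F[1]*\T$ the torsion pair is $(\F[1],\T)$, with \emph{torsion-free} class $\T$; since a torsion-free class is closed under subobjects, a monomorphism $F'[1]\monic T''$ in $\B$ with $T''\in \T$ forces $F'=0$. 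What the octahedral (in fact, just the rotation) axiom applied to a mono $F'\monic T_0$ in $\A$ with cokernel $T_1\in\T$ actually produces is the triangle $T_0\to T_1\to F'[1]\to T_0[1]$ with all three outer terms in $\B$, i.e.\ a short exact sequence $0\to T_0\to T_1\to F'[1]\to 0$ in $\B$: the tilting hypothesis makes $F'[1]$ a \emph{quotient} of an object of $\T$, not a subobject of one. So the arrow points the wrong way for effacement-by-monomorphisms, and the $\F[1]$-case cannot be reduced to the $\T$-case as you claim.

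The other two steps also fail. In the $\T$-case your induction never lowers the degree: from the triangle $T\to T_0\to T_1\to T[1]$ and $f\in\hom^n_{\D^b(\A)}(A,T)$, the composite $A\to T_0[n]$ either vanishes --- in which case the mono $T\monic T_0$ already effaces $f$ and no induction is needed --- or it does not, and then $f$ does \emph{not} factor through $T_1[n-1]$, so you obtain no map in degree $n-1$; there is no mechanism decreasing $n$, because tilting provides no vanishing of $\hom^{\geq 2}(-,\T)$ (contrast the classical effacement argument, which really uses $\ext^{>0}_\A(-,I)=0$ for injectives $I$). Finally, the target d\'evissage you defer to (``glue termwise monomorphisms via the octahedral axiom'') is genuinely obstructed: to extend a mono $m\colon T_B\monic C$ on the torsion-free quotient to a mono $B\monic B'$ one must lift the extension class of $B$ along $m^*\colon \ext^1_\B(C,F_B[1])\to \ext^1_\B(T_B,F_B[1])$, whose cokernel is controlled by a $\hom^2$-term that has no reason to vanish; you yourself flag this as the ``main obstacle'' without resolving it, and the octahedral axiom alone cannot, since the morphism $B\to B'$ it would need is exactly what is being constructed. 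What is correct in your write-up --- the existence of a realization functor, the use of Lemma~\ref{heart equiv}(3), the reduction to $n\geq 2$, the passage to $\A^{\op}$, and the source d\'evissage on $A$ --- is the easy part; the substance of the proposition lies precisely in the steps above, which is why the cited proofs all resort to different machinery.
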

\begin{rmk}
	\begin{enum2}
	\item Proposition~\ref{tilt-co-tilt equiv} is proved originally in \cite{HRS} requiring enough projectives or enough injectives in $\A$ (see \cite[Theorem 3.3]{HRS}). The additional condition is removed in \cite{BB} using the derived category of an exact category (see \cite[Proposition 5.4.3]{BB}). See also \cite{C1} for a short proof via an explicit construction of the equivalence functor. 
	
	\item Generalizing Proposition~\ref{tilt-co-tilt equiv},  \cite{CHZ}  contains a characterization of when the inclusion of the tilted heart $\F[1]*\T$  into $\D^b(\A)$ extends to an exact equivalence for a torsion pair $(\T, \F)$ in $\A$. 
	\end{enum2}
\end{rmk}

\subsection{Reduction via Ext-projectives}

In \cite{SR}, one step of the proof of the main theorem (i.e., Assertion~\ref{ass} holds for $\D^b(\Lambda)$ for a finite dimensional hereditary algebra $\Lambda$)  is reduction via Ext-projectives (more precisely,  the simple top of an Ext-projective). 
The reduction  relies on \cite[Proposition 8.6]{SR}, which seems to work only for  $\D^b(\Lambda)$, where $\Lambda$ is  a finite dimensional hereditary algebra. 
Our proof of Theorem~\ref{der equiv} also uses Ext-projectives to do reduction. In contrast, we will  rely on Proposition~\ref{der equiv reduction} to do reduction, which works for a more general class of triangulated categories, but we have additional assumption on our  Ext-projectives to do reduction and so we have to make efforts to assure the existence of such an Ext-projective object. 
 
 Let $\D$ be a  $k$-linear algebraic triangulated category of finite type  admiting a Serre functor $\SS$. Let $(\D^{\leq 0},\D^{\geq 0})$ be  a bounded t-structure on $\D$ with heart $\B$.   These hypothesis will be retained through this subsection.
 
Let $X\in \D^{\leq 0}$ be an exceptional object such that $\SS X\in \D^{\geq 0}$. 
 By Lemma~\ref{ext-proj},  $X$ is $\D^{\leq 0}$-projective. 
Denote $\D_1:=X^{\perp_\D}={}^{\perp_\D}\SS X$. 
By Lemma~\ref{ext-proj recollement}, $(\D^{\leq 0},\D^{\geq 0})$ is compatible with  the recollement 
\begin{equation}\label{exrec}
	\xymatrix{\D_1\ar[rr]|{i_*}   & &\ar@/_1pc/[ll]|{i^*} \ar@/^1pc/[ll]|{i^!}\D \ar[rr]|{j^*} & &\ar@/_1pc/[ll]|{j_!} \ar@/^1pc/[ll]|{j_*} \pair{X}_\D,}
\end{equation}
where $i_*, j_!$ are the inclusion functors. 
We have $j_*X=\SS X$; for $Y\in \D$, we have $j^*Y=\hom^\bullet(X,Y)\otimes X$.  There are triangles \[\hom^\bullet(X,Y)\otimes X\overset{\text{ev}}{\ra} Y\ra  i_*i^*Y\cra ,\quad i_*i^!Y\ra Y\overset{\text{co-ev}}{\ra} D\hom^\bullet(Y,\SS X))\otimes \SS X\cra .\] $\D_1$ has a Serre functor $\SS_1=i^!\SS i_*$ by Proposition~\ref{admissible Serre functor}. Moreover, 
we have an induced t-structure \[(\D_1^{\leq 0},\D_1^{\geq 0})=(\D_1\cap \D^{\leq 0},\D_1\cap \D^{\geq 0})\] on $\D_1$ with heart $\B_1=\D_1\cap \B$.  
We keep these notation	in the following proposition. 
\begin{prop}\label{der equiv reduction}
 Let $X\in \D^{\leq 0}$ be an exceptional object. Suppose that $X$ and $\SS X$ lie in $\B$ and that  either $X$ or $\SS X$ is simple  in $\B$.  Then 
 \begin{enum2}	
 \item $\SS$ is right t-exact with respect to $(\D^{\leq 0},\D^{\geq 0})$ iff  so is $\SS_1$ with respect to the  t-structure $(\D_1^{\leq 0},\D_1^{\geq 0})$ on $\D_1$;

	\item the inclusion $\B\monic \D$ extends to an exact equivalence $\D^b(\B)\simeq \D$ iff the inclusion $\B_1\monic \D_1$ extends to an exact equivalence $\D^b(\B_1)\simeq \D_1$.
	\end{enum2}	
\end{prop}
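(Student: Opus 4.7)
My plan for both parts hinges on establishing the intertwining $\SS \circ i_* \cong i_* \circ \SS_1$ of functors $\D_1 \to \D$. I obtain this via Serre duality: for $Y \in \D_1$ and any $Z \in \pair{X}_\D$,
\[ \hom_\D(\SS i_* Y, j_! Z) \cong D\hom_\D(j_! Z, i_* Y) = 0, \]
the vanishing coming from $j^* i_* = 0$, so that $\SS i_* Y \in (\im j_!)^{\perp_0} = \im i_*$ and hence $\SS i_* Y \cong i_* i^! \SS i_* Y = i_* \SS_1 Y$. For part (1), the implication $(\Ra)$ is then immediate from the intertwining together with t-exactness and full faithfulness of $i_*$. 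For $(\La)$, I apply $\SS$ to the recollement triangle $j_! j^* Z \to Z \to i_* i^* Z \to$ for $Z \in \D^{\leq 0}$, yielding $j_* j^* Z \to \SS Z \to i_* \SS_1 i^* Z \to$. The right-hand end lies in $\D^{\leq 0}$ by right t-exactness of $i^*$ (general for glued t-structures), right t-exactness of $\SS_1$ (hypothesis), and t-exactness of $i_*$. For the left-hand end, since $\pair{X}_\D \simeq \D^b(k)$ has heart generated by the unique simple $X$, $j^* Z \in \pair{X}_\D^{\leq 0}$ decomposes as $\bigoplus_{k \geq 0} X^{\oplus n_k}[k]$, so $j_* j^* Z = \bigoplus_{k \geq 0} (\SS X)^{\oplus n_k}[k] \in \D^{\leq 0}$ because $\SS X \in \B$. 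The triangle then forces $\SS Z \in \D^{\leq 0}$.

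For part (2), I take realization functors $F : \D^b(\B) \to \D$ and $F_1 : \D^b(\B_1) \to \D_1$, which exist by algebraicity of $\D$, arranged to be compatible via ${}^pi_* = i_* : \B_1 \hookrightarrow \B$ in the sense $F \circ \D^b(i_*) \cong i_* \circ F_1$. The third companion $F_2 : \D^b(\B_2) \to \pair{X}_\D$ is automatically an equivalence since both sides are triangle equivalent to $\D^b(k)$. The simplicity hypothesis, combined with Proposition~\ref{simple in heart}, identifies $j_{!*}X$ with either $X$ or $\SS X$, and produces a short exact sequence in $\B$ of the form $0 \to X \to \SS X \to C \to 0$ with $C \in i_* \B_1$ (if $X$ is simple), or $0 \to K \to X \to \SS X \to 0$ with $K \in i_* \B_1$ (if $\SS X$ is simple). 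Using the Serre structure $\B_1 \hookrightarrow \B \twoheadrightarrow \B_2$ and the finite-dimensionality of ${}^pj^* A \in \B_2 \simeq \mathrm{Vec}_k^{fd}$ for each $A \in \B$, every object $A \in \B$ has a finite filtration whose successive quotients lie in $i_* \B_1$ or are isomorphic to $X$ or $\SS X$.

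By Lemma~\ref{heart equiv}, $F$ (resp.\ $F_1$) is an equivalence iff it induces isomorphisms on $\hom^n$ for all pairs of heart objects. A 5-lemma d\'evissage on these filtrations, pitting Ext long exact sequences in $\B$ against $\hom^*_\D$ long exact sequences in $\D$, reduces the comparison to the base cases $A, B \in i_*\B_1$ (handled by $F_1$, plus the base Ext comparison below) and $A$ or $B$ equal to $X$ or $\SS X$ (handled via the explicit short exact sequence together with $F_2$ being an equivalence). The main obstacle I expect is the base Ext comparison $\ext^n_\B(i_* A', i_* B') \cong \ext^n_{\B_1}(A', B')$ for $n \geq 2$: the cases $n = 0, 1$ follow directly from $\B_1$ being a Serre subcategory of $\B$, but higher-Ext comparisons fail for generic Serre embeddings. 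The rescue here is that $\B_2$ is semisimple and the short exact sequence linking $X$, $\SS X$ and $\B_1$ is available; together they permit a dimension-shifting argument in $\B$ reducing higher-Ext comparisons to the elementary cases $n \leq 1$. The reverse direction of (2) follows by a symmetric argument, the same base comparison serving both directions.
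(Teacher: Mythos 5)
Your proof is built on the intertwining $\SS\circ i_*\cong i_*\circ\SS_1$, and this is false -- both in general and under the hypotheses of the proposition. The Serre duality step is miswritten: what membership in $\im\,i_*=\ker\,j^*$ requires is $\hom_\D(j_!Z,\SS i_*Y)=0$, and Serre duality converts this to $D\hom_\D(i_*Y,j_!Z)$, i.e.\ to maps \emph{from} $\D_1$ \emph{into} $\pair{X}_\D$ -- which do not vanish ($\D_1=X^{\perp_\D}$ kills maps out of $X$, not into it). Conceptually, $\SS i_*\cong i_*\SS_1$ would force $\SS(X^{\perp_\D})=X^{\perp_\D}$, whereas in fact $\SS(X^{\perp_\D})=(\SS X)^{\perp_\D}$. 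Concretely: take $\D=\D^b(kQ)$ for $Q$ the $A_2$-quiver with standard t-structure, and $X=P_1$ the length-two projective. Then $X$ is Ext-projective in $\D^{\leq 0}$, $\SS X=S_1$ is simple in the heart, so all hypotheses hold; but $\D_1=\pair{S_2}_\D\simeq \D^b(k)$, so $\SS_1\cong \id$, while $\SS S_2\cong P_1\notin \D_1$. The valid recollement--Serre identities are $\SS j_!\cong j_*\SS_{\pair{X}_\D}$ (which correctly justifies your $j_*j^*Z$ vertex, since $\SS_{\pair{X}_\D}\cong\id$) and $i^!\SS\cong \SS_1 i^*$, but not the one you use. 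Consequently both directions of your (1) collapse. A telling symptom: your $(\Rightarrow)$ never invokes the simplicity hypothesis, yet this is exactly the direction where the paper needs it -- there, right t-exactness of $\SS_1$ is shown to be equivalent to the co-evaluation map $H^0(\SS i_*Y)\ra D\hom(H^0(\SS i_*Y),\SS X)\otimes \SS X$ being an epimorphism in $\B$ for all $Y\in\B_1$, and simplicity of $X$ or $\SS X$ is what verifies that epimorphism. Your $(\Leftarrow)$ is repairable along the paper's lines: the third vertex of the triangle is $\SS i_*i^*Z$, not $i_*\SS_1 i^*Z$, and one must decompose it further via $i_*\SS_1 i^*Z\ra \SS i_*i^*Z\ra D\hom^\bullet(\SS i_*i^*Z,\SS X)\otimes \SS X$, controlling the last term by Ext-projectivity of $X$ together with $\SS X\in\B$ -- this extra piece is precisely the content of the paper's argument.

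For (2), the step you flag as ``the main obstacle'' -- the comparison $\ext^n_\B(i_*A',i_*B')\cong \ext^n_{\B_1}(A',B')$ for $n\geq 2$, equivalently full faithfulness of $\D^b(\B_1)\ra \D^b(\B)$ -- is exactly where the simplicity hypothesis does its work, and your proposed rescue (semisimplicity of $\B_2$ plus a dimension-shifting argument) is asserted rather than carried out. The paper's key observation is different and cleaner: if $X$ is simple in $\B$, the evaluation $\hom(X,Y)\otimes X\ra Y$ is a monomorphism for $Y\in\B$, so ${i^*}_{|\B}:\B\ra\B_1$ is an \emph{exact} left adjoint to the inclusion $\B_1\monic\B$ (dually ${i^!}_{|\B}$ is an exact right adjoint when $\SS X$ is simple); an exact (co)reflection of a Serre subcategory immediately makes $\D^b(\B_1)\ra\D^b(\B)$ fully faithful, yielding the base comparison for all $n$ at once. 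With that in hand, your filtration d\'evissage is also unnecessary: since both $\D^b(\B)$ and $\D$ are classically generated by $\{X\}\cup\B_1$ and by $\{\SS X\}\cup\B_1$, it suffices to compare $\hom^n$ for $Y_1\in\{X\}\cup\B_1$, $Y_2\in\{\SS X\}\cup\B_1$; the mixed cases are dispatched not by your short exact sequences but by the vanishing $\hom^n(X,-)=0=\ext^n_\B(X,-)$ and $\hom^n(-,\SS X)=0=\ext^n_\B(-,\SS X)$ for $n\geq 1$ on both sides, since $X$ is Ext-projective in $\D^{\leq 0}$ and projective in $\B$, and $\SS X$ is Ext-injective in $\D^{\geq 0}$ and injective in $\B$. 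So the architecture of your part (2) could be salvaged, but only after supplying the exact-adjoint argument you left as a gap; part (1) as written cannot be salvaged without abandoning the intertwining.
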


\begin{proof}
 Since $X$ is $\D^{\leq 0}$-projective, we have $\ext^1_\B(X,Y)\cong\hom^1_\D(X,Y)=0$ for all $Y\in\B$, and thus $X$ is a projective object in $\B$. Similarly, since $\SS X$ is Ext-injective in $\D^{\geq 0}$, $\SS X$ is an injective object in $\B$. 

	(1) First we show that the right t-exactness of $\SS_1$ implies that of $\SS$.  
	Let $Y\in \D^{\leq 0}$. Then \[i^*Y\in\D_1^{\leq 0},\quad i_*i^* Y\in\D^{\leq 0}\] and we have a triangle \[i_*i^! \SS i_*i^* Y\ra \SS i_*i^*Y\ra D\hom^\bullet(\SS i_*i^* Y, \SS X)\otimes \SS X\cra.\] Note that for $n<0$, we have $X[n]\in \D^{\geq 1}$ and \[\hom^n(\SS i_*i^* Y, \SS X)=\hom^n(i_*i^*Y, X)=0.\] Thus  \[D\hom^\bullet (\SS i_*i^* Y, \SS X)\otimes \SS X=\oplus_{n\geq 0}D\hom^n(\SS i_*i^*Y, \SS X)\otimes \SS X[n]\in \D^{\leq 0}.\] If $\SS_1$ is right t-exact then \[i_*i^!\SS i_*i^*Y=i_*\SS_1 i^*Y\in \D^{\leq 0}.\] Hence $\SS i_*i^*Y\in \D^{\leq 0}$. 
Since $X$ is $\D^{\leq 0}$-projective, we have
\[
	\begin{aligned}
		\SS(\hom^\bullet(X,Y)\otimes X) & =\SS(\oplus_{n\leq 0}\hom^n(X, Y)\otimes X[-n])\\
										& =\oplus_{n\leq 0}\hom^n(X,Y)\otimes \SS X[-n]\\
				&\in \D^{\leq 0}.
	\end{aligned}
\]Then using the triangle \[\SS(\hom^\bullet(X,Y)\otimes X\ra Y\ra i_*i^*Y\cra),\] one knows that $\SS Y\in \D^{\leq 0}$. This shows that $\SS$ is right t-exact.

Now we suppose $\SS$ is right t-exact and  deduce the equivalence between the right t-exactness of $\SS_1$ and the condition that for each $Y\in \B_1$, the co-evaluation map \[H^0(\SS i_*Y)\lra D\hom(H^0(\SS i_*Y),\SS X)\otimes \SS X\] is an epimorphism in $\B$. This equivalence will yield the desired implication, as we will see. 
For $Y\in \B_1$, $\SS_1 Y=i^!\SS i_* Y$ fits into the triangle \[i_*\SS_1 Y\ra \SS i_*Y\ra D\hom^\bullet(\SS i_*Y,\SS X)\otimes \SS X\cra .\] 
Since $i_*Y, X\in \B$, we have \[D\hom^\bullet(\SS i_*Y,\SS X)\otimes \SS X =\oplus_{m\geq 0}D\hom( i_*Y, X[m])\otimes \SS X[m] \in \D^{\leq 0};\] since $\SS$ is right t-exact, we have $\SS i_*Y\in \D^{\leq 0}$. 
	Consider the commutative diagram \[\xymatrix@R-5pt{Z_1\ar[r]\ar[d] & \tau_{\leq -1} \SS i_*Y\ar[r]\ar[d] & \oplus_{m>0}\hom(\SS i_*Y,\SS X[m])\otimes \SS X[m]\ar[d]\\
	i_*\SS_1 Y\ar[r]\ar[d] & \SS i_*Y \ar[r]^{\text{co-ev}\qquad\qquad}\ar[d] & D\hom^\bullet(\SS i_*Y,\SS X)\otimes \SS X\ar[d]\\
	Z_2\ar[r] & H^0(\SS i_*Y)\ar[r]^{\text{co-ev}\qquad\qquad} & D\hom(H^0(\SS i_*Y), \SS X)\otimes \SS X,}\] where rows and columns are distinguished triangles.  
	Then $Z_1\in \D^{\leq 0}$ and hence $i_*\SS_1 Y\in\D^{\leq 0}$ iff $Z_2\in \D^{\leq 0}$. By the triangle \[Z_2\ra H^0(\SS i_*Y)\ra D\hom(H^0(\SS i_*Y),\SS X)))\cra,\] we have $Z_2\in \D^{[0,1]}$. Taking cohomology tells us that $Z_2\in \B$ iff the morphism $H^0(\SS i_*Y)\ra D\hom(H^0(\SS i_*Y),\SS X)\otimes \SS X$ is epic in $\B$. Hence we have the claimed equivalence that $\SS_1$ is right t-exact iff for each $Y\in \B_1$, the co-evaluation map \[H^0(\SS i_*Y)\ra D\hom(H^0(\SS i_*Y),\SS X)\otimes \SS X\] is epic in $\B$.

	If $\SS X$ is simple in $\B$ then 
	clearly the co-evaluation map is an epimorphism. 
	If $X$ is simple in $\B$ then $X$ is a simple projective. Hence for $Y\in \B_1$, \[\hom(H^0(\SS i_*Y),\SS X)\cong \hom(\SS i_*Y, \SS X)\cong \hom(i_*Y,X)=0\] and so the co-evaluation map is  also an epimorphism. 

	(2) If $X$ is simple in $\B$ then for $Y\in \B$, the evaluation map $\hom(X,Y)\otimes X\ra Y$ is a monomorphism in $\B$. Therefore  \[i_*i^* Y  =\cone(\hom^\bullet(X,Y)\otimes X\ra Y) =\cone(\hom(X,Y)\otimes X\ra Y)\] coincides with  the cokernel of the evaluation map \[\hom(X,Y)\otimes X\ra Y\] in $\B$, whence $i^*Y\in \B_1$.
	It follows that $i^*$ is t-exact and restricts to an exact functor ${i^*}_{|\B}: \B\ra \B_1$, which is  left adjoint to the inclusion $\iota=i_{*|\B_1}: \B_1\monic \B$.
	This implies that the inclusion $\iota: \B_1\monic \B$ extends to a fully faithful exact functor $\D^b(\iota): \D^b(\B_1)\monic \D^b(\B)$. 
	Similarly, if $\SS X$ is simple in $\B$ then $i^!$ is t-exact and restricts to an exact functor ${i^!}_{|\B}: \B\ra \B_1$. This also implies that the inclusion $\iota=i_{*|\B_1}: \B_1 \monic \B$ extends to a fully faithful embedding  $\D^b(\iota): \D^b(\B_1)\monic \D^b(\B)$. 
  In either case, we have a fully faithful functor  $\D^b(\iota): \D^b(\B_1)\monic \D^b(\B)$.

  Let $F: \D^b(\B)\ra \D$ be a realization functor. Note that $F$ maps the essential image of $\D^b(\B_1)$ in $\D^b(\B)$ into $\D_1$ and $F_1:=F\circ \D^b(\iota): \D^b(\B_1)\ra \D_1$ is a realization functor. We now show our assertion.
	
  \nec If $F$ is an equivalence then for any $Y_1,Y_2\in \B_1$, we have \[\hom^n_{\D^b(\B_1)}(Y_1,Y_2) \overset{\sim}{\ra} \hom^n_{\D^b(\B)}(Y_1,Y_2) \overset{\sim}{\ra} \hom_\D^n(Y_1,Y_2) = \hom^n_{\D_1}(Y_1,Y_2).\]
	Hence $F_1$ is an equivalence. 

\suf  Assume that $F_1: \D^b(\B_1)\ra \D_1$ is an equivalence.  
Since both $\D^b(\B)$ and $\D$ are generated by $\{X\}\cup \B_1$ and also by $\{\SS X\}\cup \B_1$, to show that $F$ is an equivalence, it sufficies to show that $F$ induces an isomorphism \[(*)\quad \quad\hom^n_{\D^b(\B)}(Y_1, Y_2)\overset{\sim}{\ra}\hom_\D^n(Y_1,Y_2)\] for each $Y_1\in \{X\}\cup \B_1, Y_2\in \{\SS X\}\cup \B_1$. 
	$(*)$ always holds for $n\leq 1$ and so we need to show that $(*)$ holds for $n\geq 2$. 
	Since $F_1:\D^b(\B_1)\ra \D_1$ is an equivalence, $(*)$ holds for $Y_1,Y_2\in \B_1$. Since $X$ is Ext-projective in $\D^{\leq 0}$ and projective in $\B$, \[\hom^n_\D(X,Y_2)=0=\hom^n_{\D^b(\B)}(X,Y_2)\] for $Y_2\in \{\SS X\}\cup \B_1$ and $n\geq 1$; since $\SS X$ is Ext-injective in $\D^{\geq 0}$ and injective in $\B$, we have \[\hom^n_\D(Y_1,\SS X)=0=\hom^n_{\D^b(\B)}(Y_1, \SS X)\]  for $Y_1\in \{X\}\cup \B_1$ and $n\geq 1$. This finishes the proof. 

\end{proof}

We use the following fact to find an  object satisfying the assumption of Proposition~\ref{der equiv reduction}.  For an exceptional  object $X\in \D$,  denote $M_X=\cocone(X\overset{\eta}{\ra} \SS X)$, where $\eta$ is a nonzero morphism. Since $\hom(X,\SS X)\cong D\hom(X,X)=k$, $M_X$ is  up to isomorphism independent of the choice of $\eta$. 
\begin{lem}\label{simple in A_t heart}
	Let $X$ be an exceptional Ext-projective object  in $\D^{\leq 0}$. With the above notation, if $M_X\in\D^{\leq 0}$ then $\SS X$ is a simple object in $\B$; if $M_X\in \D^{\geq 1}$ then $X$ is a simple object in $\B$. In particular, if $M_X[l]\in \B$ for some $l$ then either $X$ or $\SS X$ is simple in $\B$.
\end{lem}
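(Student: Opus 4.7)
My plan is to exploit the Auslander-Reiten triangle
$$M_X\lra X\overset{\eta}{\lra}\SS X\lra M_X[1],$$
whose existence is guaranteed by the Serre functor, and in which $\eta$ is (essentially) the unique nonzero element of $\hom(X,\SS X)=k$ furnished by Serre duality and exceptionality of $X$. The first step is to extract cohomological information with respect to the t-structure. If $M_X\in\D^{\leq 0}$, then since $X\in\D^{\leq 0}$ and $\D^{\leq 0}$ is closed under extension, the triangle places $\SS X$ into $\D^{\leq 0}$, and combined with $\SS X\in\D^{\geq 0}$ from Lemma~\ref{ext-proj} this puts $\SS X$ into $\B$; moreover the long exact cohomology sequence gives $H^1(M_X)=\coker\hat\eta$ where $\hat\eta:=H^0(\eta): H^0(X)\to\SS X$, so $M_X\in\D^{\leq 0}$ forces $\hat\eta$ to be epic in $\B$. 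Dually, if $M_X\in\D^{\geq 1}$ then $X\in\B$ and the morphism $\hat\eta: X\to H^0(\SS X)$ corresponding to $\eta$ under the adjunction identification $\hom_\D(X,\SS X)\cong\hom_\B(X,H^0(\SS X))$ is monic in $\B$.

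For the first case, I assume toward a contradiction a proper nonzero subobject $Y\subsetneq\SS X$ in $\B$ with nonzero quotient $Z$. Applying $\hom(-,\SS X)$ to the triangle $Y\to\SS X\to Z$ and combining three vanishings---$\hom(Y[1],\SS X)=0$ (from Ext-projectivity of $X$ via Serre duality $\hom(Y[1],\SS X)\cong D\hom(X,Y[1])$), $\hom(Z,\SS X[1])=0$ (from Ext-injectivity of $\SS X$ in $\B$, the Serre-dual of the Ext-projectivity hypothesis), and $\ext^1(\SS X,\SS X)=0$ (from exceptionality)---yields $\hom(Z,\SS X)=0$, and hence $\hom(X,Z)=D\hom(Z,\SS X)=0$ by Serre duality. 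However, under the adjunction identification $\hom_\D(X,Z)\cong\hom_\B(H^0(X),Z)$, the composition $X\overset{\eta}{\to}\SS X\overset{q}{\epi} Z$ corresponds to $q\circ\hat\eta$ in $\B$, which is nonzero precisely because $\hat\eta$ is epic and $Y\subsetneq\SS X$. This contradiction forces $\SS X$ to be simple.

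The second case is completely dual: for a proper nonzero subobject $Y\subsetneq X$ in $\B$ with quotient $Z$, applying $\hom(X,-)$ to $Y\to X\to Z$ and using projectivity of $X$ in $\B$ (from Ext-projectivity) together with $\ext^1(X,X)=0$ (from exceptionality) and $\hom^{-1}(X,Z)=0$ (from the t-structure axiom) gives $\hom(X,Y)=0$, whence $\hom(Y,\SS X)=0$ by Serre duality. The composition $\eta\iota_Y: Y\to X\to\SS X$ then vanishes in $\D$, which under the correspondence $\hom_\D(Y,\SS X)\cong\hom_\B(Y,H^0(\SS X))$ translates to $\hat\eta\circ\iota_Y=0$ in $\B$; since $\hat\eta$ is monic, $\iota_Y=0$, contradicting $Y\neq 0$. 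The last assertion is immediate, as $M_X[l]\in\B$ lands $M_X$ in $\D^{\leq 0}$ when $l\geq 0$ and in $\D^{\geq 1}$ when $l<0$. The only delicate aspect of executing this plan is bookkeeping the various adjunction-induced identifications between morphism spaces in $\D$ and $\B$ so that the epic/monic properties of $\hat\eta$ translate faithfully into the desired contradictions; conceptually the argument is carried by the observation that placing $M_X$ on one side of the heart rigidly constrains $\eta$.
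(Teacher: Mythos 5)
Your proof is correct, but it takes a genuinely different route from the paper's. The paper stays inside the recollement machinery it has already set up: since $X$ is exceptional and Ext-projective, the t-structure is compatible with the recollement $X^{\perp_\D}\to\D\to\pair{X}_\D$, the corresponding heart on $\pair{X}_\D\simeq\D^b(k)$ is $\add X$, and BBD's description of simples in a glued heart (Proposition~\ref{simple in heart}) says $j_{!*}X$ is simple in $\B$; the two defining triangles for $j_{!*}X$ involve precisely $M_X$ (one checks $i_*i^!j_!X\cong M_X$ and $i_*i^*j_*X\cong M_X[1]$), so $M_X\in\D^{\geq 1}$ kills the first correction term and yields $j_{!*}X\cong X$, while $M_X\in\D^{\leq 0}$ kills the second and yields $j_{!*}X\cong \SS X$. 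You instead argue entirely inside the heart, using the triangle $M_X\to X\overset{\eta}{\to}\SS X$, truncation adjunctions, Serre duality and Ext-(pro/in)jectivity, never invoking $j_{!*}$ or gluing. Your argument is more elementary and self-contained (it would work verbatim in any Hom-finite algebraic triangulated category with Serre functor, with no recollement language), at the cost of the Hom-space bookkeeping you acknowledge; the paper's proof is shorter only because the recollement formalism and the $j_{!*}$-calculus are already in place and reused elsewhere.

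Two small repairs. First, in each case your listed vanishings by themselves give only a four-term exact sequence, e.g.\ $0\to\hom(Z,\SS X)\to \End(\SS X)\to\hom(Y,\SS X)\to 0$ in the first case (and dually $0\to\hom(X,Y)\to\End(X)\to\hom(X,Z)\to 0$ in the second); to conclude that the left-hand term vanishes you must add the one-line Schur-type observation that the inclusion $Y\monic\SS X$ (resp.\ the projection $X\epic Z$) is a nonzero element of the right-hand space, which forces it to exhaust the one-dimensional middle term. Also note that $\ext^1(\SS X,\SS X)=0$ resp.\ $\ext^1(X,X)=0$ plays no role in these sequences; the vanishing actually used is Ext-injectivity of $\SS X$ resp.\ Ext-projectivity of $X$. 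Second, your final sentence reverses the shift direction relative to the paper's convention $\D^{\leq n}=\D^{\leq 0}[-n]$: if $M_X[l]\in\B$ then $M_X\in\D^{\leq 0}$ when $l\leq 0$ and $M_X\in\D^{\geq 1}$ when $l\geq 1$, not the other way around. The disjunctive conclusion is of course unaffected, but as written the case labels are swapped.
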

\begin{proof} 
	We will use the recollement \eqref{exrec}, with which the t-structure $(\D^{\leq 0}, \D^{\geq 0})$ is compatible. 
Denote by $(\D_2^{\leq 0}, \D_2^{\geq 0})$ the corresponding t-structure on $\pair{X}_\D\simeq \D^b(k)$. Since $j^*X=X\in \D_2^{\leq 0}$ and $j^*\SS X=X\in \D_2^{\geq 0}$, we know that the heart of $(\D_2^{\leq 0}, \D_2^{\geq 0})$ is $\add\,X$. 
Then by Proposition~\ref{simple in heart}, $j_{!*}X$ is simple in $\B$ and $j_{!*}X$ fits into the two triangles \[i_*\tau_{\leq 0}i^!j_!X\ra j_!X\ra j_{!*}X)\cra, \quad j_{!*}X\ra j_*X\ra i_*\tau_{\geq 0}i^*j_*X\cra .\]
If $M_X\in \D^{\geq 1}$ then \[M_X=i^!M_X\in \D_1^{\geq 1}\] thus \[i_*\tau_{\leq 0}i^!j_!X=i_*\tau_{\leq 0} M_X=0,\quad j_{!*}X\cong j_!X=X;\]  if $M_X\in \D^{\leq 0}$ then \[M_X=i^*M_X\in \D_1^{\leq 0}\]  thus \[i_*\tau_{\geq 0}i^*j_*X=i_*\tau_{\geq 0}(M_X[1])=0,\quad j_{!*} X=\SS X.\]
	These show our first assertion and the second assertion follows easily.
\end{proof}

\begin{rmk}
	If $X,\SS X$ lie in $\B$ then by the definition of $j_{!*}$, we have $j_{!*}(X)=\im(\eta: X\ra \SS X)$, which is 
	the simple top (resp. socle) of $X$ (resp. $\SS X$).
\end{rmk}

\subsection{Proof of Theorem~\ref{der equiv}}
 We prove Theorem~\ref{der equiv} in this subsection. At first, we consider again the category $\A_t$ of finite dimensional nilpotent $k$-representations of the  cyclic quiver $\tilde{\AA}_{t-1}$ with $t$ vertices. The following  lemma refines Lemma~\ref{simple Ext-proj} and makes feasible our induction process.
\begin{lem}\label{simple simple Ext-proj}
	For a bounded t-structure $(\D^{\leq 0},\D^{\geq 0})$ on $\D^b(\A_t)$ with heart $\B$, which is not a shift of the standard one,  there exists a simple object $X\in \A_t$   such that for some some $n\in\Z$, $X[n]$ is $\D^{\leq 0}$-projective and either $X[n]$ or $\SS X[n]$ is a simple object in $\B$, where $\SS$ is the Serre functor of $\D^b(\A_t)$.
\end{lem}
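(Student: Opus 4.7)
Plan: The goal is to refine Lemma~\ref{simple Ext-proj} so that the simple Ext-projective we obtain additionally corresponds (up to Serre functor) to a simple object of $\B$. The main tool will be Lemma~\ref{simple in A_t heart}: for a simple $X \in \A_t$ with $X[n]$ being $\D^{\leq 0}$-projective, one has $M_X \cong \tau X^{[2]}$ (the middle term of the AR-sequence ending at $X$), and it suffices to verify $M_X[n] \in \D^{\leq 0}$ or $M_X[n] \in \D^{\geq 1}$.

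I would proceed by induction on $t$. The base case $t=1$ is vacuous, since every bounded t-structure on $\D^b(\A_1)=\D^b(\mod k)$ is a shift of the standard one. For $t\geq 2$, first apply Lemma~\ref{simple Ext-proj} to produce a simple $X\in\A_t$ and $n\in\Z$ with $X[n]$ being $\D^{\leq 0}$-projective. Analysing the triangle $\tau X[n]\to M_X[n]\to X[n]\cra$, since $X[n]\in\D^{\leq 0}$ and $\SS X[n]=\tau X[n+1]\in\D^{\geq 0}$ yield $\tau X[n]\in\D^{\geq -1}$, the position of $M_X[n]$ in the t-structure is controlled by $\tau X[n]$: if $\tau X[n]\in\D^{\leq 0}$ then $M_X[n]\in\D^{\leq 0}$ and Lemma~\ref{simple in A_t heart} gives that $\SS X[n]$ is simple in $\B$; the dual consideration applies if $M_X[n]\in\D^{\geq 1}$. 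In these sub-cases we are done.

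If instead $M_X[n]$ is ``mixed'' with respect to the t-structure, I would replace $X$ via reduction through the recollement $X^{\perp_\D}\subset \D^b(\A_t)\supset \pair{X}_\D$, which exists because $X$ is exceptional (Proposition~\ref{admissible Serre functor}) and with which $(\D^{\leq 0},\D^{\geq 0})$ is compatible (Lemma~\ref{ext-proj recollement}). The induced t-structure on $X^{\perp_\D}\simeq \D^b(\A_{t-1})$ (Theorem~\ref{simple perp}) has heart $\B_1$; if $\B_1$ is not a shift of $\A_{t-1}$, the inductive hypothesis produces simple $Y\in\A_{t-1}$ and $m\in\Z$ so that $Y[m]$ is Ext-projective in the induced t-structure and $Y[m]$ or $\SS_1 Y[m]$ is simple in $\B_1$. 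Transferring through the recollement via Lemma~\ref{ext-proj from recollement} and Proposition~\ref{simple in heart}(2) (which identifies simples of $\B$ coming from the $\B_1$ side with simples of $\B_1$ via ${}^p i_*$, an identity on hearts here), this gives the desired simple of $\A_t$ satisfying both conditions.

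The principal obstacle is that simples of $\A_{t-1}\simeq X^{\perp_{\A_t}}$ do not all arise from simples of $\A_t$: they either correspond to simples of $\A_t$ distinct from $X$ and $\tau X$, or to the length-two object $\tau X^{[2]}$ (the new simple created by perpendicular reduction, cf. the proof of Lemma~\ref{simple Ext-proj}). When induction hands us $Y=\tau X^{[2]}$, $Y$ is not simple in $\A_t$, so one must re-examine the original $X$: in this scenario the interaction between $X$, $\tau X$ and $\tau X^{[2]}$ in $\B$ forces (by a direct analysis using Proposition~\ref{simple in heart} and the computation $M_X=\tau X^{[2]}$) that either $X[n]$ or $\SS X[n]$ already suffices. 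Finally, the remaining case where $\B_1$ is itself a shift of $\A_{t-1}$ must be handled separately using the structural Proposition~\ref{A_t t-str}, which pins down $\B$ tightly enough to locate a simple of $\A_t$ of the required form by inspection.
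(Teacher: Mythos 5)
Your overall architecture---induction with reduction through the recollement at an exceptional simple Ext-projective, Lemma~\ref{simple in A_t heart} as the engine, and awareness that the perpendicular category creates the new simple $\tau X^{[2]}$---is the same as the paper's (which inducts on the pair $(t,\sharp\SSS)$ from Proposition~\ref{A_t t-str} rather than on $t$ alone, an inessential difference). But two steps do not hold as written. The small one is a computational slip: if $X[n]$ is $\D^{\leq 0}$-projective then $\SS X[n]=\tau X[n+1]\in\D^{\geq 0}$ gives $\tau X[n]\in\D^{\geq 1}$, not merely $\D^{\geq -1}$; since $\D^{\leq 0}\cap\D^{\geq 1}=0$, your branch ``$\tau X[n]\in\D^{\leq 0}$, hence $M_X[n]\in\D^{\leq 0}$'' is vacuous, so your opening trichotomy contributes nothing and everything falls to the recollement step.

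The decisive gap is in the transfer step. Lemma~\ref{ext-proj from recollement} lifts Ext-projectives only from the $\pair{X}_\D$ (i.e.\ $j_!$) side of the recollement; on the $i_*$ side it only descends them, so it does \emph{not} make a $\D_1^{\leq 0}$-projective $Y[m]$ in $\D_1=X^{\perp_\D}$ into a $\D^{\leq 0}$-projective. What one actually knows is $\SS_1Y[m]=i^!\SS Y[m]\in\D_1^{\geq 0}$, and this yields $\SS Y[m]\in\D^{\geq 0}$ (hence $\D^{\leq 0}$-projectivity, via Lemma~\ref{ext-proj}) exactly when $\tau Y\in X^{\perp_{\A_t}}$, i.e.\ when $Y\ncong\tau^{-1}X$. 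The case $Y\cong\tau^{-1}X$, where $\SS_1Y=\tau X^{[2]}[1]\neq\SS Y$, is precisely where the induction fails to close automatically, and your proposal never mentions it; the paper spends the bulk of its proof there, splitting on the integer $n$ with $j^*\B=\add\,X[n]$ (the sub-cases $n\geq 1$ and $n\leq 0$, and dually $S'\cong\tau^2S$ in the Ext-injective branch) to show that then $\tau^{-1}X$ or $X$ itself is the required simple. Relatedly, your handling of $Y=\tau X^{[2]}$ is complete only when $Y[m]$ itself is simple in $\B_1$: then $M_X[m]\in\B$ and Lemma~\ref{simple in A_t heart} finishes (this is also exactly how your deferred case $\B_1=\A_{t-1}[m]$ is settled, since $\tau X^{[2]}[m]\in\B$ there). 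But if only $\SS_1Y[m]=\tau^2X[m+1]$ is simple in $\B_1$, you are thrown back into the honest-simple transfer in its Ext-injective form, i.e.\ into the same unaddressed case. The paper avoids this trap by stating the induction hypothesis symmetrically---$S'[l]$ is \emph{itself} simple in $\B_1$ and is $\D_1^{\leq 0}$-projective or $\D_1^{\geq 0}$-injective, an equivalent reformulation of the lemma---which you should adopt; even then, the $S'\cong\tau^{-1}S$ and $S'\cong\tau^2S$ analysis is unavoidable and is missing from your argument.
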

\begin{proof}
	We will use freely the notation introduced at the start of \S\ref{sec: A_t}.  
	Let $\SSS$ be the  proper  collection  of simple objects in $\A_t$ asserted in Proposition~\ref{A_t t-str}. Then for some $S\in \SSS$,  $(\D^{\leq 0}, \D^{\geq 0})$ is compatible with the recollement \[ \xymatrix{S^{\perp_\D}  \ar[rr]|{i_*}   & &\ar@/_1pc/[ll]|{i^*} \ar@/^1pc/[ll]|{i^!}\D=\D^b(\A_t) \ar[rr]|{j^*} & &\ar@/_1pc/[ll]|{j_!} \ar@/^1pc/[ll]|{j_*} \pair{S}_\D,}\]
	where $i_*,j_!$ are the inclusion functors. 
	Denote \[\D_1=S^{\perp_\D}, \D^{\leq 0}_1=\D_1\cap \D^{\leq 0}, \D^{\geq 0}=\D_1\cap \D^{\geq 0}, \B_1=\D_1\cap \B.\] Then $(\D^{\leq 0}_1,\D^{\geq 0}_1)$ is a bounded t-structure on $\D_1$ with heart $\B_1$.

We will use induction on the pair $(t,\sharp \SSS)$ to prove our assertion. As the first step of induction, we consider  arbitrary $t$ and $\sharp\SSS=1$. Then $\SSS=\{S\}$ and, up to a shift  of $\B$, the corresponding t-structure on $S^{\perp_\D}$ has heart $S^{\perp_{\A_t}}$. In particular, $\tau S^{[2]}\in \B$. Since we have  a triangle $\tau S^{[2]}\ra S\ra \tau S[1]\cra $, $S$ is  the desired object by Lemma~\ref{simple in A_t heart}. 
Now suppose $\sharp \SSS>1$. In particular, $t> 2$. 
By the induction hypothesis, there exist some simple $S'\in S^{\perp_{\A_t}}$ and some $l\in \Z$ such that $S'[l]$ is simple in $\B_1$ and is moreover $\D_1^{\leq 0}$-projective or $\D_1^{\geq 0}$-injective. 
Note that a simple object in $S^{\perp_{\A_t}}$ is isomorphic to $\tau S^{[2]}$ or to some simple object in $\A_t$ nonisomorphic to $\tau S, S$. 
If $S'\cong \tau S^{[2]}$ then we have $\tau S^{[2]}[l]\in \B$ and  $S$ is  the desired  object  by Lemma~\ref{simple in A_t heart}. It remains to consider the case when $S'$ is a simple object in $\A_t$ nonisomorphic to $\tau S$ or $S$. Up to a shift of $\B$, we can suppose $l=0$. Then $S'$ is either $\D_1^{\leq 0}$-projective or $\D_1^{\geq 0}$-injective. 

If $S'$ is $\D_1^{\leq 0}$-projective then $\SS_1 S'\in \D_1^{\geq 0}\subset \D^{\geq 0}$, where $\SS_1=i^!\SS i_*$ is the Serre functor of $\D_1=S^{\perp_\D}$. Easy computation shows that \[\SS_1 S' =\left\{\begin{array}{ll} \tau S'[1] & \quad \text{if}\,\, S'\ncong \tau^{-1}S;\\ \tau S^{[2]}[1] & \quad \text{if}\,\, S'\cong \tau^{-1}S.\end{array}\right.\] If $S'\ncong \tau^{-1}S$ then $\tau S'[1]\in \D^{\geq 0}$ and thus $S'$ is $\D^{\leq 0}$-projective. 
Moreover $S'$ is simple in $\B_1$ thus  simple in $\B$, whence $S'$ is the desired object. If $S'\cong \tau^{-1}S$ then $\tau S^{[2]}\in \D^{\geq 1}$. 
Suppose $j^*\B=\add\, S[n]$. 
Then $S\in\D^{\leq n}, \tau S[1]\in \D^{\geq n}$. If $n\geq 1$ then using the triangle $\tau S^{[2]}\ra S\ra \tau S[1]\cra $,  $\tau S[1]\in \D^{\geq n}$ and $\tau S^{[2]}\in \D^{\geq 1}$ imply  $S\in \D^{\geq 1}$. 
Then $S'\cong \tau^{-1}S$ is $\D^{\leq 0}$-projective. Now that $\tau^{-1}S$ is simple in $\B$,   $\tau^{-1}S$ is the desired. If $n\leq 0$ then $\tau S^{[2]}\in \D^{\geq 1}$ and $\tau S[1]\in \D^{\geq n}$ imply $S[n]\in \D^{\geq n}$, whereby yielding $S[n]\in \B$ since we already have $S[n]\in \D^{\leq 0}$. Now that $S[n]\in \B$ and $\tau S[n+1]\in\D^{\geq 0}$,  $S[n]$ is $\D^{\leq 0}$-projective. Moreover, we have $\tau S^{[2]}[n]\in  \D^{\geq 1}$ and thus $S[n]$ is simple in $\B$ by Lemma~\ref{simple in A_t heart}. Therefore $S$ is the desired. 

Similar arguments apply to the case when $S'$ is $\D_1^{\geq 0}$-injective.
The following are some sketchy arguments. Since $t>2$, $\tau^2S\ncong S$. We have \[\SS_1^{-1} S'=i^*\SS^{-1}i_* S'=\left\{\begin{array}{ll} \tau^{-1} S'[-1] & \text{if}\,\,S'\ncong \tau^2 S;\\ \tau S^{[2]}[-1] & \text{if}\,\,S'\cong \tau^2 S.\end{array}\right.\] Suppose $j^*\B=\add\, S[n]$. If $S'\ncong \tau^2 S$ then $\tau^{-1} S'$ is the desired. If  $S'\cong \tau^2 S$ then $\tau S$ is the desired when $n\leq -2$ and $S$ is the desired when $n> -2$.  We are done.
\end{proof}

We show that Assertion~\ref{ass} holds for a class of bounded t-structures on $\D^b(\X)$, where $\X$ is a weighted projective line of arbitrary type. 
\begin{lem}\label{concentrated equiv}
	Let $\X=\X(\udp,\udl)$ be a weighted projective line. Let $(\D^{\leq 0}, \D^{\geq 0})$ be a bounded t-structure on $\D=\D^b(\X)$ whose heart $\B$ satisfies $\{i\mid \vect\X[i]\cap \B\neq 0\}\subset\{j,j+1\}$. Then 
	Assertion~\ref{ass} holds under these additional assumptions.
\end{lem}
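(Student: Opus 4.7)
The plan is to iterate Proposition~\ref{der equiv reduction} along a proper collection of simple sheaves and terminate in a base case handled by Proposition~\ref{tilt-co-tilt equiv}. Necessity in Assertion~\ref{ass} is automatic from \cite[Corollary 4.13]{SR}, so I focus on sufficiency: assuming $\SS$ is right t-exact, I will produce an exact equivalence $\D^b(\B)\overset{\sim}{\to}\D$. First, Proposition~\ref{restrict t-str} (whose hypothesis matches ours) furnishes a proper collection $\SSS$ of simple sheaves, a compatible recollement $\SSS^{\perp_\D}\hookrightarrow\D\twoheadrightarrow\pair{\SSS}_\D$, and a description of the induced heart on $\SSS^{\perp_\D}\simeq\D^b(\coh\X')$ as a shift of an HRS-tilted heart $\F'[1]*\T'$ in $\coh\X'$. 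I then proceed by induction on $\sharp\SSS$.

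In the base case $\SSS=\emptyset$, the heart $\B$ itself is, up to shift, an HRS-tilt in $\coh\X$. If $\{i\mid\vect\X[i]\cap\B\neq 0\}=\{j-1,j\}$, the torsion pair involved satisfies $\coh_0\X\subsetneq\T\subsetneq\coh\X$ and is both tilting and cotilting by Lemma~\ref{cotilting torsion theory}, so Proposition~\ref{tilt-co-tilt equiv} gives the equivalence directly. If $\{i\mid\vect\X[i]\cap\B\neq 0\}=\{j\}$, the torsion pair is $(\T_P,\F_P)$ with $\T_P\subset\coh_0\X$ a Serre subcategory of $\coh\X$; here I plan to invoke the generalization of Proposition~\ref{tilt-co-tilt equiv} from \cite{CHZ} characterizing when HRS-realization is an equivalence, or, alternatively, verify criterion (3) of Lemma~\ref{heart equiv} directly using the Serre property of $\T_P$ and the hereditary structure of $\coh\X$ to efface any $\ext^{\geq 2}$ class in $\B$.

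For the inductive step with $\sharp\SSS>0$, I seek $S\in\SSS$ and $n\in\Z$ such that $S[n]$ is $\D^{\leq 0}$-projective and one of $S[n]$ or $\SS S[n]$ is simple in $\B$. Proposition~\ref{der equiv reduction}(1) then transports right t-exactness of $\SS$ to the induced Serre functor on $S^{\perp_\D}$, and part (2) reduces the equivalence claim to the corresponding one on $S^{\perp_\D}$, whose proper collection is $\SSS\setminus\{S\}$; the induction hypothesis closes the argument. The existence of some Ext-projective $S\in\SSS$ (after a shift) follows from Lemma~\ref{simple ext-proj A} applied within each rep-finite hereditary block of $\pair{\SSS}_\A\simeq\coprod_i\mod k\vec{\AA}_{l_i}$ together with Lemma~\ref{ext-proj from recollement}, and the right t-exactness of $\SS$ places both $S[n]$ and $\SS S[n]$ into $\B$. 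The main obstacle will be the additional simplicity condition: by Lemma~\ref{simple in A_t heart} this amounts to forcing $M_{S[n]}=\cocone(S[n]\to\SS S[n])$ into $\D^{\leq 0}$ or $\D^{\geq 1}$, and establishing this will require refining the choice of $S$ by an argument modelled on Lemma~\ref{simple simple Ext-proj}, adapted from the cyclic $\A_t$ setting to the linear rep-finite hereditary $\mod k\vec{\AA}_{l_i}$ setting and reconciled with the behaviour of $\SS=\tau(-)[1]$ on exceptional simple sheaves across the recollement.
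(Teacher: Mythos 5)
Your skeleton matches the paper's proof (Proposition~\ref{restrict t-str} to get the recollement, base case via Proposition~\ref{tilt-co-tilt equiv}, reduction via Proposition~\ref{der equiv reduction}), but the crux of the argument is left unproven, and the route you sketch for it would not work as stated. The whole inductive step hinges on producing an exceptional simple sheaf $S$ and an $n$ such that $S[n]$ is $\D^{\leq 0}$-projective \emph{and} $S[n]$ or $\SS S[n]=\tau S[n+1]$ is simple in $\B$; you explicitly defer this ("will require refining the choice of $S$ \dots reconciled with the behaviour of $\SS$"). Your plan to find it inside the blocks $\pair{\SSS}_\D\simeq \coprod_i \D^b(\mod k\vec{\AA}_{l_i})$ runs into a structural obstruction: $\pair{\SSS}_\A$ is not stable under $\tau$ (for $S\in\SSS$ at the "end" of a segment, $\tau S\notin \pair{\SSS}_\A$), so the Serre functor of a block (the Nakayama functor of $k\vec{\AA}_{l_i}$) is \emph{not} the restriction of $\SS=\tau(-)[1]$, and simplicity of $\tau S[n+1]$ in $\B$ — equivalently, by Lemma~\ref{simple in A_t heart}, control of $M_{S[n]}=\cocone(S[n]\to\tau S[n+1])$, which involves objects like $\tau S^{[2]}$ living in the tube but outside $\pair{\SSS}_\A$ — cannot be detected block-internally. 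The paper avoids exactly this by restricting the t-structure to $\D^b(\coh_\lambda\X)\simeq\D^b(\A_{p_\lambda})$ for a $\lambda$ with $\SSS_\lambda\neq\emptyset$ (legitimate by Lemma~\ref{restrict to coh0}): the tube \emph{is} $\tau$-stable, its Serre functor is the restriction of the ambient one, and Lemma~\ref{simple simple Ext-proj} applies verbatim — this is precisely why that lemma is proved for the cyclic category $\A_t$ rather than for $\vec{\AA}_l$. Two further steps you omit are then needed: lifting simplicity from $\B_\lambda$ to $\B$, done in the paper via the $j_{!*}$-computation $j_{!*}(S[n])=\im(\eta:S[n]\to\tau S[n+1])$ (Proposition~\ref{simple in heart} plus the remark after Lemma~\ref{simple in A_t heart}), and checking that the induced heart $\B_1$ on $\D^b(\X')$ again satisfies $\{i\mid \vect\X'[i]\cap\B_1\neq 0\}\subset\{j,j+1\}$ (the paper deduces this from the rank-preserving equivalence $S^{\perp_\A}\simeq\coh\X'$ of Theorem~\ref{simple perp}), without which your induction hypothesis cannot be invoked.

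A smaller point: in the base case $\{i\mid\vect\X[i]\cap\B\neq 0\}=\{j\}$, the detour through \cite{CHZ} or a direct verification of Lemma~\ref{heart equiv}(3) is unnecessary. Since $\T_P\subset\coh_0\X$, the torsion-free class $\F_P$ contains all of $\vect\X$, hence contains a nonzero bundle, hence is a cotilting torsion-free class by Lemma~\ref{cotilting torsion theory}(1), and Proposition~\ref{tilt-co-tilt equiv} applies at once; the paper treats both sub-cases of $\SSS=\emptyset$ uniformly this way. Also note the paper inducts on the weight sequence rather than on $\sharp\SSS$, and the collection induced on $\X'$ is not literally $\SSS\setminus\{S\}$ (simples change in $S^{\perp_\A}$, as in the proof of Proposition~\ref{A_t t-str}), though its cardinality does drop, so your induction variable is salvageable once the main gap is filled.
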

\begin{proof}
	We have only to show the sufficiency. Let $\SSS$ be the proper collection of simple sheaves asserted in Proposition~\ref{restrict t-str}. 	
	If $\SSS=\emptyset$ then up to a shift of $\B$ we have $\B=\F[1]*\T$ for some torsion pair $(\T,\F)$ in $\coh\X$.  
	By Lemma~\ref{cotilting torsion theory}, either $\T$ is a tilting torsion class or $\F$ is a cotilting torsion-free class. Then it follows  from Proposition~\ref{tilt-co-tilt equiv} that the inclusion $\B\monic \D^b(\X)$ extends to an exact equivalence $\D^b(\B)\overset{\sim}{\ra}\D^b(\X)$.  
	In particular, if the weight sequence $\udp$ is trivial then there is no exceptional simple sheaves and $\SSS=\emptyset$ and so the assertion also holds in this case. Now we  use induction on the weight sequence $\udp$ and consider a nontrivial weight sequence $\udp=(p_1,\dots,p_n)$. We suppose $\SSS\neq \emptyset$. 

	Take $\lambda\in \P^1$ such that $\SSS_\lambda=\SSS\cap \coh_\lambda\X\neq \emptyset$. By Lemma~\ref{restrict to coh0}, $(\D^{\leq 0}, \D^{\geq 0})$ restricts to a bounded t-structure $(\D^{\leq 0}_\lambda, \D^{\geq 0}_\lambda)$ on $\D^b(\coh_\lambda\X)$. Let $\B_\lambda=\D^b(\coh_\lambda\X)\cap \B$ be its heart. 
	Recall that $\coh_\lambda\X\simeq \A_{p_\lambda}$. By Lemma~\ref{simple simple Ext-proj}, for some exceptional simple sheaf  $S\in \SSS_\lambda$ and some $n\in \Z$, $S[n]$ is $\D_\lambda^{\leq 0}$-projective and either $S[n]$ or $\tau S[n+1]$ is simple in $\B_\lambda$.
	$S[n]\in \D^{\leq 0}, \tau S[n+1]\in \D^{\geq 0}$ imply that $S[n]$ is $\D^{\leq 0}$-projective. Then $(\D^{\leq 0}, \D^{\geq 0})$ is compatible with the recollement  
	\begin{equation}\label{der equiv re}
		\xymatrix{ \D^b(\X')\simeq \D^b(S^{\perp_\A}) \ar[rr]|{i_*}  & &\ar@/_1pc/[ll]|{i^*} \ar@/^1pc/[ll]|{i^!}\D=\D^b(\X) \ar[rr]|{j^*} & &\ar@/_1pc/[ll]|{j_!} \ar@/^1pc/[ll]|{j_*} \pair{S}_\D,}
	\end{equation}
	where $i_*, j_!$ are the inclusion functors, $\X'=\X(\udp',\udl)$ is a weighted projective line with weight sequence \[\udp'=(p_1,\dots, p_{i-1},p_i-1,p_{i+1},\dots, p_n)\] and the exact equivalence $\D^b(\X')\simeq \D^b(S^{\perp_\A})$ is induced by the equivalence $S^{\perp_\A}\simeq \coh\X'$ (see Theorem~\ref{simple perp}). 
	If the Serre functor $\SS=\tau(-)[1]$ is right t-exact then $S[n],\tau S[n+1]\in \B$. One easily shows  \[j_{!*}(S[n])=\im(\eta: S[n]\ra \tau S[n+1])=\left\{\begin{array}{ll} S[n] & \quad\text{if $S[n]$ is simple in $\B_\lambda$}\\ \tau S[n+1] & \quad\text{if $\tau S[n+1]$ is simple in $\B_\lambda$}\end{array}\right.,\] where $\eta: S[n]\ra \tau S[n+1]$ is any nonzero morphism.  Hence either $S[n]$ or $\tau S[n+1]$ is simple in $\B$.
 Then by Proposition~\ref{der equiv reduction}(1), the right t-exactness of the Serre functor $\SS$ of $\D^b(\X)$ implies the right t-exactness of the Serre functor $\SS_1$ of $\D^b(\X')$. 

 Let $\B_1$ be the heart of the corresponding t-structure on $\D^b(\X')$. Since the essential image of $\vect\X'[i]\cap \B_1$ under the sequence of functors $\D^b(\X')\simeq \D^b(S^{\perp_\A})\hookrightarrow \D^b(\X)$ is contained in $\vect\X[i]\cap \B$,  \[\{i\mid \vect\X[i]\cap \B\neq 0\}\subset \{j,j+1\}\quad\text{implies}\quad\{i\mid \vect\X'[i]\cap \B_1\neq 0\}\subset \{j,j+1\}.\] By the induction hypothesis, the right t-exactness of $\SS_1$ implies that the inclusion of $\B_1$ into $\D^b(\X')$ extends a derived equivalence $\D^b(\B_1)\simeq \D^b(\X')$. 
	Then by Proposition~\ref{der equiv reduction}(2), the inclusion $\B\monic \D^b(\X)$ extends to an exact equivalence $\D^b(\B)\simeq \D^b(\X)$.
\end{proof}

We eventually arrive at our proof of Assertion~\ref{ass} for $\D=\D^b(\X)$, where $\X$ is of domestic or tubular type. 
\begin{proof}[Proof of Theorem~\ref{der equiv}]

 We show the sufficiency. Assume that the Serre functor $\SS$ is right t-exact.
 We have shown in Lemma~\ref{concentrated equiv} that  if $\{i\mid \vect\X[i]\cap\B\neq 0\}\subset \{j,j+1\}$ then  Assertion~\ref{ass} holds. If $\X$ is of domestic or tubular type and $\B$ does not satisfy the condition even up to the action of $\aut\D^b(\X)$ then  $\B$ is of finite length by Proposition~\ref{not restrict to coh0}.  The remaining argument goes as in \cite[\S 4]{SR}. By Theorem~\ref{silting t-str}, $(\D^{\leq 0}, \D^{\geq 0})$ corresponds to a silting object $T$ in $\D^b(\X)$.
 In particular, we have an equivalence $F: \B\overset{\sim}{\ra}  \mod\,\End(T)$. If $\SS$ is right t-exact then $T$ is a tilting object by Lemma~\ref{silting tilting}, whose endomorphism algebra has finite global dimension by Proposition~\ref{silting property}. The composition \[\D^b(\B)\overset{\D^b(F)}{\lra} \D^b(\End (T))\overset{-\otimes^L T}{\lra} \D^b(\X)\] is an  exact equivalence which maps $\B$ into $\B$. Thus the inclusion $\B\monic \D^b(\X)$ extends to an exact equivalence $\D^b(\B)\simeq \D^b(\X)$. 
\end{proof}

\begin{rmk}
	We make a final remark on a potential approach to Conjecture~\ref{der equiv all case}, based on the validity of the following 
\begin{conj}\label{conj1}
Let $\X$ be a weighted projective line of arbitrary type. For any bounded t-structure $(\D^{\leq 0}, \D^{\geq 0})$ on $\D^b(\X)$, $\D^{\leq 0}$ contains  no nonzero Ext-projective iff it is a shift of the HRS-tilt with respect to some torsion pair $(\T,\F)$ in $\coh\X$ such that there is no nonzero sheaf $E\in \T$ with $\tau E\in \F$.
\end{conj}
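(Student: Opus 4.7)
The forward direction $(\Leftarrow)$ follows from Lemma~\ref{ext-proj}. Up to a shift we may assume $(\D^{\leq 0}, \D^{\geq 0}) = (\D_\A^{\leq -1}*\T,\, \F[1]*\D_\A^{\geq 0})$ for a torsion pair $(\T, \F)$ in $\A = \coh\X$ satisfying the no-pair condition. Since $\A$ is hereditary, every indecomposable object of $\D^b(\X)$ is a shift $E[n]$ of an indecomposable sheaf. A direct cohomological analysis using the defining triangles of $\D_\A^{\leq -1}*\T$ and $\F[1]*\D_\A^{\geq 0}$ shows that $E[n] \in \D^{\leq 0}$ together with $\tau E[n+1] \in \D^{\geq 0}$ forces $n = 0$, $E \in \T$, and $\tau E \in \F$. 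By Lemma~\ref{ext-proj} the indecomposable Ext-projectives in $\D^{\leq 0}$ are thus exactly such $E$, which are zero by hypothesis.

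For the reverse direction $(\Rightarrow)$, assume $\D^{\leq 0}$ has no nonzero Ext-projective. The plan has four steps. \emph{Step 1 (the crucial step):} Show that $\{i \in \Z : \vect\X[i] \cap \B \neq 0\} \subset \{j, j+1\}$ for some $j \in \Z$. \emph{Step 2:} Apply Proposition~\ref{restrict t-str} to obtain a proper collection $\SSS$ of simple sheaves such that $(\D^{\leq 0}, \D^{\geq 0})$ is compatible with the associated recollement and the induced t-structure on $\SSS^{\perp_\D}$ is (a shift of) an HRS-tilt by a torsion pair described there. \emph{Step 3:} Prove $\SSS = \emptyset$: since the corresponding glued t-structure on $\pair{\SSS}_\D \simeq \D^b(\coprod_i \mod k\vec{\AA}_{l_i})$ has length heart by Lemma~\ref{finite rep length heart}, nonzero Ext-projectives exist there and, by Lemma~\ref{ext-proj from recollement}(2), lift through $j_!$ to nonzero Ext-projectives in $\D^{\leq 0}$, contradicting our hypothesis unless $\SSS$ is empty. \emph{Step 4:} Conclude that $\B = \F[1]*\T$ (up to shift) for a torsion pair $(\T, \F)$ in $\coh\X$, and the forbidden-pair condition follows from the same characterization of Ext-projectives used in the easy direction.

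The hard part will be Step 1 in the wild case. For $\X$ of domestic or tubular type the required containment can already be extracted from the classification of indecomposable bundles (Theorems~\ref{domestic bundle} and~\ref{tubular bundle}) and the analysis of Lemma~\ref{exceptional ind} and Proposition~\ref{not restrict to coh0}, so the conjecture there reduces to the proven Theorems~\ref{thm: domestic} and~\ref{thm: tubular}. For wild $\X$, however, AR components of $\vect\X$ have shape $\Z\AA_\infty$ and indecomposable bundles of arbitrarily large rank exist, so the slope and $\tau$-orbit arguments of \S\ref{sec: wpl vect}--\ref{nonvanish hom} do not suffice to constrain $\B$. The natural contrapositive attack is: given bundles with $E_1[n_1], E_2[n_2] \in \B$ and $n_2 - n_1 \geq 2$, use Theorem~\ref{bundle hom nonzero} together with a refined semistability and Harder--Narasimhan analysis to single out an exceptional bundle $F$ and an integer $l$ such that $F[l] \in \D^{\leq 0}$ while $\tau F[l+1] \in \D^{\geq 0}$, producing the forbidden Ext-projective. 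Making this construction work uniformly in the wild regime, where we do not have a bound on rank nor a tubular/tilting structure to fall back on, is the crux of the conjecture and the step that must be worked out to complete the proof.
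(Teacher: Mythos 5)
You should first be clear about the status of this statement: it is Conjecture~\ref{conj1}, and the paper contains no proof of it. The only justification offered is the remark immediately following it: ``The sufficiency obviously holds. The necessity holds in the domestic and tubular case by our description of bounded t-structures.'' Measured against that, the parts of your proposal that are actually carried out are correct and even add detail the paper omits. Your $(\Leftarrow)$ direction is right: since $\coh\X$ is hereditary, an indecomposable of $\D^b(\X)$ is $E[n]$ with $E$ a sheaf, and the computation with $\D'^{\leq 0}=\D_\A^{\leq -1}*\T$, $\D'^{\geq 0}=\F[1]*\D_\A^{\geq 0}$ together with Lemma~\ref{ext-proj} (the Serre functor is $\tau(-)[1]$) shows the indecomposable Ext-projectives in the tilted aisle are exactly the $E\in\T$ with $\tau E\in\F$. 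Your reduction of $(\Rightarrow)$ is also sound and uses precisely the paper's machinery: Proposition~\ref{restrict t-str} is proved for arbitrary weight type, the set $\{i\mid \vect\X[i]\cap\B\neq 0\}$ is nonempty by Lemma~\ref{trun bundle} and boundedness, and your Step 3 correctly observes that $\SSS\neq\emptyset$ would force nonzero Ext-projectives, because the induced t-structure on $\pair{\SSS}_\D$ has length heart (Lemma~\ref{finite rep length heart}), its silting object consists of Ext-projectives, and these lift along $j_!$ by Lemma~\ref{ext-proj from recollement}(2). This reduction, valid for arbitrary $\X$, is a cleaner packaging than anything stated in the paper, and in the domestic and tubular cases your Step 1 does follow from Proposition~\ref{not restrict to coh0} and the classification in Theorems~\ref{thm: domestic} and~\ref{thm: tubular}, recovering exactly the cases the paper claims.

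The genuine gap is the one you yourself flag: Step 1 for wild $\X$, i.e., that absence of Ext-projectives forces $\{i\mid \vect\X[i]\cap\B\neq 0\}\subset\{j,j+1\}$. Nothing in the paper closes this either -- for wild type the AR components of $\vect\X$ are of shape $\Z\AA_\infty$, ranks of indecomposable bundles are unbounded, and there are no telescopic functors, so the slope-limit-point arguments of \S\ref{nonvanish hom} and Lemma~\ref{rational limit point} genuinely break down; your proposed contrapositive construction of an exceptional Ext-projective bundle from $E_1[n_1],E_2[n_2]\in\B$ with $n_2-n_1\geq 2$ via Theorem~\ref{bundle hom nonzero} and Harder--Narasimhan analysis is plausible but entirely unproved, and is exactly the open content of the conjecture. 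So your proposal should not be presented as a proof: it is a correct proof of the easy direction, a correct reduction of the hard direction to a single open statement, and an honest identification of why that statement is open -- which coincides with, and slightly sharpens, the paper's own assessment.
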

The sufficiency obviously holds. The necessity holds in the domestic and tubular case by our description of bounded t-structures. 
	
The aforementioned potential approach is as follows. Let $(\D^{\leq 0}, \D^{\geq 0})$ be a bounded t-structure on $\D^b(\X)$ with heart $\B$. We can first try to show that Assertion~\ref{ass} holds when $\D^{\leq 0}$ contains no nonzero Ext-projective. For example, if  Conjecture~\ref{conj1} holds, then Assertion~\ref{ass} holds by Lemma~\ref{cotilting torsion theory} and Proposition~\ref{tilt-co-tilt equiv}. 
	Then  we consider the case when $\D^{\leq 0}$ contains a nonzero Ext-projective. Suppose all indecomposable Ext-projectives are torsion sheaves and suppose Conjecture~\ref{conj1} is true. Then the heart $\B$ satisfies $\{i\mid \vect\X[i]\cap \B\neq 0\}\subset \{j,j+1\}$ for some $j\in\Z$ and Assertion~\ref{ass} holds by Lemma~\ref{concentrated equiv}. 
	It remains to consider  the case when some indecomposable bundle $E$ is $\D^{\leq 0}$-projective (up to a shift of $\B$). On one hand, it's possible that our previous approach  still works, i.e., we can still apply Proposition~\ref{der equiv reduction} in some way. 
	On the other hand, since $E$ is exceptional, by Proposition~\ref{bundle perp},   $E^{\perp_{\coh\,\X}}\simeq \mod H$ for some hereditary algebra $H$. Stanley and van Roosmalen's result \cite{SR} may apply here. 

\end{rmk}


\begin{thebibliography}{99}
		
	\bibitem{AI} T. Aihara, O. Iyama. \emph{Silting mutation in triangulated categories.} J. Lond. Math. Soc. (2) 85 (2012), no. 3, 633--668. 

	
	\bibitem{ASS}  I. Assem, M.J.S. Salorio, S. Trepode. \emph{Ext-projectives in suspended subcategories.} J. Pure Appl. Algebra 212 (2008), no. 2, 423--434. 

	\bibitem{Aus}  M. Auslander. \emph{Functors and morphisms determined by objects}, in Representation theory of algebras (Proc. Conf., Temple Univ., Philadelphia, Pa., 1976), 1--244. Lecture Notes in Pure Appl. Math., 37, Dekker, New York, 1978.

	\bibitem{ARS} M. Auslander,  I. Reiten, and S.O. Smal\o. \emph{Representation theory of Artin algebras.} Vol. 36. Cambridge university press, 1997.
	\bibitem{AS} M. Auslander, S.O. Smal\o. \emph{Almost split sequences in subcategories}.  J. Algebra 71 (1981), no. 2, 592--594. 

	\bibitem{B} A. A. Beilinson. \emph{On the derived category of perverse sheaves}. $K$-theory, arithmetic and geometry (Moscow, 1984-1986), 27--41, 
Lecture Notes in Math., 1289, Springer, Berlin, 1987. 
	
	\bibitem{BBD} A. A. Beilinson, J. Bernstein, P. Deligne. \emph{Faisceaux pervers}, in: Analysis and Topology on Singular Spaces, I (Luminy, 1981), in: Aste\'{r}isque, vol. 100, Soc. Math. France, Paris, 1982,  5--171.

	\bibitem{BGS} A. Beilinson, V. Ginzburg, W. Soergel. \emph{Koszul duality patterns in representation theory}. J. Amer. Math. Soc. 9, $\sharp$2, 1996, 473--527.	

	\bibitem{Bon} A. I. Bondal. \emph{Representation of associative algebras and coherent sheaves.} (Russian) Izv. Akad. Nauk SSSR Ser. Mat. 53 (1989), no. 1, 25--44; translation in 
Math. USSR-Izv. 34 (1989), no. 1, 23--42.
	
	\bibitem{BK} A. I. Bondal, M. M. Kapranov. \emph{Representable functors, Serre functors, and reconstructions.} (Russian) Izv. Akad. Nauk SSSR Ser. Mat. 53 (1989), no. 6, 1183--1205, 1337; translation in 
Math. USSR-Izv.  35 (1989), no. 3, 519--541. 
	
	\bibitem{BB} A. I. Bondal, M. van den Bergh. \emph{Generators and representability of functors in commutative and noncommutative geometry.} Mosc. Math. J. 3 (2003), no. 1, 1--36, 258. 
	
	\bibitem{Br} T. Bridgeland. \emph{Stability conditions on triangulated categories}. Ann. of Math. (2) 166 (2007), no. 2, 317--345. 

	\bibitem{CHZ} X.-W. Chen, Z. Han, Y. Zhou. \emph{Derived equivalences via HRS-tilting}. arXiv:1804.05629, 2018.

	\bibitem{CK} X.-W. Chen, H. Krause. \emph{Introduction to coherent sheaves on weighted projective lines}. arXiv:0911.4473, 2009.

	\bibitem{C1} X.-W. Chen. \emph{A short proof of HRS-tilting.} Proc. Amer. Math. Soc. 138 (2010), no. 2, 455--459. 
	
	
	\bibitem{CR} X.-W. Chen, C.M. Ringel. \emph{Hereditary triangulated categories}. arXiv: 1606.08279, 2016.

	
	\bibitem{GL} W. Geigle, H. Lenzing. \emph{A class of weighted projective curves arising in representation theory of finite-dimensional algebras}, in: Singularities, Representation of Algebras, and Vector Bundles (Lambrecht, 1985), in: Lecture Notes in Math., vol. 1273, Springer, Berlin, 1987, pp. 265--297.

	\bibitem{GL2} W. Geigle, H. Lenzing. \emph{Perpendicular categories with applications to representations and sheaves}. J. Algebra 144 (1991), no. 2, 273-343. 

	\bibitem{GKR} A. Gorodentscev, S. Kuleshov, and A. Rudakov. \emph{Stability data and t-structures on a triangulated category.} arXiv: math/0312442 (2003).
	
	\bibitem{H} D. Happel. \emph{A characterization of hereditary categories with tilting object}. Invent. Math. 144 (2001), no. 2, 381--398. 

	\bibitem{Happel} D. Happel. \emph{Triangulated categories in the representation of finite dimensional algebras.} Vol. 119. Cambridge University Press, 1988.
	
	\bibitem{HRS} D. Happel, I. Reiten, S.O. Smal\o. \emph{Tilting in Abelian categories and quasitilted algebras.} Mem. Amer. Math. Soc. 120 (1996), no. 575, viii+ 88 pp. 
	
	\bibitem{H89} T. H\"{u}bner. \emph{Classification of indecomposable vector bundles on weighted curves.}  Diplomarbeit, Paderborn, 1989.
    
	\bibitem{H96}  T. H\"{u}bner. \emph{Exzeptionelle Vektorb\"{u}ndel und Reflektionen an Kippgarben \"{u}ber projectiven gewichteten Kurven.} Dissertion, Universit\"{a}t Paderborn,  1996. 

	\bibitem{HL} T. H\"{u}bner, H. Lenzing. \emph{Categories perpendicular to exceptional bundles.} Preprint, Paderborn, 1993.
	
	\bibitem{Keller} B. Keller. \emph{On triangulated orbit categories.} Doc. Math 10.551--581 (2005): 21--56.
	\bibitem{KV} B. Keller, D. Vossieck. \emph{Aisles in derived categories}. Bull. Soc. Math. Belg. S\'er. A, 1988, 40(2): 239--253.

	\bibitem{KD} S. Koenig, D. Yang. \emph{Silting objects, simple-minded collections, t-structures and co-t-structures for finite-dimensional algebras.} Doc. Math. 19 (2014), 403--438.

	\bibitem{KLM} D. Kussin, H. Lenzing,  H. Meltzer. \emph{Triangle singularities, ADE-chains, and weighted projective lines.} Advances in Mathematics 237 (2013): 194--251.

	\bibitem{Lenzing} H. Lenzing. \emph{Weighted projective lines and applications}, in: Representations of Algebras and Related Topics, European Mathematical Society, 2011, 153--187.

	\bibitem{Lenzing2} H. Lenzing. \emph{Hereditary categories}, in: Handbook of tilting theory, London Math. Soc. Lecture Note Ser 332, 2007, 105--146.

	\bibitem{LM} H. Lenzing,  H. Meltzer.  \emph{Tilting sheaves and concealed-canonical algebras.}  Representation theory of algebras (Cocoyoc, 1994) 18 (1996): 455--473.
	
	\bibitem{LM3} H. Lenzing, H. Meltzer. \emph{Sheaves on a weighted projective line of genus one, and representations of a tubular algebra.} Representations of algebras (Ottawa, ON, 1992), 313--337. CMS Conf. Proc. Vol. 14.
	
	\bibitem{LM2} H. Lenzing, H. Meltzer. \emph{The automorphism group of the derived category for a weighted projective line.} Comm. Algebra 28 (2000), no. 4, 1685--1700. 
	
	\bibitem{LP} H. Lenzing, J.Aon. de la Pe\~{n}a. \emph{Wild canonical algebras.} Math. Z. 224 (3) (1997) 403--425.

	\bibitem{Liu-V} Q.-H. Liu, J. Vit\'oria.  \emph{T-structures via recollements for piecewise hereditary algebras.} J. Pure Appl. Algebra 216 (2012), no. 4, 837--849.

	\bibitem{LVY} Q.-H. Liu, J. Vit\'oria, D. Yang. \emph{Gluing silting objects.} Nagoya Math. J. 216 (2014), 117--151. 

	\bibitem{Meltzer} H. Meltzer. \emph{Exceptional vector bundles, tilting sheaves and tilting complexes for weighted projective lines.} Mem. Amer. Math. Soc. 171 (808) (2004).

	\bibitem{Meltzer2} H. Meltzer. \emph{Tubular mutations.} Colloq. Math. Vol. 74. No. 2. 1997

	\bibitem{OSS} C. Okonek, M. Schneider, and H. Spindler. \emph{Vector bundles on complex projective spaces}, with an appendix by S. I. Gelfand. Corrected reprint of the 1988 Edition,  Birkh\"auser, Boston, 2011.

	\bibitem{Poli} A. Polishchuk. \emph{Constant families of t-structures on derived categories of coherent sheaves.} Mosc. Math. J 7.1 (2007): 109--134.

	\bibitem{RVDB} I. Reiten,  M. Van den Bergh. \emph{Noetherian hereditary abelian categories satisfying Serre duality.} Journal of the American Mathematical Society 15.2 (2002): 295--366.

	\bibitem{Rick} J. Rickard. \emph{Equivalences of derived categories for symmetric algebras.} J. Algebra 257 (2002), no. 2, 460--481.

	\bibitem{Ringel} C. M. Ringel. \emph{Tame Algebras and Integral Quadratic Forms}, in: Lecture Notes in Mathematics., vol. 1099, Springer-Verlag., Berlin etc., 1984.  
	 
	\bibitem{SR} D. Stanley, A. C. van Roosmalen. \emph{Derived equivalences for hereditary Artin algebras.} Adv. Math. 303 (2016), 415--463. 

	\bibitem{N} I. M. Niven. \emph{Diophantine approximations.} Reprint of the 1963 original. Dover Publications, Inc., Mineola, NY, 2008. ix+68 pp. 
\end{thebibliography}
 \end{document}